\numberwithin{equation}{section}
\newtheorem{theorem}{Theorem}[section]
\newtheorem{lemma}[theorem]{Lemma}
\newtheorem{proposition}[theorem]{Proposition}
\newtheorem{corollary}[theorem]{Corollary}
\newtheorem{remark*}[theorem]{Remark}
\newcommand{\N}{\mathbb{N}}
\newcommand{\R}{\mathbb{R}}
\newcommand{\eps}{\epsilon}
\newcommand{\cL}{\mathcal{L}}
\newcommand{\cN}{\mathcal{N}}
\newcommand{\cM}{\mathcal{M}}
\newcommand{\cB}{\mathcal{B}}
\newcommand\restrict[1]{\raisebox{-.2ex}{$|$}_{#1}}
\begin{document}

%\title{A Boundary Local Time For One Dimensional super-Brownian Motion And Applications}
%\maketitle
\begin{frontmatter}

% "Title of the Paper"
\title{A Boundary Local Time For One-Dimensional Super-Brownian Motion And Applications}
\runtitle{The boundary local time of super-Brownian motion}

\begin{aug}
\author{\fnms{\hspace{4 mm} Thomas}
  \snm{Hughes}\thanksref{t1}\corref{}\ead[label=e1]{}} 
\ead[label=e3]{hughes@math.ubc.ca}
\thankstext{t1}{Supported by an NSERC CGS-D Scholarship.} %and an NSERC Discovery Grant.}
\affiliation{%\thanksmark{t1} 
The University of British Columbia\thanksmark{t1} \thanksmark{t2}}

\address{Department of Mathematics\\
The University of British Columbia\\
1984 Mathematics Road\\
Vancouver, British Columbia V6T 1Z2\\
\printead{e3}}

\runauthor{T. Hughes}
\end{aug}
\begin{abstract}
For a one-dimensional super-Brownian motion with density $X(t,x)$, we construct a random measure $L_t$ called the boundary local time which is supported on $\partial \{x:X(t,x) = 0\} =: BZ_t$, thus confirming a conjecture of Mueller, Mytnik and Perkins \cite{MMP2017}. $L_t$ is analogous to the local time at $0$ of solutions to an SDE.  We establish first and second moment formulas for $L_t$, some basic properties, and a representation in terms of a cluster decomposition. Via the moment measures and the energy method we give a more direct proof that $\text{dim}(BZ_t) = 2-2\lambda_0> 0$ with positive probability, a recent result of Mueller, Mytnik and Perkins \cite{MMP2017}, where $-\lambda_0$ is the lead eigenvalue of a killed Ornstein-Uhlenbeck operator that characterizes the left tail of $X(t,x)$. In a companion work \cite{HP2018}, the author and Perkins use the boundary local time and some of its properties proved here to show that $\text{dim}(BZ_t) = 2-2\lambda_0$ a.s. on $\{X_t(\R) > 0 \}$.
\end{abstract}

\begin{keyword}[class=MSC]%[class=AMS] %
\kwd[Primary ]{60J68}
%\kwd{}
\kwd[; Secondary ]{60J55, 60H15, 28A78}
\end{keyword}
 
\begin{keyword}
\kwd{super-Brownian motion}
\kwd{local time}
\kwd{stochastic pde}
\kwd{Hausdorff dimension}
\end{keyword}

% history:
% \received{\smonth{1} \syear{0000}}

\end{frontmatter}

\footskip=25pt

\section{Introduction \& Statement Of Main Results} 
Super-Brownian motion is a Markov process taking values in the space of finite measures on $\R^d$, $\cM_F(\R^d)$, equipped with the topology of weak convergence. We denote this process by $X=(X_t : t \geq 0)$ and denote by $P_{X_0}^X$ and $E_{X_0}^X$, respectively, a probability and its expectation under which $X$ is a super-Brownian motion with initial data $X_0 \in \cM_F(\R^d)$. In one dimension, $X_t$ is almost surely an absolutely continuous random measure and thus has a density we denote by $X(t,x)$. The density is jointly continuous (and will exist) for $t>0$, and is continuous with H\"older index $\frac 1 2 - \epsilon$ in the spatial variable for all $\epsilon > 0$ (see \cite{P2002}, for example, where this is implicit in the proof of Theorem III.4.2). It was shown by Konno and Shiga in \cite{KS1988} and independently by Reimers in \cite{R1989} that $X(t,x)$ satisfies the following stochastic partial differential equation (SPDE):
\begin{equation} \label{e_SPDEdensity}
\frac{\partial X(t,x)}{\partial t} =  \frac{\Delta X(t,x)}{2} + \sqrt{X(t,x)} \dot{W}(t,x),
\end{equation}  
where $\dot{W}(t,x)$ is a space-time white noise. For a complete discussion of such equations, including the precise definition of a solution, see \cite{Walsh} and \cite{KS1988}. \\

Before we discuss our results, we briefly introduce the canonical measure of super-Brownian motion. The canonical measure $\N_0$ is a $\sigma$-finite measure on $C([0,\infty),\cM_F(\R)) \backslash \{0\}$ defined as the weak limit
\begin{equation}
\N_0( X \in \cdot) := \lim_{N\to \infty} N P^X_{ \delta_0 / N}(X \in \cdot).  \nonumber
\end{equation}
When restricted to $\{X_t>0\}$ for $t>0$, $\N_0$ is a finite measure; in particular we have $\N_0 ( \{ X_t > 0 \}) = 2/t$ (see Theorem II.7.2 of \cite{P2002}). $\N_0$ is a fundamental object; it describes the behaviour of a single cluster, that is, the descendants of a single ancestor at the origin, of super-Brownian motion. (Likewise $\N_x$ is a cluster started from $x$ and is just a shift of $\N_0$.) An important fact, which we will describe more precisely later on, is that super-Brownian motion under $P^X_{X_0}$ can be understood as a superposition of canonical clusters. We will use the notation $X_t$ and $X(t,x)$ to denote the superprocess and its density, respectively, under both $P^X_{X_0}$ and $\N_0$. The law of the process will always be clear from context. For a complete overview of the canonical measure, including proofs of the properties just stated, see Section II.7 of \cite{P2002}.  \\

In a recent work by Mueller, Mytnik and Perkins \cite{MMP2017}, the authors studied the small-scale asymptotic behaviour of $X(t,x)$, as well as the boundary of its zero set. We define the random set $Z_t = \{x \in \R : X(t,x) = 0\}$. The boundary of the zero set $BZ_t$ is then defined as
\begin{equation} \label{e_BZt}
BZ_t := \partial Z_t = \{ x \in Z_t : (x-\epsilon,x+\epsilon) \cap Z_t^c \neq \emptyset \,\,\, \forall \epsilon > 0 \}, \nonumber
\end{equation}
where the second equality holds by continuity of the density. The results in \cite{MMP2017} involve an eigenvalue $\lambda_0 \in (\frac 1 2,1)$ which we describe in greater detail shortly. The authors of \cite{MMP2017} show that the left tail of the distribution of $X(t,x)$ behaves like
\begin{equation} \label{e_lefttail}
P^X_{X_0} ( 0< X(t,x) < a ) \asymp t^{-1/2 -\lambda_0}\, a^{2\lambda_0 - 1}
\end{equation}
as $a \downarrow 0$, where $f(a) \asymp g(a)$ means that $f(a)$ is bounded above and below by $cg(a)$ for different constants $c$. The upper bound is uniform in $x$ and the lower bound required a localizing assumption. For details, see Section 4 and in particular Theorem 4.8 of \cite{MMP2017}. Let $\text{dim}(B)$ denote the Hausdorff dimension of a set $B \subseteq \R$. \\

\textbf{Theorem A.} \emph{(Mueller, Mytnik, Perkins \cite{MMP2017}.) Under $P^X_{X_0}$, $\text{dim}(BZ_t) \leq 2 - 2\lambda_0$ almost surely on $\{X_t > 0 \}$ and $\text{dim}(BZ_t) \geq 2 - 2\lambda_0$ with positive probability.}\\

Because $\lambda_0 \in(1/2,1)$, the dimension satisfies $2-2\lambda_0 \in (0,1)$. The lower bound was conjectured to hold with full probability on $\{X_t>0\}$, implying that $\text{dim}(BZ_t) = 2-2\lambda_0$ almost surely on $\{X_t > 0\}$. The difficulty in proving that the lower bound for the dimension holds with probability one on $\{X_t>0\}$ is owing to the delicate nature of the $BZ_t$. It is not monotone in the initial conditions nor in the measure $X_t$ itself.\\

We will construct a random measure $L_t$, which we call the boundary local time of $X_t$, supported on $BZ_t$. (See Theorems~\ref{thm_Lt} and~\ref{thm_Ltprop}.) The existence of $L_t$ was conjectured in Section~5.1 of \cite{MMP2017}. Once we have constructed $L_t$, we use it to give a simpler alternative proof of the lower bound in Theorem~A. Our method is to show that $L_t$ has finite $p$-energy for all $p < 2-2\lambda_0$; in particular, see Theorem~\ref{thm_Ltdim} below. In a future work \cite{HP2018}, $L_t$ and several of its properties derived here, including Theorem~\ref{thm_Ltprop}(a), Proposition~\ref{prop_Ltcanonmomentsbd} and Theorem~\ref{thm_clustergeneral}, will be used to resolve the problem left open in Theorem~A and Theorem~\ref{thm_Ltdim}, showing that $\text{dim}(BZ_t) = 2-2\lambda_0$ almost surely on $\{X_t > 0 \}$.\\

We now give a description of $\lambda_0$. Define a function $F(x)$ by
\begin{equation} \label{e_Fdefintro}
F(x) := -\log P_{\delta_0}^X \big( \{X(1,x) = 0 \}\big) = \N_0\big(\{X(1,x) > 0 \}\big) > 0. 
\end{equation} 
The second equality is standard and is a consequence of \eqref{e_canonPPP} below. Section \ref{s_duality}, from (\ref{e_dualPDElambda}) to (\ref{e_FODE}), provides a thorough overview of $F$ as the limit as $\lambda \to \infty$ of the family of functions $\{V^\lambda_1\}_{\lambda>0}$ which characterize the Laplace transform of the density $X(t,x)$. Let $A f(x) = \frac 1 2 f''(x) - \frac x 2 f'(x)$ denote the infinitesimal generator of a standard, one-dimensional Ornstein-Uhlenbeck process $Y$. For a bounded, continuous function $\phi$ with limits at infinity ($F$ is such a function), $A^\phi f = Af - \phi f$ is the generator of an Ornstein-Uhlenbeck process with Markovian killing corresponding to $\phi$; that is, for a sample path $\{Y_s : s \in[0,\infty)\} \in  C([0,\infty); \R)$, we define the lifetime of the process as $\rho^\phi$, after which it is ``killed," or put into an inert cemetery state. The distribution of $\rho^\phi$ is given by 
\begin{equation} \label{e_survivalprob}
P(\rho > t\, | \, Y) = \exp \left( -\int_0^t \phi(Y_s) \, ds\right) \,\,\,\, \text{for } t > 0.
\end{equation}
Section \ref{s_OU} develops the relevant theory for these processes and their generators. In particular, Theorem \ref{thm_killedOU} states that $A^\phi$, taken as an operator on the appropriate Hilbert space, has countable orthonormal family of eigenfunctions $\{\psi_n^\phi\}_{n=0}^\infty$ with corresponding discrete spectrum $0 \geq -\lambda_0^\phi \geq -\lambda_1^\phi \geq \cdots \to -\infty$. We define $\lambda_0 = \lambda_0^F > 0$. As we have noted, it was shown in \cite{MMP2017} that $\lambda_0 \in (1/2,1)$. Numerical estimates by Zhu \cite{Z2017}, for which the stated digits are expected to be accurate, suggest that $\lambda_0 \approx 0.8882$. This implies that the value of $\text{dim}(BZ_t)$ from Theorem A, $2-2\lambda_0$, is approximately $0.224$. A more detailed discussion of the numerics can be found in the introduction of \cite{HP2018}. \\

The method the authors of \cite{MMP2017} used to show \eqref{e_lefttail} involved computing the asymptotic behaviour of the Laplace transform of the density. In particular (see Proposition 4.5 of that work),
\begin{align}\label{e_densityLaplace}
\lim_{\lambda \to \infty}t^{\lambda_0} \,& \lambda^{2\lambda_0} \,E^X_{X_0} \left(\int \phi(x)\, X(t,x) \,e^{-\lambda X(t,x)} dx\right)  \nonumber
\\ &= c_0 \iint \phi(w_0 + \sqrt t z) \exp\left(- \frac 1 t \int F(z+ t^{-1/2}(w_0 - x_0) \, dX_0(w_0) \right) \psi_0^F(z) \, dm(z)\,dX_0(x_0)
\end{align}
for every bounded Borel function $\phi$, where $m(dz)$ denotes the unit variance Gaussian measure in one dimension, $c_0$ is a positive constant and $\psi_0^F$ is the lead eigenfunction of $A^F$. For a super-Brownian motion with density $X(t,x)$, for $\lambda>0$ we define the measure $L^\lambda_t \in \cM_F(\R)$ by $dL^\lambda_t(x) = \lambda^{2\lambda_0} e^{-\lambda X(t,x)} X(t,x) \, dx$. That is, for a bounded measurable function $\phi: \R \to \R$, we define
\begin{equation} \label{e_Llambdadef}
L^\lambda_t(\phi) = \lambda^{2\lambda_0} \int \phi(x) \, X(t,x)\, e^{-\lambda X(t,x)} dx.
\end{equation}
$L^\lambda_t$ is defined the same way under $P^X_{X_0}$ and $\N_0$. The scaling factor of $\lambda^{2\lambda_0}$ can be deduced from \eqref{e_densityLaplace}. The convergence of $E^X_{X_0} (L^\lambda_t(\phi))$ as $\lambda \to \infty$, noted in \eqref{e_densityLaplace}, led the authors of \cite{MMP2017} to conjecture (Section 5.1 of that reference) that there is a random measure $L_t$ on $\R$ such that $L^\lambda_t \to L_t$ in $\cM_F(\R)$ in probability. Our main result is the verification of this conjecture. In all that follows, $X_0 \in \cM_F(\R)$.
\begin{theorem} \label{thm_Lt} 
Let $t>0$. Under both $P^X_{X_0}$ and $\N_0$ there is a random measure $L_t(dx) \in \cM_F(\R)$, supported on $BZ_t$, such that $L^\lambda_t \to L_t$ in measure as $\lambda \to \infty$, and there is a sequence $\lambda_n \to \infty$ such that $L_t^{\lambda_n} \to L_t$ a.s. as $n \to \infty$. Moreover, under $P^X_{X_0}$ or $\N_0$, for all bounded and continuous functions $\phi$, $L^\lambda_t(\phi) \to L_t(\phi)$ in $\cL^2$ as $\lambda \to \infty$.
\end{theorem}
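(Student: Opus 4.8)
The plan is to prove the $\cL^2$ convergence of $L^\lambda_t(\phi)$ for a fixed bounded continuous $\phi$ first, and then read off from it the existence of the random measure $L_t$, the convergence in measure, and the support statement. The organizing principle is that a family $\{Y_\lambda\}_{\lambda>0}\subseteq\cL^2$ converges as $\lambda\to\infty$ if and only if $E(Y_\lambda Y_{\lambda'})$ has a finite limit as $\lambda,\lambda'\to\infty$ (since $E((Y_\lambda-Y_{\lambda'})^2)=E(Y_\lambda^2)-2E(Y_\lambda Y_{\lambda'})+E(Y_{\lambda'}^2)$). So the heart of the matter is a pair of moment identities for $L^\lambda_t$ together with control of their large-parameter behaviour; the first-moment identity is essentially \eqref{e_densityLaplace}, and the second moment is where the real work lies.

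I would derive the moment identities under $\N_0$ first and then transfer them to $P^X_{X_0}$. Write $V^\lambda_t(x):=\N_0\big(1-e^{-\lambda X(t,x)}\big)$ for the one-point log-Laplace function of the density (the time-$t$ analogue of the $V^\lambda_1$ of Section~\ref{s_duality}, with $V^\lambda_t(x)\uparrow F_t(x):=\N_0(X(t,x)>0)$ as $\lambda\to\infty$); differentiating in the Laplace parameter gives $\N_0\big(X(t,x)e^{-\lambda X(t,x)}\big)=\partial_\lambda V^\lambda_t(x)$, hence $\N_0\big(L^\lambda_t(\phi)\big)=\lambda^{2\lambda_0}\int\phi(x)\,\partial_\lambda V^\lambda_t(x)\,dx$. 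For the second moment, introduce the two-point function $V^{\lambda,\mu}_t(x,y):=\N_0\big(1-e^{-\lambda X(t,x)-\mu X(t,y)}\big)$; then $\N_0\big(X(t,x)X(t,y)e^{-\lambda X(t,x)-\mu X(t,y)}\big)=-\partial_\lambda\partial_\mu V^{\lambda,\mu}_t(x,y)$, so $\N_0\big(L^\lambda_t(\phi)L^\mu_t(\psi)\big)=-\lambda^{2\lambda_0}\mu^{2\lambda_0}\iint\phi(x)\psi(y)\,\partial_\lambda\partial_\mu V^{\lambda,\mu}_t(x,y)\,dx\,dy$. Both $V^\lambda_t$ and $V^{\lambda,\mu}_t$ solve the semilinear heat equation attached to the log-Laplace functional of $X$, which supplies the mild/integral equations and comparison principles used below. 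For $P^X_{X_0}$ I would use the Poisson cluster representation $X_t=\sum_i X^i_t$, the $X^i_\cdot$ being the atoms of a Poisson process of intensity $\int\N_{x_0}(\cdot)\,dX_0(x_0)$, with only finitely many $X^i_t\neq0$ since $\N_0(X_t\neq0)=2/t<\infty$. Writing $X(t,x)e^{-\lambda X(t,x)}=\sum_i X^i(t,x)e^{-\lambda X^i(t,x)}\prod_{j\neq i}e^{-\lambda X^j(t,x)}$ and applying the Mecke (Palm) formula expresses $E^X_{X_0}(L^\lambda_t(\phi))$ and $E^X_{X_0}(L^\lambda_t(\phi)L^\mu_t(\psi))$ as integrals of the $\N_\cdot$-quantities above against the ``survival'' factors $\exp\big(-\int V^\lambda_t(x-x_0)\,dX_0(x_0)\big)$ and its two-point analogue; these are exactly the factors $\exp\big(-\frac1t\int F(\cdots)\,dX_0\big)$ appearing in \eqref{e_densityLaplace}, after Brownian rescaling of $t$ to $1$.

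The exponent $2\lambda_0$ is dictated by the one-point asymptotics of Mueller, Mytnik and Perkins (the $\N_0$-version of Proposition~4.5 of \cite{MMP2017}), namely $\lambda^{2\lambda_0}\partial_\lambda V^\lambda_t(x)\to c(t,x)$ pointwise, with a finite, positive, locally bounded limit; this already gives the convergence of the first moments. The main step, and the principal obstacle, is the matching two-point statement: that $\lambda^{2\lambda_0}\mu^{2\lambda_0}\,\partial_\lambda\partial_\mu V^{\lambda,\mu}_t(x,y)$, and the mixed products $\lambda^{2\lambda_0}\mu^{2\lambda_0}\,\partial_\lambda V^{\lambda,\mu}_t\cdot\partial_\mu V^{\lambda,\mu}_t$ coming from the survival factor, converge as $\lambda,\mu\to\infty$ for $x\neq y$, with bounds of the same order that are integrable in $(x,y)$ over a neighbourhood of the diagonal. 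This is precisely the joint left-tail behaviour of $(X(t,x),X(t,y))$ as both densities tend to $0$, which is not contained in \cite{MMP2017}; I expect to obtain it by combining the integral equation for $V^{\lambda,\mu}_t$ with Brownian scaling of the SPDE and the spectral description of the killed Ornstein-Uhlenbeck generator $A^F$ from Section~\ref{s_OU} and Theorem~\ref{thm_killedOU} that governs the one-point tail, roughly by splitting $V^{\lambda,\mu}_t$ into a ``single common ancestor'' contribution treated as in the one-point case and a ``two disjoint subtrees'' contribution treated by iterating the one-point estimate. The near-diagonal bound is exactly what makes the double spatial integral converge: although $\partial_\lambda V^\lambda_t$ is of order $\lambda^{-2\lambda_0}$ pointwise, the set on which $X(t,\cdot)$ is of order $1/\lambda$ has width tending to $0$ with $\lambda$ by the $\tfrac12$-H\"older continuity of the density, and a direct power count (using $\lambda_0<1$) shows the diagonal contributes nothing in the limit. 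Granting these, $E\big(L^\lambda_t(\phi)L^\mu_t(\psi)\big)$ converges to an explicit finite bilinear form $Q(\phi,\psi)$; in particular $L^\lambda_t(\phi)$ is $\cL^2$-Cauchy, and I set $L(\phi):=\lim_{\lambda\to\infty}L^\lambda_t(\phi)$ in $\cL^2$ for every bounded continuous $\phi$.

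It remains to package the functionals $L(\phi)$ into a measure. Fix a countable family $\cD\subseteq C_b(\R)$ that contains the constant $1$ and is convergence-determining for $\cM_F(\R)$; along a subsequence $\lambda_n\to\infty$ one may assume $L^{\lambda_n}_t(\phi)\to L(\phi)$ a.s.\ for all $\phi\in\cD$. Since $X_t=\sum_i X^i_t$ with only finitely many $X^i_t$ non-zero, each with compact support, $X_t$ and hence every $L^\lambda_t$ is a.s.\ supported in a fixed (random) compact set, while $L^{\lambda_n}_t(1)\to L(1)<\infty$ a.s.; thus $\{L^{\lambda_n}_t\}_n$ is a.s.\ tight with bounded mass, and combined with convergence on $\cD$ this yields $L^{\lambda_n}_t\to L_t$ a.s.\ (weakly) in $\cM_F(\R)$ for a random $L_t$, with $L_t(\phi)=L(\phi)$ a.s.\ for every bounded continuous $\phi$. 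The whole family $L^\lambda_t$ then converges to $L_t$ in probability in $\cM_F(\R)$ (the ``in measure'' of the statement) by the standard subsequence argument, since the $\cL^2$-limit $L(\phi)$ is independent of the subsequence. Finally, for the support I would use the a.s.\ weak convergence along $\lambda_n$: every $x\notin Z_t$ has an open neighbourhood $U$ on which $X(t,\cdot)\geq\delta>0$, so $L^{\lambda_n}_t(U)\leq|U|\,\lambda_n^{2\lambda_0}\delta e^{-\lambda_n\delta}\to0$, whence $L_t(U)\leq\liminf_n L^{\lambda_n}_t(U)=0$ by the portmanteau theorem; covering the open set $Z_t^c$ by countably many such $U$ gives $L_t(Z_t^c)=0$, and since $X(t,\cdot)\equiv0$ near every point of the interior of $Z_t$ the same argument gives $L_t(\mathrm{int}\,Z_t)=0$. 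Hence $L_t$ is carried by $\partial Z_t=BZ_t$, as required.
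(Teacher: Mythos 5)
Your proposal is essentially correct in outline, and the skeleton is the same as the paper's: reduce to an $\cL^2$-Cauchy criterion via the convergence of $E(L^\lambda_t(\phi)L^\mu_t(\psi))$, compute those second moments under $\N_0$ from the two-point log-Laplace function $V^{\lambda,\mu}_t$, and then package the $\cL^2$-limits $L(\phi)$ into a measure via a countable convergence-determining class, tightness from compact support of $X(t,\cdot)$, and a diagonal subsequence. Your support argument (bounding $L^{\lambda_n}_t(U)$ when $X(t,\cdot)\ge\delta$ on an open $U$ and invoking the portmanteau inequality $L_t(U)\le\liminf_n L^{\lambda_n}_t(U)$, then covering $Z_t^c$ and $\mathrm{int}\,Z_t$ by such neighbourhoods) is a legitimate variant of what the paper does.

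The genuine deviation is the transfer to $P^X_{X_0}$. You propose to prove $\cL^2$-convergence under $P^X_{X_0}$ by computing first and second moments under $P^X_{X_0}$ directly, via the Mecke formula for the Poisson cluster process, and then carrying out the same asymptotic analysis. That is in principle viable, but it amounts to proving a $P^X_{X_0}$-analogue of Theorem~\ref{thm_l2limit}, and the second-moment calculation is structurally more complicated than under $\N_0$: $(x,y)$ may lie in one cluster or in two distinct clusters, each case carries its own ``environment'' factor $\prod_{j}e^{-\lambda X^j(t,x)-\mu X^j(t,y)}$, and controlling the simultaneous $\lambda,\mu\to\infty$ limit of all of these cross-terms requires additional estimates beyond the purely $\N_0$ analysis. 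The paper deliberately avoids this by establishing Theorem~\ref{thm_Lt} completely under $\N_0$ first and then using the cluster identity of Theorem~\ref{thm_clusterLt}, namely $dL_t = \sum_i 1\bigl(\sum_{j\neq i}X^j(t,\cdot)=0\bigr)\,dL^i_t$, to transfer existence and $\cL^2$-convergence to $P^X_{X_0}$ without ever writing a second-moment formula under $P^X_{X_0}$; the cross-cluster interaction is controlled once, by the uniform bound $V^\infty_t - V^\lambda_t \le C t^{-1/2-\lambda_0}\lambda^{1-2\lambda_0}$, rather than inside a double Palm integral. Your route buys an explicit moment formula under $P^X_{X_0}$ (which the paper only obtains as an upper bound, in Theorem~\ref{thm_Ltmoments}(b)), but costs a considerably heavier asymptotic computation; it also obscures the cluster decomposition of $L_t$ itself, which is used again in \cite{HP2018}. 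Finally, you correctly identify the two-point asymptotics of $-\partial_\lambda\partial_\mu V^{\lambda,\mu}_t$ as the principal obstacle; be aware that the ``single common ancestor / two disjoint subtrees'' split you gesture at corresponds, after the Feynman--Kac representation of Proposition~\ref{prop_pde_rep1}, to integrating over a branching time $s\in[0,t]$ of a Brownian path, and the uniform control near the diagonal is obtained not directly from H\"older continuity of the density but through the scaling to killed Ornstein--Uhlenbeck processes and the spectral estimates of Theorem~\ref{thm_killedOU}; this is where most of the length of Section~\ref{s_momentsconvergence} goes.
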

\begin{theorem} \label{thm_Ltprop} 
(a) $P^X_{X_0}(L_t > 0 \,| \, X_t > 0 ) > 0$ and $\N_0 (L_t > 0 \,| \, X_t > 0 ) \geq \frac{1-\lambda_0}{2}$ for all $t>0$.\\
(b) $L_t$ is atomless almost surely under $P^X_{X_0}$ and $\N_0$. 
\end{theorem}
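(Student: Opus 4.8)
The plan is to deduce both statements from good control of the first two moments of $L_t$. Since $L_t$ is supported on $BZ_t=\partial Z_t$, and $BZ_t=\emptyset$ whenever the density $X(t,\cdot)$ vanishes identically, we have $\{L_t>0\}\subseteq\{X_t>0\}$; so for part (a) it suffices to bound $P^X_{X_0}(L_t>0)$ below by a positive constant and to show $\N_0(L_t>0)\geq\frac{1-\lambda_0}{t}$ (recalling $\N_0(X_t>0)=2/t$). The first moment is read off from \eqref{e_densityLaplace} with $\phi\equiv 1$ together with the $\cL^2$ convergence in Theorem~\ref{thm_Lt}, which gives $E^X_{X_0}[L_t(\R)]=\lim_{\lambda}E^X_{X_0}[L^\lambda_t(\R)]\in(0,\infty)$, and likewise a positive finite value for $\N_0[L_t(\R)]$. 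The second moment is obtained from $E[L_t(\R)^2]=\lim_\lambda E[L^\lambda_t(\R)^2]$, where
\[
E[L^\lambda_t(\R)^2]=\lambda^{4\lambda_0}\iint E\big[X(t,y)X(t,z)\,e^{-\lambda(X(t,y)+X(t,z))}\big]\,dy\,dz,
\]
which must be evaluated by a Laplace-type asymptotic analysis of the \emph{two-point} left tail of the density, a refinement of the one-point analysis of Section~4 of \cite{MMP2017}. I expect this analysis to show that $E[L_t(dy)\,L_t(dz)]$ has a density $\rho_t(y,z)$, locally bounded off the diagonal and blowing up like $|y-z|^{-(2\lambda_0-1)}$ as $y\to z$; since $2\lambda_0-1\in(0,1)$ this is locally integrable, so $E[L_t(\R)^2]<\infty$ and, more precisely, $E[L_t(I)^2]\leq C(t,K)\,|I|^{3-2\lambda_0}$ for every interval $I\subseteq[-K,K]$. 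Establishing this short-distance behaviour of $\rho_t$ is, in both parts, the principal obstacle: it requires controlling $E[X(t,y)X(t,z)e^{-\lambda(X(t,y)+X(t,z))}]$ (equivalently the two-point left tail) as the common level tends to $0$ with $|y-z|$ shrinking, uniformly in $\lambda$.

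Granting these moment facts, part (a) is a Cauchy--Schwarz/Paley--Zygmund argument. Under $P^X_{X_0}$, $E[L_t(\R)]=E[L_t(\R)\mathbf{1}_{\{L_t>0\}}]\leq E[L_t(\R)^2]^{1/2}\,P^X_{X_0}(L_t>0)^{1/2}$, so $P^X_{X_0}(L_t>0)\geq E[L_t(\R)]^2/E[L_t(\R)^2]>0$. The same inequality under $\N_0$, which restricted to $\{X_t>0\}\supseteq\{L_t>0\}$ is a finite measure, gives $\N_0(L_t>0)\geq\N_0[L_t(\R)]^2/\N_0[L_t(\R)^2]$, whence $\N_0(L_t>0\,|\,X_t>0)=\tfrac t2\,\N_0(L_t>0)\geq\tfrac t2\,\N_0[L_t(\R)]^2/\N_0[L_t(\R)^2]$. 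The value $\tfrac{1-\lambda_0}{2}$ should emerge from the explicit $\N_0$-moment formulas: in the second-moment computation one integrates over the time at which the two ancestral lineages to $y$ and $z$ split, and the resulting diagonal singularity of $\rho_t$ contributes a factor $\int_0^{1}u^{-(2\lambda_0-1)}\,du=\tfrac{1}{2(1-\lambda_0)}$, finite precisely because $\lambda_0<1$, so that $\N_0[L_t(\R)^2]$ is dominated by $\tfrac{1}{1-\lambda_0}$ times a quantity comparable to $\tfrac 2t\,\N_0[L_t(\R)]^2$. The sharp arithmetic here is the main bookkeeping task of part (a).

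For part (b) I would apply Kolmogorov's continuity criterion to the random, nondecreasing, right-continuous function $x\mapsto G(x):=L_t((-\infty,x])$. The increment bound $E[|G(x+h)-G(x)|^2]=E[L_t((x,x+h])^2]\leq C(t,K)\,|h|^{3-2\lambda_0}$, with exponent $3-2\lambda_0>1$, provides a continuous modification of $G$ on each compact interval; since $G$ is monotone and right-continuous, agreement with a continuous modification on a countable dense set forces $G$ itself to be continuous, i.e.\ $L_t$ is atomless, under $P^X_{X_0}$ (and, after restricting to the finite measure $\N_0(\,\cdot\,\cap\{X_t>0\})$ and using that $L_t\equiv 0$ off $\{X_t>0\}$, under $\N_0$). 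One technical point deserves care: to pass the second-moment bound from $L^\lambda_t$ to $L_t$ without circularly invoking atomlessness, use the a.s.\ weak convergence $L^{\lambda_n}_t\to L_t$ along the subsequence of Theorem~\ref{thm_Lt} together with the portmanteau inequality $\liminf_n L^{\lambda_n}_t(J)\geq L_t(J)$ for open $J$ and Fatou's lemma, so that $E[L_t(J)^2]\leq\liminf_n E[L^{\lambda_n}_t(J)^2]\leq C(t,K)|J|^{3-2\lambda_0}$ for open $J\subseteq[-K,K]$, which is exactly what the criterion requires.
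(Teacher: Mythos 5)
Part (a) of your plan is essentially the paper's proof: both use the Paley--Zygmund inequality $\N_0(L_t>0)\geq \N_0(L_t(1))^2/\N_0(L_t(1)^2)$ together with the explicit formulas $\N_0(L_t(1))=C_{\ref{thm_l2limit}}\theta t^{-\lambda_0}$ and $\N_0(L_t(1)^2)\leq \frac{C_{\ref{thm_l2limit}}^2\theta^2}{1-\lambda_0}t^{1-2\lambda_0}$, then divide by $\N_0(X_t>0)=2/t$. (Your heuristic factor $1/(2(1-\lambda_0))$ from the split-time integral should be $1/(1-\lambda_0)$ --- the singularity is $w^{-\lambda_0}$ over $[0,t]$, not $u^{-(2\lambda_0-1)}$ over $[0,1]$ --- but this does not affect the final constant.)

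Part (b) is where you diverge genuinely from the paper. You propose applying Kolmogorov's continuity criterion to $G(x)=L_t((-\infty,x])$ using an increment bound $E[L_t((x,x+h])^2]\leq C(t,K)h^{3-2\lambda_0}$ (with $3-2\lambda_0\in(1,2)$), then arguing that a monotone right-continuous function agreeing with a continuous modification on a dense set is itself continuous. The paper instead proves an elementary combinatorial lemma (Lemma~\ref{lemma_squareatoms}): if $\mu$ is supported on $[-M,M]$ with atomic part $\sum_i c_i\delta_{x_i}$, then $\sum_k \mu(I^n_k)^2\geq\sum_i c_i^2$ for every dyadic partition $\{I^n_k\}$; combining this with the same interval second-moment bound (Lemma~\ref{lemma_ball2moment}: $\N_0(L_t(B(x,r))^2)\leq C(t)(r^{3-2\lambda_0}+r^2)$) shows $E[\sum_i c_i^2]\leq\liminf_n\sum_k E[L_t(I^n_k)^2]\leq \lim_n C(t)\,2M2^n\cdot(2^{-n})^{3-2\lambda_0}=0$. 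Both routes rest on the same crucial local second-moment estimate --- your blow-up exponent $|y-z|^{-(2\lambda_0-1)}$ for the density $\rho_t$ is exactly what drives the paper's bound --- and both are correct; the paper's is slightly more self-contained since it avoids the ``continuous modification vs.\ actual continuity'' step, while yours makes the H\"older mechanism behind atomlessness more transparent. Your Portmanteau/Fatou device for transferring the bound from $L^{\lambda_n}_t$ to $L_t$ is sound, though unnecessary in the paper since it derives the bound directly from the exact second-moment formula of Theorem~\ref{thm_Ltcanonmoments}(b), which is itself proved by uniform integrability rather than by Fatou.
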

\textbf{Definition.} $L_t$ is the boundary local time of $X_t$. \\

We note that $Z_t$ will contain intervals, unlike the zero set of a Brownian motion (which is equal to its boundary). It is easy to see that $L_t$ is supported on $BZ_t$ from the fact that as $\lambda$ gets large, $L^\lambda_t$ concentrates on $\{x : 0 < X(t, x) = O(\lambda^{-1})\}$, and properties of the weak topology on $\cM_F(\R)$ (see the proof of Theorem~\ref{thm_Lt} in Section~\ref{s_main}). For fixed $t>0$, $x \to X(t,x)$ is a continuous path taking values in $\R^+ = [0,\infty)$. $BZ_t$ is the set of points where this path begins and ends its excursions from $0$. As $L_t$ is supported on $BZ_t$, in this sense $L_t$ is a local time of $x\to X(t,x)$ on these excursion endpoints, and hence the boundary local time of $X(t,\cdot)$.\\

The existence of a measure supported on $BZ_t$ allows us to use the energy method to study its dimension. We will provide a second moment formula for $L_t$, with which we compute the expectation of energy integrals of the form
\begin{equation} \label{e_energyint}
\iint |x-y|^{-p} \, dL_t(x) \,dL_t(y).
\end{equation}
If $L_t >0$ and the above energy is finite, then $\text{dim}(\text{supp}(L_t)) \geq p$ by Frostman's connection between energy integrals and Hausdorff dimension (see Theorem 4.27 of M\"orters and Peres \cite{MP10}). We introduce some notation. For $h:\R^2 \to \R$, define $(L_t \times L_t)(h)$ by
\[ (L_t \times L_t)(h) = \iint h(x,y) \, dL_t(x)\, dL_t(y).\]
For $p>0$, we define $h_p(x,y) = |x-y|^{-p}$. The second moment formula for $L_t$ allows us to establish the following.
\begin{theorem} \label{thm_Ltdim} 
$E^X_{X_0}((L_t \times L_t)(h_p))$ and $\N_0((L_t \times L_t)(h_p))$ are finite for all $p< 2-2\lambda_0$. Moreover, $\text{dim}(BZ_t) = 2-2\lambda_0$ almost surely on $\{L_t> 0 \}$ under both measures.
\end{theorem}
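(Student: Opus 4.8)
The plan is to deduce the finiteness of the expected $p$-energies from a second moment formula for $L_t$, and then to combine this with Frostman's energy method and the upper bound in Theorem~A. I would work primarily under $\N_0$, which describes a single cluster; the statement under $P^X_{X_0}$ then follows along the same lines, using the Poisson cluster structure of $X_t$: the second moment functional $\phi\mapsto E^X_{X_0}\big((L_t\times L_t)(\phi)\big)$ splits into a single-cluster part, equal to the average over $x_0$ of the corresponding $\N_{x_0}$-second moment, plus a two-cluster part built from the first moment density of $L_t$ integrated against $X_0\times X_0$. The two-cluster part is harmless here because that density is bounded with rapid spatial decay (the first moment formula being the $\N_0$ analogue of \eqref{e_densityLaplace}) and because $|x-y|^{-p}$ is locally integrable on $\R^2$ for $p<1$, and here $p<2-2\lambda_0<1$.

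First I would reduce to an integral against the limiting second moment density. For $\delta>0$ put $h_p^\delta(x,y):=(|x-y|\vee\delta)^{-p}$, which is bounded and continuous on $\R^2$ and increases to $h_p$ as $\delta\downarrow0$. By Theorem~\ref{thm_Lt} there is a sequence $\lambda_n\to\infty$ along which $L^{\lambda_n}_t\to L_t$ a.s.\ in $\cM_F(\R)$, hence $L^{\lambda_n}_t\times L^{\lambda_n}_t\to L_t\times L_t$ a.s.\ in $\cM_F(\R^2)$ and $(L^{\lambda_n}_t\times L^{\lambda_n}_t)(h_p^\delta)\to(L_t\times L_t)(h_p^\delta)$ a.s. Feeding the bounded function $h_p^\delta$ into the second moment formula for $L_t$ --- which I write as $\N_0\big((L_t\times L_t)(h)\big)=\iint h(x,y)\,\rho_t(x,y)\,dx\,dy$ for a nonnegative $\rho_t$ on $\R^2$ --- and then letting $\delta\downarrow0$ with monotone convergence on both sides (the diagonal being $L_t\times L_t$-null by Theorem~\ref{thm_Ltprop}(b)), I obtain
\[
\N_0\big((L_t\times L_t)(h_p)\big)=\iint |x-y|^{-p}\,\rho_t(x,y)\,dx\,dy.
\]
If the available form of the formula is only a uniform-in-$\lambda$ bound $\N_0\big((L^\lambda_t\times L^\lambda_t)(h)\big)\le\iint h(x,y)\,\Phi_t(x,y)\,dx\,dy$ with $\Phi_t\ge0$, I would instead conclude finiteness from Fatou's lemma along $\lambda_n$ applied to $h_p^\delta$, followed by $\delta\downarrow0$.

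The heart of the matter --- and the step I expect to be the main obstacle --- is to control $\rho_t$ (or the dominating $\Phi_t$) near the diagonal, showing
\[
\rho_t(x,y)\le C(t)\,\big(|x-y|^{-(2\lambda_0-1)}\vee1\big)\,g_t(x,y),
\]
where $2\lambda_0-1\in(0,1)$ and $g_t$ decays at spatial infinity like a Gaussian, inherited from $X_0\in\cM_F(\R)$ and from the heat-kernel and Ornstein-Uhlenbeck factors appearing in the formula. This is a two-point version of the Laplace-transform asymptotics underlying \eqref{e_densityLaplace}: the exponent $2\lambda_0-1$ is the cost, as $\lambda\to\infty$, of forcing $X(t,\cdot)$ to be simultaneously of order $\lambda^{-1}$ at two points at distance $|x-y|$ (the relevant small scale being $\lambda^{-1/2}$), and it is read off from the killed Ornstein-Uhlenbeck semigroup and the lead eigenfunction $\psi_0^F$; this is exactly where $\lambda_0$ and the left-tail estimate \eqref{e_lefttail} enter. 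Granting this bound, the substitution $u=x-y$ reduces the near-diagonal part of $\iint|x-y|^{-p}\,\rho_t(x,y)\,dx\,dy$ to a constant multiple of $\int_{\{|u|\le1\}}|u|^{-p-(2\lambda_0-1)}\,du$ (the remaining integration converging by the decay of $g_t$), while the part over $\{|x-y|\ge1\}$ is finite since $|x-y|^{-p}$ is bounded there; the total is thus finite precisely when $p+(2\lambda_0-1)<1$, i.e.\ when $p<2-2\lambda_0$. The same holds under $P^X_{X_0}$ by the splitting above, so the first assertion of the theorem follows.

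For the dimension statement, fix $p<2-2\lambda_0$; by the first assertion and Chebyshev's inequality, $(L_t\times L_t)(h_p)<\infty$ a.s.\ under both measures. On the event $\{L_t>0\}$ the probability measure $L_t/L_t(\R)$ is supported in $BZ_t$ (by Theorem~\ref{thm_Lt}) and has finite $p$-energy, so Frostman's lemma (Theorem~4.27 of \cite{MP10}) gives $\text{dim}(BZ_t)\ge p$ a.s.\ on $\{L_t>0\}$; intersecting over a sequence $p_k\uparrow2-2\lambda_0$ yields $\text{dim}(BZ_t)\ge2-2\lambda_0$ a.s.\ on $\{L_t>0\}$. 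Since $\{L_t>0\}\subseteq\{X_t>0\}$ (if $X_t=0$ then $BZ_t=\emptyset$ and $L_t=0$), Theorem~A gives $\text{dim}(BZ_t)\le2-2\lambda_0$ a.s.\ on $\{L_t>0\}$ under $P^X_{X_0}$, and the same bound under $\N_0$ follows from Theorem~A by conditioning $P^X_{\delta_0}$ on the positive-probability event that exactly one cluster is alive at time $t$, on which $X$ is distributed as an $\N_0$-cluster conditioned on $\{X_t>0\}$. Combining the two bounds gives $\text{dim}(BZ_t)=2-2\lambda_0$ almost surely on $\{L_t>0\}$ under both measures.
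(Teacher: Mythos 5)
You correctly identify the overall architecture --- finiteness of the $p$-energy integral, Frostman's energy method for the lower bound, Theorem~A for the upper bound (transferred to $\N_0$ by conditioning the cluster decomposition on $N=1$) --- and this is exactly the route the paper takes; that portion of your argument is complete and correct. The problem is the step you yourself flag as the main obstacle: the near-diagonal bound $\rho_t(x,y)\le C(t)\,\big(|x-y|^{-(2\lambda_0-1)}\vee1\big)\,g_t(x,y)$. This is where the entire analytic content of the theorem lives; without it the argument establishes nothing. You give a heuristic (the cost of forcing $X(t,\cdot)$ to be simultaneously of order $\lambda^{-1}$ at two points, guided by the left-tail estimate), which does locate the correct exponent, but you neither derive the bound nor invoke any proved result that yields it.

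In the paper this is precisely what Proposition~\ref{prop_Ltcanonmomentsbd} accomplishes, and the mechanism is rather different from a diagonal-density estimate in $(x,y)$. The bound \eqref{e_Ltcanon_secondmomentbd} is a one-parameter integral over $w=t-s\in(0,t)$; plugging in $h_p$ introduces $w^{-p/2}$ (since the two arguments differ by $\sqrt{w}\,(z_2-z_1)$), and the remaining exponential is recognized as the survival probability $P^Y_{z_1}(\rho^F>\log(t/w))\le C\,e^{\delta z_1^2}(w/t)^{\lambda_0}$ of a killed Ornstein--Uhlenbeck process via \eqref{OU_survprobbd}. Multiplying $w^{-2\lambda_0-p/2}\cdot w^{\lambda_0}$ gives $w^{-\lambda_0-p/2}$, integrable on $(0,t)$ exactly when $p<2-2\lambda_0$. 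Until you supply an argument at this level of detail --- whether in your $\rho_t$ formulation or in the paper's $w$-parametrized form --- the finiteness claim is unestablished, and the second half of the theorem (which is otherwise fine, including your transfer of the upper bound to $\N_0$ via the $N=1$ conditioning) has nothing to stand on. The $P^X_{X_0}$ case via the cluster decomposition and the first moment formula \eqref{e_Ltcanon_firstmoment} for the cross terms is sketched reasonably and matches the paper, but it of course inherits the same gap.
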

The fact that $\text{dim}(BZ_t) \leq 2-2\lambda_0$ $P^X_{X_0}$-a.s. is already known from Theorem A, and from this it follows easily under $\N_0$, as we point out in the proof of Theorem~\ref{thm_Ltdim}. By the above, the lower bound, ie. $\text{dim}(BZ_t) \geq 2-2\lambda_0$, holds with at least the probability that $L_t >0$, as in Theorem~\ref{thm_Ltprop}(a). This plays an important role in Hughes-Perkins \cite{HP2018}; in Theorem 1.2 of \cite{HP2018} we show that with respect to both $P^X_{X_0}$ and $\N_0$, $L_t > 0$ almost surely on $\{X_t > 0 \}$, thus improving part (a) of Theorem~\ref{thm_Ltprop} above and establishing almost sure non-degeneracy of $L_t$. Combined with Theorem~\ref{thm_Ltdim}, this will show that $\text{dim}(BZ_t) = 2-2\lambda_0$ almost surely on $\{X_t > 0 \}$. \\

There are a number of other potential uses for such a local time. We now discuss some possibilities. By sampling a point from $L_t$, we are able to ``view $X_t$ from the perspective of a typical point in $BZ_t$." More precisely, one can define $Q_{X_0}((Z,X_t) \in A) = E^X_{X_0}(\int 1_A(z,X_t) \, dL_t(z))$ and study properties of the Palm measure $Q_{X_0}(X_t \in \cdot \, | \, Z = z)$. The behaviour of $X_t$ near $BZ_t$ is complex and there is still much that is not understood about it. For example, the density has an improved modulus of continuity and is nearly Lipzschitz (ie. H\"older $1-\eta$ for all $\eta > 0$) at points in $BZ_t$ (see Theorem 2.3 of \cite{MP2011}). This suggests that $BZ_t$ would be small, but despite this $BZ_t$ has positive dimension. Constructing and studying the Palm measure described above would give a more structured approach for investigating this phenomenon. \\

As a local time, $L_t$ has the potential to study pathwise uniqueness in the SPDE \eqref{e_SPDEdensity}, a problem which remains open, assuming a similar role as that of the semi-martingale local time in the Yamada-Watanabe Theorem for one-dimensional SDEs (see Theorem V.40 of Rogers and Williams \cite{RW}). It may also provide insight in the behaviour of some discrete processes; super-Brownian motion in high dimensions is the scaling limit of a number of lattice models and interacting particle systems. In dimension one, it is still the scaling limit of branching random walk (for example see \cite{W1968} or Theorem II.5.1(iii) of \cite{P2002}). One could obtain information about the boundaries of such approximating processes by proving a limit theorem establishing weak convergence of the laws of their discrete local times to that of $L_t$. Of course, $L_t$ allows for us to study $BZ_t$ more directly, as we have done in Theorem~\ref{thm_Ltdim}. In fact, with $L_t$ it may be possible to determine the exact Hausdorff measure function of $BZ_t$.\\

We now discuss the method of our proof. Upper bounds on second moments of $L^\lambda_t$ were obtained in Section 5.1 of \cite{MMP2017}, but in order to establish the existence of $L_t$ we require exact asymptotics, which are more delicate. The main ingredient is the following convergence result. In order to state it we need to introduce some notation. Recall that $m(dx)$ denotes the centred unit variance Gaussian measure. Let $\psi_0 = \psi_0^F$ (the eigenfunction of $A^F$ corresponding to eigenvalue $-\lambda_0$). The constant $C_{\ref{thm_l2limit}}$ is given explicitly in (\ref{e_Cdef}), and the function $\rho$ is defined in (\ref{e_rhodef}). The function $V^{\infty,\infty}_t$ is defined in Section~\ref{s_duality} as $V^{\infty,\infty}_t(x_1,x_2) = \N_0 \left(\{ X(t,x_1) >  0 \} \cup \{X(t,x_2) > 0\} \right)$ (see \eqref{V2infprob}).

\begin{theorem} \label{thm_l2limit} 
There exists a constant $C_{\ref{thm_l2limit}} > 0$ and continuous function $\rho:\R \times \R \to (0,1]$ such that for bounded Borel $h : \R^2 \to \R$,
\begin{align}
&\lim_{\lambda, \lambda' \to \infty} \N_0 ((L^{\lambda}_t \times L^{\lambda'}_t)(h) ) \nonumber
\\ &\hspace{ 6mm}=  C_{\ref{thm_l2limit}}^2 \int_0^t (t-s)^{-2\lambda_0} \bigg[ \iint E^{B}_0 \bigg(  \exp \left( - \int_0^s V^{\infty, \infty}_{t-u} (\sqrt{t-s} \, z_1 + B_s - B_u, \sqrt{t-s} \, z_2 + B_s - B_u) \, du \right) \nonumber
\\ & \hspace{10 mm}\times  h(\sqrt{t-s} \, z_1 + B_s,\sqrt{t-s} \, z_2 + B_s) \bigg)\, \rho(z_1, z_2) \,\psi_0(z_1) \, \psi_0(z_2) \, dm(z_1) \, dm(z_2)  \bigg] ds. \nonumber
\end{align}
Moreover, the limit is finite for all bounded $h$.
\end{theorem}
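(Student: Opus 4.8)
\emph{Proof proposal.} The plan is to reduce everything to the asymptotics of the two-point quantity
\[ R^{\lambda,\lambda'}_t(x_1,x_2):=\N_0\big(X(t,x_1)X(t,x_2)\,e^{-\lambda X(t,x_1)-\lambda' X(t,x_2)}\big), \]
since $\N_0\big((L^\lambda_t\times L^{\lambda'}_t)(h)\big)=\lambda^{2\lambda_0}(\lambda')^{2\lambda_0}\iint h(x_1,x_2)\,R^{\lambda,\lambda'}_t(x_1,x_2)\,dx_1\,dx_2$. Setting $V^{\lambda,\lambda'}_t(x_1,x_2)=\N_0(1-e^{-\lambda X(t,x_1)-\lambda' X(t,x_2)})$, which is finite and satisfies the mild equation recorded in Section~\ref{s_duality}, one has $R^{\lambda,\lambda'}_t=-\partial_\lambda\partial_{\lambda'}V^{\lambda,\lambda'}_t$, and differentiating that mild equation once in $\lambda$ and once in $\lambda'$ yields the \emph{linear} mild equation
\[ R^{\lambda,\lambda'}_t(x_1,x_2)=\int_0^t\!\int_\R p_s(b)\Big[(\partial_\lambda V^{\lambda,\lambda'}_{t-s})(\partial_{\lambda'}V^{\lambda,\lambda'}_{t-s})-V^{\lambda,\lambda'}_{t-s}\,R^{\lambda,\lambda'}_{t-s}\Big](x_1-b,x_2-b)\,db\,ds, \]
where the convolution displaces both coordinates $x_1,x_2$ by the common amount $b$, reflecting that a cluster has a single location process. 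Solving this by Feynman--Kac gives
\[ R^{\lambda,\lambda'}_t(x_1,x_2)=E^B_0\!\left[\int_0^t \exp\!\Big(-\!\int_0^s V^{\lambda,\lambda'}_{t-u}(x_1-B_u,x_2-B_u)\,du\Big)\big(\partial_\lambda V^{\lambda,\lambda'}_{t-s}\,\partial_{\lambda'}V^{\lambda,\lambda'}_{t-s}\big)(x_1-B_s,x_2-B_s)\,ds\right] \]
for a one-dimensional Brownian motion $B$ from $0$; this is exactly the backbone decomposition of the second moment --- branch at time $s$; kill along the backbone at the rate given by the potential $V^{\lambda,\lambda'}$; source equal to the product of the one-point weighted moments of the two independent sub-clusters issuing from the branch point. (At $\lambda=\lambda'=0$ this recovers the classical $\N_0$ two-point function $\int_0^t\!\int p_s(b)\,p_{t-s}(x_1-b)\,p_{t-s}(x_2-b)\,db\,ds$, a useful check.)

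I would then substitute this formula into $\N_0((L^\lambda_t\times L^{\lambda'}_t)(h))$ and pass to the limit inside $\int_0^t ds$ and $E^B_0$. The killing factor converges, by monotone convergence $V^{\lambda,\lambda'}_{t-u}\uparrow V^{\infty,\infty}_{t-u}$, to the exponential appearing in the statement. For the source, the first factor after the $\lambda^{2\lambda_0}$ scaling is $\lambda^{2\lambda_0}\N_0\big(X(t-s,x_1-B_s)\,e^{-\lambda X(t-s,x_1-B_s)-\lambda' X(t-s,x_2-B_s)}\big)$; letting $\lambda'\to\infty$ inserts $1_{\{X(t-s,\,x_2-B_s)=0\}}$, and then $\lambda\to\infty$ calls for the one-point weighted Laplace asymptotics of the density under $\N_0$ --- the direct $\N_0$-analogue of \eqref{e_densityLaplace}, obtained from it by de-Poissonization (or from the spectral analysis of $A^F$ in Section~\ref{s_OU}) --- \emph{augmented by a zero-constraint at the second point}. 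By Brownian scaling this reduces to age $1$: the unconstrained part produces, at $x_i-B_s=\sqrt{t-s}\,z_i$, the factor $C_{\ref{thm_l2limit}}\,(t-s)^{-\lambda_0}\,\psi_0(z_i)\,p_{t-s}(x_i-B_s)$; the zero-constraint on each of the two factors produces the correction $\rho(z_1,z_2)\in(0,1]$ of \eqref{e_rhodef} (a constraint can only lower a nonnegative quantity, whence $\rho\le1$); and changing variables to $z_i$ converts $p_{t-s}(x_i-B_s)\,dx_i$ into $dm(z_i)$. Balancing the powers of $t-s$ --- one $(t-s)^{-\lambda_0}$ per sub-cluster, $(t-s)^{-1}$ from the two Gaussian densities and $(t-s)$ from the Jacobian --- leaves precisely $(t-s)^{-2\lambda_0}$, and Fubini produces the stated formula with prefactor $C_{\ref{thm_l2limit}}^2$.

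Finally I would justify the interchange of limits and prove finiteness. By the uniform-in-$x$ left-tail estimate of \cite{MMP2017}, the two one-point factors are bounded, uniformly in $\lambda,\lambda'$, by $C\,(t-s)^{-1/2-\lambda_0}$ times Gaussian-decaying functions of $z_1,z_2$, so on $s\in(0,t-\delta)$ the integrand is uniformly dominated and dominated convergence applies. The delicate region is $s\uparrow t$: there $(t-s)^{-2\lambda_0}$ is \emph{not} integrable (since $\lambda_0>\tfrac12$), but the limiting $s$-integral still converges because the killing factor decays like a positive power of $t-s$ --- $V^{\infty,\infty}_r$ is of order $1/r$ as $r\to0$ near the cluster's starting point (it is at most $\N_0(\{X_r\ne 0\})=2/r$ and at least $\N_0(\{X(r,\cdot)>0\})=r^{-1}F(\cdot/\sqrt r)$) --- which, together with the Gaussian weight $\psi_0\,dm$, tames the singularity. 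Establishing this decay \emph{uniformly in $\lambda,\lambda'$}, so that the contribution of $\int_{t-\delta}^t ds$ is negligible as $\delta\downarrow0$ for all large $\lambda,\lambda'$ (using also the second-moment bounds of Section~5.1 of \cite{MMP2017}), is the main obstacle. The other genuinely new input, and the conceptual heart of the argument, is the constrained one-point asymptotic that produces $\rho$, together with the proof that $\rho$ is continuous and strictly positive; this does not follow formally from \eqref{e_densityLaplace} and requires a fresh analysis combining the equation for $V^{\lambda,\lambda'}$ with the spectral theory of Section~\ref{s_OU}.
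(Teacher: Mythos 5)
The proposal starts from essentially the same place as the paper: differentiating the Laplace functional twice gives a linear mild equation for the twice-differentiated $V$, and solving it by Feynman--Kac gives a backbone representation of $\N_0\big((L^\lambda_t\times L^{\lambda'}_t)(h)\big)$. This is the content of the paper's Lemma~\ref{lemma_pde_canon_func} and Proposition~\ref{prop_pde_rep1}; the only difference is that the paper expands the $\partial_\lambda V^{\lambda,\lambda'}$ and $\partial_{\lambda'}V^{\lambda,\lambda'}$ source factors through their \emph{own} Feynman--Kac representations, producing the two independent post-branching Brownian motions $B^1,B^2$ explicitly, whereas you leave them abstract. You also correctly identify the two difficulties, namely (i) the non-integrable $(t-s)^{-2\lambda_0}$ singularity is tamed only after an exact killing cancellation, and (ii) the joint $\lambda,\lambda'\to\infty$ asymptotics of the rescaled source terms conditioned on the backbone.

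The genuine gap is at (ii), which is where almost all of the paper's work goes. You assert the existence of a ``constrained one-point asymptotic'' valued in terms of a Palm probability that $X(r,b)=0$, and then write that it ``does not follow formally from \eqref{e_densityLaplace} and requires a fresh analysis.'' That is not a plan; it is a restatement of the problem. The paper's route is: change variables so the killing potentials along the backbone and along the two sub-clusters become killed Ornstein--Uhlenbeck problems (Lemma~\ref{lemma_pde_rep2}), condition the two killed OU processes on their endpoints, split each integral at intermediate times $K$ and $T-K$ (the $A_1,A_2,A_3$ decomposition), show the middle piece is asymptotically trivial, show the tail piece has a limit under the $T\to\infty$ conditioning using the endpoint-independence results of Lemma~\ref{lemma_endpoint_indep}, and only then identify $\rho(z_1,z_2)=E^{Y,\infty}_{z_1}(W_\infty(Y,z_2))\,E^{Y,\infty}_{z_2}(W_\infty(Y,z_1))$. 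Nothing in the proposal produces this or shows that your ``zero-constrained Palm'' object is the same quantity. A second, smaller, issue: because both $\partial_\lambda V^{\lambda,\lambda'}$ and $\partial_{\lambda'}V^{\lambda,\lambda'}$ depend on \emph{both} parameters, the iterated limit ``first $\lambda'\to\infty$ inserts the indicator, then $\lambda\to\infty$'' is not automatically the joint limit; one needs uniformity in the other parameter, which the paper obtains from the explicit rate estimates in Lemma~\ref{lemma_V2pt_ROC} and the uniform bound $\tilde Z\le C_Z$. For (i), your mechanism is right in spirit (the exponential of $-\int V^\infty\,du$ converts into a killed-OU survival probability supplying a compensating $(t-s)^{\lambda_0}$), but in the paper this is carried out precisely in Lemma~\ref{lemma_tepsprelimit}, and you would still need that argument to justify dominated convergence on $[0,t-\epsilon]$ and the negligibility of $[t-\epsilon,t]$ uniformly in $\lambda,\lambda'$.
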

That the formula above is finite is not obvious, as $\lambda_0 > 1/2$; we discuss this in more detail shortly. From the above we can deduce that $\{L^\lambda_t(\phi)\}_{\lambda>0}$ is Cauchy in $\cL^2(\N_0)$ and therefore has a limit by completeness; in particular see Corollary~\ref{cor_l2limit} and its proof. We then argue that the limit is in fact the integral with respect to a unique measure, which is $L_t$. The proof of Theorem~\ref{thm_l2limit} is long and technical; Section~\ref{s_momentsconvergence} is entirely devoted to it. We use the Laplace functional to obtain a Feynman-Kac type representation for $\N_0 ( L_t^\lambda(\phi)\, L_t^{\lambda'}(\phi) )$ and then establish its convergence. The reason we do so under $\N_0$ is because the Feynman-Kac formulas are simpler in this setting. We now present first and second moment formulas for $L_t$ under $\N_0$; as one would expect, the second moment formula in part (b) agrees with the limit of $\N_0((L_t^\lambda \times L_t^{\lambda'})(h))$ given in Theorem~\ref{thm_l2limit}. The terms $C_{\ref{thm_l2limit}}$ and $\rho$ are the same that appeared in that result.

\begin{theorem} \label{thm_Ltcanonmoments} (a) For a bounded or non-negative Borel function $\phi:\R \to \R$,
\begin{equation} \label{e_Ltcanon_firstmoment}
\N_0 ( L_t(\phi) ) = C_{\ref{thm_l2limit}}\, t^{-\lambda_0} \int \phi(\sqrt t z)\, \psi_0(z) \, dm(z).
\end{equation}
(b) For measurable $h: \R^2 \to \R$, either bounded or non-negative,
\begin{align} \label{e_Ltcanon_secondmoment}
\N_0 ( (L_t &\times  L_t)(h) )  \nonumber
\\ = \,&C_{\ref{thm_l2limit}}^2 \int_0^t (t-s)^{-2\lambda_0} \bigg[ \iint E^{B}_0 \bigg(  \exp \left( - \int_0^s V^{\infty, \infty}_{t-u} (\sqrt{t-s} \, z_1 + B_s - B_u, \sqrt{t-s} \, z_2 + B_s - B_u) \, du \right) \nonumber
\\ \hspace{5 mm} &\times  h(\sqrt{t-s} \, z_1 + B_s, \sqrt{t-s} \, z_2 + B_s) \bigg)\, \rho(z_1, z_2) \,\psi_0(z_1) \, \psi_0(z_2) \, dm(z_1) \, dm(z_2)  \bigg] ds. 
\end{align}
Moreover, \eqref{e_Ltcanon_secondmoment} is finite for all bounded $h$.
\end{theorem}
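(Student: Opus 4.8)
The plan is to derive both formulas by letting $\lambda\to\infty$ (and $\lambda'\to\infty$) in the corresponding quantities for the approximating measures $L^\lambda_t$, using the $\cL^2(\N_0)$-convergence provided by Theorem~\ref{thm_Lt} to identify the limits, and then upgrading from a convenient class of test functions to all bounded Borel, and finally to all non-negative, test functions. Throughout we use that $L^\lambda_t$ and $L_t$ are supported (up to null sets) on $\{X_t>0\}$, where $\N_0$ is finite, so that the relevant $\cL^2(\N_0)$ statements make sense and imply $\cL^1(\N_0)$-convergence.

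\emph{Part (a).} First I would record the first-moment asymptotics of $L^\lambda_t$ under $\N_0$: for bounded continuous $\phi$,
\[
\N_0\bigl(L^\lambda_t(\phi)\bigr)=\lambda^{2\lambda_0}\,\N_0\!\Bigl(\int\phi(x)\,X(t,x)\,e^{-\lambda X(t,x)}\,dx\Bigr)\ \longrightarrow\ C_{\ref{thm_l2limit}}\,t^{-\lambda_0}\int\phi(\sqrt t\,z)\,\psi_0(z)\,dm(z),
\]
the $\N_0$-counterpart of \eqref{e_densityLaplace}, obtained by the same Laplace-functional / duality analysis of Section~\ref{s_duality}; matching constants identifies the proportionality factor as $C_{\ref{thm_l2limit}}$ (this is the computation behind \eqref{e_Cdef}). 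On the other hand, Theorem~\ref{thm_Lt} gives $L^\lambda_t(\phi)\to L_t(\phi)$ in $\cL^2(\N_0)$, hence in $\cL^1(\N_0)$, so $\N_0(L^\lambda_t(\phi))\to\N_0(L_t(\phi))$. Comparing limits yields \eqref{e_Ltcanon_firstmoment} for bounded continuous $\phi$. Since $L_t$ is a.s. a finite measure (Theorem~\ref{thm_Lt}), $\phi\mapsto\N_0(L_t(\phi))$ is a finite Borel measure on $\R$ (finiteness from $\phi\equiv1$), as is the right side of \eqref{e_Ltcanon_firstmoment}; two finite measures agreeing on $C_b(\R)$ coincide, so \eqref{e_Ltcanon_firstmoment} holds for all bounded Borel $\phi$, and then for all non-negative Borel $\phi$ by monotone convergence applied first inside $L_t$ (to $\phi\wedge N$) and then under $\N_0$, using $\psi_0\ge0$ on the right.

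\emph{Part (b).} This runs parallel to (a), with Theorem~\ref{thm_l2limit} in place of \eqref{e_densityLaplace}. Fix bounded continuous $\phi,\psi$ and set $h=\phi\otimes\psi$. By Theorem~\ref{thm_Lt}, $L^\lambda_t(\phi)\to L_t(\phi)$ and $L^{\lambda'}_t(\psi)\to L_t(\psi)$ in $\cL^2(\N_0)$; since a product of $\cL^2$-convergent sequences converges in $\cL^1$ (Cauchy--Schwarz), $\N_0\bigl((L^\lambda_t\times L^{\lambda'}_t)(h)\bigr)=\N_0\bigl(L^\lambda_t(\phi)\,L^{\lambda'}_t(\psi)\bigr)\to\N_0\bigl((L_t\times L_t)(h)\bigr)$. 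By Theorem~\ref{thm_l2limit} this limit equals the right side of \eqref{e_Ltcanon_secondmoment} at $h=\phi\otimes\psi$; in particular, taking $\phi=\psi\equiv1$ gives $\N_0(L_t(\R)^2)<\infty$, so $L_t\times L_t$ has finite mean measure and $h\mapsto\N_0((L_t\times L_t)(h))$ is a finite Borel measure on $\R^2$, as is the right side of \eqref{e_Ltcanon_secondmoment} (finite for bounded $h$ by the last assertion of Theorem~\ref{thm_l2limit}). These two finite measures agree on the multiplicative class $\{\phi\otimes\psi:\phi,\psi\in C_b(\R)\}$, which generates $\mathcal{B}(\R^2)$, so by the functional monotone class theorem they agree on all bounded Borel $h$; monotone convergence then extends this to non-negative Borel $h$ (the integrand on the right is non-negative, as $\rho>0$ and $\psi_0\ge0$). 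This gives \eqref{e_Ltcanon_secondmoment}, whose finiteness for bounded $h$ is immediate from the identification with the formula of Theorem~\ref{thm_l2limit}.

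No step here is a serious obstacle once Theorems~\ref{thm_Lt} and~\ref{thm_l2limit} are in hand — the substantive inputs, namely that the $\cL^2(\N_0)$-limit of $L^\lambda_t(\phi)$ is integration against $L_t$ and the precise form of the second-moment limit, are supplied by those theorems, and no new estimates are needed. The only care required is bookkeeping around the $\sigma$-finiteness of $\N_0$ (applying $\cL^2(\N_0)$ statements only to quantities supported on $\{X_t>0\}$) and the passage from continuous to Borel (then non-negative) test functions. If anything needs attention, it is ensuring the first-moment asymptotic under $\N_0$ is available in the stated form; if it has not already been isolated, it should be extracted from the Section~\ref{s_duality} analysis before the rest of the argument proceeds.
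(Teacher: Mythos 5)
Your proposal is correct and takes essentially the same approach as the paper: both parts reduce to passing to the limit in the pre-limit moment formulas using the convergence established in Theorem~\ref{thm_Lt} and Theorem~\ref{thm_l2limit}, with a monotone class argument and monotone convergence handling the extension from nice test functions to bounded Borel and then non-negative ones. The one minor variation is in part~(b): the paper obtains $\cL^1(\N_0)$-convergence of $(L_t^{\lambda_n}\times L_t^{\lambda_n})(h)$ for general bounded continuous $h$ by combining almost sure weak convergence along the subsequence $\{\lambda_n\}$ with uniform integrability derived from $\cL^2$-convergence of $L_t^{\lambda_n}(1)$, whereas you work on product test functions $h=\phi\otimes\psi$ and invoke Cauchy--Schwarz to turn the two $\cL^2$-convergent factors $L_t^\lambda(\phi)$, $L_t^{\lambda'}(\psi)$ into an $\cL^1$-convergent product before extending over the multiplicative class; both are valid, and yours is perhaps slightly more elementary (no need to pass to the a.s.~subsequence) while the paper's handles all bounded continuous $h$ in one step. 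Your caveat about the first-moment asymptotic under $\N_0$ is the right one: \eqref{canonmoment2} does not come directly out of Section~\ref{s_duality} but from adapting the Palm-measure argument of Proposition~4.5 of \cite{MMP2017}, and matching its constant to $C_{\ref{thm_l2limit}}$ (defined in \eqref{e_Cdef} via the second-moment computation) is a nontrivial identification that the paper notes is only implicit.
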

As we noted earlier, finiteness of \eqref{e_Ltcanon_secondmoment} is not obvious since $\lambda_0 > 1/2$ (although it is implicit in the proof of Theorem~\ref{thm_l2limit}), which can make \eqref{e_Ltcanon_secondmoment} hard to use; for applications, the following upper bound for second moments is easier to apply than the exact formula. The value $\theta$ is defined as $\theta = \int \psi_0 \, dm$. $Y$ is an Ornstein-Uhlenbeck process started at $z_1$ with corresponding expectation $E^Y_{z_1}$. The exponential term in the first bound of the following proposition can be interpreted as a survival probability of $Y$, producing a $w^{\lambda_0}$ term which makes the integral finite. (The proofs of Theorem~\ref{thm_Ltdim} and Theorem~\ref{thm_Ltprop}(b) in Section~\ref{s_main} both use this technique.)
\begin{proposition} \label{prop_Ltcanonmomentsbd}
For a non-negative Borel function $h: \R^2 \to \R$,
\begin{align} \label{e_Ltcanon_secondmomentbd}
\N_0 ( (L_t \times  L_t)(h) )  \leq \,&C_{\ref{thm_l2limit}}^2 \int_0^t w^{-2\lambda_0} \bigg[ \iint E^Y_{z_1} \bigg(  \exp \bigg( - \int_{0}^{\log (t/w)} F(Y_u) \, du \bigg)  \nonumber
\\ & \times h(\sqrt t Y_{\log (t/w)}, \sqrt t Y_{\log (t/w)}+ \sqrt{w}(z_2 - z_1)) \bigg) \psi_0(z_1) \, \psi_0(z_2) \, dm(z_1) \, dm(z_2)  \bigg] dw.
\end{align} 
Moreover,
\begin{equation} \label{e_Ltcanon_secondmomentmassbd}
\N_0 ( L_t(1)^2 ) \leq  \frac{C_{\ref{thm_l2limit}}^2 \theta^2}{1-\lambda_0} t^{1-2\lambda_0}.
\end{equation}
\end{proposition}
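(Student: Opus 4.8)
The plan is to derive both displays directly from the exact second moment formula \eqref{e_Ltcanon_secondmoment} of Theorem~\ref{thm_Ltcanonmoments}(b). Two simplifications will suffice: a crude pointwise lower bound on the exponent appearing there, and a change of variables converting the Brownian expectation into an expectation for a killed Ornstein--Uhlenbeck process. The mass estimate \eqref{e_Ltcanon_secondmomentmassbd} is then the case $h\equiv 1$ of \eqref{e_Ltcanon_secondmomentbd}, evaluated via the spectral theory of $A^F$.

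First I would simplify the integrand in \eqref{e_Ltcanon_secondmoment}. Since $h\ge 0$ and $\rho:\R^2\to(0,1]$, we may replace $\rho(z_1,z_2)$ by $1$. From the definition \eqref{V2infprob} of $V^{\infty,\infty}$ and the parabolic scaling of the canonical one-point survival function set up in Section~\ref{s_duality} (consistent with \eqref{e_Fdefintro}), for every $r>0$ and $a,b\in\R$,
\[ V^{\infty,\infty}_r(a,b)\ \ge\ \N_0\big(\{X(r,a)>0\}\big)\ =\ r^{-1}F\big(r^{-1/2}a\big). \]
Hence the exponential factor in \eqref{e_Ltcanon_secondmoment} is, pathwise in the Brownian motion $B$, at most
\[ \exp\Big(-\int_0^s (t-u)^{-1}F\big((t-u)^{-1/2}(\sqrt{t-s}\,z_1+B_s-B_u)\big)\,du\Big), \]
and since $\psi_0\ge 0$ (the lead eigenfunction of $A^F$ is positive, cf. Theorem~\ref{thm_killedOU}) this produces a genuine upper bound for \eqref{e_Ltcanon_secondmoment}.

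Next I would change variables. Put $w=t-s$, so the outer integral becomes $\int_0^t w^{-2\lambda_0}[\,\cdots\,]\,dw$. Inside the Brownian expectation, reverse time by setting $\hat B_r:=B_s-B_{s-r}$ (again a Brownian motion on $[0,s]$, with $\hat B_s=B_s$) and substitute $r=s-u$; the exponent becomes $\int_0^s(w+r)^{-1}F\big((w+r)^{-1/2}(\sqrt w\,z_1+\hat B_r)\big)\,dr$ and the arguments of $h$ become $\sqrt w\,z_1+\hat B_s$ and $\sqrt w\,z_1+\hat B_s+\sqrt w\,(z_2-z_1)$. Now invoke the Ornstein--Uhlenbeck representation from Section~\ref{s_OU}: with $\beta_r:=w^{-1/2}\hat B_{wr}$, the process $Y_v:=e^{-v/2}\big(z_1+\beta_{e^v-1}\big)$, $v\in[0,\log(t/w)]$, is an Ornstein--Uhlenbeck process started at $z_1$, and the substitution $r=w(e^v-1)$ (so $dv=dr/(w+r)$, with $r=s$ at $v=\log(t/w)$) gives $Y_v=(w+r)^{-1/2}(\sqrt w\,z_1+\hat B_r)$. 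Thus the exponent equals $\int_0^{\log(t/w)}F(Y_v)\,dv$, the first argument of $h$ equals $\sqrt t\,Y_{\log(t/w)}$ and the second equals $\sqrt t\,Y_{\log(t/w)}+\sqrt w\,(z_2-z_1)$; replacing $E^B_0$ by $E^Y_{z_1}$ yields exactly \eqref{e_Ltcanon_secondmomentbd}. For \eqref{e_Ltcanon_secondmomentmassbd} I set $h\equiv 1$: writing $T=\log(t/w)$, the inner expectation becomes $E^Y_{z_1}\big(\exp(-\int_0^T F(Y_u)\,du)\big)=(P^F_T 1)(z_1)$, where $P^F_T=e^{TA^F}$ is the killed semigroup (cf. \eqref{e_survivalprob}). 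Since $m$ is the symmetrizing measure of the Ornstein--Uhlenbeck process, $A^F=A-F$, and hence $P^F_T$, is self-adjoint on $L^2(m)$, so
\[ \iint (P^F_T 1)(z_1)\,\psi_0(z_1)\,\psi_0(z_2)\,dm(z_1)\,dm(z_2) = \theta\,\langle P^F_T 1,\psi_0\rangle_m = \theta\,\langle 1,P^F_T\psi_0\rangle_m = \theta^2 e^{-\lambda_0 T} = \theta^2 (w/t)^{\lambda_0}, \]
using $P^F_T\psi_0=e^{-\lambda_0 T}\psi_0$ and $\theta=\int\psi_0\,dm$. Substituting and integrating,
\[ \N_0\big(L_t(1)^2\big)\ \le\ C_{\ref{thm_l2limit}}^2\,\theta^2\, t^{-\lambda_0}\int_0^t w^{-\lambda_0}\,dw\ =\ \frac{C_{\ref{thm_l2limit}}^2\,\theta^2}{1-\lambda_0}\,t^{1-2\lambda_0}, \]
the integral being finite because $\lambda_0<1$.

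The hard part is organizational rather than conceptual: in the change of variables one must keep the time-reversal, the Brownian rescaling and the Ornstein--Uhlenbeck representation aligned so that the exponent, both arguments of $h$, and the Gaussian weights $\psi_0(z_1)\psi_0(z_2)\,dm(z_1)\,dm(z_2)$ all transform correctly and simultaneously, and one must use the precise one-point scaling $\N_0(\{X(r,a)>0\})=r^{-1}F(r^{-1/2}a)$. The final step needs nothing beyond self-adjointness of the killed semigroup on $L^2(m)$ and the eigenrelation $A^F\psi_0=-\lambda_0\psi_0$, both available from Section~\ref{s_OU} and Theorem~\ref{thm_killedOU}.
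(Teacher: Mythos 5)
Your proposal is correct and follows essentially the same route as the paper's proof: bound $\rho\le 1$, lower bound $V^{\infty,\infty}_r(a,b)\ge V^\infty_r(a)=r^{-1}F(r^{-1/2}a)$, time-reverse the Brownian motion, and change to Ornstein--Uhlenbeck coordinates via the exponential time substitution; the paper works with a stationary Ornstein--Uhlenbeck process on $\R$ and conditions on $Y_0$ at the end, whereas you build the process started at $z_1$ directly, but these are the same computation. Your derivation of \eqref{e_Ltcanon_secondmomentmassbd} via self-adjointness of $P^F_T=e^{TA^F}$ on $\cL^2(m)$ and the eigenrelation $P^F_T\psi_0=e^{-\lambda_0 T}\psi_0$ is a marginally slicker restatement of the paper's Lemma~\ref{lemma_integsurvivalprob}, which obtains $\int P^Y_z(\rho^F>T)\psi_0(z)\,dm(z)=\theta e^{-\lambda_0 T}$ from the eigenfunction expansion of $q_T$; both are instances of the same spectral facts from Theorem~\ref{thm_killedOU}.
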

As we have alluded to, applying \eqref{e_Ltcanon_secondmomentbd} with $h(x,y) = |x-y|^{-p}$ gives an upper bound for the expectation of energy integrals of the form \eqref{e_energyint}, which is how we prove Theorem~\ref{thm_Ltdim}.\\

Thus far, we have not commented on the proofs of existence and properties of $L_t$ under $P^X_{X_0}$. The proofs rely on the conditional representation in terms of canonical clusters, which we will discuss shortly. First, in order to keep  the moment results together, we state our results regarding the moments of $L_t$ under $P^X_{X_0}$.
\begin{theorem} \label{thm_Ltmoments}
For a bounded or non-negative Borel function $\phi:\R \to \R$,
\begin{equation} \label{e_Ltfirstmoment} 
E^X_{X_0}( L_t(\phi) ) = C_{\ref{thm_l2limit}}\, t^{-\lambda_0} \iint \phi(x_0 + \sqrt t z) \exp \left( -\frac 1 t \int F(z + t^{-1/2} (x_0 - y_0) \, dX_0(y_0) \right) \psi_0 (z)\, dm(z) \,dX_0(x_0).
\end{equation}
(b) There is a constant $C_{\ref{thm_Ltmoments}}$ such that
\begin{equation} \label{e_Ltsecondmomentbd}
E^X_{X_0} (L_t(1)^2) \leq  C_{\ref{thm_Ltmoments}} \left( X_0(1)\, t^{1-2\lambda_0} + X_0(1)^2\, t^{-2\lambda_0} \right).
\end{equation}
\end{theorem}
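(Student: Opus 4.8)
The plan is to derive both moment identities from the canonical cluster decomposition of $L_t$ under $P^X_{X_0}$, that is, from Theorem~\ref{thm_clustergeneral}, together with the $\N_0$-moment results already in hand. Recall that under $P^X_{X_0}$ the super-Brownian motion is a Poisson superposition $X = \sum_i X^i$ of canonical clusters, with $\Xi := \sum_i \delta_{X^i}$ a Poisson point process of ($\sigma$-finite) intensity $\nu(\cdot) = \int \N_{x_0}(\cdot)\, dX_0(x_0)$; see \eqref{e_canonPPP}. In the form we shall use it, Theorem~\ref{thm_clustergeneral} asserts
\[
L_t(dx) \;=\; \sum_i \mathbf 1\{X^{-i}(t,x) = 0\}\, L_t^i(dx), \qquad X^{-i} := \textstyle\sum_{j\ne i} X^j,
\]
where $L_t^i$ is the boundary local time of the cluster $X^i$; in particular, under $\N_{x_0}$, $L^i_t$ is the translate by $x_0$ of $L_t$ under $\N_0$, so by \eqref{e_Ltcanon_firstmoment} one has $\N_{x_0}(L_t^i(\psi)) = C_{\ref{thm_l2limit}}\, t^{-\lambda_0}\int \psi(x_0+\sqrt t z)\,\psi_0(z)\,dm(z)$, and $L_t \le \sum_i L_t^i$ as measures.

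For part (a) I would apply Mecke's equation to $L_t(\phi) = \sum_i \int \phi(x)\,\mathbf 1\{X^{-i}(t,x)=0\}\, L_t^i(dx)$, which for non-negative Borel $\phi$ gives
\[
E^X_{X_0}\big(L_t(\phi)\big) \;=\; \int \N_{x_0}\!\left( E^X_{X_0}\!\left[\, \int \phi(x)\,\mathbf 1\{X'(t,x)=0\}\, L_t(dx)\,\right]\right) dX_0(x_0),
\]
where inside, $L_t$ denotes the cluster boundary local time governed by $\N_{x_0}$ and $X'$ is an independent copy of the whole process under $P^X_{X_0}$. Since everything is non-negative I may integrate out $X'$ first: $E^X_{X_0}[\mathbf 1\{X'(t,x)=0\}] = P^X_{X_0}(X(t,x)=0) = \exp\!\big(-\int \N_{y_0}(X(t,x)>0)\, dX_0(y_0)\big) = \exp\!\big(-\tfrac1t\int F(t^{-1/2}(x-y_0))\, dX_0(y_0)\big)$, using \eqref{e_Fdefintro} and the Brownian scaling of the density. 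Feeding this deterministic weight into the translated first-moment formula for $L^i_t$ recorded above (equivalently, changing variables $x = x_0+\sqrt t z$) reproduces \eqref{e_Ltfirstmoment}; the bounded Borel case then follows by writing $\phi = \phi^+ - \phi^-$. As an independent check, and an alternative derivation for bounded continuous $\phi$, one may combine the first-moment convergence \eqref{e_densityLaplace} of \cite{MMP2017} with the $\cL^2$ convergence $L^\lambda_t(\phi)\to L_t(\phi)$ of Theorem~\ref{thm_Lt}, identify the constant there with $C_{\ref{thm_l2limit}}$, and extend to Borel $\phi$ by a monotone class argument once the intensity measure is seen to be finite.

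For part (b) I would use only $L_t \le \sum_i L_t^i$, so that $L_t(1) \le \sum_i L_t^i(1)$, together with the second-moment formula for a Poisson integral:
\[
E^X_{X_0}\!\left(\Big({\textstyle\sum_i} L_t^i(1)\Big)^2\right) \;=\; \int \N_{x_0}\!\big(L_t(1)^2\big)\, dX_0(x_0) \;+\; \left(\int \N_{x_0}\!\big(L_t(1)\big)\, dX_0(x_0)\right)^2 .
\]
Total mass is translation invariant, so $\N_{x_0}(L_t(1)^2) = \N_0(L_t(1)^2)$ and $\N_{x_0}(L_t(1)) = \N_0(L_t(1))$; then \eqref{e_Ltcanon_secondmomentmassbd} bounds the first term by $\tfrac{C_{\ref{thm_l2limit}}^2\theta^2}{1-\lambda_0}\,X_0(1)\,t^{1-2\lambda_0}$, while \eqref{e_Ltcanon_firstmoment} with $\phi\equiv 1$ (giving $\N_0(L_t(1)) = C_{\ref{thm_l2limit}}\theta\,t^{-\lambda_0}$) bounds the second by $C_{\ref{thm_l2limit}}^2\theta^2\,X_0(1)^2\,t^{-2\lambda_0}$. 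This yields \eqref{e_Ltsecondmomentbd} with $C_{\ref{thm_Ltmoments}} = C_{\ref{thm_l2limit}}^2\theta^2/(1-\lambda_0)$, since $(1-\lambda_0)^{-1}>1$ controls the coefficient of the second term as well; the resulting finiteness also retroactively justifies the interchanges used in part (a).

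The genuine difficulty lies in Theorem~\ref{thm_clustergeneral} rather than in the computations above: one must show that $L_t$ really does split over clusters with no mass lost or created at points belonging to the boundaries of two or more clusters — that is, that distinct clusters contribute to $L_t$ through effectively disjoint sets — and one must justify Mecke's equation and the Fubini interchanges when $\nu$ is merely $\sigma$-finite (handled by restricting to the $\nu$-finite event $\{X_t \ne 0\}$, off of which $L^i_t \equiv 0$, together with the measurability of $X \mapsto L_t$ from Theorem~\ref{thm_Lt}). Within the present argument the one delicate point is ordering the Mecke integration in part (a) so that the extinction probability of the other clusters factors out as a deterministic weight before the $\N_0$ first-moment formula is invoked.
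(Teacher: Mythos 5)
Your proposal is correct; the only slip is one of citation: the cluster decomposition you invoke is that of Theorem~\ref{thm_clusterLt} (the Poisson superposition of canonical clusters), not Theorem~\ref{thm_clustergeneral} (the finite-sum version for independent super-Brownian motions). The substance is unaffected, since you then apply Mecke's equation precisely to that Poisson superposition.

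For part (b) your route is essentially the paper's: the paper conditions on $N\sim\text{Poisson}(2X_0(1)/t)$, uses that the cluster local times $L^i_t(1)$ are i.i.d.\ under $\N_{\bar X_0}(\cdot\,|\,X_t>0)$, and then sums over Poisson $N$; you carry out the same computation as a Campbell/second-moment identity for the Poisson integral $\sum_i L^i_t(1)$. Both give $\int\N_{x_0}(L_t(1)^2)\,dX_0(x_0)+\big(\int\N_{x_0}(L_t(1))\,dX_0(x_0)\big)^2$, then translation invariance of the total mass and \eqref{e_Ltcanon_secondmomentmassbd}, \eqref{e_Ltcanon_firstmoment}.

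For part (a) your derivation is genuinely different from the paper's and is the more interesting discrepancy. The paper's proof of \eqref{e_Ltfirstmoment} does not use the cluster decomposition at all: it notes that Proposition 4.5 of \cite{MMP2017} (restated as \eqref{canonmoment1}) already identifies $\lim_{\lambda\to\infty}E^X_{X_0}(L^\lambda_t(\phi))$ with the right-hand side of \eqref{e_Ltfirstmoment}, and then passes to the limit along the a.s.-convergent subsequence $\{\lambda_n\}$ of Theorem~\ref{thm_Lt}, using the $\cL^2$-boundedness of $L^\lambda_t(\phi)$ (hence uniform integrability) to exchange limit and expectation, followed by the usual monotone class and monotone convergence extensions. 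Your route instead obtains the first moment directly from Mecke's equation on the $\sigma$-finite Poisson process of canonical clusters, combined with the $\N_0$-first-moment formula \eqref{e_Ltcanon_firstmoment} and the identification of the extinction probability $P^X_{X_0}(X(t,x)=0)=\exp\!\big(-\tfrac1t\int F(t^{-1/2}(x-y_0))\,dX_0(y_0)\big)$; a change of variable $x=x_0+\sqrt t z$ then reproduces \eqref{e_Ltfirstmoment}. Both arguments are valid. The paper's is shorter because it imports a limit that was already computed in \cite{MMP2017}; yours is more self-contained given Theorem~\ref{thm_clusterLt} and makes transparent where the exponential weight comes from (the survival probability of the other clusters at $x$). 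You correctly flag the justification of Mecke on a $\sigma$-finite intensity, the Fubini interchanges, and the ordering so that the other-cluster extinction probability factors out before the $\N_0$-mean-measure formula is used, as the points that need care; these go through, e.g.\ by working on $\{X_t\neq0\}$ where $\nu$ is finite and all local times are supported.
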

We note that the right hand side of \eqref{e_Ltfirstmoment} is equal to that of \eqref{e_densityLaplace}, and so was originally computed in Proposition 1.5 of \cite{MMP2017} as $\lim_{\lambda \to \infty} E^X_{X_0}(L^\lambda_t(\phi))$. The fact that the same formula gives the mean measure of $L_t$ then follows from the $\cL^2$ convergence of $L_t^\lambda(\phi)$, as in Theorem~\ref{thm_Lt}.\\

We first establish the existence of $L_t$, as well as its properties, under the measure $\N_0$, owing to the fact that the second moments of $L^\lambda_t$ admit simpler formulas in this case. In order to prove the same for super-Brownian motion, we need to use the relationship between super-Brownian motion under $P^X_{X_0}$ and the canonical measure, which we now describe. We recall that $\N_x$ is a $\sigma$-finite measure such that $\N_x (\{X_t>0\}) = 2/t$ which describes the ``law" of a single cluster of super-Brownian motion started at $x$; that is, the descendants of a single ancestor at $x$. More precisely, super-Brownian motion is a superposition of canonical clusters; for a bounded, non-negative Borel function $\phi:\R \to \R$, 
\begin{equation} \label{e_canonPPP}
E^X_{X_0} \left( \exp \left(-X_t(\phi)\right)\right) = \exp \left( -\iint 1-e^{-\mu_t(\phi)}d\N_{x_0}(\mu)\, dX_0(x_0) \right).
\end{equation}
This expression for the Laplace functional is in fact a consequence of a distributional equality between super-Brownian motion under $P^X_{X_0}$ and a Poisson point process of canonical clusters. For $X_0 \in \cM_F(\R)$, let $\N_{X_0}(\cdot) = \int \N_x(\cdot) \, dX_0(x)$ and let $\Theta_{X_0}$ be a Poisson point process on $C([0,\infty),\cM_F(\R))$ with intensity $\N_{X_0}$. We define a $\cM_F(\R)$-valued process $(X_t : t\geq 0)$ by
\begin{equation} \label{e_XtPPP}
X_t(\cdot) = 
\begin{cases} 
\int \mu_t(\cdot) \, d\Theta_{X_0}(\mu)& \text{  if } t>0,
\\ X_0(\cdot) &\text{  if } t = 0.
\end{cases}
\end{equation}
By Theorem 4 of Section IV.3 of \cite{LG1999}, $(X_t : t\geq 0)$ is a super-Brownian motion with initial measure $X_0$. The ``points" of the point process $\Theta_{X_0}$ are the clusters of $X$. For fixed $t>0$, \eqref{e_XtPPP} leads to 
\begin{equation}
X_t = \sum_{j \in I_t} \mu^j_t, \nonumber
\end{equation}
where $\{\mu^j_t : j \in I_t\}$ are the points of a Poisson point process with finite intensity $\N_{X_0}(\mu_t \in \cdot \, | \, \mu_t > 0 \}$. Let $\bar{X_0}(\cdot) = X_0(\cdot) / X_0(1)$. Assuming our probability space is rich enough to allow us to choose random relabellings of these points, by the above we can write
\begin{equation} \label{e_clusterrep}
X_t = \sum_{i=1}^N X_t^i,
\end{equation}
where $N$ is Poisson$(2X_0(1)/t)$ and, given $N$, $\{X^i_t : i = 1,\ldots,N \}$ are iid with distribution $\N_{\bar{X}_0}(X_t \in \cdot \,| \, X_t > 0 )$. We can and do condition on the values of the initial points of the clusters, denoted by $x_1, \ldots, x_N$, which are iid points with distribution $\bar{X}_0$, in which case $X^i_t$ has conditional distribution $\N_{x_i}(X_t \in \cdot \, | \, X_t > 0)$. In order to prove the existence and properties of $L_t$ with respect to a super-Brownian motion $X_t$, we realize the super-Brownian motion as a point process and express $X_t$ as above. Conditioning on $N$ and applying \eqref{e_clusterrep}, we can write $L^\lambda_t(\phi)$ as
\begin{equation}
L^\lambda_t(\phi) = \lambda^{2\lambda_0} \int \left[ \sum_{i=1}^{N} X^i(t,x) \right] e^{-\lambda \sum_{i=1}^{N} X^i(t,x)}\, \phi(x) \, dx. \nonumber 
\end{equation}
The almost sure existence of boundary local times corresponding to the canonical clusters allows us to take this limit quite easily and so establish that $L_t$ exists under $P^X_{X_0}$ (ie. Theorem~\ref{thm_Lt}). Furthermore, we obtain a conditional representation for $L_t$ in terms of its clusters; this allows us to transfer the properties of $L_t$ under $\N_0$ to $L_t$ under $P^X_{X_0}$. Let $L^{i}_t$ denote the boundary local time of $X_t^i$. In the statement that follows, we assume that we have realized $X_t$ using \eqref{e_clusterrep}.
\begin{theorem} \label{thm_clusterLt} Let $X_t$ be super-Brownian motion under $P^X_{X_0}$ and $L_t$ be its boundary local time. Conditional on $N$, we have
\begin{align} \label{e_clusterdecomp}
dL_t(x) &= \sum_{i=1}^{N} 1\big(\sum_{j \neq i} X^j(t,x) = 0\big) \, dL^i_t(x) \nonumber
\\ &=  1(X(t,x) = 0) \sum_{j=1}^{N} dL^i_t(x). 
\end{align}
\end{theorem}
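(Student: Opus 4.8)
The plan is to expand the cluster decomposition inside the definition \eqref{e_Llambdadef} of $L^\lambda_t$ and identify the limit of each term separately. In the representation \eqref{e_clusterrep}, write $g_i(x) := \sum_{j \ne i} X^j(t,x)$, a continuous nonnegative function, and let $L^{i,\lambda}_t(dx) := \lambda^{2\lambda_0}\, X^i(t,x)\, e^{-\lambda X^i(t,x)}\, dx$ be the approximating measure of the $i$-th cluster, so that $L^{i,\lambda}_t \to L^i_t$ in measure as $\lambda \to \infty$ by Theorem~\ref{thm_Lt} applied under $\N_{x_i}(\,\cdot\,|\,X_t > 0)$. Since $X(t,x) = \sum_{i=1}^N X^i(t,x)$, we get the exact identity
\[
L^\lambda_t(\phi) \;=\; \sum_{i=1}^N \lambda^{2\lambda_0} \int \phi(x)\, X^i(t,x)\, e^{-\lambda X^i(t,x)}\, e^{-\lambda g_i(x)}\, dx \;=\; \sum_{i=1}^N L^{i,\lambda}_t\big(\phi\, e^{-\lambda g_i}\big).
\]
As $N < \infty$ a.s., Theorem~\ref{thm_clusterLt} reduces to the following claim: for every bounded continuous $\phi \ge 0$ and each $i$,
\begin{equation} \label{e_ctermconv}
L^{i,\lambda}_t\big(\phi\, e^{-\lambda g_i}\big) \to L^i_t\big(\phi\, 1(g_i = 0)\big) \quad \text{along a subsequence } \lambda_n \to \infty.
\end{equation}
Indeed, summing \eqref{e_ctermconv} over $i$ and comparing with $L^\lambda_t(\phi) \to L_t(\phi)$ from Theorem~\ref{thm_Lt} gives $L_t(\phi) = \sum_{i=1}^N L^i_t(\phi\, 1(g_i = 0))$ for all bounded continuous $\phi$ (by linearity), which is the first line of \eqref{e_clusterdecomp}. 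The second line is then immediate: $L^i_t$ is carried by $\{x : X^i(t,x) = 0\}$, on which $g_i(x) = X(t,x) - X^i(t,x) = X(t,x)$, so $1(g_i = 0) = 1(X(t,\cdot) = 0)$ off an $L^i_t$-null set, and summing over $i$ yields $dL_t(x) = 1(X(t,x) = 0)\sum_{i=1}^N dL^i_t(x)$.

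To prove \eqref{e_ctermconv}, condition on $\{X^j_t : j \ne i\}$, so that $g_i$ is a fixed continuous nonnegative function and $O_i := \{g_i > 0\}$ a fixed open subset of $\R$, whose boundary $\partial O_i$ is the countable set of endpoints of the component intervals of $O_i$. By Theorem~\ref{thm_Lt} there is a sequence $\lambda_n \to \infty$ along which $L^{i,\lambda_n}_t \to L^i_t$ a.s.\ in $\cM_F(\R)$; in particular $\sup_n L^{i,\lambda_n}_t(1) < \infty$ a.s. For the lower bound in \eqref{e_ctermconv}, restrict the integral to $\{g_i = 0\}$, where $e^{-\lambda g_i} = 1$, and use $\phi \ge 0$:
\[
L^{i,\lambda}_t\big(\phi\, e^{-\lambda g_i}\big) \;\ge\; L^{i,\lambda}_t\big(\phi\, 1(g_i = 0)\big).
\]
Because $L^i_t$ is atomless (Theorem~\ref{thm_Ltprop}(b)) and $\partial\{g_i = 0\} = \partial O_i$ is countable, $\{g_i = 0\}$ is an $L^i_t$-continuity set, so the right-hand side converges along $\lambda_n$ to $L^i_t(\phi\, 1(g_i = 0))$; hence $\liminf_n L^{i,\lambda_n}_t(\phi\, e^{-\lambda_n g_i}) \ge L^i_t(\phi\, 1(g_i = 0))$. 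For the upper bound, pick $\eps > 0$ with $L^i_t(\{g_i = \eps\}) = 0$ (all but countably many $\eps$ qualify since $L^i_t$ is finite), and split at the level $\eps$:
\[
L^{i,\lambda}_t\big(\phi\, e^{-\lambda g_i}\big) \;\le\; L^{i,\lambda}_t\big(\phi\, 1(g_i \le \eps)\big) + e^{-\lambda \eps}\, \|\phi\|_\infty\, L^{i,\lambda}_t(1).
\]
Since $\partial\{g_i \le \eps\} \subseteq \{g_i = \eps\}$, the set $\{g_i \le \eps\}$ is an $L^i_t$-continuity set, so the first term converges along $\lambda_n$ to $L^i_t(\phi\, 1(g_i \le \eps))$, while the second tends to $0$ because $L^{i,\lambda_n}_t(1)$ is a.s.\ bounded; thus $\limsup_n L^{i,\lambda_n}_t(\phi\, e^{-\lambda_n g_i}) \le L^i_t(\phi\, 1(g_i \le \eps))$. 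Letting $\eps \downarrow 0$ through admissible values, dominated convergence gives $L^i_t(\phi\, 1(g_i \le \eps)) \downarrow L^i_t(\phi\, 1(g_i = 0))$, and \eqref{e_ctermconv} follows.

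To finish, assemble the pieces on each event $\{N = k\}$: since the convergences in Theorem~\ref{thm_Lt} hold as $\lambda \to \infty$ (in measure and in $\cL^2$), we may pass to a single subsequence along which $L^{\lambda_n}_t \to L_t$ a.s.\ and $L^{i,\lambda_n}_t \to L^i_t$ a.s.\ for all $i \le k$, run the argument of the first paragraph along it, and deduce the first line of \eqref{e_clusterdecomp} tested against a countable measure-determining family of bounded continuous functions, hence as an identity of measures; the second line follows as above. The main obstacle is controlling the interaction factor $e^{-\lambda g_i}$ precisely on and near $\{g_i = 0\}$, the set on which the mass of $L^{i,\lambda}_t$ concentrates as $\lambda \to \infty$: although $e^{-\lambda g_i}$ is not a fixed test function, the sandwich in \eqref{e_ctermconv} goes through because on $\{g_i \le \eps\}$ it is dominated by $1$ while on $\{g_i > \eps\}$ it decays geometrically in $\lambda$, and both $\{g_i \le \eps\}$ (by finiteness of $L^i_t$) and $\{g_i = 0\}$ (by atomlessness of $L^i_t$, Theorem~\ref{thm_Ltprop}(b)) are continuity sets of the limit measure $L^i_t$.
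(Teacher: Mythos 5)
Your decomposition $L^\lambda_t(\phi) = \sum_{i=1}^N L^{i,\lambda}_t(\phi\,e^{-\lambda g_i})$ and the sandwich strategy are reasonable, but the justification of the lower bound contains a genuine error. You claim that $\partial O_i = \partial\{g_i = 0\}$ is ``the countable set of endpoints of the component intervals of $O_i$.'' For a general open $O\subset\R$ this is false: the boundary is the set of endpoints together with all of their accumulation points, and can be uncountable (the complement of a Cantor set is the standard example). In the present situation it is in fact uncountable with positive probability: $g_i$ is the density of the super-Brownian motion $\sum_{j\neq i}X^j$, so $\partial\{g_i = 0\}$ is precisely the set $BZ_t$ for that process, which by Theorem~A has Hausdorff dimension $2-2\lambda_0>0$ with positive probability. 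Atomlessness of $L^i_t$ therefore does not give $L^i_t(\partial\{g_i = 0\}) = 0$, which is what the continuity-set step behind your lower bound requires, so as written the argument does not close.

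The conclusion $L^i_t(\partial\{g_i=0\})=0$ a.s.\ is nevertheless true, but it needs a different proof: by Theorem~A, $\partial\{g_i=0\}$ is a.s.\ Lebesgue-null (its dimension is $<1$); the mean measure of $L^i_t$ under $\N_{x_i}$ is absolutely continuous with respect to Lebesgue measure by \eqref{e_Ltcanon_firstmoment}; and $L^i_t$ is independent of $g_i$, so conditioning on $\{X^j:j\neq i\}$ and taking expectations yields $L^i_t(\partial\{g_i=0\})=0$ a.s. With that supplied, your sandwich goes through. For comparison, the paper sidesteps continuity-set machinery altogether: it inserts an intermediate scale $\lambda'\le\lambda$, writes the error $L^{i,\lambda}_t(\phi\,e^{-\lambda g_i})-L^i_t(\phi\,1(g_i=0))$ as $R_1+R_2+R_3$, and bounds $E(R_1)$ and $E(R_3)$ quantitatively via the convergence rate $\sup_x\big[V^\infty_t(x)-V^{\lambda'}_t(x)\big]\le C\,t^{-1/2-\lambda_0}\lambda'^{1-2\lambda_0}$ of \eqref{e_VlambdaConvergence}, while $R_2$ is handled by the $\cL^2$ convergence of $L^{i,\lambda}_t$; in particular the paper's argument does not rely on the atomlessness statement of Theorem~\ref{thm_Ltprop}(b) at all.
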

\textbf{Remark.} Given the nature of $BZ_t$, we expect this behaviour. In the cluster decomposition, each cluster has a boundary local time of its own. Since each is supported on the boundary of its respective zero set, the local time $L_t$ of $X_t$ will be equal to the sum of cluster local times, except the boundary of the zero set of one cluster may be ``swallowed" by the support of another, hence the indicator functions. \\

The idea of representing the boundary local time of $X_t$ in terms of the boundary local time of its clusters is not restricted to a super-Brownian motion and its comprising canonical clusters. The following formulation of the same principle will be useful in Hughes-Perkins \cite{HP2018}. Recall that a sum of independent super-Brownian motions is a super-Brownian motion.

\begin{theorem} \label{thm_clustergeneral}
Suppose $X^1, \hdots, X^n$ are independent super-Brownian motions with corresponding boundary local times $L^i_t$ at time $t>0$, for $i=1,\hdots,n$. Let $X = \sum_{i=1}^n X^i$ and let $L_t$ be the boundary local time of $X_t$. Then 
\begin{align}
dL_t(x) &=  \sum_{i=1}^{n} 1\big(\sum_{j\neq i} X^j(t,x)= 0\big) \,  dL^i_t(x) \nonumber
\\  &= 1(X(t,x) = 0\big) \sum_{i=1}^{n} dL^i_t(x). \nonumber 
\end{align} 
\end{theorem}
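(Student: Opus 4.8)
The plan is to follow the proof of Theorem~\ref{thm_clusterLt}, arguing directly from the definition \eqref{e_Llambdadef} of $L^\lambda_t$. Writing $X(t,x)=\sum_{i=1}^n X^i(t,x)$ and setting $dL^{i,\lambda}_t(x)=\lambda^{2\lambda_0}X^i(t,x)e^{-\lambda X^i(t,x)}\,dx$ (so that $L^{i,\lambda}_t\to L^i_t$ in measure by Theorem~\ref{thm_Lt} applied to $X^i$), one has the identity
\[
L^\lambda_t(\phi)=\sum_{i=1}^n\int\phi(x)\,e^{-\lambda\sum_{j\neq i}X^j(t,x)}\,dL^{i,\lambda}_t(x).
\]
I would first pass, by successively extracting subsequences (each of the measure-valued families converges in measure by Theorem~\ref{thm_Lt}), to a single sequence $\lambda_n\to\infty$ along which, almost surely, $L^{\lambda_n}_t\to L_t$ and $L^{i,\lambda_n}_t\to L^i_t$ weakly in $\cM_F(\R)$ for every $i$. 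Since both sides of the claimed identity are finite measures, it then suffices to show that almost surely
\[
L_t(\phi)=\sum_{i=1}^n\int\phi(x)\,1\big(\sum_{j\neq i}X^j(t,x)=0\big)\,dL^i_t(x)
\]
for all bounded continuous $\phi\geq0$, which extends to all $\phi\in C_b(\R)$ by linearity.

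The identity would be obtained by a sandwich. For the upper bound: fixing $\mu>0$, for $\lambda_n\geq\mu$ one has $e^{-\lambda_n s}\leq e^{-\mu s}$ for $s\geq0$, and since $x\mapsto\phi(x)e^{-\mu\sum_{j\neq i}X^j(t,x)}$ is bounded and continuous, letting $n\to\infty$ gives $L_t(\phi)\leq\sum_i\int\phi(x)e^{-\mu\sum_{j\neq i}X^j(t,x)}\,dL^i_t(x)$; sending $\mu\to\infty$ and using dominated convergence (with $L^i_t(1)<\infty$) then gives $L_t(\phi)\leq\sum_i\int\phi\,1(\sum_{j\neq i}X^j(t,\cdot)=0)\,dL^i_t$. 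For the lower bound: since $e^{-\lambda_n s}\geq1(s=0)$, for any continuous $g_i$ with $0\leq g_i\leq1$ vanishing off the open set $U_i:=\mathrm{int}\{x:\sum_{j\neq i}X^j(t,x)=0\}$ one has $L^{\lambda_n}_t(\phi)\geq\sum_i\int\phi\,g_i\,dL^{i,\lambda_n}_t$; letting $n\to\infty$ (each $\phi g_i\in C_b$) and taking the supremum over such $g_i$ via Urysohn's lemma gives $L_t(\phi)\geq\sum_i\int\phi\,1_{U_i}\,dL^i_t$.

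The remaining --- and main --- point is that these two bounds coincide, i.e.\ that $L^i_t$ gives no mass to the topological boundary $\{x:\sum_{j\neq i}X^j(t,x)=0\}\setminus U_i$. At a point $x$ of this boundary every $X^j(t,x)=0$ while, for some $j\neq i$, there are points arbitrarily near $x$ with $X^j(t,\cdot)>0$; hence the boundary is contained in $\bigcup_{j\neq i}BZ^j_t$. Because $L^i_t$ is determined by $X^i$ while $\bigcup_{j\neq i}BZ^j_t$ is determined by $(X^j)_{j\neq i}$, the two are independent; conditioning on $(X^j)_{j\neq i}$, the mean measure of $L^i_t$ is absolutely continuous with respect to Lebesgue measure by Theorem~\ref{thm_Ltmoments}(a), whereas $\bigcup_{j\neq i}BZ^j_t$ is a finite union of sets of Hausdorff dimension at most $2-2\lambda_0<1$ (Theorem~A; the set is empty on the event that the cluster $X^j$ has died), hence Lebesgue-null. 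So $L^i_t\big(\bigcup_{j\neq i}BZ^j_t\big)=0$ almost surely, the bounds match, and the first identity follows. The second identity is then immediate, since $L^i_t$ is supported on $BZ^i_t\subseteq\{X^i(t,\cdot)=0\}$, so $L^i_t$-a.e.\ $\sum_{j\neq i}X^j(t,x)=X(t,x)$ and $1(\sum_{j\neq i}X^j(t,x)=0)=1(X(t,x)=0)$. I expect the hard part to be precisely this last step: the limiting indicator is discontinuous, so one is forced to pass to the interior, and the only way to recover the boundary contribution is through the independence of the clusters together with the absolute continuity of the first moment measure of $L^i_t$.
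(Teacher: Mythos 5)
Your proof is correct, but it takes a genuinely different route from the one in the paper. The paper proves Theorem~\ref{thm_clustergeneral} by pointing to the proof of Theorem~\ref{thm_clusterLt}, which is quantitative: it introduces an intermediate scale $\lambda'$, splits the difference $L^{i,\lambda}_t(\phi\,e^{-\lambda Z^i_N(t,\cdot)})-L^i_t(\phi\,1(Z^i_N(t,\cdot)=0))$ into three error terms $R_1+R_2+R_3$, and then controls $E(R_1)$ and $E(R_3)$ by conditioning on $X^i_t$, using independence of the other clusters, the first-moment formula for $L^{i,\lambda}_t$ and $L^i_t$, and the explicit rate $V^\infty_t - V^{\lambda'}_t \leq C\,t^{-1/2-\lambda_0}\lambda'^{-(2\lambda_0-1)}$ from \eqref{e_VlambdaConvergence}. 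Your argument instead sandwiches $L_t(\phi)$ between $\sum_i\int\phi\,1_{U_i}\,dL^i_t$ and $\sum_i\int\phi\,1(\sum_{j\neq i}X^j(t,\cdot)=0)\,dL^i_t$ by passing weak limits through bounded continuous test functions, and then closes the gap by showing $L^i_t$ charges no mass to the topological boundary $\partial\{\sum_{j\neq i}X^j(t,\cdot)=0\}\subseteq\bigcup_{j\neq i}BZ^j_t$; that set is Lebesgue-null a.s.\ by Theorem~A, while by independence and \eqref{e_Ltfirstmoment} the conditional mean measure of $L^i_t$ is absolutely continuous. Both arguments exploit independence of the clusters and the absolute continuity of the first-moment measure of $L^i_t$ in essentially the same way; the difference is in how the discontinuity of the indicator $1(\cdot=0)$ is tamed. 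The paper smooths it off by $e^{-\lambda'(\cdot)}$ and pays with the sharp $V^{\lambda'}$ convergence rate; you retreat to the interior and pay with the Hausdorff-dimension bound of Theorem~A. Both inputs come from \cite{MMP2017}, so neither is ``cheaper''; but the paper's version is self-contained to the tools already developed in Section~\ref{s_duality} and, as a byproduct, gives the explicit rate $\lambda'^{-(2\lambda_0-1)}$, whereas your version is arguably more conceptual but imports the dimension upper bound on $BZ_t$, which the paper otherwise treats as a consequence rather than an ingredient.
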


One example of superprocesses satisfying the above conditions follows from (III.1.3) of \cite{P2002}. Let $X_0 \in \cM_F(\R)$ and suppose that $\{A_1, \hdots, A_n\}$ is a Borel partition of $\R$. Define $X^i$ as the contribution to $X$ from ancestors at time $0$ which are in $A_i$. (This makes $X^i$ a super-Brownian motion with initial measure $X_0( \cdot \cap A_i)$; a precise definition of $X^i$ may be given in terms of the historical process as in the above reference.) Then $X = \sum_{i=1}^N X^i$ satisfies the conditions of the above theorem.\\

\textbf{Notations.} We will make use of the common convention that $C$ denotes any positive constant whose value is not important. The value of $C$ may change line to line in a derivation; to bring attention to the fact that the constant has changed, we will sometimes label the new constant $C'$. We write $f \sim g$ if $\lim_x f(x)/g(x) = 1$, where the limit will be clear from context. As the reader has probably inferred, we will write $\mu > 0$ when a measure has positive mass (that is, to indicate that $\mu(1) > 0$). For an interval $I \subseteq \R$, let $C(I,\R)$ denote the space of continuous maps from $I$ to $\R$.\\

Let $S_t$ denote the semi-group of Brownian motion and $p_t$ the associated heat kernel (the Gaussian density of variance $t$). Let $\cN(x_0,\sigma^2)$ denote the law of a one-dimensional Gaussian with mean $x_0$ and variance $\sigma^2$. \\

\textbf{Organization of Paper.} The paper is organized as follows. \textbf{ Section \ref{s_OU}} gives a brief overview of the theory of one-dimensional Ornstein-Uhlenbeck processes with Markovian killing. Our method relies on a change of variables which allow us to express certain quantities in terms of eigenvalue problems involving these processes' generators. \textbf{Section \ref{s_duality}} describes fundamental background connecting the Laplace functional of super-Brownian motion to a family of semi-linear PDEs. We also introduce the families $V^\lambda$ and $V^{\lambda, \lambda'}$, which play a key role in our analysis. \textbf{Section~\ref{s_main}} contains the proofs of all our main results, including existence and properties of $L_t$ and the cluster representations, with the exception of Theorem~\ref{thm_l2limit}. The proof of this result is reserved for \textbf{Section \ref{s_momentsconvergence}}.\\

\textbf{Acknowledgements.} The author gratefully acknowledges the assistance of Ed Perkins, his thesis supervisor, who introduced him to the problem and provided many useful insights and suggestions during its resolution, and gave close readings of the manuscript at several stages during its preparation. Any remaining inconsistencies are the sole responsibility of the author.

\section{Killed Ornstein-Uhlenbeck Processes} \label{s_OU}
As above, we define the operator $A$ by $Af(x) = \frac{f''(x)}{2} - \frac{xf'(x)}{2}$. The Markov process generated by $A$ is a one-dimensional Ornstein-Uhlenbeck process with mean zero. We denote this process by $Y$, denote its law when started at $x$ by $P^Y_x$ with corresponding expectation $E^Y_x$. For general initial conditions $Y_0 \sim \mu \in \cM_1(\R)$ (the space of probability measures on $\R$), we write its law as $P^Y_\mu$. $Y$ has a stationary measure, the unit variance Gaussian measure, $m$. When $Y_0 \sim m$, the process is reversible and can be defined for time values in $\R$. We will denote the law of this stationary process on $\R$ by $P^Y$.\\

We now introduce the notions of killing and lifetime for the process $(Y_t: t\geq 0)$. Let $\phi \in C^+([-\infty,\infty],\R)$, the space of non-negative continuous functions with limits at $\pm \infty$. We will call such functions killing functions. Let $A^\phi f(x) = Af(x) - f(x)\phi(x)$. $A^\phi$ is the generator of an Ornstein-Uhlenbeck process subjected to Markovian killing at rate $\phi(Y_t)$. The lifetime of the killed process is $\rho^\phi = \inf \{t> 0: \int_0^t \phi(Y_s) \, ds > e \}$, where $e$ is an independent Exp($1$) random variable. We recall that the distribution of $\rho^\phi$ is given by \eqref{e_survivalprob}.\\

The generators $A$ and $A^\phi$ correspond to strongly continuous contraction semigroups on $\cL^2(m)$. The following theorem is proved in \cite{MMP2017}, where it is stated as Theorem 2.3. We note that the statement of the result in that paper had a misprint when describing the convergence of of the transition densities, which appeared in part (c). We have corrected the statement, which is in part (b) of the following. 
\begin{theorem} \label{thm_killedOU} For $\phi \in C^+([-\infty, \infty])$, the following statements hold.\\
(a) $A^\phi$ has complete orthonormal family of $C^2$ eigenfunctions $\{\psi_n : n \geq 1 \}$ in $\cL^2(m)$ satisfying $A^\phi \psi_n = -\lambda_n \psi_n$, where $0\leq \lambda_0 \leq \lambda_1 \leq \cdots \to \infty$. Furthermore, $-\lambda_0$ is a simple eigenvalue and $\psi_0 > 0$. \newpage
(b) For $t>0$, the diffusion $Y$ generated by $A^\phi$ has a jointly continuous transition density $q_t(x,y)$ with respect to $m$, given by
\begin{equation} \label{OU_eigexp}
q_t(x,y) = \sum_{n=0}^\infty e^{-\lambda_n t} \psi_n(x) \psi_n(y),
\end{equation} 
where the series converges in $\cL^2(m \times m)$ and uniformly absolutely on sets of the form $[\epsilon,\infty) \times [-\epsilon^{-1}, \epsilon^{-1}]^2$ for all $\epsilon > 0$.\\

(c) For $0< \delta<\frac 1 2$, there exists a constant $c_\delta>0$ such that 
\begin{equation} \label{OU_qbound}
q_t(x,y) \leq c_\delta e^{-\lambda_0 t} e^{\delta(x^2 + y^2)} \hspace{ 4mm} \text{ for all } t \geq s^*(\delta),
\end{equation}
where $s^*(\delta) > 0$ is the solution of
\begin{equation} \label{OU_s_star}
2\delta = \frac{e^{-s^*/2} - e^{-s^*}}{1 - e^{-s^*}}.
\end{equation}
(d) Denote $\theta = \int \psi_0 \, dm$. For all $t\geq 0$ and $x \in \R$,
\begin{equation} \label{OU_survivalprob}
e^{\lambda_0 t} P_x (\rho^\phi > t) = \theta \psi_0(x) + r(t,x),
\end{equation}
where, for any $\delta > 0$, there is a constant $c_\delta >0$ such that 
\begin{align}
\psi_0(x) &\leq c_\delta e^{\delta x^2}, \label{OU_psi0bd}
\\ |r(t,x)| &\leq c_\delta e^{\delta x^2} e^{-(\lambda_1 - \lambda_0)t}. \label{OU_rbound}
\end{align}
(e) As $T \to \infty$, $P_x(Y \in \cdot | \, \rho_\phi > T) \to P^{Y,\infty}_x$ weakly on $C([0,\infty),\R)$, where $P^{Y,\infty}_x$ is the law of the diffusion with the transition density
\begin{equation} \label{OU_immortaldensity}
\tilde{q}_t(x,y) \equiv q_t(x,y) \frac{\psi_0(y)}{\psi_0(x)}e^{\lambda_0 t}
\end{equation}
with respect to $m$.
\end{theorem}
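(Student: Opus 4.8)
The plan is to treat the five parts in sequence, each building on its predecessors, using throughout the fact that $A$ is the generator of the Ornstein--Uhlenbeck semigroup, a self-adjoint operator on $\cL^2(m)$ with completely explicit spectral data. For \textbf{(a)} I would start from the Dirichlet form $\mathcal{E}(f,f) = \tfrac12\int (f')^2\,dm$ associated with $A$ on $\cL^2(m)$: its self-adjoint operator has the (probabilists') Hermite polynomials as a complete orthonormal eigenbasis with $A H_n = -\tfrac n2 H_n$, so $-A$ has compact resolvent. Since $\phi\in C^+([-\infty,\infty])$ is bounded, $A^\phi = A-\phi$ is a bounded self-adjoint perturbation, hence still self-adjoint on the same domain and, by the resolvent identity, still has compact resolvent; this gives a discrete spectrum $-\lambda_0\ge -\lambda_1\ge\cdots\to-\infty$ and a complete orthonormal eigenbasis $\{\psi_n\}$. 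The min--max principle comparing $-A^\phi$ with $-A$ yields $\lambda_n\ge n/2\to\infty$, and $\lambda_0\ge 0$ follows from $\langle -A^\phi f,f\rangle = \tfrac12\|f'\|_2^2 + \int\phi f^2\,dm\ge 0$. Each $\psi_n$ solves the linear ODE $\tfrac12\psi_n'' - \tfrac x2\psi_n' - \phi\psi_n = -\lambda_n\psi_n$ with continuous coefficients, so a standard bootstrap gives $\psi_n\in C^2$. Finally the Feynman--Kac semigroup $Q_t f(x) = E^Y_x[\exp(-\int_0^t\phi(Y_s)\,ds)\,f(Y_t)]$ has a strictly positive kernel, hence is positivity improving, so by the Perron--Frobenius/Krein--Rutman theory for positivity-improving self-adjoint semigroups the top eigenvalue $e^{-\lambda_0 t}$ is simple with a.e.\ strictly positive eigenfunction $\psi_0$.

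For \textbf{(b)}, since $\phi\ge 0$ one has $0\le Q_tf\le S^0_tf$ for $f\ge 0$, where $S^0_t$ is the OU semigroup, whose density $q^0_t(x,y)$ with respect to $m$ is the explicit jointly continuous Mehler kernel; hence $Q_t$ has a density $q_t$ with $0\le q_t\le q^0_t$. The $\cL^2(m\times m)$ expansion \eqref{OU_eigexp} is the spectral theorem. To promote it to absolute uniform convergence on $[\epsilon,\infty)\times[-\epsilon^{-1},\epsilon^{-1}]^2$ (hence joint continuity of $q_t$), I would use the diagonal bound $\sum_n e^{-\lambda_n t}\psi_n(x)^2 = q_t(x,x)\le q^0_t(x,x)<\infty$: split off finitely many terms and estimate the tail of $\sum_n e^{-\lambda_n t}\psi_n(x)\psi_n(y)$ by Cauchy--Schwarz against this diagonal series together with $q_t = q_{t/2}\cdot q_{t/2}$, with uniformity in $t\in[\epsilon,\infty)$ coming from monotonicity of the tails. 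For \textbf{(c)}, write the Mehler kernel explicitly, $q^0_t(x,y) = (1-e^{-t})^{-1/2}\exp\!\big(\tfrac{2xy\,e^{-t/2} - e^{-t}(x^2+y^2)}{2(1-e^{-t})}\big)$; then $2xy\le x^2+y^2$ bounds the exponent by $\tfrac{e^{-t/2}-e^{-t}}{2(1-e^{-t})}(x^2+y^2)$, and the coefficient $\tfrac{e^{-t/2}-e^{-t}}{1-e^{-t}} = \tfrac{e^{-t/2}}{1+e^{-t/2}}$ decreases to $0$ and equals $2\delta$ exactly at $t=s^*(\delta)$ by \eqref{OU_s_star}. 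This bounds $q_t\le q^0_t$ correctly but lacks the factor $e^{-\lambda_0 t}$; to insert it, factor $Q_t = Q_\epsilon Q_{t-2\epsilon}Q_\epsilon$ for $t\ge 2\epsilon$, whence $q_t(x,y) = \langle q_\epsilon(x,\cdot),\,Q_{t-2\epsilon}\,q_\epsilon(\cdot,y)\rangle_m \le e^{-\lambda_0(t-2\epsilon)}\,\|q_\epsilon(x,\cdot)\|_2\,\|q_\epsilon(\cdot,y)\|_2$, using that $Q_{t-2\epsilon}$ is self-adjoint on $\cL^2(m)$ with norm $e^{-\lambda_0(t-2\epsilon)}$ and that $\|q_\epsilon(x,\cdot)\|_2^2 = q_{2\epsilon}(x,x)\le q^0_{2\epsilon}(x,x)$. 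Choosing $2\epsilon = s^*(\delta)$ makes the resulting exponential exactly $e^{\delta(x^2+y^2)}$ and $t-2\epsilon\ge 0$ precisely when $t\ge s^*(\delta)$; absorbing $e^{\lambda_0 s^*(\delta)}$ and $(1-e^{-s^*(\delta)})^{-1/2}$ into $c_\delta$ gives \eqref{OU_qbound}, and the companion bound $\psi_0(x)\le c_\delta e^{\delta x^2}$ of \eqref{OU_psi0bd} follows the same way from $\psi_0 = e^{\lambda_0 t}Q_t\psi_0$ and Cauchy--Schwarz.

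For \textbf{(d)}, integrate \eqref{OU_eigexp}: $e^{\lambda_0 t}P_x(\rho^\phi>t) = e^{\lambda_0 t}(Q_t 1)(x) = \sum_n e^{-(\lambda_n-\lambda_0)t}\langle\psi_n,1\rangle_m\,\psi_n(x)$, whose $n=0$ term is $\theta\psi_0(x)$ and whose remainder is $r(t,x)$. For $t$ bounded away from $0$, pull out $e^{-(\lambda_1-\lambda_0)t}$ and estimate the rest by Cauchy--Schwarz, using $|\langle\psi_n,1\rangle_m|\le 1$, the summability of $\sum_n e^{-(\lambda_n-\lambda_1)t}$ (from $\lambda_n\ge n/2$, uniform for $t$ in a half-line), and $\sum_n e^{-(\lambda_n-\lambda_1)t}\psi_n(x)^2\le e^{\lambda_1 t}q_t(x,x)\le c_\delta e^{\delta x^2}$ from (c); for small $t$ the bound is immediate since $e^{\lambda_0 t}P_x(\rho^\phi>t)$ and $\theta\psi_0(x)$ are both $O(e^{\delta x^2})$ while $e^{-(\lambda_1-\lambda_0)t}$ is bounded below (here $\lambda_1>\lambda_0$ by the simplicity in (a)), giving \eqref{OU_survivalprob}--\eqref{OU_rbound}. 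For \textbf{(e)}, the conditioned finite-dimensional laws are $P_x(Y_{t_1}\in dy_1,\dots,Y_{t_k}\in dy_k\mid\rho^\phi>T) = P_x(\rho^\phi>T)^{-1}\,q_{t_1}(x,y_1)\cdots q_{t_k-t_{k-1}}(y_{k-1},y_k)\,P_{y_k}(\rho^\phi>T-t_k)\prod_j m(dy_j)$; by (d) the scalar ratio converges as $T\to\infty$ to $e^{\lambda_0 t_k}\psi_0(y_k)/\psi_0(x)$, with \eqref{OU_rbound} providing the domination to take the limit under the integral against bounded continuous test functions, and the limiting f.d.d.\ telescopes (via $\tilde q_s(a,b) = q_s(a,b)\tfrac{\psi_0(b)}{\psi_0(a)}e^{\lambda_0 s}$ and the telescoping of the $\psi_0$-ratios and exponentials) exactly into the f.d.d.\ of $P^{Y,\infty}_x$. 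A Kolmogorov-type tightness estimate for the increments under $P_x(\,\cdot\mid\rho^\phi>T)$, uniform in $T$ and again fed by (c)--(d), upgrades f.d.d.\ convergence to weak convergence on $C([0,\infty),\R)$.

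The technical heart, and the main obstacle, is the pair of estimates in (c)--(d): obtaining the Gaussian-type bound with the precise threshold $s^*(\delta)$ \emph{and} the decay $e^{-\lambda_0 t}$ simultaneously, which forces one to couple the crude Mehler-kernel domination with the $\cL^2$ operator-norm bound for $Q_r$. Once those are in hand, the remaining work — passing from $\cL^2$/pointwise statements to uniform ones in (b), and from f.d.d.\ convergence to path-space convergence in (e) — reduces to feeding these estimates into routine dominated-convergence and tightness arguments.
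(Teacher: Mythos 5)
The paper itself does not prove this theorem: the sentence immediately preceding the statement explains that it is proved in Mueller, Mytnik and Perkins (2017), where it is their Theorem~2.3, and that only the statement has been corrected here (a misprint in the description of the transition-density convergence was moved into part~(b)). There is therefore no in-paper proof against which to compare your argument; you were really being asked to reconstruct a proof from the general Schr\"odinger-operator literature.

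With that caveat, your reconstruction is sound and follows the standard route for a bounded Schr\"odinger perturbation of the Ornstein--Uhlenbeck generator. For (a), compactness of the resolvent of $-A$ (via the Hermite eigenbasis), stability under the bounded perturbation $\phi$, the min--max principle giving $\lambda_n\ge n/2$, and Perron--Frobenius/Krein--Rutman for the positivity-improving Feynman--Kac semigroup is the right chain. For (b), domination $q_t\le q_t^0$ by the Mehler kernel plus a Chapman--Kolmogorov/Cauchy--Schwarz tail argument yields local uniform absolute convergence. For (c), your factorization $Q_t=Q_{s^*/2}\,Q_{t-s^*}\,Q_{s^*/2}$, the $\cL^2(m)$ operator-norm bound $\|Q_{t-s^*}\|=e^{-\lambda_0(t-s^*)}$, and the explicit Mehler diagonal at time $s^*(\delta)$ are precisely what is needed to get the $e^{-\lambda_0 t}$ decay and the sharp exponent $\delta(x^2+y^2)$ simultaneously; your identification $\tfrac{e^{-t/2}-e^{-t}}{1-e^{-t}}=\tfrac{e^{-t/2}}{1+e^{-t/2}}$ and evaluation at $s^*(\delta)$ from \eqref{OU_s_star} both check out. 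Spectral expansion of $Q_t 1$ for (d) and the Doob $h$-transform for (e) are standard. The only substantive item left implicit is the tightness estimate for the conditioned bridges in (e); the present paper carries out a close analogue in Lemma~\ref{lemma_endpoint_indep}, which you could cite or model yours on.
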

The bounds in part (c) of the above easily imply the following estimates, which we will often use. For $0< \delta < 1/2$, there is a constant $C_\delta > 0$ such that
\begin{equation} \label{OU_survprobbd}
P^Y_x (\rho_\phi > t) \leq C_\delta e^{\delta x^2} e^{-\lambda_0 t} \,\,\,\, \forall \, x \in \R, t > 0.
\end{equation}
This implies that there is a constant $C>0$ such that
\begin{equation} \label{OU_survivalprobGaus}
P^Y_m(\rho_\phi > t) \leq C e^{-\lambda_0 t} \,\,\, \forall \, t>0.
\end{equation}
The following limit result is a simple consequence of the eigenfunction expansion for $q_t(x,y)$.
\begin{lemma} \label{lemma_densitylimit}
For all $x,y \in \R$, 
\begin{equation}
\lim_{t \to \infty} e^{\lambda_0 t} q_t(x,y) = \psi_0 (x)  \psi_0 (y). \nonumber
\end{equation}
The convergence is uniform on compact sets.
\end{lemma}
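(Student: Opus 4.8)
The plan is to read everything off the eigenfunction expansion \eqref{OU_eigexp} of Theorem~\ref{thm_killedOU}(b). Writing $q_t(x,y) = \sum_{n=0}^\infty e^{-\lambda_n t}\psi_n(x)\psi_n(y)$ and multiplying through by $e^{\lambda_0 t}$ gives
\[
e^{\lambda_0 t} q_t(x,y) = \psi_0(x)\psi_0(y) + \sum_{n=1}^\infty e^{-(\lambda_n - \lambda_0)t}\psi_n(x)\psi_n(y),
\]
so the lemma reduces to showing that the tail series tends to $0$ as $t \to \infty$, uniformly for $x,y$ ranging over a compact set. Here the key structural input is that $-\lambda_0$ is simple by part~(a), so there is a genuine spectral gap $\lambda_1 - \lambda_0 > 0$ and every term of the tail decays exponentially; the only thing that needs care is that this decay is not destroyed by summation against the exploding prefactor $e^{\lambda_0 t}$.

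To that end I would fix an auxiliary time $t_0 > 0$ and, for $t \ge t_0$ and $n \ge 1$, factor $e^{-(\lambda_n-\lambda_0)t} = e^{-(\lambda_n-\lambda_0)(t-t_0)}\,e^{-(\lambda_n-\lambda_0)t_0} \le e^{-(\lambda_1-\lambda_0)(t-t_0)}\,e^{-(\lambda_n-\lambda_0)t_0}$, whence
\[
\Bigl| \sum_{n=1}^\infty e^{-(\lambda_n-\lambda_0)t}\psi_n(x)\psi_n(y) \Bigr| \le e^{-(\lambda_1-\lambda_0)(t-t_0)} \sum_{n=1}^\infty e^{-(\lambda_n-\lambda_0)t_0}\,|\psi_n(x)|\,|\psi_n(y)|.
\]
Applying Cauchy--Schwarz in $n$ and then the diagonal case of \eqref{OU_eigexp}, namely $\sum_{n\ge 0} e^{-\lambda_n t_0}\psi_n(x)^2 = q_{t_0}(x,x)$, the last sum is at most $e^{\lambda_0 t_0}\sqrt{q_{t_0}(x,x)\,q_{t_0}(y,y)}$. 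Since $q_{t_0}$ is jointly continuous by part~(b), for any compact $K \subseteq \R$ the quantity $M_K := e^{\lambda_0 t_0}\sup_{x\in K} q_{t_0}(x,x)$ is finite, and therefore
\[
\sup_{(x,y)\in K\times K}\bigl| e^{\lambda_0 t} q_t(x,y) - \psi_0(x)\psi_0(y)\bigr| \le M_K\, e^{-(\lambda_1-\lambda_0)(t-t_0)},
\]
which tends to $0$ as $t\to\infty$. Taking $K = \{x,y\}$ yields the pointwise statement and the display above gives uniformity on compact sets (any compact subset of $\R\times\R$ lies in some such product $K\times K$).

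There is no serious obstacle in this argument; it is essentially bookkeeping on the spectral expansion. The one mild point requiring attention is the uniform control of the tail against the growing factor $e^{\lambda_0 t}$, which is handled by the Cauchy--Schwarz step combined with the semigroup identity $\sum_n e^{-\lambda_n t_0}\psi_n(x)^2 = q_{t_0}(x,x)$ and the continuity of $q_{t_0}$, together with the strict inequality $\lambda_1 > \lambda_0$ coming from simplicity of the lead eigenvalue.
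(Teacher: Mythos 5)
Your proof is correct and follows essentially the same path as the paper's: both multiply the eigenfunction expansion \eqref{OU_eigexp} by $e^{\lambda_0 t}$, isolate the $n=0$ term, and kill the remainder via the spectral gap $\lambda_1 - \lambda_0 > 0$ coming from simplicity of $-\lambda_0$. The only variation is how you bound the tail series uniformly on compacts: the paper directly invokes the uniform absolute convergence stated in Theorem~\ref{thm_killedOU}(b) at $t=1$, while you recover essentially the same bound by Cauchy--Schwarz combined with the diagonal identity $\sum_n e^{-\lambda_n t_0}\psi_n(x)^2 = q_{t_0}(x,x)$ and joint continuity of $q_{t_0}$, which is a valid, equivalent-strength substitute.
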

\begin{proof}
For all $t>0$ and $x,y \in \R$, from \eqref{OU_eigexp}, we have
\begin{equation} \label{e_transdenslim}
e^{\lambda_0 t} q_{t}(x,y) = \psi_0(x) \psi_0(y) + \sum_{n=1}^\infty e^{-(\lambda_n - \lambda_0)t} \psi_n(x) \psi_n(y).
\end{equation}
The absolute value of the sum above is bounded above by
\[e^{-(\lambda_1 - \lambda_0) (t-1)} \sum_{n=1}^\infty e^{-(\lambda_n - \lambda_0)} |\psi_n(x)  \psi_n(y)|.\]
By Theorem~\ref{thm_killedOU}(b) with $t=1$, the series in the above is convergent, and the convergence is uniform on compact sets. Part (a) of the same theorem states that $-\lambda_0$ is a simple eigenvalue. Hence $\lambda_1 - \lambda_0 > 0$ and the above vanishes as $t \to \infty$; in fact, because the series converges uniformly on compacts to a continuous limit, the above vanishes uniformly on compacts as $t\to \infty$, so \eqref{e_transdenslim} gives the result.
\end{proof}

It will be useful for us to study the distribution of the process $Y$ when conditioned on survival and its endpoint. Hereafter we assume that $Y$ has killing function $\phi \in C^+([-\infty,\infty],\R)$ and we denote its lifetime by $\rho$. For fixed $T>0$ and $z \in \R$, consider the $[0,T]$-indexed inhomogeneous Markov process taking values in $\R$ with transition density (with respect to $dm(y_2)$)
\begin{equation} \label{e_bridgetransden1}
\hat{q}_{s,t}(y_1, y_2) = \frac{q_{t-s}(y_1, y_2) \,q_{T-t}(y_2,z)}{q_{T-s}(y_1,z)}
\end{equation} 
for $0 \leq s < t < T$. (The kernels are degenerate when $t=T$, since $Y_T = z$.) Below we verify that the finite dimensional distributions defined by this transition kernel have an extension to a (necessarily) unique law on $C([0,T],\R)$, which we denote by $P^Y_x( \cdot \, | \, \rho> T, Y_T = z)$ when the initial point is $x \in \R$, and show that it gives an explicit version of the suggested regular conditional distribution for all $z \in \R$. We then establish that for fixed $S>0$, $P^Y_x(Y\restrict{[0,S]} \in \cdot \, | \, \rho> T, Y_T = z)$ converges weakly to $P^{Y,\infty}_x(Y\restrict{[0,S]} \in \cdot )$ as $T \to \infty$ for all $z \in \R$.
\begin{lemma} \label{lemma_endpoint_indep}
(a) Let $x \in \R$ and $T>0$. For all $z \in \R$, the finite dimensional distributions described in \eqref{e_bridgetransden1}, with initial value $x$, have a unique extension to $C([0,T],\R)$. The resulting laws $P^Y_x (\cdot \, | \,\rho>T, Y_T = z)$ are continuous in $z$ and define a regular conditional probability for $Y\restrict{[0,T]}$ under $P^Y_x$ conditioned on $Y_T$. \\
(b) Let $x,z \in \R$, $S>0$ be fixed. Then $P_x(Y\restrict{[0,S]} \in \cdot \, | \, \rho >T, Y_T = z)$ converges weakly on $C([0,S], \R)$ to $P_x^{Y,\infty}(Y\restrict{[0,S]} \in \cdot)$ as $T \to \infty$. \vspace{1 mm}\\ %That is, for any continuous, bounded functional $g:C([0,S], \R) \to \R$,
%\[ \lim_{T \to \infty} E_x^{Y} ( g(Y\restrict{[0,S]}) \, | \,\rho > T, Y_T = z ) = E_x^{Y, \infty}(g(Y\restrict{[0,S]})).\]
(c) For all $S, K>0$, $\big\{ P_x(Y\restrict{[0,S]} \in \cdot \, | \, \rho >T, Y_T = z) : |x|,|z| \leq K, T\geq S  \big\}$ is tight on $C([0,S],\R)$.
\end{lemma}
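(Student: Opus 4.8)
\textbf{Proof approach for Lemma~\ref{lemma_endpoint_indep}.}
The plan is to recognize $P^Y_x(\,\cdot\mid\rho>T,\,Y_T=z)$ as a reweighting of an ordinary Ornstein--Uhlenbeck bridge. Throughout write $P^{OU}_x$, $E^{OU}_x$ and $p^{OU}_t(x,y)$ for the law, expectation and transition density with respect to $m$ of the (unkilled) OU process; by the Feynman--Kac formula $q_t(x,y)=p^{OU}_t(x,y)\,E^{OU}_x[\exp(-\int_0^t\phi(Y_r)\,dr)\mid Y_t=y]$, so $e^{-t\|\phi\|_\infty}p^{OU}_t(x,y)\le q_t(x,y)\le p^{OU}_t(x,y)$ and in particular $q_t(x,y)>0$ for every $t>0$. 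For part (a): Chapman--Kolmogorov for $q$ shows that the kernels $\hat q_{s,t}$ of \eqref{e_bridgetransden1} are consistent (they are the transition kernels of the Doob $h$-transform by the space--time harmonic function $h_t(y)=q_{T-t}(y,z)$, and the factors $q_{T-t}(w,z)$ cancel). A version living on $C([0,T],\R)$ is produced explicitly: take the OU bridge from $x$ to $z$ over $[0,T]$ (a continuous Gaussian process), reweight it by $\exp(-\int_0^T\phi(Y_r)\,dr)$ and renormalize; using the Markov property of OU together with $E^{OU}_a[\exp(-\int_0^u\phi(Y_r)\,dr)\mid Y_u=b]=q_u(a,b)/p^{OU}_u(a,b)$, one checks that the $p^{OU}$ factors telescope and the finite-dimensional distributions coincide with those built from $\hat q_{s,t}$; uniqueness of the extension is automatic since finite-dimensional distributions determine a law on $C([0,T],\R)$. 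Continuity in $z$ follows because the OU bridge law is weakly continuous in its right endpoint, $\exp(-\int_0^T\phi(Y_r)\,dr)$ is a bounded continuous functional on $C([0,T],\R)$, and the normalizer $q_T(x,z)/p^{OU}_T(x,z)$ is continuous and strictly positive in $z$. Finally, integrating these laws against $P^Y_x(Y_T\in dz\mid\rho>T)=q_T(x,z)\,m(dz)/P^Y_x(\rho>T)$ recovers $P^Y_x(Y\restrict{[0,T]}\in\cdot\mid\rho>T)$ -- this is exactly the disintegration of the Feynman--Kac reweighting of OU over the endpoint $Y_T$ -- so $z\mapsto P^Y_x(\,\cdot\mid\rho>T,\,Y_T=z)$ is a regular conditional distribution for $Y\restrict{[0,T]}$ given $Y_T$.

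For part (c) I would verify the Kolmogorov tightness criterion; since every path starts at the point $x$ with $|x|\le K$, the time-$0$ marginals are trivially tight and it suffices to bound, uniformly over $T\ge S$ and $|x|,|z|\le K$,
\[
E^Y_x\big[\,|Y_t-Y_s|^4 \,\big|\, \rho>T,\,Y_T=z\,\big]\;=\;\frac{E^Y_x\big[\,|Y_t-Y_s|^4\,1(\rho>S)\,q_{T-S}(Y_S,z)\,\big]}{q_T(x,z)}\qquad(0\le s<t\le S)
\]
by a constant times $(t-s)^2$, the displayed identity coming from the Markov property at time $S$. Fix a small $\delta\in(0,\tfrac12)$, and let $T_*<\infty$ be chosen (via Lemma~\ref{lemma_densitylimit}) so that $T-S\ge s^*(\delta)$ and $e^{\lambda_0T}q_T(x,z)\ge\tfrac12\psi_0(x)\psi_0(z)$ whenever $T\ge T_*$ and $|x|,|z|\le K$. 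For $T\ge T_*$, bound $q_{T-S}(Y_S,z)\le c_\delta e^{-\lambda_0(T-S)}e^{\delta(Y_S^2+z^2)}$ by \eqref{OU_qbound} and $q_T(x,z)\ge c_K e^{-\lambda_0T}$; the ratio above is then at most $C_{K,S}\,E^Y_x[|Y_t-Y_s|^4 1(\rho>S)e^{\delta Y_S^2}]\le C_{K,S}\,E^{OU}_x[|Y_t-Y_s|^4 e^{\delta Y_S^2}]\le C_{K,S}(t-s)^2$, the last step by Cauchy--Schwarz using $E^{OU}_x[|Y_t-Y_s|^8]\le C_{K,S}(t-s)^4$ and $E^{OU}_x[e^{2\delta Y_S^2}]<\infty$ uniformly (since $2\delta(1-e^{-S})<1$ and $Y_S$ is Gaussian with mean of modulus $\le K$ and variance $<1$). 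For the complementary bounded range $S\le T\le T_*$ the claim follows from part (a): there $P^Y_x(Y\restrict{[0,S]}\in\cdot\mid\rho>T,\,Y_T=z)$ is a reweighting of the OU bridge over $[0,T]$ restricted to $[0,S]$ by a functional bounded between $e^{-T_*\|\phi\|_\infty}$ and $1$ and normalized by a constant bounded below by the same, so this subfamily is dominated by, hence as tight as, the (weakly continuously parametrized, thus tight over the compact parameter set) family of restricted OU bridges.

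Granting (a) and (c), part (b) is the convergence of finite-dimensional distributions. For $0<s_1<\cdots<s_k\le S<T$ the density of $(Y_{s_1},\dots,Y_{s_k})$ under $P^Y_x(\,\cdot\mid\rho>T,\,Y_T=z)$ with respect to $m^{\otimes k}$ is, from \eqref{e_bridgetransden1} (the factors $q_{T-s_i}(y_i,z)$ telescoping), equal to $q_{s_1}(x,y_1)\big(\prod_{i=2}^k q_{s_i-s_{i-1}}(y_{i-1},y_i)\big)\,q_{T-s_k}(y_k,z)/q_T(x,z)$, and by Lemma~\ref{lemma_densitylimit}
\[
\frac{q_{T-s_k}(y_k,z)}{q_T(x,z)}=e^{\lambda_0 s_k}\,\frac{e^{\lambda_0(T-s_k)}q_{T-s_k}(y_k,z)}{e^{\lambda_0 T}q_T(x,z)}\;\xrightarrow[\,T\to\infty\,]{}\;\frac{e^{\lambda_0 s_k}\psi_0(y_k)}{\psi_0(x)}\qquad\text{for every }y_k,
\]
the denominator's limit $\psi_0(x)\psi_0(z)$ being strictly positive. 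Hence the density converges pointwise to $q_{s_1}(x,y_1)\big(\prod_{i=2}^k q_{s_i-s_{i-1}}(y_{i-1},y_i)\big)e^{\lambda_0 s_k}\psi_0(y_k)/\psi_0(x)$, which -- after the $\psi_0(y_i)$-ratios telescope and the $e^{\lambda_0(s_i-s_{i-1})}$ factors combine -- is precisely the density of $(Y_{s_1},\dots,Y_{s_k})$ under $P^{Y,\infty}_x$ computed from $\tilde q$ in \eqref{OU_immortaldensity}. Since both sides are probability densities, Scheff\'e's lemma promotes this to total-variation convergence of the finite-dimensional distributions, and combined with the tightness from (c) this gives $P^Y_x(Y\restrict{[0,S]}\in\cdot\mid\rho>T,\,Y_T=z)\Rightarrow P^{Y,\infty}_x(Y\restrict{[0,S]}\in\cdot)$ on $C([0,S],\R)$.

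The delicate point is the uniformity in $T$ in part (c): for large $T$ the conditioning event $\{\rho>T,\,Y_T=z\}$ has probability of order $e^{-\lambda_0 T}$, so the normalizer $q_T(x,z)$ is exponentially small, and one needs the matching exponential decay of the numerator supplied by the Gaussian bound \eqref{OU_qbound} -- valid only once $T-S\ge s^*(\delta)$ -- together with the sharp asymptotics of Lemma~\ref{lemma_densitylimit} for the denominator; the remaining compact range of $T$ then has to be dispatched separately via the OU-bridge representation of part (a).
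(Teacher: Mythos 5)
Your proof is correct and follows the same overall architecture (establish uniform tightness in (c), then convergence of finite-dimensional distributions for (b)), but the mechanism at each step is genuinely different from the paper's. For (a), the paper verifies a Kolmogorov continuity criterion directly by estimating $E^Y_x[(Y_t-Y_s)^4\mid\rho>T,Y_T=z]$ through pointwise comparison of $q$ with the unkilled Ornstein--Uhlenbeck kernel $k$ of \eqref{unkilledtrans}, handling the time range near $T$ by reversibility, and then obtains continuity in $z$ from tightness plus pointwise convergence of the FDDs; you instead identify the conditioned law as a Feynman--Kac reweighting of the unkilled OU bridge (equivalently, a Doob $h$-transform by $h_t(y)=q_{T-t}(y,z)$), which delivers the $C([0,T],\R)$ version, uniqueness, continuity in $z$, and the regular-conditional-distribution property in one structural stroke -- cleaner than the paper's computation and a legitimate alternative. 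For (c), your Markov-property identity
\[
E^Y_x\big[\cdot\mid\rho>T,Y_T=z\big]=\frac{E^Y_x\big[\cdot\,1(\rho>S)\,q_{T-S}(Y_S,z)\big]}{q_T(x,z)}
\]
isolates the two $q$-factors that carry the $T$-dependence so a single application of \eqref{OU_qbound} against Lemma~\ref{lemma_densitylimit} closes the estimate, whereas the paper re-derives Gaussian bounds on each of the three kernels in the numerator of \eqref{tightness1}; your dichotomy between $T\ge T_*$ and $T\in[S,T_*]$ (the latter handled by domination by OU bridges over a compact parameter set) mirrors the paper's split between \eqref{inftight1} and \eqref{bonustightness}. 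For (b), Scheff\'e's lemma, applied to the telescoped FDD densities, is a tidy replacement for the paper's dominated-convergence argument. One small arithmetic slip: the exponential-moment condition should read $4\delta(1-e^{-S})<1$ rather than $2\delta(1-e^{-S})<1$ (for a Gaussian $G$ with variance $\sigma^2$, $E[e^{2\delta G^2}]<\infty$ requires $4\delta\sigma^2<1$); this is harmless because you may simply fix $\delta<1/4$ at the outset, as you are already free to do.
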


Before proving the lemma, we make an observation concerning time reversals of $Y$ under $P^Y_x (\cdot \, | \,\rho>T, Y_T = z)$. For $T>0$ and $t\in [0,T]$, define $\hat{Y}_t = Y_{T-t}$. Let $x ,z \in \R$. For $0 < t_1 < t_2 < T$ and $\phi_1, \phi_2$ bounded Borel functions, we have
\begin{align}
&E_x^Y ( \phi_1(\hat{Y}_{t_1}) \, \phi_2(\hat{Y}_{t_2}) \, \big| \, \rho > T, Y_T = z ) \nonumber
\\ &\hspace{ 6 mm }= \frac{1}{q_T(x,z)} \iint \phi_1(y_1) \, \phi_2(y_2) \, q_{T-t_2}(x,y_2) \, q_{t_2 - t_1}(y_2, y_1) \, q_{t_1}(y_1,z) \,dm(y_1) \, dm(y_2) \nonumber
\\ &\hspace{ 6 mm }= E_z^Y (\phi_1(Y_{t_1})\, \phi_2(Y_{t_2}) \, \big| \,\rho > T, Y_T = x ), \nonumber
\end{align}
where the last equality uses $q_t(x,y) = q_t(y,x)$. The above equality of distributions can be extended to general finite dimensional distributions. Because the extension of the finite dimensional distributions to a law on $C([0,T],\R)$ (ie. from Lemma~\ref{lemma_endpoint_indep}(a)) is unique, we therefore have that for all $x,z \in \R$, 
\begin{align} \label{reversible}
P^Y_x( \hat{Y}\restrict{[0,T]} \in \cdot \, | \, \rho > T, Y_T = z ) = P^Y_z( Y\restrict{[0,T]} \in \cdot \, | \, \rho > T, Y_T = x).
\end{align}
As a last note, we will sometimes denote the law $P^Y_x(\cdot \, | \, \rho>T, Y_T = z )$ simply by $P^Y_x(\cdot \, | \, Y_T = z )$ when it is clear from context that we are working with the killed process. \\

\textit{Proof of Lemma~\ref{lemma_endpoint_indep}.}
Let $x,z \in \R$ and $T>0$. We define a distribution $P^{Y}_x( \cdot \,| \,\rho > T, Y_T = z)$ on finite (time indexed) collections of random variables which describes the finite dimensional distributions (FDDs) of the inhomogeneous Markov process with transition density \eqref{e_bridgetransden1}. For $0= t_0 < t_1  < \hdots < t_n < T$ and bounded, continuous functions $\phi_1,\hdots,\phi_n$, the $n$-dimensional FDD of $(Y_{t_1}, \ldots, Y_{t_n})$ under $P^{Y}_x( \cdot \,| \,\rho> T, Y_T = z)$ is defined as
\begin{equation} \label{e_bridgefdd}
E^{Y}_x \bigg( \prod_{i=1}^n \phi_i(Y_{t_i})  \, \bigg| \,\rho>T, Y_T = z \bigg) = \frac{1}{q_T(x,z)} \int \bigg[ \prod_{i=1}^n \phi_i(y_i) q_{t_{n} - t_{n-1}}(y_{n-1}, y_n) \bigg] q_{T-t_n}(y_n,z) \prod_{i=1}^n dm(y_i)
\end{equation}
where we use the convention $y_0 = x$. We note that \eqref{e_bridgefdd} also defines the FDDs of a regular conditional distribution of $(Y_t : t \in [0,T])$ under $P_x^Y$ conditioned on $Y_T = z$ (which is why we have used this notation). Thus when we have established that these laws extend to a probability on $C([0,T],\R)$, we will have explicitly constructed a version of the regular conditional distribution.\\

To prove that $P^{Y}_x( \cdot \,| \,\rho > T, Y_T = z)$ extends to a probability on $C([0,T],\R)$, we will establish a tightness criterion. We consider the fourth moments of increments of $Y$. Let $0< s<t < T$. Expanding using \eqref{e_bridgefdd}, we have
\begin{align} \label{tightness1}
& E^{Y}_x ((Y_t-Y_s)^4 \,  | \,\rho > T, Y_T = z) \nonumber
\\ & \hspace{5 mm}= \frac{1}{q_T(x,z)} \iint (y_2-y_1)^4 q_{s}(x,y_1) \,q_{t-s}(y_1, y_2) \, q_{T-t}(y_2,z)  \,dm(y_1) \,dm(y_2) .
\end{align}
We now collect some elementary bounds and inequalities which will allow us to obtain a useful upper bound for the above. First, we note that while $q_t(x,y)$ is a transition density with respect to $m$, it will sometimes be useful to express it as a density with respect to the Lebesgue measure. Since $p_1(\cdot)$ is the density of $m$, we have
\begin{equation}\label{transLeb}
q_t(x,y)\, dm(y) = q_t(x,y) \,p_1(y) \, dy.
\end{equation}
We will use a comparison with an un-killed Ornstein-Uhlenbeck process. The transition kernel of a standard Ornstein-Uhlenbeck process is described by, for $0 \leq s < t$,
\[(Y_t - Y_s \, | \, Y_s = y) \sim \cN(e^{-(t-s)/2}y, 1- e^{-(t-s)}).\]
Let $k_t(x,y)$ denote the a transition density of an un-killed Ornstein-Uhlenbeck process with respect to Lebesgue measure. Then for $x,y\in \R$ and $t>0$, 
\begin{equation}\label{unkilledtrans}
k_t(x,y)  = \frac{(2\pi)^{-1/2}}{\sqrt{1-e^{-t}}} \exp \left( -(y-e^{-t/2}x)^2 \big /\,2(1-e^{-t}) \right).
\end{equation}
The transition densities of the killed Ornstein-Uhlenbeck process are bounded above by those of the un-killed process. This implies that
\begin{equation} \label{unkilledtransbd}
\text{(i)}\,\, q_t(x,y) \, dm(y) \leq k_t(x,y)\, dy, \hspace{8 mm} \text{(ii)}\,\, q_{t}(x,y) \,p_1(y) \leq k_t(x,y)
\end{equation}
It is easy to establish from \eqref{unkilledtrans} that there is a constant $c>0$ such that 
\begin{equation} \label{unkilledgaussbd}
k_t(x,y) \leq c p_t(y-xe^{-t/2})\,\, \text{ for all }  t\leq 2 \text{ and } x,y \in \R.
\end{equation} 
where we recall that $p_t(\cdot)$ is the Gaussian density of variance $t$. Let $K>0$. From  \eqref{unkilledtransbd}(ii) and \eqref{unkilledgaussbd} it follows that there is a constant $C_1(K)$ such that
\begin{equation} \label{tightaux1}
q_{T-t}(y_2,z) \leq k_{T-t}(y_2,z) p_1(z)^{-1} \leq \frac{C_1(K)}{\sqrt{T-T'}} \,\,\, \forall \, y_2 \in \R, z \in [-K,K],\text{ and }  t\leq T' < T.
\end{equation}
Next, we note that it holds by elementary formulas for moments of Gaussians that there is a constant $c>0$ such that
\begin{equation} \label{tightaux3}
\int (y_2-y_1)^4 p_t(y_2) \, dy_2 \leq c\,(t^2 + |y_1|^4) \,\,\,\, \forall \, y_1 \in \R, \, t>0.
\end{equation}
Finally, observe that $q_T(\cdot,\cdot)$ is bounded below by the transition density of $Y$ with constant killing function $\|\phi\|_\infty$%, in which case the transition density (with respect to Lebesgue measure) equals the (constant) survival probability multiplied by $k_T(\cdot,\cdot)$.
. Thus for all $K >0$ and $M\geq 1$, from \eqref{transLeb} we have
\begin{equation} \label{tightaux2}
q_T(x,z) \geq e^{-\|\phi\|_\infty T} k_T(x,z) p_1(z)^{-1} \geq \delta(K,M) \,\,\, \forall \, x,z \in[-K,K], \,\, T \in [M^{-1},M]
\end{equation}
for a sufficiently small constant $\delta(K,M) > 0$. \\%, which is positive since $k_T(\cdot,\cdot)$ is positive and jointly continuous in $(T,(x,y))$. \\

Let $0<T'<T$ and suppose that $0<s<t\leq T'$ such that $t-s\leq 1$. Let $K>0$ and suppose that $x,z \in [-K,K]$. Using \eqref{unkilledtransbd}(i) to bound $q_s(x,y_1) dm(y_1)$ and $q_{t-s}(y_1,y_2) dm(y_2)$, and \eqref{tightaux1} to bound $q_{T-t}(y_2,z)$, from \eqref{tightness1} we obtain that
\begin{align} \label{tightness2}
& E^{Y}_x ((Y_t-Y_s)^4 \,  | \,\rho > T, Y_T = z) \nonumber
\\ & \hspace{5 mm}\leq \frac{C_1(K)\,(T-T')^{-1/2}}{q_T(x,z)}  \int k_s(x,y_1) \bigg[ \int (y_2- y_1)^4 k_{t-s}(y_1,y_2) dy_2 \bigg] dy_1 \nonumber
\\ &\hspace{5 mm}\leq \frac{C_1(K)\,(T-T')^{-1/2}}{q_T(x,z)}  \int k_s(x,y_1) \bigg[ \int c(y_2- y_1)^4 p_{t-s}(y_2-e^{-(t-s)/2}y_1) dy_2 \bigg] dy_1,
\end{align}
where the second inequality uses \eqref{unkilledgaussbd}. Changing variables and applying \eqref{tightaux3}, we obtain that
\begin{equation} \label{tightaux666}
 \int c(y_2- y_1)^4 p_{t-s}(y_2-e^{-(t-s)/2}y_1) dy_2 \leq C (|y_1|^4(1-e^{-(t-s)/2})^4 + (t-s)^2) \leq C(1+|y_1|^4)(t-s)^2, 
\end{equation}
where in the second inequality we have $1-e^{-x} \leq x$ for $x \geq 0$ and $(t-s)^4 \leq (t-s)^2$ (since $t-s \leq 1$). Substituting this into \eqref{tightness2}, we obtain that
\begin{equation} 
E^{Y}_x ((Y_t-Y_s)^4 \,  | \, Y_T = z) \leq\frac{C_1(K)\,(T-T')^{-1/2} }{q_T(x,z)} \,(t-s)^2 \int C \, k_s(x,y_1)\, (|y_1|^4+1) \, dy_1.
\end{equation}
Recall that we have assumed $x,z \in [-K,K]$. By \eqref{unkilledtrans} it is clear that for $K>0$, the integral is bounded above by some constant $C_2(K)>0$ for all $x \in [-K,K]$ and $s>0$. Using this along with \eqref{tightaux2}, with a choice of $M\geq 1$ for which $T \in [M^{-1},M]$, from the above we deduce the following:
\begin{align} \label{tightness3}
\text{For all } \,x,z \in &[-K,K], \,  0 < s < t \leq T' \text{ such that } t-s \leq 1,  \nonumber
\\ &E^{Y}_x ((Y_t-Y_s)^4 \,  | \, Y_T = z) = C_2(K)\,\delta(K,M)^{-1}\,C_1(K)\,(T-T')^{-1/2} \times (t-s)^2 .
\end{align}
Let $T' = 2T/3$. Hereafter we consider increments of size at most $1 \,\wedge \,T/3$. We have that \eqref{tightness3} holds for all $0 < s <t \leq 2T/3$ such that $t-s \leq 1 \,\wedge \, T/3$ with constant $C_2(K) \delta(K,M)^{-1} C_1(K) (T/3)^{-1/2}$. It remains to show that it also holds on $[2T/3,T]$. To do so, we make use of reversibility. Suppose $T/3 \leq s < t < T$. Then
\begin{align} \label{increverse}
&E_x^Y ( (Y_t - Y_s)^4 \, \big| \, Y_T = z ) \nonumber
\\ &\hspace{ 6 mm }= \frac{1}{q_T(x,z)} \iint (y_2- y_1)^4\, q_{s}(x,y_1) \, q_{t-s}(y_1, y_2) \, q_{T-t}(y_2,z) \,dm(y_1) \, dm(y_2) \nonumber
\\ &\hspace{ 6 mm }= E_z^Y ((Y_{T-s}- Y_{T-t})^4 \, \big| \,Y_T = x ), 
\end{align}
where the last equality uses $q_t(x,y) = q_t(y,x)$ (a consequence of \eqref{OU_eigexp}) and \eqref{e_bridgefdd}. Since $0 < T-t < T-s \leq 2T/3$, by \eqref{tightness3} and \eqref{increverse} we have that for all $x,z \in [-K,K]$ and $T/3 \leq s < t < T$ such that $t-s \leq 1 \wedge T/3$,
\begin{equation}
E^{Y}_x ((Y_t-Y_s)^4 \,  | \,\rho > T, Y_T = z) = C_2(K)\,\delta(K,T)^{-1}\, C_1(K) \,(T/3)^{-1/2} \times (t-s)^2. \nonumber
\end{equation}
Combined with the previous statement that this holds for all $0<s<t \leq 2T/3$, we have that
\begin{align} \label{tightnessF}
\text{For all } \,x,z \in &[-K,K], \,  0 < s < t <T  \text{ such that } t-s \leq 1 \wedge T/3,  \nonumber
\\ &E^{Y}_x ((Y_t-Y_s)^4 \,  | \, Y_T = z) \leq C_2(K)\,\delta(K,M)^{-1}\, C_1(K) \,(T/3)^{-1/2} \times (t-s)^2 .
\end{align}
The above proof can be easily modified to obtain the same bound (with a potential change to the constant) for increments in which $s=0$ or $t = T$, and we omit it. Thus by \eqref{tightnessF} and the Kolmogorov Continuity Theorem, $P^Y_x ( \cdot \, | \,\rho>T, Y_T = z)$ has a unique extension to a probability on $C([0,T],\R)$, also denoted by $P^Y_x (\cdot \, | \,\rho>T, Y_T = z)$. As we noted earlier, this gives an explicit construction of the regular conditional distribution $(Y_t : t \in[0,T])$ under $P^Y_x$ given $\rho > T$ and $Y_T = z$,  Additionally, suppose that $z_n \to z$ and that $z_n \in [-K,K]$ for all $n\geq 1$. From \eqref{tightnessF}, $\{P^Y_x ( \cdot | \,\rho>T, Y_T = z_n) : n \geq 1\}$ is tight. It is clear from \eqref{e_bridgefdd} and continuity of $q_t(\cdot,\cdot)$ that the FDDs of $P^Y_x ( \cdot \, | \,\rho>T, Y_T = z_n)$ converge to those of $P^Y_x (\cdot \, | \,\rho>T, Y_T = z)$. Thus the aforementioned tightness proves that $P^Y_x ( \cdot \, | \,\rho>T, Y_T = z_n)$ converges to $P^Y_x ( \cdot \, | \,\rho>T, Y_T = z_n)$ as a law on $C([0,T],\R)$. Thus we have proved part (a).\\

Before proving (b), we note the following consequence of \eqref{tightnessF} and its proof. Let $S,K>0$ and fix $M > 1$ such that $S \in [M^{-1},M]$. By considering increments of $(Y_s : s \in [0,S])$ but allowing the time $T$ at which we condition $Y_T = z$ to take values in $[S,M]$, we have that
\begin{equation} \label{bonustightness}
\{P^Y_x ( Y\restrict{[0,S]} \in \cdot \, | \,\rho>T, Y_T = z) : |x|,|z| \leq K, T \in [S, M]\} \, \text{ is tight.}
\end{equation}

Next we turn to part (b). Fix $S>0$ and $x,z\in \R$. We now check that the FDDs of $(Y_s : s \in [0,S])$ under $P^{Y}_x (\cdot \,  | \,\rho>T, Y_T = z)$ converge to those of $(Y_s : s \in [0,S])$ under $P^{Y,\infty}$ as $T \to \infty$. Let $0 < t_1 < t_2 \leq S$ and let $\phi_1$ and $\phi_2$ be bounded and continuous functions. Then from \eqref{e_bridgefdd}, we have
\begin{align} \label{e_endpointindep11}
&E_x^Y (\phi_1(Y_{t_1}) \, \phi_2(Y_{t_2}) \, \big| \,\rho > T, Y_T = z )  \nonumber
\\&\hspace{8mm}= \frac{1}{q_T(x,z)} \iint \phi_1(Y_{t_1}) \, \phi_2(Y_{t_2}) \, q_{t_1}(x,y_1) \,q_{t_2-t_1}(y_1, y_2) \,q_{T-t_2}(y_2,z) \,  dm(y_1) \, dm(y_2) \nonumber 
\\&\hspace{8mm}= \frac{e^{\lambda_0 t_2}}{e^{\lambda_0T}q_T(x,z)} \iint \phi_1(Y_{t_1}) \, \phi_2(Y_{t_2}) \, q_{t_1}(x,y_1) \,q_{t_2-t_1}(y_1, y_2) \,e^{\lambda_0(T-t_2)}q_{T-t_2}(y_2,z) \,  dm(y_1) \, dm(y_2).
\end{align} 
By Lemma~\ref{lemma_densitylimit}, we have
\begin{equation} \label{end1}
\lim_{T \to \infty} e^{\lambda_0(T-t)}q_{T-t}(y_2,z) = \psi_0(y_2) \psi_0(z), \,\,\,\, \lim_{T \to \infty} e^{\lambda_0 T}q_{T}(x,z) = \psi_0(x) \psi_0(z).
\end{equation}
Moreover, applying (\ref{OU_qbound}) with $\delta = 1/8$, we have that 
\begin{equation} \label{tightaux4}
 e^{\lambda_0(T-t)}q_{T-t}(y_2,z) \leq ce^{y_2^2/8 + z^2/8} \,\,\,\,\,\, \forall \, y_2,z \in \R, t \in (0,S] \, \text{ and }  T \geq S + s^*(1/8),
\end{equation}
where $s^*(1/8)$ is as in \eqref{OU_s_star}. Using \eqref{tightaux4} (replacing $t$ with $t_2$) and \eqref{unkilledtransbd}(i) we obtain the following bound for the integrand in \eqref{e_endpointindep11}:
\begin{align}
&| \phi_1(Y_{t_1}) \, \phi_2(Y_{t_2}) \, q_{t_1}(x,y_1) \,q_{t_2-t_1}(y_1, y_2) \,e^{\lambda_0(T-t_2)}q_{T-t_2}(y_2,z)|\,  dm(y_1) \, dm(y_2) \nonumber
\\ &\hspace{6 mm} \leq e^{z^2/8}\|\phi_1\|_\infty \|\phi_2\|_\infty e^{y_2^2/8} k_{t_1}(x,y_1) k_{t_2 - t_1}(y_1, y_2) \,dy_1 \, dy_2 \nonumber
\end{align}
for all $T \geq S + s^*(1/8)$. By \eqref{unkilledtrans}, $k_{t_1}(x,y_1)$ and $k_{t_2 - t_1}(y_1, y_2)$ are Gaussians with variance at most $1$, and so a short argument shows that the above quantity is integrable. This allows us to use Dominated Convergence in \eqref{e_endpointindep11}, so by \eqref{end1} we have
\begin{align}
& \lim_{T \to \infty} E_x (\phi_1(Y_{t_1}) \, \phi_2(Y_{t_2}) \, \big| \, Y_T = z )  \nonumber
\\ &\hspace{8mm}=\frac{e^{\lambda_0 t_2}}{\psi_0(x) \psi_0(z)} \iint \phi_1(Y_{t_1}) \, \phi_2(Y_{t_2}) \, q_{t_1}(x,y_1) \,q_{t_2-t_1}(y_1, y_2) \,\psi_0(y_2)\,\psi_0(z) \,  dm(y_1) \, dm(y_2)\nonumber
\\ &\hspace{8mm}= \iint \phi_1(Y_{t_1}) \, \phi_2(Y_{t_2}) \, \left[e^{\lambda_0 t_1} q_{t_1}(x,y_1) \frac{\psi_0(y_1)}{\psi_0(x)}\right] \, \left[e^{\lambda_0(t_2 - t_1)}q_{t_2-t_1}(y_1, y_2) \frac{\psi_0(y_1)}{\psi_0(y_2)} \right] \,  dm(y_1) \, dm(y_2)\nonumber
\\ &\hspace{8mm}=\iint \phi_1(Y_{t_1}) \, \phi_2(Y_{t_2}) \, \tilde{q}_{t_1}(x,y_1) \, \tilde{q}_{t_2 - t_1}(y_1, y_2) \,  dm(y_1) \, dm(y_2) = E_x^{Y,\infty}( \phi_1(Y_{t_1}) \, \phi_2(Y_{t_2}) ). \nonumber
\end{align}
The above argument can be easily generalized to $n$-fold FDDs for all $n \geq 2$, (the $\delta=1/8$ in \eqref{tightaux4} can be reduced to handle larger $n$) and thus we have the desired convergence of the FDDs as $T \to \infty$. In order to obtain weak convergence of the laws on $C([0,S],\R)$, we need tightness of the distributions as $T \to \infty$. To prove that the distributions are tight we will analyse the fourth moments of increments, as in \eqref{tightness1}, but first we obtain one more bound. We note that by Lemma~\ref{lemma_densitylimit} and joint continuity of $(T,(x,y)) \to  q_T(x,y)$, it holds that for all $K>0$,
\begin{equation} \label{tightaux5}
 e^{\lambda_0 T}q_T(x,z) \geq \delta(K) > 0 \,\,\,\, \forall\, x,z \in [-K,K], \, T \geq 1
\end{equation}
for sufficiently small $\delta(K) > 0$. Let $K>0$ and $x,z \in [-K,K]$. In \eqref{tightness1}, we bound $q_{T-t}(y_2,z)$ above using \eqref{tightaux4} and bound the other transition densities using \eqref{unkilledtransbd}, which gives
%\label{inftight1}
\begin{align} 
&E^{Y}_x ((Y_t-Y_s)^4 \,  | \,\rho > T, Y_T = z) \nonumber 
\\ &\hspace{5 mm}\leq \frac{e^{\lambda_0 t + z^2/8}}{e^{\lambda_0 T}q_T(x,z)} \int k_s(x,y_1) \bigg[\int (y_2 - y_1)^4 \, k_{t-s}(y_1,y_2) \,e^{y_2^2/8} dy_2 \bigg] dy_1 \nonumber
\\ &\hspace{5 mm} \leq e^{\lambda_0 S + K^2/8} \, \delta(K)^{-1} \int k_s(x,y_1) e^{y_1^2/4} \bigg[  \int c \,(y-y_1(1-e^{-(t-s)/2}))^4 p_{t-s}(y) \,e^{y^2/4} \, dy \bigg] dy_1 \nonumber
\\ &\hspace{5 mm} \leq e^{\lambda_0 S + K^2/8} \, \delta(K)^{-1} \int c'\, k_s(x,y_1) e^{y_1^2/4} \bigg[  \int c' \,(y-y_1(1-e^{-(t-s)/2}))^4\, p_{2(t-s)}(y) \, dy \bigg] dy_1 \nonumber
\end{align} 
for all $T\geq S + s^*(1/8)$. In the second inequality we have used \eqref{tightaux5} as well as \eqref{unkilledgaussbd} and a change of variables. The third follows from a short calculation and the fact that $t-s \leq 1$. Applying \eqref{tightaux3} to the above and arguing as in \eqref{tightaux666}, we obtain that, for all $T\geq S + s^*(1/8)$,
\begin{align} \label{inftight1}
&E^{Y}_x ((Y_t-Y_s)^4 \,  | \,\rho > T, Y_T = z) \nonumber 
\\ &\hspace{5 mm} \leq e^{\lambda_0 S + K^2/8} \, \delta(K)^{-1} \, (t-s)^2 \,C \int k_s(x,y_1)\, e^{y_1^2/4} \,(1+|y_1|^4) \,dy_1, \nonumber
\\&\hspace{5 mm}\leq C_3(S,K)\, (t-s)^2 \,\,\, \forall \,x,z \in [-K,K], \, 0\leq s<t\leq S \text{ such that } t-s \leq 1,
\end{align} 
for a constant $C_3(S,K) > 0$, where to see that the integral is bounded uniformly for $|x| \leq K$, we use the fact, from \eqref{unkilledtrans}, that $k_s(x,y_1)$ is Gaussian with mean absolutely bounded by $|x|$ and variance less than $1$. The fact that \eqref{inftight1} holds for all $T \geq S + s^*(1/8)$ implies that the laws $P^Y_x (Y\restrict{[0,S]}\in \cdot \, | \,\rho>T, Y_T = z)$ are tight as $T \to \infty$. Combined with the convergence of the FDDs to those of $P^{Y,\infty}_x$, this proves (b). \\

Observe that \eqref{inftight1} proves part (c) if we restrict to $T \geq S + s^*(1/8)$. If we choose $M\geq 1$ such that $M^{-1} < S < S+s^*(1/8) < M$, then \eqref{bonustightness} gives tightness of the laws for $T \in [S,S+s^*(1/8)]$. Combining these two cases gives desired tightness and proves (c). \qed \\

% We recall that the constant $C_1(\tau,K)$ appearing in \eqref{tightness3} is non-increasing in $\tau$. Choosing $M\geq 1$ such that $M^{-1} < S$ and $S+s^*(1/8)<M$, from \eqref{tightness3} we obtain that for $K>0$ and all $x,z \in \R$ such that $|x|, |z| \leq K$,
%\begin{align} 
%\text{For all } 0 &< s < t \leq \frac{2S}{3} \text{ such that } t-s \leq 1 \text{ and all } T \in [S,S+s^*(1/8)],  \nonumber
%\\ &E^{Y}_x ((Y_t-Y_s)^4 \,  | \,\rho > T, Y_T = z) = C_2(K)\,\delta(K,M)^{-1}\, C_1(S/3,K) \times (t-s)^2 . \nonumber
%\end{align}
%Finally, using \eqref{increverse} and \eqref{tightness3} again, we deduce that the same holds for $S/3 \leq s < t < S$ satisfying $t-s \leq 1$ (with the same constants). Provided $S/3 \geq 1$, this proves the desired bound on fourth moments of increments for all $0<s<t<S$ such that $t-s \leq 1$. (As before, if this is not the case we can restrict to $t-s \leq S/3$). This implies tightness of the laws on $C([0,S],\R)$ for $T \in[S, S+s^*(1/8)$. Taking the maximum  of the constants from the $T\geq s^*(1/8)$ case and the $ T \in [S,S+s^*(1/8)]$ case, we obtain a uniform in $T \geq S$ bound on moments of increments, which implies the tightness stated in part (c). \qed

\section{Some non-linear PDE} \label{s_duality}
Let $\cB_{b^+}(\R)$ denote the space of bounded, non-negative Borel functions. Recall that $S_t$ denotes the semigroup of Brownian motion. By Theorem III.5 of \cite{LG1999}, for $\phi \in \cB_{b^+}(\R)$, there exists a unique non-negative solution, denoted $V^\phi_t(x)$, to the evolution equation
\begin{equation}\label{e_integraleq}
V_t = S_t(\phi) -  \left( \int_0^t S_{t-s}(V_{s}^2/2) \, ds \right),
\end{equation}
such that
\begin{equation} \label{e_LapFun}
E_{X_0}^X \big( e^{-X_t(\phi)} \big) = e^{-X_0(V_t^\phi)}
\end{equation}
for all $X_0 \in \cM_F(\R)$. Applying the above with $X_0 = \delta_x$, \eqref{e_canonPPP} gives
\begin{equation} \label{e_LapFunCanon}
\N_x \big(1-e^{-X_t(\phi)} \big) = V_t^\phi(x).
\end{equation}
We are interested in the case when the initial data is a measure, and also in the differential form of the equation. The integral equation \eqref{e_integraleq} has a corresponding PDE, which is the following:
\begin{equation} \label{e_dualPDE}
\frac{\partial V}{\partial t} = \frac 1 2 \frac{\partial^2 V}{\partial x^2} - \frac{V^2}{2} \hspace{4 mm} \text{ for } (t,x) \in(0,\infty) \times \R, \hspace{3 mm} V_t \to \phi \, \text{  as } t \downarrow 0. 
\end{equation}
In \cite{BF1983}, this equation was shown to have a unique $C^{1,2}$ solution when $\phi \in \cM_F(\R)$, where $V_t \to \phi$ is understood as weak convergence. By Lemma 2.1 of \cite{M2002}, this solution is also the unique solution to \eqref{e_integraleq}. We denote the unique solution to \eqref{e_integraleq} and \eqref{e_dualPDE} by $V^\phi_t$. Part (d) of the same lemma establishes that if $\phi_n \to \phi$ weakly as $n \to \infty$, then $V^{\phi_n}_t(x) \to V^\phi_t(x)$ for all $t>0, x \in \R$. We note from \eqref{e_integraleq} that $V^{\phi_n}_t \leq S_t\phi_n \leq ct^{-1/2}\phi_n(\R)$. Using this and the fact that $X_t$ has a bounded, continuous density, if we approximate measures by functions in $\cB_{b^+}(\R)$, we can take bounded limits in \eqref{e_LapFun} and \eqref{e_LapFunCanon} to establish that \eqref{e_LapFun} and \eqref{e_LapFunCanon} hold for $V^\phi_t$ when $\phi \in \cM_F(\R)$.\\

\textbf{Notation:} As $X_t$ is absolutely continuous, when $\phi \in \cM_F(\R)$ we interpret $X_t(\phi)$ as $\int X(t,x)\, d\phi(x)$.\\

We now state some useful properties of solutions to \eqref{e_dualPDE}. For a proof, see Lemma~2.6 in \cite{M2002}.
\begin{proposition} \label{prop_mono_subadd}
Let $\phi, \psi \in \cM_F(\R)$. \\
(a) (Monotonicity) If $\phi \leq \psi$, then $0 \leq V_t^\phi \leq V_t^\psi$ for all $t>0$.\\
(b) (Sub-additivity) $V_t^{\phi + \psi} \leq V_t^\phi + V_t^\psi$ for all $t>0$.
\end{proposition}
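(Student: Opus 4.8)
The plan is to bypass the PDE entirely and argue through the canonical-measure representation \eqref{e_LapFunCanon}. As noted just after that display, the identity $V_t^\phi(x) = \N_x(1 - e^{-X_t(\phi)})$ extends from $\cB_{b^+}(\R)$ to all $\phi \in \cM_F(\R)$ by approximating measures by bounded nonnegative functions and passing to the bounded limit; I would record this first. Under $\N_x$, $X_t$ is, on $\{X_t > 0\}$ (where $\N_x$ is finite), a genuine finite measure with $X_t(\phi) = \int X(t,y)\,d\phi(y) \ge 0$ when $\phi \ge 0$, while on $\{X_t = 0\}$ the integrand $1 - e^{-X_t(\phi)}$ vanishes, so the right-hand side is a well-defined number in $[0,\infty)$.

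For (a): since $1 - e^{-a} \ge 0$ for $a \ge 0$ and $\N_x$ is a positive measure, the representation gives $V_t^\phi \ge 0$ immediately. If $\phi \le \psi$, then $X_t(\phi) \le X_t(\psi)$ for every realization of $X_t$, hence $1 - e^{-X_t(\phi)} \le 1 - e^{-X_t(\psi)}$ pointwise; integrating this against $\N_x$ (restricted to $\{X_t>0\}$, where all quantities are finite) yields $V_t^\phi(x) \le V_t^\psi(x)$ for every $x$.

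For (b): I would use the elementary bound
\[ 1 - e^{-(a+b)} = \big(1 - e^{-a}\big) + e^{-a}\big(1 - e^{-b}\big) \le \big(1 - e^{-a}\big) + \big(1 - e^{-b}\big), \qquad a, b \ge 0, \]
applied with $a = X_t(\phi)$, $b = X_t(\psi)$ and $X_t(\phi+\psi) = X_t(\phi) + X_t(\psi)$; integrating against $\N_x$ and invoking the representation three times (noting $\phi, \psi, \phi+\psi$ all lie in $\cM_F(\R)$) gives $V_t^{\phi+\psi}(x) \le V_t^\phi(x) + V_t^\psi(x)$.

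There is no real obstacle here; the only point requiring attention is that $\N_x$ is merely $\sigma$-finite, so one must observe that the integrands above are supported on $\{X_t>0\}$ and bounded by $1$ there, which makes every integral finite and legitimizes the manipulations. As a remark, a purely analytic alternative is available: setting $U_t = V_t^\psi - V_t^\phi$ (resp. $U_t = V_t^\phi + V_t^\psi - V_t^{\phi+\psi}$) and subtracting the relevant instances of the integral equation \eqref{e_integraleq} produces a linear integral inequality for $U_t$ with nonnegative source term and nonnegative kernel $(V^\phi_s + V^\psi_s)/2$, from which $U_t \ge 0$ follows by Picard iteration / a Gronwall-type estimate. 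This reproves the proposition without the probabilistic representation but is more laborious, so I would present the argument above.
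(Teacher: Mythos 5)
Your proof is correct and complete. The paper does not actually give its own argument for this proposition; it merely refers the reader to Lemma~2.6 of \cite{M2002}, where the result is established analytically from the integral equation \eqref{e_integraleq} (comparison / Gronwall-type arguments for the nonlinear heat semigroup). You instead read both parts directly off the canonical-measure identity $V_t^\phi(x) = \N_x(1 - e^{-X_t(\phi)})$, which the paper has already extended from $\cB_{b^+}(\R)$ to $\cM_F(\R)$ by bounded approximation. Once that identity is in hand, (a) is the monotonicity of $a \mapsto 1 - e^{-a}$ and (b) is the elementary pointwise bound $1 - e^{-(a+b)} = (1-e^{-a}) + e^{-a}(1-e^{-b}) \le (1-e^{-a}) + (1-e^{-b})$, integrated against $\N_x$. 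Your remark about $\sigma$-finiteness is the right thing to flag and you handle it correctly: the integrands vanish on $\{X_t=0\}$ and are bounded by $1$ on $\{X_t>0\}$, where $\N_x$ has total mass $2/t$. The trade-off between the two routes is exactly what you identify: the PDE comparison argument is self-contained at the analytic level and works for more general branching mechanisms without a canonical-measure theory, whereas your probabilistic argument is essentially one line once \eqref{e_LapFunCanon} is available, and it is more in the spirit of the rest of this paper, which leans heavily on $\N_0$ throughout.
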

%\begin{proof} To see (a), it is clear from \eqref{e_LapFunCanon} that $V^\psi_t - V^\phi_t \geq 0$. To prove (b), we use a theorem of Burton and Waymire: each infinitely divisible measure is associated \cite{BW1986}. $X_t$ is infinitely divisible under $P^X_{X_0}$ and is associated, which implies that $\text{Cov}(F(X), G(X)) \geq 0$ for bounded increasing functions $F,G : \cM_F(\R) \to \R$, a theorem from \cite{BW1985}. Taking functionals $-e^{-X_t(\phi)}$ and $-e^{-X_t(\psi)}$, we have
%\begin{align}
% 0 \leq E^X_{X_0}(e^{-X_t(\phi)} e^{-X_t(\psi)}) - E^X_{X_0}(e^{-X_t(\phi)})E^X_{X_0}(e^{-X_t(\psi)}) = e^{-X_0(V^{\phi + \psi}_t)} - e^{-X_0(V^\phi_t + V^\psi_t)}. \nonumber
%\end{align} 
%The result then follows by taking $X_0 = \delta_x$ for all $x \in \R$. \end{proof}

We now let $\phi = \lambda \delta_x \in \cM_F(\R)$ for $\lambda>0$, so that $X_t(\phi) = \lambda X(t,x)$. Denote by $V_t^\lambda$ the unique, non-negative $C^{2,1}$ solution to the initial value problem
\begin{align} \label{e_dualPDElambda}
&\frac{\partial V}{\partial t} = \frac 1 2 \frac{\partial^2 V}{\partial x^2} - \frac{V^2}{2} \hspace{4 mm} \text{ for } (t,x) \in(0,\infty) \times \R, \hspace{3 mm} V_t \to \lambda \delta_0 \, \text{  weakly as } t \downarrow 0. 
\end{align}
This family was originally studied in \cite{KP1985}. It is an exercise to use \eqref{e_dualPDElambda} or the scaling properties of super-Brownian motion to show that $V^\lambda_t(x)$ satisfies the following space-time scaling relationship. For $\lambda,r>0$, we have
\begin{equation} \label{e_Vscale}
V^{\lambda r}_t(x) = \lambda^2 V^r_{\lambda^2 t} (\lambda x).
\end{equation}

By translation invariance in the initial conditions of (\ref{e_dualPDElambda}), and by (\ref{e_LapFun}) and (\ref{e_LapFunCanon}) we have
\begin{align}
&E_{\delta_0}^X \big( e^{-\lambda X(t,x)} \big) = e^{-V^\lambda_t(x)}, \label{e_LapFunlambda}
\\ &\N_0 \big(1-e^{-\lambda X(t,x)} \big) = V_t^\lambda(x)     \label{e_LapFunCanonlambda}
\end{align}
for all $x \in \R$ and $t>0$. It is clear from (\ref{e_LapFunlambda}) that $V^\lambda_t$ increases to a limit as $\lambda \to \infty$. In the PDE literature this was established in \cite{KP1985}, where it was shown that $V^\lambda$ converges locally uniformly as $\lambda \to \infty$ to a function $V^\infty_t$ on $(0,\infty) \times \R$. Heuristically, $V^\infty_t$ is the solution of (\ref{e_dualPDElambda}) when $\lambda = + \infty$. Rigorously, it is the unique solution to the following problem:
\begin{align} \label{e_dualPDEinfty}
&\frac{\partial V}{\partial t} = \frac 1 2 \frac{\partial^2 V}{\partial x^2} - \frac{V^2}{2} \hspace{4 mm} \text{ for } (t,x) \in(0,\infty) \times \R, \nonumber
\\ & \lim_{t\downarrow 0} V_t(x) = 0 \,\,\,\forall x \neq 0, \,\,\,\, \lim_{t\downarrow 0} \int_{B_\epsilon} V^\infty_t(x) \, dx = + \infty \,\,\, \forall \,\epsilon>0, 
\end{align}
where $B_\epsilon = B(0, \epsilon)$, the ball with radius $\epsilon$ centered at the origin. $V^\infty_t$ was introduced and shown to solve \eqref{e_dualPDEinfty} in \cite{BPT1986}; uniqueness of the solution is a consequence of Theorem 3.5 of \cite{MV1999}. Taking $\lambda \to \infty$ in \eqref{e_LapFunCanonlambda}, we see that $V^\infty_t$ satisfies
\begin{equation} \label{e_Vinf_prob}
V^\infty_t(x) = \N_0(\{X(t,x) > 0\}).
\end{equation}
We recall that (see Theorem II.7.2 of \cite{P2002})
\begin{equation} \label{e_Xtsurvive}
\N_0 (\{X_t > 0\}) = 2/t.
\end{equation}
Thus \eqref{e_Vinf_prob} implies that
\begin{equation}\label{e_Vinf_tbd}
V^\infty_t(x) \leq 2/t \,\,\, \forall \, x.
\end{equation}
Taking $\lambda^2 = 1/t$ and letting $r \to \infty$ in \eqref{e_Vscale}, one obtains that $V^\infty_t(x) = t^{-1} V^\infty_1(t^{-1/2}x)$. \\

\textbf{Definition:} $F(x) := V_1^\infty(x)$.\\

Then we have $V_t^\infty(x) = t^{-1}F(t^{-1/2} x)$. It was shown in \cite{BPT1986} that $F$ is the solution to an ODE problem. (In fact, their PDEs and ODEs have different (constant) coefficients, but Section~3 of \cite{MMP2017} shows that $F$ is a rescaled version of the function they study.) $F$ is the unique solution of
\begin{align} \label{e_FODE}
(i)&\,\,  F''(x) +  xF'(x) + F(x)(2 - F(x)) = 0 \nonumber
\\ (ii)&\, \,F>0, F\in C^2(\R)
\\ (iii)&\,\, F'(0) = 0, F(x) \sim c_1 |x| e^{-x^2/2} \, \text{ as }\, |x| \to \infty \nonumber
\end{align}
for some $c_1 >0$. We recall that $f(x) \sim h(x)$ means $f(x) / h(x) \to 1$ as $x \to \infty$. This $F$ is the function we discussed in the introduction, for which $-\lambda_0$ is the lead eigenvalue of the operator $A^F$. In particular, by evaluating \eqref{e_Vinf_prob} at $t=1$ we can recover \eqref{e_Fdefintro}, our preliminary definition.\\

As part of the proof of Theorem A, the authors of \cite{MMP2017} computed the rate of convergence of $V^\lambda_t$ to $V^\infty_t$. In particular, Proposition~4.6 of that reference states that
\begin{equation} \label{e_VlambdaConvergence}
\sup_{x\in \R} \left[V^\infty_t(x) - V^\lambda_t(x)\right] \leq C t^{-1/2 - \lambda_0} \lambda^{1-2\lambda_0}
\end{equation}
for some constant $C$. (This is closely connected to \eqref{e_densityLaplace}.) A similar lower bound with the same power of $\lambda$ is established in the same proposition. We will make frequent use of \eqref{e_VlambdaConvergence} in this work to bound error terms arising when we make approximations to obtain an eigenvalue problem. Let $Y$ be an Ornstein-Uhlenbeck process. We define $Z_T(Y)$ as 
\begin{equation} \label{e_Zdef}
Z_T(Y) = \exp \bigg( \int_0^T F(Y_s) - V_1^{e^{s/2}}(Y_s) \, ds \bigg).
\end{equation}
Since $V_1^{e^{s/2}} \uparrow V^\infty_1 = F$ as $s \to \infty$, the integrand is converging to zero. As $Z_T(Y)$ is increasing in $T$, we can define $Z_\infty(Y) := \lim_{T \to \infty} Z_T(Y)$. By (\ref{e_VlambdaConvergence}), we can easily deduce that the (monotone) limit
\begin{equation} \label{e_Zinf}
Z_\infty(Y) := \lim_{T \to \infty} Z_T(Y)
\end{equation} 
exists and is finite, and that moreover there is a constant $C_Z>0$ such that, uniformly for all $Y$,
\begin{equation} \label{e_ZTbd}
Z_T(Y)\leq Z_\infty(Y) \leq C_Z < \infty \,\,\, \forall \, T>0.
\end{equation}

Finally, we introduce another family of solutions to (\ref{e_dualPDE}), which arise when we compute second moments of $L^\lambda_t$; we will evaluate expressions that involve the density at two points $x_1, x_2 \in \R$. Let $V_t^{(\lambda, \lambda'),(x_1,x_2)}$ denote $V_t^\phi$ when $\phi = \lambda \delta_{x_1} + \lambda' \delta_{x_2} \in \cM_F(\R)$. When we evaluate this function at $0$ we simply write $V_t^{(\lambda, \lambda'),(x_1,x_2)}(0) = V_t^{\lambda, \lambda'}(x_1,x_2)$. In particular, by \eqref{e_dualPDE}, this is equivalent to
\begin{equation} \label{V2ptdef}
V^{\lambda, \lambda'}_t(x_1,x_2) = \N_0 \big(1 - e^{-\lambda X(t,x_1) - \lambda' X(t,x_2)} \big).
\end{equation}
We will make frequent use of the fact that this family of solutions is translation invariant in the initial conditions of (\ref{e_dualPDE}). This implies that $V_t^{(\lambda, \lambda'),(x_1,x_2)}(y) = V_t^{\lambda, \lambda'}(y-x_1,y-x_2)$. The family satisfies the following scaling relationship:
\begin{equation} \label{e_Vscale_2pt}
V^{r \lambda, c\lambda'}_t(x_1,x_2) = \lambda^2 V^{r,c \lambda'/\lambda }_{\lambda^2 t}(\lambda x_1, \lambda x_2) = (\lambda')^2 V^{r\lambda/\lambda' ,c }_{(\lambda')^2 t}(\lambda' x_1, \lambda' x_2),
\end{equation}
for all $\lambda, \lambda', r,c>0$. Taking limits and applying bounded convergence in (\ref{e_LapFun}), we see that $V_t^{\lambda, \lambda'}(x_1,x_2)$ has a monotone limit as $\lambda, \lambda' \to \infty$ (by Proposition~\ref{prop_mono_subadd}(a)). We denote this limit $V_t^{\infty,\infty}(x_1,x_2)$. In agreement with our previous notation we define the following.\\

\textbf{Definition.} $F_2(x_1,x_2) := V_1^{\infty,\infty}(x_1,x_2)$.\\

By taking the limit as $\lambda, \lambda' \to \infty$ in \eqref{V2ptdef} (and in \eqref{e_LapFun} with $\phi = \lambda \delta_{x_1} + \lambda' \delta_{x_2}$) we obtain that
\begin{equation} \label{V2infprob}
V_t^{\infty,\infty}(x_1,x_2) = \N_0(\{X(t,x_1)>0\} \cup \{X(t,x_2) > 0\} ) = -\log P^X_{\delta_0}(\{X(t,x_1) = X(t,x_2) = 0\} ). 
\end{equation}

We conclude by stating a version of (\ref{e_VlambdaConvergence}) for the functions $V^{\lambda, \lambda'}_t$.
\begin{lemma} \label{lemma_V2pt_ROC}
There is a positive constant $C$ such that for all $t,\lambda,\lambda' >0 $,
\begin{equation}
\sup_{x_1,x_2 \in \R} \left[V^{\infty, \infty}_t(x_1,x_2) - V^{\lambda, \lambda'}_t(x_1,x_2)\right] \leq C t^{-1/2 - \lambda_0} \left[ \lambda^{1-2\lambda_0} + \lambda'^{1-2\lambda_0} \right]. \nonumber
\end{equation}
\end{lemma}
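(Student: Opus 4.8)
The plan is to reduce this two-point rate of convergence to the one-point estimate \eqref{e_VlambdaConvergence} by using the probabilistic representations of both sides under $\N_0$. Since $\N_0(\{X_t>0\}) = 2/t<\infty$ (so $V^\infty_t\le 2/t$ by \eqref{e_Vinf_tbd}), every $\N_0$-integral below is finite. By \eqref{V2infprob} we have $V^{\infty,\infty}_t(x_1,x_2) = \N_0(\{X(t,x_1)>0\}\cup\{X(t,x_2)>0\})$ and by \eqref{V2ptdef} we have $V^{\lambda,\lambda'}_t(x_1,x_2) = \N_0(1 - e^{-\lambda X(t,x_1) - \lambda' X(t,x_2)})$; subtracting and observing that the resulting integrand vanishes on $\{X(t,x_1)=X(t,x_2)=0\}$ (there the indicator of the union is $0$ and the Laplace term is $1-1=0$) gives
\[
V^{\infty,\infty}_t(x_1,x_2) - V^{\lambda,\lambda'}_t(x_1,x_2) = \N_0\Big( e^{-\lambda X(t,x_1) - \lambda' X(t,x_2)}\,;\, \{X(t,x_1)>0\}\cup\{X(t,x_2)>0\} \Big).
\]

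Next I would split the union into the disjoint events $\{X(t,x_1)>0\}$ and $\{X(t,x_1)=0\}\cap\{X(t,x_2)>0\}$. On the first, bound $e^{-\lambda X(t,x_1) - \lambda' X(t,x_2)}\le e^{-\lambda X(t,x_1)}$; on the second, $e^{-\lambda X(t,x_1) - \lambda' X(t,x_2)} = e^{-\lambda' X(t,x_2)}$. This yields
\[
V^{\infty,\infty}_t(x_1,x_2) - V^{\lambda,\lambda'}_t(x_1,x_2) \le \N_0\big(e^{-\lambda X(t,x_1)}\,;\, X(t,x_1)>0\big) + \N_0\big(e^{-\lambda' X(t,x_2)}\,;\, X(t,x_2)>0\big).
\]
Each term on the right is a one-point gap: since $(1 - e^{-\lambda X(t,x)})1(X(t,x)>0) = 1 - e^{-\lambda X(t,x)}$, subtracting this from the identity $V^\infty_t(x) = \N_0(\{X(t,x)>0\})$ of \eqref{e_Vinf_prob} and using \eqref{e_LapFunCanonlambda} gives $\N_0\big(e^{-\lambda X(t,x)}\,;\, X(t,x)>0\big) = V^\infty_t(x) - V^\lambda_t(x)$, and likewise with $\lambda'$ and $x_2$. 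Applying \eqref{e_VlambdaConvergence} to each term then produces the bound $C t^{-1/2-\lambda_0}\big[\lambda^{1-2\lambda_0} + (\lambda')^{1-2\lambda_0}\big]$, uniformly in $x_1,x_2$ because \eqref{e_VlambdaConvergence} is uniform in the spatial variable, with $C$ here twice the constant there.

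I do not expect a genuine obstacle: the argument is a short manipulation of exponential Laplace functionals under $\N_0$, and the only point requiring a little (routine) care is the finiteness of the $\N_0$-integrals and the disjoint bookkeeping of the event $\{X(t,x_1)>0\}\cup\{X(t,x_2)>0\}$. If a purely PDE-flavoured presentation is preferred, one can instead telescope $V^{\infty,\infty}_t - V^{\lambda,\lambda'}_t = \big(V^{\infty,\infty}_t - V^{\infty,\lambda'}_t\big) + \big(V^{\infty,\lambda'}_t - V^{\lambda,\lambda'}_t\big)$, where $V^{\infty,\lambda'}_t := \lim_{\lambda\to\infty}V^{\lambda,\lambda'}_t$ is well defined and monotone by Proposition~\ref{prop_mono_subadd}(a); monotonicity shows both brackets are nonnegative, and the same representation computation bounds the second bracket by $V^\infty_t(x_1) - V^\lambda_t(x_1)$ and the first by $V^\infty_t(x_2) - V^{\lambda'}_t(x_2)$, after which \eqref{e_VlambdaConvergence} finishes the proof exactly as above.
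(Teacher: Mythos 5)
Your proof is correct and matches the paper's in substance: both express the two-point gap via the $\N_0$-representations and reduce it to the one-point bound \eqref{e_VlambdaConvergence}. The only difference is ordering — the paper telescopes through $V^{\lambda,\infty}_t$ and then applies the $\N_0$-identity to each bracket, while you represent the full gap under $\N_0$ first and then partition the event $\{X(t,x_1)>0\}\cup\{X(t,x_2)>0\}$ — a cosmetic rearrangement, and indeed your closing remark sketches the paper's telescoping variant.
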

\begin{proof}
Let $x_1, x_2 \in \R$ and $t, \lambda, \lambda' > 0$. We write
\begin{align} \label{exp_2ptVlemma1}
&V^{\infty,\infty}_t(x_1,x_2) - V^{\lambda,\lambda'}_t(x_1,x_2) \nonumber
\\ &\hspace{6 mm}= \left[V^{\infty,\infty}_t(x_1,x_2)-V^{\lambda, \infty}_t(x_1,x_2) \right] + \left[V^{\lambda,\infty}_t(x_1,x_2)-V^{\lambda, \lambda'}_t(x_1,x_2) \right].
\end{align}
By \eqref{V2infprob} and  \eqref{e_LapFunCanon}, the first term is equal to
\begin{align} 
\N_0 &\left(1 - 1( X(t,x_1) = X(t,x_2) = 0 )\right) - \N_0\left(1- e^{-\lambda X(t,x_1)} 1(X(t,x_2) = 0 ) \right) \nonumber
\\ &= \N_0 \left( 1(X(t,x_2) = 0) \left(e^{-\lambda X(t,x_1)} - 1(X(t,x_1) = 0) \right) \right) \nonumber
\\ &\leq \N_0 \left( e^{-\lambda X(t,x_1)} - 1(X(t,x_1) = 0) \right) \nonumber
\\ &= V^\infty_t(x_1) - V^\lambda_t(x_1) \nonumber
\\ &\leq Ct^{-1/2 - \lambda_0}  \lambda^{1-2\lambda_0},  \nonumber
\end{align}
where the second last line follows from \eqref{e_Vinf_prob} and \eqref{e_LapFunCanon}, and the final inequality is by \eqref{e_VlambdaConvergence}. We use similar reasoning to bound the second term of \eqref{exp_2ptVlemma1} by the same expression with $\lambda'$ replacing $\lambda$, which gives the desired result. \end{proof}

\section{Existence and Properties of $L_t$} \label{s_main}
As stated in the introduction, our method first establishes the existence and properties of $L_t$ under $\N_0$ and then uses the cluster decomposition to establish them under $P^X_{X_0}$. The main ingredient in the proof of Theorem \ref{thm_Lt} is the convergence of second moments of $L^\lambda_t(\phi)$ as $\lambda \to \infty$. For a bounded Borel function $\phi$, we show that $\N_0 ( L^\lambda_t(\phi)^2 )$ converges as $\lambda \to \infty$. In fact, we prove convergence of second moments of general functions of two variables. For $h : \R^2 \to \R$ we recall the notation
\begin{equation} 
(L_t^\lambda \times L_t^\lambda)(h) = \int h(x,y)\, dL_t^\lambda(x) \, dL_t^\lambda(y). \nonumber
\end{equation}
$L_t^\lambda(\phi)^2$ is easily recovered by taking $h(x,y) = \phi(x) \, \phi(y)$. The following result is the workhorse of this paper.\\

\textbf{Theorem~\ref{thm_l2limit}.}\emph{
There is a constant $C_{\ref{thm_l2limit}} > 0$ and continuous function $\rho:\R \times \R \to (0,1]$ such that for bounded Borel $h : \R^2 \to \R$,
\begin{align}
&\lim_{\lambda, \lambda' \to \infty} \N_0 ((L^{\lambda}_t \times L^{\lambda'}_t)(h) ) \nonumber
\\ &\hspace{ 6mm}=  C_{\ref{thm_l2limit}}^2 \int_0^t (t-s)^{-2\lambda_0} \bigg[ \iint E^{B}_0 \bigg(  \exp \left( - \int_0^s V^{\infty, \infty}_{t-u} (\sqrt{t-s} \, z_1 + B_s - B_u, \sqrt{t-s} \, z_2 + B_s - B_u) \, du \right) \nonumber
\\ & \hspace{10 mm}\times  h(\sqrt{t-s} \, z_1 + B_s,\sqrt{t-s} \, z_2 + B_s) \bigg)\, \rho(z_1, z_2) \,\psi_0(z_1) \, \psi_0(z_2) \, dm(z_1) \, dm(z_2)  \bigg] ds. \nonumber
\end{align}}
\begin{corollary} \label{cor_l2limit}
For a bounded Borel function $\phi$, $L^{\lambda}_t(\phi)$ converges in $\cL^2(\N_0)$ as $\lambda \to \infty$.
\end{corollary}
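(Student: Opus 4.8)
The plan is to deduce the corollary directly from Theorem~\ref{thm_l2limit} by a polarization argument. First I would observe that for a bounded Borel $\phi$, the function $h(x,y) = \phi(x)\phi(y)$ is bounded and Borel on $\R^2$, and that $L^\lambda_t(\phi)\,L^{\lambda'}_t(\phi) = (L^\lambda_t \times L^{\lambda'}_t)(h)$; in particular $L^\lambda_t(\phi)^2 = (L^\lambda_t \times L^\lambda_t)(h)$. Writing $I(h)$ for the (finite) limit appearing on the right-hand side of Theorem~\ref{thm_l2limit}, that theorem then yields $\N_0\big(L^\lambda_t(\phi)\,L^{\lambda'}_t(\phi)\big) \to I(h)$ as $\lambda,\lambda' \to \infty$, and, specializing to $\lambda' = \lambda$, $\N_0\big(L^\lambda_t(\phi)^2\big) \to I(h)$ as $\lambda \to \infty$.

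Next I would expand
\[
\big\| L^\lambda_t(\phi) - L^{\lambda'}_t(\phi) \big\|^2_{\cL^2(\N_0)} = \N_0\big(L^\lambda_t(\phi)^2\big) - 2\,\N_0\big(L^\lambda_t(\phi)\,L^{\lambda'}_t(\phi)\big) + \N_0\big(L^{\lambda'}_t(\phi)^2\big),
\]
which by the previous paragraph tends to $I(h) - 2 I(h) + I(h) = 0$ as $\lambda,\lambda' \to \infty$. Hence $\{L^\lambda_t(\phi)\}_{\lambda>0}$ is a Cauchy family in $\cL^2(\N_0)$, and by completeness it has an $\cL^2(\N_0)$ limit, which is the assertion of the corollary; in particular the limit is independent of the sequence $\lambda_n \to \infty$ chosen. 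I would note in passing that since $dL^\lambda_t(x) = \lambda^{2\lambda_0} e^{-\lambda X(t,x)} X(t,x)\,dx$ vanishes on $\{X_t = 0\}$, the relevant $\cL^2$ norms are effectively computed over $\{X_t > 0\}$, where $\N_0$ is a finite measure; alternatively one simply uses completeness of $\cL^2$ of a $\sigma$-finite measure space.

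There is essentially no obstacle at this stage: all of the difficulty is contained in Theorem~\ref{thm_l2limit} itself, and the corollary is a formal consequence via polarization and completeness. The only minor points to keep in mind are that Theorem~\ref{thm_l2limit} is stated for the joint limit $\lambda,\lambda' \to \infty$ (so it controls both the cross term and, by restriction to the diagonal, the square terms), and that each $L^\lambda_t(\phi)$ indeed lies in $\cL^2(\N_0)$ — which follows from the second-moment bounds of Section~5.1 of \cite{MMP2017}, or a posteriori from finiteness of $I(h)$ together with the Cauchy estimate above.
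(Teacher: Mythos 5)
Your argument is correct and is essentially the same as the paper's: expand $\N_0\big((L^\lambda_t(\phi)-L^{\lambda'}_t(\phi))^2\big)$ into the three second-moment terms, note by Theorem~\ref{thm_l2limit} that each tends to the same finite limit $I(h)$ with $h(x,y)=\phi(x)\phi(y)$, conclude that the family is Cauchy, and appeal to completeness of $\cL^2(\N_0)$. The extra remarks about restricting to $\{X_t>0\}$ and a priori square-integrability are harmless elaborations, not a different route.
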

\begin{proof}
Since $\cL^2(\N_0)$ is complete, it is enough to show that $\{L^\lambda_t(\phi)\}_{\lambda > 0}$ is Cauchy in $\cL^2(\N_0)$. For $\lambda, \lambda' > 0$, we have
\begin{align*}
\N_0 ( (L^{\lambda}_t(\phi) - L^{\lambda'}_t(\phi))^2 ) &= \N_0 ( (L^{\lambda}_t(\phi)^2 ) + \N_0 ( (L^{\lambda'}_t(\phi)^2 )  -2\N_0 ( L^{\lambda}_t(\phi)L^{\lambda'}_t(\phi) ).
\end{align*}
By Theorem \ref{thm_l2limit}, this converges to $0$ as $\lambda, \lambda' \to \infty$. \end{proof}

The proof of Theorem \ref{thm_l2limit} is long and technical. We defer it to Section \ref{s_momentsconvergence}, which is devoted to its proof. For now, we take it as a given and use it to establish our other main results, the first being the existence of $L_t$ under $\N_0$. \\

\emph{Proof of Theorem \ref{thm_Lt} for $\N_0$.} Fix $t>0$. Because $X_t = 0$ implies that $L^\lambda_t = 0$ for all $\lambda > 0$, without loss of generality we can work under the finite measure $\N_0(\cdot \cap \{X_t > 0 \})$. By Corollary~\ref{cor_l2limit}, for a bounded continuous function $\phi$, there exists a random variable $l(t,\phi)$ such that $L^\lambda_t(\phi)\to l(t,\phi)$ in $\cL^2(\N_0)$ as $\lambda \to \infty$. It follows that $L^\lambda_t(\phi)\to l(t,\phi)$ in measure. We will now establish that there exists a unique random measure $L_t$ such that the random variable $l(t,\phi)$ is the integral of $\phi$ with respect to a random measure $L_t$, ie. $l(t,\phi) = L_t(\phi)$ for all continuous and bounded functions $\phi$.\\

We need to establish that the measures $\{L^\lambda_t : \lambda>0\}$ are tight $\N_0$-almost surely. To see that this is true, we recall that $X(t,\cdot)$ is compactly supported $\N_0$-a.s., (see Corollary III.1.4 of \cite{P2002} for the result under $P^X_{\delta_0}$; condition the cluster representation on $N=1$ to get it for $\N_0$) and hence the mass of $X_t$ is contained in a ball $B(0,R)$ for some $R = R(\omega) > 0$. Since $L^\lambda_t(A) = \lambda^{2\lambda_0}  \int_A X(t,x) e^{-\lambda X(t,x)}\, dx$, this implies that the mass of $L^\lambda_t$ is contained in $B(0,R)$ for all $\lambda >0$, which implies that $\{L^\lambda_t(\omega) : \lambda >0\}$ is tight.\\

Let $\{\phi_n\}_{n=1}^\infty$ be a countable determining class for $\cM_F(\R)$ consisting of bounded, continuous functions. We choose $\phi_1 = 1$. $\cL^1$-boundedness of the total mass and tightness are sufficient conditions for a family in $\cM_F(\R)$ (with the weak topology) to be relatively compact. By Corollary~\ref{cor_l2limit}, $\{L^\lambda_t(1) : \lambda>0\}$ is $\cL^2(\N_0)$-bounded, and hence $\cL^1(\N_0)$-bounded, and so from the above we see that
\[\{L^{\lambda}_t : \lambda > 0 \} \, \text{ is relatively compact } \, \N_0\text{-a.s.}  \]

As we have noted, $L^\lambda_t(\phi_n) \to l(t,\phi_n)$ in measure as $\lambda \to \infty$. Using the fact that convergence in measure implies almost sure convergence along a subsequence, we can iteratively define subsequences and take a diagonal subsequence $\{\lambda_m\}_{m=1}^\infty$ which satisfies
\begin{equation} \label{e_measureconverge1}
L_t^{\lambda_m}(\phi_n) \to l(t, \phi_n) \, \text{  as } m \to \infty 
\,\, \text{ for all } n\geq 1 \,\,\,\ \N_0\text{-a.s.}
\end{equation} 
As we have noted, $\{L^{\lambda_m}_t\}_{m=1}^\infty$ is relatively compact $\N_0$-almost surely. Combined with the above, this means that for $\N_0$-a.a. $\omega$ we have the above convergence for all $n$ and relative compactness of the measures $\{L^{\lambda_m}_t\}_{m=1}^\infty$. Choose such an $\omega$. By relative compactness of $\{L^\lambda_t\}_{\lambda>0}$, any subsequence of $\{\lambda_m\}_{m=1}^\infty$ admits a further sequence along which the measures converge in the weak topology. It remains to show that all subsequential limits coincide. Suppose $L_t(\omega)$ and $L_t'(\omega)$ are two such limit measures. Since $\omega$ has been chosen so that (\ref{e_measureconverge1}) holds, we have that $L_t(\omega)(\phi_n) = L_t'(\omega)(\phi_n)$ for all $n$. Since the family $\{\phi_n\}_{n\geq 1}$ are a determining class, this implies that $L_t(\omega) = L_t'(\omega)$. Hence all subsequences admit a further subsequence with the same limit $L_t(\omega)$ in the weak topology. Since the weak topology on $\cM_F(\R)$ is metrizable, the ``every subsequence admits a further converging subsequence" criterion for convergence applies, and we have $L^{\lambda_m}_t(\omega)$ converges to $L_t(\omega)\in \cM_F(\R)$ as $m \to \infty$. This gives the almost sure convergence along $\{\lambda_m\}_{m=1}^\infty$. As the weak topology is metrizable we also have $L^\lambda_t \to L_t$ in measure. Furthermore, we observe that for continuous and bounded $\phi$, $L_t(\phi) = l(t,\phi)$. To see this, recall that $L^{\lambda_m}_t(\phi)$ converges to $l(t,\phi)$ in $\cL^2(\N_0)$. As we have just shown that $\lim_{m \to \infty} L^{\lambda_m}_t(\phi) = L_t(\phi)$ $\N_0$-a.s, it must hold that $L_t(\phi) = l(t,\phi)$. This implies that $L^\lambda_t(\phi) \to L_t(\phi)$ in $\cL^2(\N_0)$.\\ 

Finally, we verify that $L_t$ is supported on $BZ_t$. We fix $\omega$ outside of a null set such that $L^{\lambda_m}_t \to L_t$ in $\cM_F(\R)$ as $m \to \infty$. For an open set $U$, $L_t(U) \leq \liminf_{m \to \infty}L^{\lambda_m}_t(U)$ (a consequence of the Portmanteau theorem). From \eqref{e_Llambdadef}, we have $L^{\lambda_m}_t(Z_t) = 0$ for all $m \geq 1$, which implies that $L_t(\text{int}(Z_t)) = 0$. Moreover, $X(t,x) > 0$  implies that $\lambda_m^{2\lambda_0} X(t,x)e^{-\lambda_m X(t,x)} \to 0$ as $m \to \infty$, so for $\epsilon >0$, $L_t(\{x : X(t,x) > \epsilon\}) = 0$, and hence $L_t(Z_t^c) = 0$. Since $L_t( \text{int}(Z_t) \cup Z_t^c ) = 0$, we must have $\text{supp}(L_t) \subseteq BZ_t$. \qed \\

\emph{Proof of Theorem~\ref{thm_Ltcanonmoments}.} To prove (b), by Theorem~\ref{thm_l2limit} it is enough to show that $\N_0 ((L_t \times L_t)(h)) = \lim_{n \to \infty} \N_0 ((L^{\lambda_n}_t \times L^{\lambda_n}_t)(h))$ for a sequence $\lambda_n \to \infty$, which we choose to be the sequence from Theorem~\ref{thm_Lt} on which $L^{\lambda_n}_t \to L_t$ almost surely. Because $L_t = 0$ when $X_t = 0$, we can work on the probability measure $\N_0(\cdot \, | \, X_t > 0 )$. For bounded and continuous $h:\R^2 \to \R$, $| (L_t \times L_t)(h) | \leq \|h\|_\infty L_t^{\lambda_n}(1)^2$. By Theorem~\ref{thm_Lt}, $L^{\lambda_n}_t(1)$ converges in probability and in $\cL^2(\N_0(\cdot\, | \, X_t>0))$ to $L_t(1)$, which implies that $L^{\lambda_n}_t(1)^2$ and hence $(L^{\lambda_n}_t \times L^{\lambda_n})(h)$ are uniformly integrable (see, e.g. Theorem 4.6.3 of \cite{D}). We can therefore exchange limit and expectation, giving 
\[\N_0 (\lim_{n \to \infty} (L_t^{\lambda_n} \times L^{\lambda_n}_t)(h)) = \lim_{n \to \infty} \N_0 ((L^{\lambda_n}_t \times L^{\lambda_n}_t)(h)).\] 
Since $L^{\lambda_n}_t \to L_t$ in $\cM_F(\R)$ and $h$ is bounded and continuous, the integrand on the left hand side is equal to $(L_t \times L_t)(h)$, which gives the result. By a Monotone Class Theorem (e.g. Corollary 4.4 in the Appendix of Ethier and Kurtz \cite{EK}), the same holds for all bounded and measurable $h$. \\

We now turn to part (a). Let $\phi:\R \to \R$ be bounded and Borel. We recall from the Introduction (see \eqref{e_densityLaplace}) that Proposition 4.5 of \cite{MMP2017} states that
\begin{align} \label{canonmoment1}
&\lim_{\lambda \to \infty} E^X_{X_0}(L^\lambda_t(\phi)) \nonumber 
\\&\hspace{ 6 mm} = C_{\ref{thm_l2limit}}\, t^{-\lambda_0} \iint \phi(x_0 + \sqrt t z) \exp \left( -\frac 1 t \int F(z + t^{-1/2} (x_0 - y_0) \, dX_0(y_0) \right) \psi_0 (z)\, dm(z) \,dX_0(x_0). 
\end{align}
(The fact that the constant appearing in Proposition~4.5 of \cite{MMP2017} equals $C_{\ref{thm_l2limit}}$ is implicit in the proof.) The proof uses the Palm measure formula for $X_t$ under $P^X_{X_0}$; see Theorem 4.1.3 of Dawson-Perkins \cite{DP}. The corresponding Palm measure formula for the superprocess under $\N_0$ is in fact simpler, and the same proof shows that
\begin{align}\label{canonmoment2}
&\lim_{\lambda \to \infty} \N_0(L^\lambda_t(\phi)) = C_{\ref{thm_l2limit}}\, t^{-\lambda_0} \int \phi(\sqrt t z)\, \psi_0 (z)\, dm(z). 
\end{align}
Consider now a bounded and continuous function $\phi$; we can also clearly assume that $\phi \geq 0$. By Theorem~\ref{thm_Lt} (under $\N_0$), $L^\lambda_t(\phi)$ converges in $\cL^2$ with respect to the probability measure $\N_0 ( X_t \in \cdot\,|\,X_t>0)$, which implies that it also converges in $\cL^1$, allowing us to exchange limit and expectation in \eqref{canonmoment2}, which gives part (a) for bounded and continuous $\phi$. This extends to all bounded and measurable $\phi$ by a monotone class argument (as above for part (b)). Finally, it is clear that both (a) and (b) hold for general non-negative functions by the Monotone Convergence Theorem. \qed \\

We now describe how to ascertain the existence of $L_t$ when $X_t$ is a super-Brownian motion under $P^X_{X_0}$ via the cluster representation. In particular, we recall \eqref{e_XtPPP} and \eqref{e_clusterrep}. Let $X_0 \in \cM_F(\R)$ and $t>0$. \\

\emph{Proof of Theorem~\ref{thm_Lt}~for~$P^X_{X_0}$~and~Theorem~\ref{thm_clusterLt}.} Let $N, x_1, \hdots, x_N, X^1_t, \hdots X^N_t$ be as in the cluster decomposition \eqref{e_clusterrep}. For $\lambda>0$, define the measure $L^\lambda_t$ via \eqref{e_Llambdadef} using $X_t$. For $i = 1,\hdots,N$, let $L^{i,\lambda}_t$ denote the measure defined in \eqref{e_Llambdadef} corresponding to $X^i_t$. By Theorem~\ref{thm_Lt} for $\N_0$ and translation invariance, $\N_{x_i}(X_t^i \in \cdot \, | \, X_t^i>0)$-a.s. there exists $L_t^i$ such that $L^{i,\lambda}_t \to L_t$ in $\cM_F(\R)$ in measure. Define $L_t \in \cM_F(\R)$ by \eqref{e_clusterdecomp}. That is,
\[dL_t(x) = \sum_{i=1}^{N} 1\big(\sum_{j \neq i} X^j(t,x) = 0\big) \, dL^i_t(x) .\]
Let $\phi:\R \to \R$ be bounded and continuous. We will show that
\begin{equation} \label{e_clusterprobcon}
L^\lambda_t(\phi) \to L_t(\phi) \, \text{ in probability as } \, \lambda \to \infty.
\end{equation}
Once we establish \eqref{e_clusterprobcon}, the proof of Theorem~\ref{thm_Lt} for $\N_0$ applies and shows that $L^\lambda_t \to L_t$ in probability in $\cM_F(\R)$ as $\lambda \to \infty$. With the exception of $\cL^2$ convergence, which we show afterward, this proves Theorem~\ref{thm_Lt} for $P^X_{X_0}$. Furthermore, since $L_t$ is defined by \eqref{e_clusterdecomp}, this also proves Theorem~\ref{thm_clusterLt}.\\

Turning to \eqref{e_clusterprobcon}, we will argue conditionally on $(N,x_1, \hdots, x_N)$. That is, we argue under the regular conditional distribution for $(X^1_t,\hdots, X^N_t)$ given $(N,x_1, \hdots, x_N)$. As such, we treat $N \geq 1$ and $x_1,\hdots,x_N \in \R$ as fixed, and $X^1_t,\hdots,X^N_t$ are independent random measures with respective laws $\N_{x_i}(X_t \in \cdot \, | \, X_t>0)$ for $i=1,\hdots,N$. Let $E$ denote the expectation of a probability realizing this conditional representation for $X_t$. Expanding $L^\lambda_t(\phi)$ in terms of the clusters, we have	
\begin{align} \label{e_clusteraux1}
L^\lambda_t(\phi) &=  \int \lambda^{2\lambda_0} X(t,x)e^{-\lambda X(t,x)} \, \phi(x)\, dx \nonumber
\\ &=  \int  \lambda^{2\lambda_0}\left[ \sum_{i=1}^{N} X^i(t,x) \right] e^{-\lambda \sum_{i=1}^{N} X^i(t,x)}\phi(x) \, dx \nonumber 
\\ &= \sum_{i=1}^{N} \int \lambda^{2\lambda_0}  X^i(t,x) e^{-\lambda X^i(t,x)}\,\left[ e^{-\lambda \sum_{j\neq i} X^j(t,x)} \phi(x) \right]  dx \nonumber 
\\ &= \sum_{i=1}^{N} L^{i,\lambda}_t \big(\phi \cdot e^{-\lambda Z^i_N(t,\cdot)} \big),
\end{align}
where we define $Z^i_N(t,x) = \sum_{j\neq i} X^j(t,x)$, in which the indices are understood to sum from $1$ to $N$. Using this notation, $L_t(\phi) = \sum_{i=1}^N L_t^i(\phi \cdot 1(Z^i_N(t,\cdot) = 0) )$. Thus by \eqref{e_clusteraux1}, to prove \eqref{e_clusterprobcon} it is clearly enough to show that for any $1\leq i \leq N$,
\begin{equation} \label{clusterconditionprob}
L^{i,\lambda}_t \big(\phi \cdot e^{-\lambda Z^i_N(t,\cdot)} \big) \to L_t^i(\phi \cdot 1(Z^i_N(t,\cdot) = 0) ) \, \text{ in probability as } \, \lambda \to \infty.
\end{equation}
Without loss of generality, assume that $\lambda>1$. Let $1\leq\lambda' \leq \lambda$. Then
\begin{align} \label{e_clusteraux2}
|L^{i,\lambda}_t (\phi &\cdot e^{-\lambda Z^i_N(t,\cdot)} ) - L_t^i(\phi \cdot 1\big(Z^i_N(t,\cdot) = 0) )| \nonumber
\\ \leq \,&|L^{i,\lambda}_t (\phi \cdot ( e^{-\lambda Z^i_N(t,\cdot)} - e^{-\lambda' Z^i_N(t,\cdot)} ) )| + |L^{i,\lambda}_t (\phi \cdot e^{-\lambda' Z^i_N(t,\cdot)} ) - L_t^i(\phi \cdot e^{-\lambda' Z^i_N(t,\cdot)} )|  \nonumber
\\ &\hspace{5 mm} + |L^{i}_t (\phi \cdot ( e^{-\lambda' Z^i_N(t,\cdot)} - 1(Z^i_N(t,\cdot) = 0) ) ) | \nonumber
\\ \leq \,&\|\phi\|_\infty |L^{i,\lambda}_t(e^{-\lambda' Z^i_N(t,\cdot)}1(Z^i_N(t,\cdot)>0))| +  |L^{i,\lambda}_t (\phi \cdot e^{-\lambda' Z^i_N(t,\cdot)} ) - L_t^i(\phi \cdot e^{-\lambda' Z^i_N(t,\cdot)} )|  \nonumber
\\ & \hspace{5 mm}	+\|\phi\|_\infty L^{i}_t(e^{-\lambda' Z^i_N(t,\cdot)}1(Z^i_N(t,\cdot)>0))  \nonumber
\\ =: &\|\phi\|_\infty R_1(\lambda',\lambda) + R_2(\phi,\lambda') + \|\phi\|_\infty R_3(\lambda',\lambda). 
\end{align}
We first consider $R_1$. Since $X^i_t$ and $Z^i_N(t,\cdot)$ are independent and $L_t^{\lambda}$ is a measurable function of $X_t^i$, conditional on $X^i_t$ we have, for all $\lambda > 1$ and $1 \leq \lambda' < \lambda$,
\begin{align}
E(R_1(\lambda',\lambda) \,|\, X^i_t ) &= \int E(e^{-\lambda' Z^i_N(t,x)}1(Z^i_N(t,x)>0)) \,| \,X^i_t) \,dL^{i,\lambda}_t(x) \nonumber
\\ &= \int E(e^{-\lambda' \sum_{j\neq i} X^j(t,x)}1(\sum_{j\neq i}X^j(t,x) > 0) ) \,dL^{i,\lambda}_t(x) \nonumber
\\ &\leq \sum_{j \neq i} \int \N_{x_j}(e^{-\lambda' X^j(t,x)}1(X^j(t,x) > 0) \, | \, X_t^j > 0) \,dL^{i,\lambda}_t(x) \nonumber
\\ &= \sum_{j \neq i} \N_{x_j}(X_t^j > 0 )^{-1} \int \N_{x_j}( e^{-\lambda' X^j(t,x)} -1(X^j(t,x) = 0)) \,dL^{i,\lambda}_t(x) \nonumber
\\ &= (t/2) \sum_{j \neq i} \int \N_{x_j}( 1- 1(X^j(t,x) = 0)) - \N_{x_j}(1-e^{-\lambda' X^j(t,x)} )  \,dL^{i,\lambda}_t(x) \nonumber
\\ &= (t/2) \sum_{j \neq i} \int V^\infty_t(x-x_j) - V^{\lambda'}_t(x-x_j)\, dL^{i,\lambda}_t(x),
\end{align}
where in the second last line we have used \eqref{e_Xtsurvive}, and the last follows from  \eqref{e_LapFunCanonlambda}, \eqref{e_Vinf_prob}, and translation invariance. We apply \eqref{e_VlambdaConvergence} to the integrand and take the expectation of the above to obtain that
\begin{align} \label{R1bd}
E(R_1(\lambda',\lambda)) &\leq \frac{N-1}{2} t^{1/2 - \lambda_0} \lambda'^{-(2\lambda_0-1)} \N_{x_i} (L^{i,\lambda}_t(1) \, | \, X^i_t>0) \nonumber
\\ &= \frac{N-1}{4} t^{3/2 - \lambda_0} \lambda'^{-(2\lambda_0-1)} \N_{x_i} (L^{i,\lambda}_t(1) ) \hspace{22 mm} \text{ (by \eqref{e_Xtsurvive})} \nonumber
\\ &\leq C(t,N) \lambda'^{-(2\lambda_0-1)}, 
\end{align}
for all $\lambda>1$ and $1 \leq \lambda' < \lambda$, where the last inequality is by Theorem~\ref{thm_Ltcanonmoments}(a) and the fact that $L^{i,\lambda}_t(1) \to L^i_t(1)$ in $\cL^2(\N_{x_i})$ (from Theorem~\ref{thm_Lt}). Next we consider $R_3$. Note that we can expand and bound this term in exactly the same way as we did $R_1$ in \eqref{e_clusteraux2} but with $L^i_t$ replacing $L^{i,\lambda}_t$. Taking the expectation and proceeding as above then gives
\begin{equation} \label{R3bd}
E(R_3) \leq \frac{N-1}{4} t^{3/2 - \lambda_0} \N_{x_i} (L^{i}_t(1) ) \lambda'^{-(2\lambda_0-1)} .
\end{equation}
Fix $\delta > 0$. By \eqref{R1bd} and \eqref{R3bd} and Markov's inequality there exists $\bar{\lambda}(\delta)$ such that for $\lambda' \geq \bar{\lambda}(\delta)$,
\begin{equation} \label{R1R3con}
P(R_1(\lambda',\lambda) > \delta) +  P(R_3(\lambda',\lambda) > \delta) < C'(t,N) \lambda'^{-(2\lambda_0-1)} / \delta.
\end{equation}
Now consider $R_2(\phi)$. Since $\phi \cdot e^{-\lambda' Z^i_N(t,\cdot)}$ is a bounded, continuous function for all $\lambda' \geq 1$, by Theorem~\ref{thm_Lt} for $\N_{x_i}$, $R_2(\phi,\lambda') \to 0$ in probability as $\lambda \to \infty$ for all $\lambda' \geq 1$. From this and \eqref{R1R3con} we conclude, by choosing $\lambda' \leq \lambda$ sufficiently large, that \eqref{e_clusteraux2} converges to $0$ in probability as $\lambda \to \infty$. As we noted in \eqref{clusterconditionprob}, this is sufficient to prove the result. \\

It remains to show that $L^\lambda_t(\phi) \to L_t(\phi)$ in $\cL^2(P^X_{X_0})$ for all continuous and bounded functions $\phi$. Let $\phi$ be such a function, and suppose that $X_t$ is realized as in \eqref{e_clusterrep} under a probability $P^X_{X_0}$. Under $P^X_{X_0}(\cdot \, | \, N)$, from \eqref{e_clusteraux1} and \eqref{e_clusterdecomp} we have
\begin{align} \label{e_clusterL21}
(L^\lambda_t(\phi) - L_t(\phi))^2 &= \bigg( \sum_{i=1}^N L^{i,\lambda}_t(e^{-\lambda Z^i_N(t,\cdot)} \cdot \phi) - L^i_t(1(Z^i_N(t,\cdot)=0) \cdot \phi) \bigg)^2 \nonumber
\\ &\leq N \sum_{i=1}^N \,\big[ L^{i,\lambda}_t(e^{-\lambda Z^i_N(t,\cdot)} \cdot \phi) - L^i_t(1(Z^i_N(t,\cdot)=0) \cdot \phi) \big]^2.
\end{align}
We recall that $X^1_t,\ldots,X^N_t$ are iid with distribution $\N_{\bar{X}_0}(X_t \in \cdot \, | \, X_t > 0)$, where $\bar{X}_0 = X_0(\cdot)/ X_0(1)$ and $\N_{X_0}(\cdot) = \int \N_x(\cdot) dX_0(x)$. This implies that the $N$ summands in \eqref{e_clusterL21} are identically distributed; in particular, conditional on $N$ we define identically distributed random variables $e^{N,\lambda}_i \geq 0$, for $i=1,\ldots,N$, by
\begin{equation} \label{e_edef}
e^{N,\lambda}_i = \big[ L^{i,\lambda}_t(e^{-\lambda Z^i_N(t,\cdot)} \cdot \phi) - L^i_t(1(Z^i_N(t,\cdot)=0) \cdot \phi) \big]^2.
\end{equation}
By \eqref{clusterconditionprob}, $e^{N,\lambda}_i$ converges to $0$ in probability as $\lambda \to \infty$ when conditioned on $(x_1,\ldots,x_N)$. However, one can integrate the conditional probabilities over $(x_1,\ldots,x_N) \in \R^N$ to determine that
\begin{equation} \label{e_eprob}
e^{N,\lambda}_i \to 0\, \text{ in probability under } P^X_{X_0}(\cdot \, | \, N) \text{ as } \lambda \to \infty.
\end{equation}
It is clear from \eqref{e_edef} that for all $\lambda>0$,
\begin{equation} \label{e_ebd}
e^{N,\lambda}_i \leq 2\|\phi\|_\infty^2 (L^{i,\lambda}_t(1)^2 + L^i_t(1)^2) \,\,\,\, \forall \,	 i =1,\ldots,N, \, \forall \,N \geq 1.
\end{equation}
By Theorem~\ref{thm_Lt} for $\N_0$, $L^{i,\lambda}_t(1)^2 \to L^i_t(1)^2$ in probability under $\N_{\bar{X}_0}(X_t \in \cdot\, | \, X_t > 0)$ and hence under $P^X_{X_0}(\cdot\, | \, N)$. Furthermore, since $L^{i,\lambda}_t(1) \to L^i_t(1)$ in $\cL^2(\N_{\bar{X}_0}(\cdot\, | \, X_t>0))$ (by Theorem~\ref{thm_Lt} for $\N_0$), it follows from Cauchy-Schwarz that $L^{i,\lambda}_t(1)^2 \to L^i_t(1)^2$ in $\cL^1(\N_{\bar{X}_0}(\cdot\,| \, X_t>0))$; since $X^i_t$ has distribution $\N_{\bar{X}_0}(X_t \in \cdot \, | \, X_t>0)$ under $P^X_{X_0}(\cdot\, | \, N)$, this implies $L^{i,\lambda}_t(1)^2 \to L^i_t(1)^2$ in $\cL^1(P^X_{X_0}(\cdot\, | \, N))$. %By Theorem 4.6.3 of Durrett \cite{D}, 
Hence $\{2\|\phi\|_\infty(L^{i,\lambda}_t(1)^2 + L^i_t(1)^2): \lambda \geq 1 \}$ is uniformly integrable. Thus by \eqref{e_ebd}, $\{ e^{N,\lambda}_i : \lambda \geq 1\}$ is uniformly integrable, and by \eqref{e_eprob} we have $\cL^1$ convergence. That is,
\begin{equation} \label{e_eL1}
E^X_{X_0}(e^{N,\lambda}_i \, | \,N) \to 0 \, \text{ as } \lambda \to \infty.
\end{equation}
Conditioning on $N=n$ and summing over $n \in \N$, by \eqref{e_clusterL21} and Fubini's Theorem we have
\begin{align}
E^X_{X_0}((L^\lambda_t(\phi) - \L_t(\phi))^2) &\leq \sum_{n=1 }^\infty P^X_{X_0}(N=n)\, n \sum_{i=1}^n E^X_{X_0}(e^{n,\lambda}_i \, | \, N=n). \nonumber
\end{align}
Since $E^X_{X_0}(e^{N,\lambda}_i \, | \,N) \leq 2\|\phi\|_\infty E^X_{X_0}(L^{i,\lambda}_t(1)^2 + L_t(1)^2) \leq C(t,\phi)$ for all $\lambda \geq 1$, for some constant $C(t,\phi)>0$ (by uniform integrability), the $n$th term in the above is bounded above by $C(t,\phi) \,P^X_{X_0}(N=n) n^2$. Dominated Convergence therefore allows us to exchange limit and summation in the above, which by \eqref{e_eL1} gives the result. \qed \\

\emph{Proof of Theorem~\ref{thm_clustergeneral}.} This is virtually identical to the above proof of Theorem~\ref{thm_clusterLt} and is omitted.\\ 

As we have commented on, the expression in Theorem~\ref{thm_l2limit}, which is the same as \eqref{e_Ltcanon_secondmoment} in Theorem~\ref{thm_Ltcanonmoments}(b), is finite for all bounded $h$, despite the appearance of non-integrability (since $\lambda_0 > 1/2$). Proposition~\ref{prop_Ltcanonmomentsbd}, which we restate here for convenience, provides a useful upper bound on second moments which is our main tool for studying $L_t$. The bound is not difficult to obtain. Its derivation relies only on applying trivial upper bounds to several terms and making a few changes of variables. Recall that $E^Y_{z}$ denotes the expectation of a standard Ornstein-Uhlenbeck process $Y$ with $Y_0 = z$.\\

{\bf Proposition~\ref{prop_Ltcanonmomentsbd}.
\it For measurable, non-negative function $h: \R^2 \to \R$,
\begin{align*} 
\N_0 ( (L_t \times  L_t)(h) )  &\leq C_{\ref{thm_l2limit}}^2 \int_0^t w^{-2\lambda_0} \bigg[ \iint E^Y_{z_1} \bigg(  \exp \bigg( - \int_{0}^{\log (t/w)} F(Y_u) \, du \bigg)  \nonumber
\\ & \hspace{4 mm} \times h(\sqrt t Y_{\log (t/w)}, \sqrt t Y_{\log (t/w)}+ \sqrt{w}(z_2 - z_1)) \bigg) \psi_0(z_1) \, \psi_0(z_2) \, dm(z_1) \, dm(z_2)  \bigg] dw. \tag{\ref{e_Ltcanon_secondmomentbd}} 
\end{align*} 
Moreover,
\begin{equation*} 
\N_0 ( L_t(1)^2 ) \leq \frac{C_{\ref{thm_l2limit}}^2 \theta^2}{1-\lambda_0} t^{1-2\lambda_0}. \tag{\ref{e_Ltcanon_secondmomentmassbd}}
\end{equation*}}

\emph{Proof.} Let $h:\R^2 \to \R$ be Borel measurable and non-negative. We use the formula for $\N_0 ( (L_t \times L_t)(h))$ given by \eqref{e_Ltcanon_secondmoment}. We recall that $\rho(z_1,z_2) \leq 1$ and use this bound, and we bound above by using $V_u^{\infty,\infty}(x,y) \geq V_u^\infty(x)$ in the exponential. This gives
\begin{align}
\N_0 ((L_t \times L_t)(h) ) &\leq C_{\ref{thm_l2limit}}^2 \int_0^t (t-s)^{-2\lambda_0} \bigg[ \iint E^{B}_0 \bigg(  \exp \left( - \int_0^s V^{ \infty}_{t-u} (\sqrt{t-s} \, z_1 + B_s - B_u) \, du \right) \bigg)\nonumber
\\ & \hspace{4 mm}\times h(\sqrt{t-s}\, z_1 + B_s, \sqrt{t-s}\, z_2 + B_s)\, \psi_0(z_1) \, \psi_0(z_2) \, dm(z_1) \, dm(z_2)  \bigg] ds. \nonumber
\end{align}
Since $z_1 \sim m$, $\sqrt{ t- s}\, z_1$ has a normal distribution with variance $t-s$, and we interpret it as the Brownian increment $B_t - B_s$. Hence the above is equal to
\begin{align}
&C_{\ref{thm_l2limit}}^2 \int_0^t (t-s)^{-2\lambda_0} \bigg[\int E^{B}_0 \bigg(  \exp \bigg( - \int_0^s V^{\infty}_{t-u} (B_t- B_u) \, du \bigg)  \times h(B_t, \sqrt{t-s}\, z_2 + B_s)\nonumber
\\ & \hspace{20 mm} \,\psi_0 \bigg(\frac{B_t - B_s}{\sqrt{t-s}}\bigg) \bigg)   \psi_0(z_2)\, dm(z_2)  \bigg] ds \nonumber
\\ &\hspace{4 mm}= C_{\ref{thm_l2limit}}^2 \theta \int_0^t w^{-2\lambda_0} \bigg[ \int \,E^{W}_0 \bigg(  \exp \left( - \int_{w}^t V^{\infty}_{u} (W_{u}) \,du \right) h(W_t, \sqrt w \, z_2 + W_t - W_{w}) \nonumber 
\\ &\hspace{20 mm}  \times \psi_0 \left(\frac{W_w}{\sqrt{w}}\right)\,\psi_0(z_2)\, dm(z_2) \bigg) \bigg] dw, \nonumber
\end{align}
where in the second line we have used $w = t-s$ and defined $W_u = B_t - B_{t-u}$. Hence $W_u$ is a standard Brownian motion under $P^W_0$. Recall that $V_u^\infty(x) = u^{-1} F(u^{-1/2}x)$. Applying this and letting $u = e^r$ in the integral, we obtain that the above is equal to
\begin{align}
&C_{\ref{thm_l2limit}}^2 \theta \int_0^t w^{-2\lambda_0} \bigg[ \int \,E^{W}_0 \bigg(  \exp \bigg( - \int_{\log w}^{\log t} F(e^{-r/2 }W_{e^r}) \,dr \bigg) h(W_t, \sqrt w \, z_2 + W_t - W_{w}) \nonumber 
\\ &\hspace{20 mm}  \times \psi_0 \bigg(\frac{W_w}{\sqrt{w}}\bigg)\,\psi_0(z_2)\, dm(z_2) \bigg) \bigg] dw. \nonumber
\end{align}
We now define a stationary Ornstein-Uhlenbeck process $Y$ (with stationary measure $m$) by $Y_r = e^{-r/2 }W_{e^r}$ for $r \in \R$. Recall that we denote its law by $E^Y$. The above is therefore equal to
\begin{align}
&C_{\ref{thm_l2limit}}^2 \theta \int_0^t w^{-2\lambda_0} \bigg[ \int \,E^Y \bigg(  \exp \bigg( - \int_{\log w}^{\log t} F(Y_r) \,du \bigg) h(\sqrt t\, Y_{\log t}, \sqrt w \, z_2 + \sqrt t \, Y_{\log t} - \sqrt w \, Y_{\log w}) \nonumber 
\\ &\hspace{20 mm}  \times \psi_0 (Y_{\log w})\,\psi_0(z_2)\, dm(z_2) \bigg) \bigg] dw. \nonumber
\end{align}
By stationarity of $Y$, we can shift time by $\log w$ to obtain that the above is equal to
\begin{align}
&C_{\ref{thm_l2limit}}^2 \theta \int_0^t w^{-2\lambda_0} \bigg[ \int \,E^Y \bigg(  \exp \bigg( - \int_{0}^{\log (t/w)} F(Y_r) \,du \bigg) h(\sqrt t\, Y_{\log (t/w)}, \sqrt w \, z_2 + \sqrt t \, Y_{\log (t/w)} - \sqrt w \, Y_{0}) \nonumber 
\\ &\hspace{20 mm}  \times \psi_0 (Y_{0})\,\psi_0(z_2)\, dm(z_2) \bigg) \bigg] dw. \nonumber
\end{align}
$Y_0$ has distribution $m$, so we condition on the value of $Y_0$ and call it $z_1$. This gives the desired expression and proves that \eqref{e_Ltcanon_secondmomentbd} holds. The proof of \eqref{e_Ltcanon_secondmomentmassbd} is a consequence of the following lemma.
\begin{lemma} \label{lemma_integsurvivalprob} For $t>0$,
\[\int P_z^Y(\rho^F > t)\, \psi_0(z) \,dm(z) = \theta e^{-\lambda_0 t}.\]
\end{lemma}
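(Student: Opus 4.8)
The plan is to identify $z \mapsto P_z^Y(\rho^F > t)$ with the Feynman--Kac semigroup of $A^F$ evaluated at the constant function $1$, and then use that this semigroup is self-adjoint on $\cL^2(m)$ and has $\psi_0$ as an eigenfunction.

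First I would recall, from \eqref{e_survivalprob} with $\phi = F$ together with the fact (Theorem~\ref{thm_killedOU}(b)) that the process generated by $A^F$ has sub-Markovian transition density $q_t(z,y)$ with respect to $m$, that
\[
P_z^Y(\rho^F > t) = E_z^Y\!\left(\exp\left(-\int_0^t F(Y_s)\,ds\right)\right) = \int q_t(z,y)\,dm(y),
\]
i.e. the survival probability is the total mass of the kernel $q_t(z,\cdot)\,dm$. Writing $P_t^F$ for the operator $(P_t^F f)(z) = \int q_t(z,y) f(y)\,dm(y)$, this says $P_z^Y(\rho^F > t) = (P_t^F 1)(z)$.

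Next I would record the two ingredients that make the computation immediate: (i) $P_t^F$ is self-adjoint on $\cL^2(m)$, since $q_t(z,y) = q_t(y,z)$ by the eigenfunction expansion \eqref{OU_eigexp}; and (ii) $P_t^F \psi_0 = e^{-\lambda_0 t}\psi_0$, which follows from $A^F\psi_0 = -\lambda_0\psi_0$ (Theorem~\ref{thm_killedOU}(a)), or directly from \eqref{OU_eigexp} and orthonormality of $\{\psi_n\}$ in $\cL^2(m)$. Since $m$ is a probability measure we have $1 \in \cL^2(m)$, and $\psi_0 \in \cL^2(m)$; hence
\[
\int P_z^Y(\rho^F > t)\,\psi_0(z)\,dm(z) = \langle P_t^F 1,\,\psi_0 \rangle_{\cL^2(m)} = \langle 1,\,P_t^F \psi_0 \rangle_{\cL^2(m)} = e^{-\lambda_0 t}\!\int \psi_0\,dm = \theta e^{-\lambda_0 t},
\]
which is the claim. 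If one prefers to avoid the operator language, the same three lines run with Tonelli's theorem: all integrands are non-negative (recall $\psi_0 > 0$ and $q_t \geq 0$), so
\[
\int P_z^Y(\rho^F > t)\,\psi_0(z)\,dm(z) = \iint q_t(z,y)\,\psi_0(z)\,dm(z)\,dm(y) = \int \Big(\int q_t(y,z)\,\psi_0(z)\,dm(z)\Big)dm(y),
\]
and the inner integral equals $(P_t^F\psi_0)(y) = e^{-\lambda_0 t}\psi_0(y)$; integrating against $m$ gives $\theta e^{-\lambda_0 t}$.

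There is essentially no obstacle here. The only points needing a word of justification are the identification $P_z^Y(\rho^F > t) = \int q_t(z,y)\,dm(y)$ — immediate from \eqref{e_survivalprob} and the fact that $q_t(z,\cdot)\,dm$ is by definition the (sub-probability) law of the killed process at time $t$ — and the interchange of the two $m$-integrals, which is Tonelli by nonnegativity; the symmetry of $q_t$ and the eigenrelation for $\psi_0$ are already contained in Theorem~\ref{thm_killedOU}.
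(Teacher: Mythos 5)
Your proof is correct and follows essentially the same route as the paper's: both identify $P_z^Y(\rho^F>t)=\int q_t(z,y)\,dm(y)$ and then reduce the double integral to a single term via the spectral structure of $A^F$. The paper writes the integral as $\langle q_t, 1\otimes\psi_0\rangle_{\cL^2(m\times m)}$ and kills all but the $n=0$ term of the eigenfunction series \eqref{OU_eigexp} by orthogonality; you instead move $P_t^F$ across the inner product by self-adjointness and use $P_t^F\psi_0=e^{-\lambda_0 t}\psi_0$, which is a marginally more compact packaging of the same two facts (symmetry of $q_t$ and the eigenrelation) and avoids invoking the full series. Either way the non-negative Tonelli interchange does the bookkeeping. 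No gaps.
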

Returning to \eqref{e_Ltcanon_secondmomentmassbd}, we apply \eqref{e_Ltcanon_secondmomentbd} with $h=1$. Separating the integrals, we obtain that
\begin{align}
\N_0(L_t(1)^2) \leq C_{\ref{thm_l2limit}}^2 \,\theta \int_0^t w^{-2\lambda_0}  \bigg( \int  P^Y_{z} \big(\rho^F > \log(t/w) \big) \,\psi_0(z) \, dm(z) \bigg) dw, \nonumber
\end{align}
where we have used $\int \psi_0 dm = \theta$. The inequality \eqref{e_Ltcanon_secondmomentmassbd} now readily follows from Lemma~\ref{lemma_integsurvivalprob}, which completes the proof of Proposition~\ref{prop_Ltcanonmomentsbd}. \qed \\

\emph{Proof of Lemma~\ref{lemma_integsurvivalprob}.}
Expanding in terms of the transition densities, we have
\begin{align} \label{integprob1}
\int  P^Y_{z} \big(\rho^F > t \big) \,\psi_0(z) \, dm(z) &=  \int \left( \int q_{t}(z,y)\, dm(y) \right) \psi_0(z) \, dm(z) \nonumber
\\ &= \langle q_t, 1 \otimes \psi_0 \rangle_{\cL^2(m \times m)},
\end{align}
where $\langle \cdot\,, \cdot \rangle_{\cL^2(m \times m)}$ denotes the inner product on $\cL^2(m \times m)$ and $\otimes$ is the tensor product of functions. Recall from that Theorem~\ref{thm_killedOU}(a) that the eigenfunction expansion \eqref{OU_eigexp} converges in $\cL^2(m \times m)$ to $q_t(\cdot,\cdot)$, and that $\|\psi_0\|_{\cL^2(m)} = 1$. Thus by the above and Fubini's theorem, \eqref{integprob1} is equal to
\begin{align}
\sum_{n=0}^\infty  e^{-\lambda_n t}  \langle\psi_n \otimes \psi_n, 1 \otimes \psi_0 \rangle_{\cL^2(m\times m)}  \nonumber
&= e^{-\lambda_0 t}  \langle\psi_0 \otimes \psi_0, 1 \otimes \psi_0 \rangle_{\cL^2(m\times m)} 
\\ &= e^{-\lambda_0 t} \int \psi_0^2 \, dm \int \psi_0 \,dm = \theta e^{-\lambda_0 t}, \nonumber
\end{align}
where the first equality follows from orthogonality of the eigenfunctions, which implies that $\int \psi_n \psi_0 \,dm = 0$ for all $n \geq 1$, and the last line has used $\int \psi_0 \,dm = \theta$ and $\int \psi_0^2\, dm = 1$. \qed \\

We now use the bounds in Proposition~\ref{prop_Ltcanonmomentsbd} to derive prove the remaining properties of $L_t$. \\

\emph{Proof of Theorem \ref{thm_Ltprop}(a).}  Via the second moment method, we have
\begin{align}
\N_0 \left( L_t(1) > 0 \right) \geq \frac{ \N_0 (L_t(1))^2}{\N_0(L_t(1)^2)} \geq  \frac{C_{\ref{thm_l2limit}}^2 \theta^2 t^{-2\lambda_0}}{C_{\ref{thm_l2limit}}^2 \theta^2 t^{1-2\lambda_0}(1-\lambda_0)^{-1}} = \frac{1-\lambda_0}{t}, \nonumber
\end{align}
where we recall that $\int \psi_0 \, dm = \theta$ and we have used Theorem~\ref{thm_Ltcanonmoments}(a) and \eqref{e_Ltcanon_secondmomentmassbd}. We recall that $\N_0 \left( X_t > 0 \right) = 2/t$, which implies that  $\N_0 \left( L_t > 0\, \big| X_t >0 \right) \geq \frac{1-\lambda_0}{2}$. \qed \\

\emph{Proof of Theorem~\ref{thm_Ltdim}.}
Recall that for $p>0$, $h_p(x,y) = |x-y|^{-p}$. We first establish that
\begin{equation} \label{e_canonenergy}
\N_0 ( (L_t \times L_t)(h_p)) < \infty
\end{equation}
for all $p< 2-2\lambda_0$. Applying \eqref{e_Ltcanon_secondmomentbd} with $h_p$, we have
\begin{align}
\N_0 ( (L_t \times  L_t)(h_p))  \leq & \, C_{\ref{thm_l2limit}}^2 \int_0^t w^{-2\lambda_0} \bigg[ \iint E^Y_{z_1} \bigg(  \exp \bigg( - \int_{0}^{\log (t/w)} F(Y_u) \, du \bigg) \bigg) \nonumber
\\ & \hspace{8 mm}\times |\sqrt{w}(z_2 - z_1)|^{-p} \, \psi_0(z_1) \, \psi_0(z_2) \, dm(z_1) \, dm(z_2)  \bigg] dw \nonumber
\\ =&\, C_{\ref{thm_l2limit}}^2 \int_0^t w^{-2\lambda_0-p/2} \bigg[ \iint E^Y_{z_1} \bigg(  \exp \bigg( - \int_{0}^{\log (t/w)} F(Y_u) \, du \bigg)\bigg)  \nonumber
\\ & \hspace{8 mm}\times |z_1 - z_1|^{-p}  \psi_0(z_1) \, \psi_0(z_2) \, dm(z_1) \, dm(z_2)  \bigg] dw. \nonumber
\end{align} 
Recalling \eqref{e_survivalprob}, the expectation is equal to the survival probability $P^Y_{z_1}(\rho^F > \log(t/w))$, so the above equals
\begin{align}
C_{\ref{thm_l2limit}}^2 \int_0^t w^{-2\lambda_0 -p/2} \bigg[ \iint P^{Y}_{z_1} \big( \rho^F > \log(t/w) \big)\,  \big| z_1 - z_2 \big|^{-p} \, \psi_0(z_1)\, \psi_0(z_2) \,dm(z_1)\, dm(z_2)  \bigg] dw. \nonumber
\end{align}
Applying \eqref{OU_psi0bd} and \eqref{OU_survivalprobGaus}, both with $\delta = 1/8$, this is bounded above by
\begin{align}
&C \int_0^t w^{-2\lambda_0 -p/2} \bigg[ \iint   \big| z_1 - z_2 \big|^{-p} \,t^{-\lambda_0} \,w^{\lambda_0}\,e^{z_1^2 / 4}\, e^{z_2^2 /8} \,dm(z_1)\, dm(z_2)  \bigg] dw \nonumber
\\ &\hspace{5 mm}=  C(p)\, t^{-\lambda_0} \int_0^t w^{-\lambda_0 -p/2}dw. \nonumber
\end{align}
The second line follows because the integrand has Gaussian tails in $z_1$ and $z_2$ and $p < 2-2\lambda_0 < 1$. Finally, the integral in the final line is finite because $-\lambda_0 - p/2 > -\lambda_0 - \lambda_0 + 1 > -1$, which proves \eqref{e_canonenergy}. In fact, we have shown that
\begin{equation} \label{e_canonenergybd}
\N_0 ( (L_t \times L_t)(h_p)) \leq C(p) t^{1-2\lambda_0 - p/2}.
\end{equation} \\
Next, we establish the same under $P^X_{X_0}$. That is, we will show that
\begin{equation} \label{e_PXenergybd}
E^X_{X_0} ( (L_t \times L_t)(h_p)) < \infty
\end{equation}
for $p<2-2\lambda_0$. We use the cluster decomposition and argue conditionally as in the proof of Theorem~\ref{thm_Lt} (for $P^X_{X_0}$) above. Suppose that $P^X_{X_0}$ is a probability under which $X_t$ is realized as in \eqref{e_clusterrep}. Conditioning on $N,x_1, \hdots, x_N$, by \eqref{e_clusterdecomp} we have
\begin{equation}
dL_t(x) \leq \sum_{i=1}^N dL_t^i(x). \nonumber
\end{equation}
Thus we obtain that
\begin{align} \label{e_PXenergy1}
&\iint |x-y|^{-p} dL_t(x)\,dL_t(y)  \nonumber
\\ &\hspace{5 mm}\leq \iint |x-y|^{-p} \bigg( \sum_{i=1}^N dL_t^i (x)\bigg) \bigg( \sum_{i=j}^N dL_t^j (y)\bigg) \nonumber
\\ &\hspace{5 mm}= \sum_{i=1}^N \iint |x-y|^{-p} dL_t^i(x)\,dL_t^i(y) + \sum_{i=1}^N \sum_{j\neq i} \iint |x-y|^{-p} dL_t^i(x)\,dL_t^j(y).
\end{align}
Recall that the $X_t^i$ are independent with distributions $\N_{x_i} (X_t \in \cdot \,|\, X_t>0 )$. By \eqref{e_Xtsurvive} and \eqref{e_canonenergybd}, we therefore have
\begin{equation}\label{e_PXenergy2}
 \N_{x_i} \left( \iint |x-y|^{-p} dL_t^i(x)\,dL_t^i(y) \, \bigg| \, X_t^i > 0\right) = C(p) t^{1-2\lambda_0 - p/2}\, (2/t)^{-1} =: C_1(p) \,t^{2-2\lambda_0 - p/2},
\end{equation}
which provides a bound for the summands in the first term of \eqref{e_PXenergy1}. We now consider the mixed integrals in \eqref{e_PXenergy1}, that is, the summands in the second term. Without loss of generality, let $i=1$ and $j=2$, and denote their (independent) laws $\N^1_{x_1}(X_t^1 \in \cdot \,|\,X^1_t>0),\N^2_{x_2}(X_t^2 \in \cdot \,|\,X^2_t>0)$. Because the integrands are non-negative, we can change the order of integration and obtain
\begin{align}\label{e_PXenergy3}
&\N^1_{x_1} \otimes \N^2_{x_2} \left(\iint |x-y|^{-p} dL_t^1(x)\,dL_t^2(y) \, \bigg| \, X_t^1 >0, X_t^2 > 0\,  \right)  \nonumber
\\ &\hspace{5 mm}= \N^1_{x_1} \left( \int \N^2_{x_2} \left( \int |x-y|^{-p} dL_t^2(y) \, \bigg| \, X_t^2 > 0 \right) dL_t^1(x) \, \bigg| \, X_t^1 >0 \right) 
\end{align}
To compute the inner expectation we apply translation invariance and \eqref{e_Xtsurvive}, which gives
\begin{align}
&\N^2_{x_2} \left( \int |y-x|^{-p} dL_t^2(y) \, \bigg| \, X^2_t>0\right)  \nonumber
\\ &\hspace{4 mm}= (t/2) \,\N_0 \left( \int |y-x|^{-p} dL_t(y-x_2) \right) \nonumber
\\ &\hspace{4 mm}= (t/2) \,\N_0 \left( \int |y-x+x_2|^{-p} dL_t(y) \right) \nonumber
\\ &\hspace{4 mm}= C_{\ref{thm_l2limit}} (t/2) t^{-\lambda_0} \int |\sqrt t z-(x-x_2)|^{-p} \psi_0(z) \, dm(z), \nonumber
\end{align}
where the last line follows from the mean measure formula \eqref{e_Ltcanon_firstmoment}. By \eqref{OU_psi0bd} with $\delta = 1/4$, we have that $\psi_0(z_2) \, dm(z_2) \leq c\, e^{-z_2^2/4} dz_2$. Thus the above is bounded above by
\begin{align}
&C t^{1-\lambda_0} \int (|\sqrt t z-(x-x_2)|^{-p}\vee 1) e^{-z^2/4} \,dz \nonumber
\\ &\hspace{5 mm}= C t^{1-\lambda_0} \int (|w-(x-x_2)|^{-p}\vee 1) t^{-1/2} e^{-w^2/4t} \,dw \nonumber
\\ &\hspace{5 mm}\leq C\,t^{1-\lambda_0} t^{-1/2} \int |w-(x-x_2)|^{-p}\, 1_{|w-(x-x_2)| \leq 1}\,dw + C\,t^{1-\lambda_0} \int t^{-1/2} e^{-w^2/4t}\,dw \nonumber
\\ &\hspace{5 mm}= C'(p)t^{1/2-\lambda_0}+ Ct^{1-\lambda_0} < \infty. \nonumber
\end{align}
By the above bound and another application of \eqref{e_Ltcanon_firstmoment}, \eqref{e_PXenergy3} is bounded above by
\begin{align} \label{e_PXenergy4}
& \left[C'(p)t^{1/2-\lambda_0}+ Ct^{1-\lambda_0} \right]\N^1_{x_1} ( L_t^1(1) \, | \, X_t^1 >0 ) =: C_2(p) \left[ t^{3/2-2\lambda_0} + t^{2-2\lambda_0}\right]. 
\end{align}
We note that both \eqref{e_PXenergy2} and \eqref{e_PXenergy4} are independent of the points $x_1, \hdots, x_N$. Therefore by these bounds and \eqref{e_PXenergy1} we have shown that
\begin{equation}
E^X_{X_0} ( (L_t \times L_t)(h_p) \,| \, N ) \leq C_1(p) N t^{2-2\lambda_0 - p/2} + C_2(p) (N^2 - N) \left[t^{3/2-2\lambda_0} +t^{2-2\lambda_0}\right].  \nonumber
\end{equation}
Taking the expectation above with respect to $N$, which we recall is Poisson with mean $2X_0(1) / t$, gives
\begin{equation} \label{e_PXenergy5}
E^X_{X_0} ( (L_t \times L_t)(h_p)) \leq C_1(p) X_0(1)\, t^{1-2\lambda_0 - p/2} + C_2(p) X_0(1)^2 \left[t^{-1/2-2\lambda_0} + t^{-2\lambda_0}\right] < \infty,
\end{equation}
which proves \eqref{e_PXenergybd}. \\

Under both $P^X_{X_0}$ and $\N_0$, we have shown that the $p$-energy of $L_t$ has finite expectation, and hence $L_t$ has finite $p$-energy almost surely, for all $p < 2-2\lambda_0$. By the energy method (see, for example, Theorem 4.27 of M{\"o}rters and Peres \cite{MP10}), this implies that $\text{dim}(BZ_t) \geq 2-2\lambda_0$ a.s. on $\{L_t > 0\}$ under $P^X_{X_0}$ and $\N_0$. Combined with Theorem A, this completes the proof of Theorem~\ref{thm_Ltdim} for $P^X_{X_0}$. To see that the upper bound on the dimension holds for $\N_0$ follows from the cluster decomposition. Consider $X_t$ under $P^X_{\delta_0}$. In the cluster decomposition of $X_t$, the probability that $N=1$ is positive. Conditioning on this event, $X_t$ is equal to $X_t^1$, which has law $\N_0( X_t^1 \in \cdot \, | \, X_t > 0 )$. Because $\text{dim}(BZ_t) \leq 2-2\lambda_0$ a.s. on this event, we therefore have $\N_0 \big( \{ \text{dim}(BZ_t) \leq 2-2\lambda_0\} \, \big| \, X_t > 0 \big) = 1$. This completes the proof. \qed \\

\emph{Proof of Theorem~\ref{thm_Ltmoments}.} To see part (a), we note that \eqref{canonmoment1} gives an expression for $\lim_{\lambda \to \infty} E^X_{X_0}(L^\lambda_t(\phi))$. On the subsequence $\{\lambda_n\}_{n=1}^\infty$ from Theorem~\ref{thm_Lt}, $L^{\lambda_n}_t(\phi) \to L_t(\phi)$ a.s. for bounded and continuous $\phi$, so it is enough to show that $
\lim_{n \to \infty} E^X_{X_0}(L^{\lambda_n}_t(\phi)) =  E^X_{X_0}(\lim_{n \to \infty} L^{\lambda_n}_t(\phi))$. By Theorem~\ref{thm_Lt}, $L_t^\lambda(\phi)$ converges in and hence is bounded in $\cL^2(P^X_{X_0})$. It is therefore uniformly integrable, which justifies the above exchange of limit and integration. This proves the result for bounded and continuous $\phi$. We extend the moment formula to bounded measurable functions by a Monotone Class Lemma and to non-negative measurable functions by Monotone convergence.\\

We now prove part (b). Suppose we realize $X_t$ under a probability $P^X_{X_0}$ such that \eqref{e_clusterrep} holds. Conditionally on $N$, by \eqref{e_clusterdecomp} we have 
\begin{align}
L_t(1)^2 &\leq  \bigg( \sum_{i=1}^N L_t^i(1) \bigg)^2  = \sum_{i=1}^N L_t^i(1)^2 + \sum_{i=1}^N \sum_{j\neq i} L_t^i(1) L_t^j(1). \nonumber
\end{align}
The clusters are independent with laws $\N_{\bar{X}_0}(X_t^i \in \cdot \, | \, X_t^i > 0) = (t/2) \N_{\bar{X}_0} ( \{X_t^i >0 , X_t^i \in \cdot\})$, the equality by \eqref{e_Xtsurvive}. Thus, applying Theorem~\ref{thm_Ltcanonmoments}(a) and Proposition~\ref{prop_Ltcanonmomentsbd}(b) to the above and using independence, we obtain
\begin{align}
E^X_{X_0} (L_t(1)^2 \, | \, N) \leq C N(t/2) t^{1-2\lambda_0} + C(N^2 - N) (t/2)^2 t^{-2\lambda_0}.
\end{align}
As in the proof of Theorem~\ref{thm_Ltdim}, we take the expectation with respect to $N$, which has a $\text{Poisson}(2X_0(1)/t)$ distribution. This proves part (b). \qed \\

Finally we consider the atomless property of $L_t$ (see Theorem~\ref{thm_Ltprop}(b)). Once again we carry out the necessary moment calculations under canonical measure. $L_t$ has an atom of mass $c>0$ at $x$ if $L_t(\{x\}) = c$. We decompose $L_t$ as 
\begin{equation} \label{atomdecomp}
L_t = \tilde{L}_t + \nu_t, \nonumber
\end{equation}
where $\tilde{L}_t$ is atomless and $\nu_t$ is strictly atomic. We begin with an elementary observation which provides an upper bound for the mass of the atoms of a measure. Let $M \in \N$. Let $I^n_1 = [-M, -M+ 2^{-n}]$, and for $k=2,3 \hdots, 2M2^n$, define the dyadic interval $I^n_k = (-M + (k-1)2^{-n},-M + k2^{-n}]$. Then $\{ I^n_k : k \leq 2M2^n\}$ is a partition of $[-M,M]$ into disjoint intervals of length $2^{-n}$. The following lemma is elementary.
\begin{lemma} \label{lemma_squareatoms}
Fix $M \in \N$ and suppose that $\mu$ is a finite measure supported on $[-M,M]$ with decomposition $\mu = \rho + \nu$, where $\rho$ is atomless and $\nu = \sum_{i \in I} c_i \delta_{x_i}$ is strictly atomic. Then for every $n\geq 1$,
\[ \sum_{k=1}^{2M2^n} \mu(I^k_n)^2  \geq  \sum_{i \in I} c_i^2. \]
\end{lemma}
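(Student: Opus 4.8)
The plan is to bound each term $\mu(I^n_k)^2$ from below by $\sum_{i\,:\,x_i\in I^n_k} c_i^2$ and then sum over $k$, using that the dyadic intervals partition $[-M,M]$. Two elementary facts drive the argument: dominating a measure only increases the mass of any set, and for nonnegative reals the square of a sum is at least the sum of the squares.

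Concretely, I would first note that since $\mu$ is supported on $[-M,M]$, so is its atomic part $\nu$, hence every atom location $x_i$ lies in $[-M,M]$ and therefore in exactly one of the intervals $I^n_1,\dots,I^n_{2M2^n}$ (these are pairwise disjoint with union $[-M,M]$, the endpoint conventions being arranged precisely so that this is a partition). Fixing $k$, write $a_i$ for $c_i$ with $i$ ranging over the (finitely or countably many) indices for which $x_i\in I^n_k$; since $\rho$ is a nonnegative measure we have $\mu(I^n_k) \geq \nu(I^n_k) = \sum_i a_i \geq 0$, so $\mu(I^n_k)^2 \geq \big(\sum_i a_i\big)^2$, and expanding the square and discarding the nonnegative cross terms gives $\big(\sum_i a_i\big)^2 \geq \sum_i a_i^2 = \sum_{i\,:\,x_i\in I^n_k} c_i^2$.

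Summing this inequality over $k=1,\dots,2M2^n$ and using that each $i\in I$ is counted in exactly one interval yields
\[
\sum_{k=1}^{2M2^n} \mu(I^n_k)^2 \;\geq\; \sum_{k=1}^{2M2^n}\ \sum_{i\,:\,x_i\in I^n_k} c_i^2 \;=\; \sum_{i\in I} c_i^2,
\]
which is exactly the claimed bound. There is no genuine obstacle here; the only step deserving a sentence of care is verifying that the half-open dyadic intervals (with $I^n_1=[-M,-M+2^{-n}]$ closed on the left) really do partition $[-M,M]$, so that no atom is omitted or double-counted in the final sum.
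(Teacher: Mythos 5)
Your proof is correct, and it is precisely the elementary argument the paper has in mind: the paper simply states ``The following lemma is elementary'' and gives no proof at all. The only ingredients are that the dyadic intervals partition $[-M,M]$, that $\mu(I^n_k)\geq\nu(I^n_k)$ since $\rho\geq 0$, and that $\bigl(\sum_i a_i\bigr)^2\geq\sum_i a_i^2$ for nonnegative $a_i$, all of which you use correctly.
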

%\begin{proof}
%The proof follows by a simple computation.
%\begin{align}
%\sum_{k=1}^{2M2^n} \mu(I^k_n)^2 &= \sum_{k=1}^{2M2^n} \big( \rho(I^n_k) + \nu(I^k_n) \big)^2 \nonumber
%\\ &\geq  \sum_{k=1}^{2M2^n} \nu(I^k_n)^2 \nonumber
%\\ &\geq  \sum_{i\in I} c_i^2, \nonumber
%\end{align}
%where the final line holds because the square of the sum of the masses in any $I^k_n$ exceeds the sum of the squared masses.
%\end{proof}

The next lemma gives an upper bound for the second moment of $L_t$ on a ball. We denote by $B(x,r)$ the ball of radius $r>0$ centred at $x \in \R$. We recall $s^*(\delta)$ from Theorem \ref{thm_killedOU}(c); in what follows we use $\delta = 1/8$, and $s^*$ denotes $s^*(1/8)$.
\begin{lemma} \label{lemma_ball2moment}
There is a constant $C_{\ref{lemma_ball2moment}}>0$ and $t$-dependent constant $C_{\ref{lemma_ball2moment}}(t)>0$ such that for all $x \in \R$ and $r<e^{-s^*}t$, 
\begin{align} 
\N_0 (L_t(B(x,r))^2 ) &\leq C_{\ref{lemma_ball2moment}} \left[t^{-\lambda_0} \, r^{2-2\lambda_0}\, P_0^W (W_{4t/3} \in B(x,r) )  + t^{-3\lambda_0+ 1/2} \, r\,P_0^W ( W_t \in B(x,r) ) \right] \nonumber
\\ &\leq C_{\ref{lemma_ball2moment}}(t) \left[ r^{3-2\lambda_0} + r^2 \right], \nonumber
\end{align}
where $W$ is a standard Brownian motion under $P^W_0$.
\end{lemma}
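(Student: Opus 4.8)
\emph{Proof strategy.} The starting point is the second moment bound \eqref{e_Ltcanon_secondmomentbd} of Proposition~\ref{prop_Ltcanonmomentsbd} applied with $h(x_1,x_2)=1_{B(x,r)}(x_1)\,1_{B(x,r)}(x_2)$, and the task is to estimate the resulting multiple integral. The key geometric observation is that if both $\sqrt t\,Y_{\log(t/w)}\in B(x,r)$ and $\sqrt t\,Y_{\log(t/w)}+\sqrt w\,(z_2-z_1)\in B(x,r)$, then necessarily $|z_2-z_1|\le 2r/\sqrt w$. Accordingly I would bound
\[
1_{B(x,r)}\big(\sqrt t\,Y_{\log(t/w)}\big)\,1_{B(x,r)}\big(\sqrt t\,Y_{\log(t/w)}+\sqrt w\,(z_2-z_1)\big)\ \le\ 1_{B(x,r)}\big(\sqrt t\,Y_{\log(t/w)}\big)\,1\big(|z_2-z_1|\le 2r/\sqrt w\big),
\]
and then carry out the $z_2$-integral: since $\psi_0(z_2)\,dm(z_2)=\psi_0(z_2)p_1(z_2)\,dz_2$ has a density bounded by a constant (by \eqref{OU_psi0bd} with $\delta=1/8$ against the weight $p_1$), integrating over the interval $\{|z_2-z_1|\le 2r/\sqrt w\}$, of length $4r/\sqrt w$, contributes a factor $\le c\min(\theta,\,r/\sqrt w)$, uniformly in $z_1$.

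What remains is $\int E^Y_{z_1}\!\big(e^{-\int_0^{\log(t/w)}F(Y_u)\,du}\,1_{B(x,r)}(\sqrt t\,Y_{\log(t/w)})\big)\,\psi_0(z_1)\,dm(z_1)$, which I would estimate in two ranges of $w$, split at $w=e^{-s^*}t$ (this is where the hypothesis $r<e^{-s^*}t$ enters). For $w\le e^{-s^*}t$ one has $\log(t/w)\ge s^*$, so \eqref{OU_qbound} with $\delta=1/8$ gives $q_{\log(t/w)}(z_1,y)\le c\,(w/t)^{\lambda_0}e^{(z_1^2+y^2)/8}$; writing the expectation as $\int q_{\log(t/w)}(z_1,y)\,1_{B(x,r)}(\sqrt t\,y)\,dm(y)$, substituting $u=\sqrt t\,y$, and observing that $e^{y^2/8}p_1(y)=(2\pi)^{-1/2}e^{-3y^2/8}$ is (after the substitution) a constant multiple of the variance-$4t/3$ Gaussian density — the value $4/3$ being exactly what $\delta=1/8$ produces — one obtains a bound $\le c\,(w/t)^{\lambda_0}e^{z_1^2/8}\,P^W_0(W_{4t/3}\in B(x,r))$, the remaining $z_1$-integral $\int e^{z_1^2/8}\psi_0(z_1)\,dm(z_1)$ being finite by \eqref{OU_psi0bd}. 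For $e^{-s^*}t<w<t$, where \eqref{OU_qbound} is no longer available because $\log(t/w)<s^*$, I would instead bound the killing exponential crudely by $1$ (equivalently $q_{\log(t/w)}\le k_{\log(t/w)}$, the un-killed OU density) and use the explicit formula \eqref{unkilledtrans}; since on this range $w$ is comparable to $t$, the $w$-integral over $(e^{-s^*}t,t)$ contributes only a bounded factor, and this range produces the second, lower-order term involving $P^W_0(W_t\in B(x,r))$.

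Putting the pieces together, the main ($w\le e^{-s^*}t$) contribution is at most $C\,t^{-\lambda_0}\,P^W_0(W_{4t/3}\in B(x,r))\int_0^{e^{-s^*}t}w^{-\lambda_0}\min(\theta,\,r/\sqrt w)\,dw$. I would split the last integral at $w=r^2$ (with a trivial extra argument by monotonicity in $r$ in case $r^2>e^{-s^*}t$): on $(0,r^2)$ one integrates $w^{-\lambda_0}$, finite because $\lambda_0<1$, yielding a multiple of $r^{2-2\lambda_0}$; on $(r^2,e^{-s^*}t)$ one integrates $r\,w^{-\lambda_0-1/2}$, whose primitive $w^{1/2-\lambda_0}$ is finite at $w=r^2$ because $\lambda_0>1/2$, again yielding a multiple of $r^{2-2\lambda_0}$. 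This gives the first displayed inequality of the lemma. The second follows at once by inserting the pointwise bound $P^W_0(W_\tau\in B(x,r))\le 2r\,(2\pi\tau)^{-1/2}$ for $\tau\in\{t,4t/3\}$, which turns the first term into a $t$-dependent constant times $r^{3-2\lambda_0}$ and the second into a $t$-dependent constant times $r^2$.

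The routine parts are the Gaussian change of variables and the elementary interval estimates; the one delicate point is the behaviour of the $w$-integral at its endpoints. Near $w=0$ the factor $w^{-2\lambda_0}$ is not integrable on its own and is rescued only by the survival factor $(w/t)^{\lambda_0}=e^{-\lambda_0\log(t/w)}$ coming from the lead eigenvalue, leaving the just-integrable $w^{-\lambda_0}$; and near $w=t$, where $\log(t/w)\to0$ and the killed-process estimate \eqref{OU_qbound} degenerates, one is forced into the cruder un-killed estimate that produces the second term. Keeping every bound uniform in the centre $x$ — so that the only $x$-dependence sits inside $P^W_0(W_\tau\in B(x,r))$ — is the other thing to be careful about.
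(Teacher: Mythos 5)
Your proposal is correct and follows essentially the same route as the paper's proof: both start from \eqref{e_Ltcanon_secondmomentbd}, split the $w$-integral at $w=e^{-s^*}t$, use \eqref{OU_qbound} with $\delta=1/8$ to extract the $(w/t)^{\lambda_0}$ factor and the variance-$4t/3$ Gaussian on the small-$w$ range, bound crudely on the large-$w$ range, and split the remaining small-$w$ integral at $w\asymp r^2$. The only cosmetic difference is bookkeeping: you integrate $z_2$ out first to produce the factor $\min(\theta,r/\sqrt w)$, whereas the paper carries everything to the end and identifies explicit Gaussian random variables ($g_1\sim\cN(0,4t/3)$, $g_2$, etc.) before applying the same $|z_2-z_1|\le 2r/\sqrt w$ observation; the two are the same calculation in a slightly different order.
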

\begin{proof}
We apply \eqref{e_Ltcanon_secondmomentbd} with $h(z_1,z_2) = 1_{B(x,r)}(z_1)\,1_{B(x,r)}(z_2)$. This gives
\begin{align} \label{e_ball2moment1}
&\N_0 (L_t(B(x,r))^2 ) \nonumber
\\ &\hspace{5 mm}= C \int_0^t w^{-2\lambda_0} \bigg[ \iint E^Y_{z_1} \bigg(  \exp \bigg( - \int_{0}^{\log (t/w)} F(Y_u) \, du \bigg) \nonumber
\\ &\hspace{9 mm} \times  1_{B(x,r)}(\sqrt t Y_{\log (t/w)}) \, 1_{B(x,r)}(\sqrt t Y_{\log (t/w)}+ \sqrt{w}(z_2 - z_1))\bigg)  \psi_0(z_1) \, \psi_0(z_2) \, dm(z_1) \, dm(z_2)  \bigg] dw. 
\end{align}
We now divide the above into two cases depending on the size of $w$. We first consider the singular case, where $w$ is small.\\

\textbf{Case 1: $w < e^{-s^*}t$.}\\

We interpret the exponential in (\ref{e_ball2moment1}) as the probability that $Y$ survives until time $\log(t/w)$ when it is subject to Markovian killing with rate $F(Y_u)$. Because this probability is equal to the integral of the transition density over all of $\R$, the portion of the integral corresponding to $w \in [0,e^{-s^*}t]$ equals
\begin{align} \label{e_ball2moment2}
&C \int_0^{e^{-s^*}t} w^{-2\lambda_0} \bigg[ \iiint q_{\log(t/w)}(z_1,y) \, 1_{B(x,r)}(\sqrt t  y) \, 1_{B(x,r)}(\sqrt t y+ \sqrt{w}(z_2 - z_1))  \nonumber
\\ &\times  \psi_0(z_1) \, \psi_0(z_2) \, dm(z_1) \, dm(z_2) \,dm(y) \bigg] dw.  \nonumber
\\ &\hspace{6 mm}\leq C \int_0^{e^{-s^*}t} w^{-2\lambda_0} \bigg[ \iiint e^{-\lambda_0 \log(t/w)} \, e^{z_1^2/8} e^{y^2/8} \, 1_{B(x,r)}(\sqrt t  y) \, 1_{B(x,r)}(\sqrt t y+ \sqrt{w}(z_2 - z_1))  \nonumber
\\ & \hspace{10 mm}\times  \, \psi_0(z_1) \, \psi_0(z_2) \, dm(z_1)  dm(z_2)\,dm(y)  \bigg] dw.  \nonumber
\\ &\hspace{6 mm}\leq Ct^{-\lambda_0}\int_0^{e^{-s^*}t} w^{-\lambda_0} \int e^{y^2/8} 1_{B(x,r)}(\sqrt t y)\,  \nonumber
\\ & \hspace{10 mm} \bigg[\iint e^{z_1^2/4} e^{z_2^2/8} \, 1_{B(x,r)}(\sqrt t y+ \sqrt{w}(z_2 - z_1)) \,dm(z_1) \, dm(z_2)  \bigg] dm(y) \, dw.  
\end{align}
The first inequality uses \eqref{OU_qbound} with $\delta = 1/8$, which applies because $\log(t/w) > s^*$ for all $w$ in the above integral, and the second uses \eqref{OU_psi0bd}, both with $\delta = 1/8$. In the integral in the last line we collect all the Gaussian terms. The square-bracketed term is equal to
\begin{align}
 &C\iint 1_{B(x,r)}(\sqrt t y+ \sqrt{w}(z_2 - z_1))\, e^{-z_1^2/4}  e^{-3z_2^2/8} \, dz_1 \, dz_2 \nonumber
 \\ &\hspace{5 mm}=C'\int 1_{B(x,r)}(\sqrt t y+ \sqrt{w} z)\, e^{-3z^2/20} \, dz. \nonumber
\end{align}
We have used the convolution property for independent Gaussians. We define Gaussian random variables $g_1 \sim \cN(0,4t/3)$ and $g_2 \sim \cN(0,10/3)$. Substituting the last expression into (\ref{e_ball2moment2}), we obtain
\begin{align} \label{e_ball2moment22}
&Ct^{-\lambda_0}\int_0^{e^{-s^*}t} w^{-\lambda_0}\bigg[\iint 1_{B(x,r)}(\sqrt t y) \,1_{B(x,r)}(\sqrt t y+ \sqrt{w} z)\, e^{-3z^2/20} \, e^{-3y^2/8}  dz\, dy \bigg] dw \nonumber
\\ &\hspace{5 mm}=C' t^{-\lambda_0}\int_0^{e^{-s^*}t} w^{-\lambda_0}\bigg[ P\big(g_1 \in B(x,r), g_1 + \sqrt w g_2 \in B(x,r)\big) \bigg] dw \nonumber
\\ &\hspace{5 mm}\leq Ct^{-\lambda_0}\int_0^{e^{-s^*}t} w^{-\lambda_0}\bigg[ P\big(g_1 \in  B(x,r)\big) \, P\big(\sqrt w g_2 \in B(0,2r)\big) \bigg] dw \nonumber
\\ &\hspace{5 mm}= Ct^{-\lambda_0}  P \big(g_1 \in  B(x,r) \big) \int_0^{e^{-s^*}t} w^{-\lambda_0} P\big(g_2 \in B(0,2r w^{-1/2})\big) dw.
\end{align}
Suppose that $4r^2  < e^{-s^*}t$. If $2rw^{-1/2} > 1$, we bound the probability in the integral above by $1$. If $2rw^{-1/2} \leq 1$, the probability is simply bounded by the diameter of the ball, $4rw^{-1/2}$. Thus \eqref{e_ball2moment22}, and hence \eqref{e_ball2moment2}, is bounded above by 
\begin{align} \label{e_ball2moment3}
&Ct^{-\lambda_0}  P (g_1 \in  B(x,r) ) \bigg[  \int_0^{4r^2} w^{-\lambda_0}  dw  + 4r \int_{4r^2}^{e^{-s^*}t} w^{-\lambda_0-1/2}   dw  \bigg] \nonumber
\\ &\hspace{5 mm}= Ct^{-\lambda_0}  P (g_1 \in  B(x,r) ) \bigg[4^{1-\lambda_0} \frac{r^{2-2\lambda_0}}{1-\lambda_0} + \frac{4r}{\lambda_0 - 1/2}\left((4r^2)^{-(\lambda_0 - 1/2)} - (te^{-s^*})^{-(\lambda_0 - 1/2)} \bigg)  \right] \nonumber
\\ &\hspace{5 mm}\leq Ct^{-\lambda_0}  P (g_1 \in  B(x,r) ) \, r^{2-2\lambda_0}. 
\end{align}
Finally, note that if $4r^2 \geq e^{-s^*}t$, then \eqref{e_ball2moment22} is bounded above by
\[ Ct^{-\lambda_0}  P (g_1 \in  B(x,r) )  \int_0^{e^{-s^*}t} w^{-\lambda_0}  dw \leq Ct^{-\lambda_0}P (g_1 \in  B(x,r) )  (e^{-s^*}t)^{1-\lambda_0}  \leq Ct^{-\lambda_0} P (g_1 \in  B(x,r) ) r^{2-2\lambda_0}, \]
so the upper bound for \eqref{e_ball2moment2} obtained in \eqref{e_ball2moment3} holds in this case as well.\\
 
\textbf{Case 2: $w \in (e^{-s^*}t,t]$.}\\

In this case we simply bound the exponential term in (\ref{e_ball2moment1}) above by $1$, effectively ignoring the killing, in which case $Y_{\log(t/w)} \sim m$. We also use (\ref{OU_psi0bd}) with $\delta = \frac 1 4$. Hence the contribution to \eqref{e_ball2moment1} from the $w \in (e^{-s^*}t,t]$ case is bounded above by
\begin{align}
&C \int_{e^{-s^*}t}^t w^{-2\lambda_0} \bigg[ \iiint  1_{B(x,r)}(\sqrt ty) \, 1_{B(x,r)}(\sqrt t y+ \sqrt{w}(z_2 - z_1))\, e^{z_1^2/4}\,e^{z_2^2 /4}\, dm(z_1) \, dm(z_2)\, dm(y)  \bigg] dw \nonumber
\\&\hspace{5 mm}\leq C \int_{e^{-s^*}t}^t w^{-2\lambda_0} \bigg[ \iiint  1_{B(x,r)}(\sqrt ty) \, 1_{B(x,r)}(\sqrt t y+ \sqrt{w}(z_2 - z_1))\, e^{-z_1^2/4}\,e^{-z_2^2 /4}\, dz_1 \, dz_2 \, dm(y) \bigg] dw \nonumber
\\&\hspace{5 mm}= C \int_{e^{-s^*}t}^t w^{-2\lambda_0} \bigg[ \iint  1_{B(x,r)}(\sqrt ty) \, 1_{B(x,r)}(\sqrt t y+ \sqrt{w} z)\, e^{-z^2/2}\, dz\, dm(y) \bigg] dw \nonumber
\\ &\hspace{5 mm}\leq Ct^{-\lambda_0}  P (g_3 \in  B(x,r) ) \int_{e^{-s^*}t}^t w^{-2\lambda_0} P (g_4 \in B(0,2r w^{-1/2})) \, dw. \nonumber
\end{align}
In the above, $g_3 \sim \cN(0,t)$ and $g_4 \sim \cN(0,1)$. The third line follows by the convolution property of Gaussians. We again bound the probability in the integral by the size diameter of the ball, which gives the following upper bound for the above:
\begin{align} \label{e_ball2moment4}
&Ct^{-\lambda_0}  P (g_3 \in  B(x,r) )\, 4r  \int_{e^{-s^*}t}^t w^{-2\lambda_0 -1/2}  \, dw. \nonumber
\\ &\hspace{5 mm}\leq Ct^{-\lambda_0} P (g_3 \in  B(x,r) )\, 4r  \,(e^{-s^*}t)^{-2\lambda_0 +1/2} \nonumber
\\ &\hspace{5 mm}= Ct^{-3\lambda_0 + 1/2} P (g_3 \in  B(x,r) )\, r.
\end{align}
By combining (\ref{e_ball2moment3}) and (\ref{e_ball2moment4}) and interpreting the Gaussian probabilities in terms of Brownian motion, we obtain the first inequality of the result. The second bound is obtained by bounding the Brownian density above by its maximum value.
\end{proof}

\emph{Proof of Theorem~\ref{thm_Ltprop}(b).} First consider $L_t$ under $\N_0$ and recall the decomposition \eqref{atomdecomp}, ie. $L_t = \tilde{L}_t + \nu_t$, the latter strictly atomic. Fix $M \in \N$ and consider the restriction of $L_t$ to $[-M,M]$, ie. $dL_t^{(M)}(x) := 1_{[-M,M]}(x)\,dL_t(x)$, with decomposition $L_t^{(N)} = \tilde{L}_t^{(M)} + \nu_t^{(M)}$. Note that the radius of the dyadic intervals is $r(I^k_n) = r = 2^{-(n+1)}$. By Lemma \ref{lemma_ball2moment}, we have
\begin{align}
\N_0 \left(\sum_{k=1}^{2M2^n} L^{(M)}_t(I^k_n)^2 \right) &= \sum_{k=1}^{2M2^n} \N_0\left( L^{(M)}_t(I^k_n)^2\right) \nonumber
\\ &\leq C(t)\, 2M2^n \left[(2^{-(n+1)})^{3-2\lambda_0} + (2^{-(n+1)})^{2} \right] \nonumber
\\ &\leq \,C(t)\, 2M \left[ (2^{-n})^{2 - 2\lambda_0} + 2^{-n} \right] \nonumber
\\ &\to 0 \, \text{ as } \, n \to \infty \nonumber
\end{align} 
because $2-2\lambda_0 > 0$. Moreover, by Lemma \ref{lemma_squareatoms}, the first expression is greater than or equal to the expectation (under $\N_0$) of the sum of the squares of the atoms of $L^{(M)}_t$. The above implies that this expectation must in fact be zero, so $\nu_t^{(M)} = 0$ $\N_0$-a.s. As this holds for all $M$, $\nu_t = 0$ and $L_t$ is atomless under $\N_0$. To obtain the result under $P^X_{X_0}$, we note from the cluster decomposition and \eqref{e_clusterdecomp} that (conditionally) $L_t$ is a sum of $N$ measures which are atomless by the above, and hence is atomless. \qed 

\section{Proof of Theorem \ref{thm_l2limit}} \label{s_momentsconvergence}
We begin by obtaining an expression for second moments of $L^\lambda_t$ under the canonical measure. In particular, we study $\N_0 (L^\lambda_t(\phi_1) \,L^{\lambda'}_t(\phi_2) )$ for $\lambda, \lambda' > 0$. The moment representation formula is naturally suggested by a branching particle heuristic. Its proof uses PDE methods and the Laplace functional. Let $E^B_x$ denote the expectation of a Brownian motion started at $x$. $E^{B^1, B^2}_{(x,y)}$ denotes the law of two independent Brownian motions $B^1$ and $B^2$ started from points $x$ and $y$ respectively. We recall that $p_\delta(\cdot)$ denotes the Gaussian density with variance $\delta$.
\begin{proposition} \label{prop_pde_rep1}
Let $h:\R^2 \to \R$ be a bounded Borel function and $\lambda, \lambda', t > 0$. Then
\begin{align*}
\N_0 ((L_t^\lambda \times L_t^{\lambda'})(h)) =  \,&(\lambda \lambda')^{2\lambda_0}\int_0^t E_0^B \bigg( E_{(0,0)}^{B^1, \,B^2}\bigg[ h( B_s + B^1_{t-s}, B_s + B^2_{t-s})  \nonumber
\\ &\times \exp \bigg( - \int_0^s V^{\lambda, \lambda'}_{t-u} (B^1_{t-s} + B_s - B_u, B^2_{t-s} + B_s - B_u) \, du \bigg) \nonumber
\\ &\times \exp \bigg( - \int_0^{t-s} V^{\lambda, \lambda'}_{r}(B^1_{r}, B^1_{r} + B^2_{t-s}- B^1_{t-s}) \, dr \bigg) 
\\ &\times \exp \bigg( - \int_0^{t-s} V^{\lambda, \lambda'}_{r}(B^2_{r}   + B^1_{t-s}- B^2_{t-s},B^2_{r} ) \, dr \bigg) \bigg] \bigg) ds.
\end{align*}
\end{proposition}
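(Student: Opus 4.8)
\emph{Proof proposal.} The plan is to reduce $\N_0\big((L_t^{\lambda}\times L_t^{\lambda'})(h)\big)$ to a double $x$-integral of the ``Laplace-weighted second-moment density''
\[ G_t^{\lambda,\lambda'}(x_1,x_2):=\N_0\big(X(t,x_1)\,X(t,x_2)\,e^{-\lambda X(t,x_1)-\lambda'X(t,x_2)}\big), \]
then to identify $G_t^{\lambda,\lambda'}$ with a mixed $\lambda$-derivative of the PDE solution $V_t^{(\lambda,\lambda'),(x_1,x_2)}$, and finally to read off the stated branching-Brownian formula from the Feynman--Kac (Dyson) expansion of that solution. First, by \eqref{e_Llambdadef} we have $(L_t^{\lambda}\times L_t^{\lambda'})(h)=(\lambda\lambda')^{2\lambda_0}\iint h(x_1,x_2)X(t,x_1)X(t,x_2)e^{-\lambda X(t,x_1)-\lambda'X(t,x_2)}\,dx_1\,dx_2$, and (after replacing $h$ by $|h|$) the double integral is dominated by $\|h\|_\infty X_t(1)^2$, which is $\N_0$-integrable: solving \eqref{e_integraleq} with constant initial data $\phi\equiv\lambda$ gives $V_t^{\phi}\equiv 2\lambda/(2+\lambda t)$, and differentiating the identity $\N_0(1-e^{-\lambda X_t(1)})=2\lambda/(2+\lambda t)$ (which is \eqref{e_LapFunCanon}) twice at $\lambda=0$ yields $\N_0(X_t(1)^2)=t<\infty$. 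Hence Tonelli and then Fubini apply, and
\[ \N_0\big((L_t^{\lambda}\times L_t^{\lambda'})(h)\big)=(\lambda\lambda')^{2\lambda_0}\iint h(x_1,x_2)\,G_t^{\lambda,\lambda'}(x_1,x_2)\,dx_1\,dx_2. \]

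Next I would identify $G_t^{\lambda,\lambda'}(x_1,x_2)=-\partial_{\lambda}\partial_{\lambda'}V_t^{(\lambda,\lambda'),(x_1,x_2)}(0)$. Starting from $\N_0(1-e^{-\lambda X(t,x_1)-\lambda'X(t,x_2)})=V_t^{(\lambda,\lambda'),(x_1,x_2)}(0)$ --- which is the evaluation at $0$ appearing in \eqref{V2ptdef} --- one differentiates once in $\lambda$ and once in $\lambda'$. Differentiation under $\N_0$ is justified by dominated convergence: $\N_0$ is concentrated on $\{X_t>0\}$, a set of finite mass $2/t$ by \eqref{e_Xtsurvive}, on which the integrands $X(t,x_i)e^{-\lambda X(t,x_i)}$ and $X(t,x_1)X(t,x_2)e^{-\lambda X(t,x_1)-\lambda'X(t,x_2)}$ stay bounded near any fixed $(\lambda,\lambda')$, and $X(t,x_i)$, $X(t,x_1)X(t,x_2)$ are $\N_0$-integrable (their $\N_0$-integrals being the first- and second-moment densities $p_t(x_i)$ and $\int_0^t\int p_{t-s}(y)p_s(y-x_1)p_s(y-x_2)\,dy\,ds$). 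This reduces everything to a representation of the mixed derivative $W:=\partial_{\lambda}\partial_{\lambda'}V^{(\lambda,\lambda'),(x_1,x_2)}$.

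For that I would differentiate the mild equation \eqref{e_integraleq} with $\phi=\lambda\delta_{x_1}+\lambda'\delta_{x_2}$. Writing $V=V^{(\lambda,\lambda'),(x_1,x_2)}$, $U^{(1)}=\partial_{\lambda}V$ and $U^{(2)}=\partial_{\lambda'}V$, one obtains the linear integral equations
\[ U^{(i)}_t=S_t\delta_{x_i}-\int_0^t S_{t-s}\big(V_s\,U^{(i)}_s\big)\,ds,\qquad W_t=-\int_0^t S_{t-s}\big(U^{(1)}_sU^{(2)}_s+V_s\,W_s\big)\,ds. \]
That $\lambda\mapsto V^{(\lambda,\lambda'),(x_1,x_2)}$ is differentiable and its derivatives solve these equations can be obtained either from the Picard/contraction construction of the mild equation together with the uniform bound $V_s^{\phi}\le S_s\phi\le cs^{-1/2}\phi(\R)$ (which, with the Gaussian bounds on $U^{(i)}_s$, also controls the products $V_sU^{(i)}_s$ and $U^{(1)}_sU^{(2)}_s$ near $s=0$), or, equivalently, by checking directly via the branching property of the superprocess that $U^{(i)}_t(y)=\N_y(X(t,x_i)e^{-\lambda X(t,x_1)-\lambda'X(t,x_2)})$ and $-W_t(y)=\N_y(X(t,x_1)X(t,x_2)e^{-\lambda X(t,x_1)-\lambda'X(t,x_2)})$ satisfy them. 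I expect \emph{this} step --- establishing the linearized equations rigorously with adequate integrability --- to be the main obstacle; the rest is Feynman--Kac bookkeeping. Solving the $U^{(i)}$-equation by the Dyson expansion identifies $U^{(i)}_t$ as the sub-density at $y$ of a Brownian path started at $x_i$ and killed at rate $V^{(\lambda,\lambda'),(x_1,x_2)}$, i.e. $\int\varphi(y)U^{(i)}_t(y)\,dy=E^B_{x_i}\big[\varphi(B_t)\exp\big(-\int_0^t V_r^{(\lambda,\lambda'),(x_1,x_2)}(B_r)\,dr\big)\big]$ (with the equivalent time-reversed form carrying the age subscript $t-r$ in place of $r$); a second Dyson expansion for $W$ (source $-U^{(1)}_sU^{(2)}_s$, potential $-V_s$) then writes $G_t^{\lambda,\lambda'}(x_1,x_2)=-W_t(0)$ as an integral over a ``branch age'' $s$ of the product $U^{(1)}_s(z)U^{(2)}_s(z)$, propagated by a third killed Brownian path from the root $0$ to the common branch point $z$.

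Finally I would substitute the killed-Brownian (bridge) forms of $U^{(1)}_s$ and $U^{(2)}_s$ into the above, multiply by $h(x_1,x_2)$, and integrate in $(x_1,x_2)$: the Gaussian weights $p_{\bullet}(\cdot-x_i)$ produced by this integration convert the two bridge factors into a pair of independent honest Brownian motions $B^1,B^2$ emanating from the branch point, so that each $x_i$ becomes (branch point)$+B^i_{\bullet}$ and the three Feynman--Kac exponentials become the trunk weight and the two branch weights. To bring the expression to the precise form stated --- in particular the subscripts $V^{\lambda,\lambda'}_{t-u}$ on the trunk and $V^{\lambda,\lambda'}_{r}$ on the branches, and the arguments written as Brownian increments --- one relabels the outer variable $s\mapsto t-s$, uses translation invariance ($V_a^{(\lambda,\lambda'),(x_1,x_2)}(z)=V_a^{\lambda,\lambda'}(z-x_1,z-x_2)$), time-reverses each branch path, and uses the reflection symmetry $V_a^{\lambda,\lambda'}(w_1,w_2)=V_a^{\lambda,\lambda'}(-w_1,-w_2)$ (immediate from \eqref{V2ptdef} and the reflection invariance of $\N_0$). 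All the Fubini interchanges and the convergence of the two Dyson series are controlled by $V_r^{(\lambda,\lambda'),(x_1,x_2)}\le\lambda p_r(\cdot-x_1)+\lambda'p_r(\cdot-x_2)$ together with $E^B\big[\int_0^\varepsilon p_r(B_r)\,dr\big]<\infty$, and by $V^{\lambda,\lambda'}_a\le V^{\infty,\infty}_a\le 2/a$ from \eqref{e_Vinf_tbd}.
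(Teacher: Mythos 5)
Your plan is in essence the paper's: reduce $\N_0((L_t^\lambda\times L_t^{\lambda'})(h))$ via Fubini (using $\N_0(X_t(1)^2)<\infty$) to the second-moment density $\N_0\big(X(t,x_1)X(t,x_2)e^{-\lambda X(t,x_1)-\lambda' X(t,x_2)}\big)$, identify this as a mixed two-parameter derivative of the log-Laplace functional $V_t^{(\lambda,\lambda'),(x_1,x_2)}(0)$, obtain a Feynman--Kac expansion for the linearized equations, then integrate against $h$, absorbing the Gaussian kernels as extra increments of the Brownian paths, relabel, and time-reverse. Where you diverge from the paper is the parameterization of the differentiation. You differentiate $V$ directly in $\lambda$ and $\lambda'$, so your linearized equations carry Dirac masses $\delta_{x_i}$ as driving initial data. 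The proof of Lemma~\ref{lemma_pde_canon_func} instead writes $\phi=\varphi+\epsilon_1\varphi_1+\epsilon_2\varphi_2$ with $\varphi=\lambda\delta_{x_1}+\lambda'\delta_{x_2}$ a fixed measure but $\varphi_i=p_\delta(\cdot-x_i)$ honest continuous $\cL^1$ functions, differentiates in the auxiliary $\epsilon_i$ (a step explicitly covered by Lemmas~2.3 and~2.5 of \cite{M2002}), derives the Feynman--Kac representation with function-valued initial data, and only afterward integrates against $\phi_1(x_1)\phi_2(x_2)\,dx_1\,dx_2$ and sends $\delta\downarrow 0$, at which point the bridge weight $p_\delta(B_s+B^i_{t-s}-x_i)$ becomes an extra $\delta$-step of the branch path.

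You flag the justification of the linearized equations as ``the main obstacle,'' and that is precisely where your route leaves the territory the paper can cite. The \cite{M2002} results used here apply when the perturbation direction is an $\cL^1$ continuous function, not a Dirac measure, so they do not directly give $U^{(i)}_t=S_t\delta_{x_i}-\int_0^t S_{t-s}(V_sU^{(i)}_s)\,ds$ or the differentiability of $\lambda\mapsto V_t^{(\lambda,\lambda'),(x_1,x_2)}$. Your two suggested remedies --- a Picard/contraction construction with the Gaussian-kernel bound $V_s^\phi\le S_s\phi$ near $s=0$, or a direct verification that $\N_y\big(X(t,x_i)e^{-\lambda X(t,x_1)-\lambda'X(t,x_2)}\big)$ solves the mild equation via the superprocess Markov property --- are both plausible in principle but are sketched rather than executed, and either would take genuine work. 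The mollified reparameterization was chosen precisely to sidestep this: it introduces one additional small parameter $\delta$, but it keeps every linearized problem inside the cited hypotheses and defers the singular limit to an elementary bounded-convergence argument at the very end, including the one-sided treatment of the $\epsilon_i\downarrow 0$ limit. If you want to keep your version, you should either extend the \cite{M2002} lemmas to measure-valued perturbation directions or make the superprocess-Markov identity for $U^{(i)}$ precise; otherwise the simplest repair is to adopt the paper's $(\epsilon_1,\epsilon_2)$-parameterization with $\varphi_i=p_\delta(\cdot-x_i)$ and postpone $\delta\downarrow 0$.
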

The proof of Proposition \ref{prop_pde_rep1} requires the following lemma.
\begin{lemma} \label{lemma_pde_canon_func}
Let $\varphi \in \cM_F(\R)$ and $\varphi_1, \varphi_2 \in \cL^1(\R)$ be non-negative and continuous. Then
\begin{align} \label{e_pde_canon_func}
\N_0 \left( X_t(\varphi_1) X_t(\varphi_2) e^{-X_t(\varphi)} \right) 
&=  \int_0^t E_0^B \bigg( \exp \left( - \int_0^s V_{t-u}^\varphi (B_u) \, ds \right) \nonumber
\\ &\,\,\,\,\,\,\times \prod_{i=1,2} E_0^{B^i} \bigg[ \exp \left( - \int_0^{t-s} V_{t-s-r}^\varphi(B_s + B^i_r) \, dr \right) \varphi_i(B_s + B^i_{t-s}) \bigg] \bigg) ds . \nonumber
\end{align}  
\end{lemma}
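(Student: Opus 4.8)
\emph{Proof proposal.} The plan is to obtain the identity by differentiating the log-Laplace functional of $X_t$ under $\N_0$ twice in an external parameter. For $\alpha=(\alpha_1,\alpha_2)\in[0,\infty)^2$ set $\Psi_\alpha:=\varphi+\alpha_1\varphi_1+\alpha_2\varphi_2\in\cM_F(\R)$. By \eqref{e_LapFunCanon}, extended to measure data as in Section~\ref{s_duality}, $\N_0(1-e^{-X_t(\Psi_\alpha)})=V^{\Psi_\alpha}_t(0)$. On $\{X_t=0\}$ this integrand vanishes and $\N_0(\{X_t>0\})=2/t<\infty$ by \eqref{e_Xtsurvive}, so one may work under the finite measure $\N_0(\,\cdot\cap\{X_t>0\})$. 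There $\alpha\mapsto 1-e^{-X_t(\Psi_\alpha)}$ is smooth with $|\partial_{\alpha_1}(1-e^{-X_t(\Psi_\alpha)})|\le X_t(\varphi_1)$ and $|\partial_{\alpha_1}\partial_{\alpha_2}(1-e^{-X_t(\Psi_\alpha)})|\le X_t(\varphi_1)X_t(\varphi_2)$, and since $\N_0(X_t(\varphi_i))=S_t\varphi_i(0)<\infty$ and $\N_0(X_t(\varphi_1)X_t(\varphi_2))<\infty$ (a standard second-moment bound; alternatively the $\varphi\equiv0$ instance of the present lemma, proved first), one may differentiate under the integral twice and conclude
\[\N_0\big(X_t(\varphi_1)X_t(\varphi_2)e^{-X_t(\varphi)}\big)=-\,\partial_{\alpha_1}\partial_{\alpha_2}V^{\Psi_\alpha}_t(0)\big|_{\alpha=0}.\]
Only the corner derivative at $\alpha=0$ is required; its existence follows from the real-analytic dependence of $\alpha\mapsto V^{\Psi_\alpha}_t$ on $[0,\infty)^2$ (seen by iterating \eqref{e_integraleq}), or by perturbing the base point to $\varphi+\epsilon(\varphi_1+\varphi_2)$, $\epsilon>0$, and letting $\epsilon\downarrow0$ via monotone convergence.

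Next I would compute $\partial_{\alpha_1}\partial_{\alpha_2}V^{\Psi_\alpha}_t(0)|_{\alpha=0}$ from the integral equation \eqref{e_integraleq}, $V^{\Psi_\alpha}_t=S_t\Psi_\alpha-\int_0^tS_{t-s}\big((V^{\Psi_\alpha}_s)^2/2\big)\,ds$. Differentiating once in $\alpha_i$ and setting $\alpha=0$, the function $U^i_t:=\partial_{\alpha_i}V^{\Psi_\alpha}_t|_{\alpha=0}$ solves the linear equation $U^i_t=S_t\varphi_i-\int_0^tS_{t-s}(V^\varphi_sU^i_s)\,ds$, whose unique solution (nonnegative and bounded by $S_s\varphi_i\le\|p_s\|_\infty\|\varphi_i\|_1$) is given by the Feynman--Kac formula
\[U^i_t(x)=E^B_x\Big[\varphi_i(B_t)\exp\Big(-\int_0^tV^\varphi_{t-r}(B_r)\,dr\Big)\Big].\]
That $\alpha\mapsto V^{\Psi_\alpha}_t$ is differentiable with this derivative is obtained from a Gronwall estimate applied to the difference of the integral equations for $V^{\Psi_\alpha}_t$ and $V^\varphi_t+\alpha_1U^1_t+\alpha_2U^2_t$, using $V^{\Psi_\alpha}_s+V^\varphi_s\le S_s\Psi_\alpha+S_s\varphi$ to control the quadratic term.

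Differentiating \eqref{e_integraleq} a second time (the affine term $S_t\Psi_\alpha$ contributes nothing to the mixed partial), $W_t:=\partial_{\alpha_1}\partial_{\alpha_2}V^{\Psi_\alpha}_t|_{\alpha=0}$ solves $W_t=-\int_0^tS_{t-s}(U^1_sU^2_s)\,ds-\int_0^tS_{t-s}(V^\varphi_sW_s)\,ds$. By Duhamel's principle together with the Feynman--Kac representation $P_{s,t}g(x)=E^B_x\big[g(B_{t-s})\exp(-\int_0^{t-s}V^\varphi_{t-r}(B_r)\,dr)\big]$ of the propagator of $\partial_\tau-\tfrac12\Delta+V^\varphi_\tau$, and the substitution $s\mapsto t-s$, one gets
\[W_t(x)=-\int_0^tE^B_x\Big[\exp\Big(-\int_0^sV^\varphi_{t-u}(B_u)\,du\Big)\,U^1_{t-s}(B_s)\,U^2_{t-s}(B_s)\Big]\,ds.\]
Inserting the Feynman--Kac formula for $U^i_{t-s}(B_s)$, translating the two branch Brownian motions to start at the origin (replacing them by $B_s+B^i_\cdot$), and using $\N_0(X_t(\varphi_1)X_t(\varphi_2)e^{-X_t(\varphi)})=-W_t(0)$, yields precisely the stated formula. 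Finiteness of all the quantities involved (hence legitimacy of these manipulations) is confirmed by bounding each $V^\varphi$-exponential by $1$ and using $U^2_{t-s}(B_s)\le C(t-s)^{-1/2}\|\varphi_2\|_1$ together with $E^B_x[U^1_{t-s}(B_s)]\le S_t\varphi_1(x)\le Ct^{-1/2}\|\varphi_1\|_1$, so the $ds$-integrand is $O((t-s)^{-1/2})$, which is integrable on $(0,t)$.

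The main obstacle will be the analytic/PDE input behind the first two paragraphs: the twice-differentiable dependence of the measure-data solution $V^{\Psi_\alpha}_t$ on $\alpha$ with the asserted Feynman--Kac derivatives (the linearized equations have kernels singular at $s=0$, and $\Psi_\alpha$ ranges over measures rather than bounded functions), together with the twofold interchange of differentiation with integration against the $\sigma$-finite measure $\N_0$ (handled by restricting to $\{X_t>0\}$ and dominating by the first- and second-moment functionals). Once these are in place, identifying the linearized solutions and reassembling them into the claimed formula is routine. A safe route around the regularity issues is to prove the identity first for $\varphi,\varphi_1,\varphi_2\in\cB_{b^+}(\R)$ (with $\varphi_1,\varphi_2$ also integrable), where every estimate is uniform and $\Psi_\alpha$ stays in $\cB_{b^+}(\R)$, and then recover the general case ($\varphi\in\cM_F(\R)$, $\varphi_1,\varphi_2\in\cL^1(\R)$) by monotone convergence, both sides being monotone under $\varphi\uparrow$, $\varphi_i\uparrow$ and the right-hand side continuous under such limits by dominated convergence.
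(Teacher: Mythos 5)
Your proposal follows the same strategy as the paper: differentiate the $\N_0$-Laplace functional $\N_0(1-e^{-X_t(\varphi+\alpha_1\varphi_1+\alpha_2\varphi_2)})=V^{\Psi_\alpha}_t(0)$ twice in $(\alpha_1,\alpha_2)$, justify the exchange of derivative and integral by domination on the finite piece $\{X_t>0\}$, identify the first derivative via the Feynman--Kac representation of the linearized equation, and differentiate once more to obtain the two-branch formula (the paper handles the corner derivative exactly as you suggest, by differentiating at $\epsilon_1,\epsilon_2>0$ and letting $\epsilon_i\downarrow 0$, and cites Lemmas 2.3, 2.5 of Mytnik \cite{M2002} where you sketch a Gronwall/Duhamel argument from the mild form \eqref{e_integraleq}). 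The minor divergences --- working with the integral equation rather than the PDE, and obtaining the mixed second derivative from its own inhomogeneous linear equation rather than by differentiating the first-order Feynman--Kac formula in $\epsilon_2$ --- are routine reformulations that arrive at the identical result.
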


\begin{proof}
Let $\epsilon_1, \epsilon_2 > 0$ and $\varphi, \varphi_1$ and $\varphi_2$ be as in the statement. Viewing $\varphi_1$ and $ \varphi_2$ as the density functions of the finite measures they induce (ie. $\varphi_i(A) = \int_A \varphi_i(x)\,dx$), let $ V_t^{\varphi,\eps_1, \eps_2}$ denote the solution to \eqref{e_dualPDE} when $\phi = \varphi + \epsilon_1 \varphi_1 + \epsilon_2 \varphi_2 \in \cM_F(\R)$. By \eqref{e_LapFunCanon},
\[\N_0(1 - e^{-X_t(\varphi + \epsilon_1 \varphi_1 + \epsilon_2 \varphi_2)}) = V_t^{\varphi,\eps_1, \eps_2}(0).\]
We differentiate this expression once with respect to $\epsilon_1$ and once with respect to $\eps_2$. The derivatives of the inner expression of the left hand side are bounded above by integrable quantities (i.e. $X_t(\varphi_1)$ and $X_t(\varphi_1) X_t(\varphi_2)$) so we can take the differentiation inside the expectation in the probabilistic representation, and the derivatives of the right hand side exist. The resulting equation is the following:
\begin{equation} \label{e_2ptcanon_pde}
\N_0 \left(X_t(\varphi_1) \, X_t(\varphi_2)\, e^{-X_t(\varphi + \epsilon_1 \varphi_1 + \epsilon_2 \varphi_2)} \right) = - \frac{\partial^2}{\partial\epsilon_1 \partial \epsilon_2 }V_t^{\varphi,\eps_1, \eps_2}(0).
\end{equation}
We note that the limit of the left hand side as $\epsilon_1, \epsilon_2 \downarrow 0$ is the desired expression. We now obtain an expression for the first derivatives of $V_t^{\varphi,\eps_1, \eps_2}(0)$ with respect to $\epsilon_1$ and $\epsilon_2$. Consider the following partial differential equation:
\begin{equation} \label{e_Veps_firstderiv_PDE}
\frac{\partial u_t}{\partial t} = \frac{\Delta}{2} u_t - V_t^{\varphi, \eps_1, \eps_2} u_t \,\,\,\, \text{ for} \,(t,x) \in (0,\infty)\times \R, \hspace{6 mm} u_t \to \varphi_1 \,\, \text{ as } t \downarrow 0,
\end{equation}
where the $u_t \to \varphi_1$ in the sense of weak convergence of measures. The above can be obtained heuristically by formally differentiating (\ref{e_dualPDE}) with respect to $\eps_1$ when the initial conditions are $\varphi + \epsilon_1 \varphi_1 + \epsilon_2 \varphi_2$. By Lemmas 2.3 and 2.5 of \cite{M2002}, \eqref{e_Veps_firstderiv_PDE} has a unique solution, which we denote by $U^{1,\eps_1, \eps_2}_t$, which satisfies
\begin{equation} 
V^{\varphi,\eps_1, \eps_2}_t(x) = V_t^{\varphi,0,\eps_2}(x) +\int_0^{\eps_1} U_t^{1, \eps,\eps_2}(x) \, d\eps. \nonumber
\end{equation}
Thus $U_t^{1,\epsilon_1,\epsilon_2} = \frac{\partial}{\partial \epsilon_1} V_t^{\phi, \eps_1, \eps_2}$. We can apply the same argument to obtain a similar representation for $\frac{\partial}{\partial \epsilon_2} V_t^{\phi, \eps_1, \eps_2}$, which we denote by $U_t^{2,\epsilon_1,\epsilon_2}$. Both $U_t^{1,\epsilon_1,\epsilon_2}$ and $U_t^{2,\epsilon_1,\epsilon_2}$ have Feynman-Kac representations; for example, see Theorem 7.6 of Karatzas and Shreve \cite{KS} (on p. 366). For $i=1,2$ we have
\begin{equation} \label{e_Veps_firstderiv_FK}
U_t^{i, \eps_1, \eps_2}(x) = E_x^B \left(\varphi_i(B_t) \exp \left( - \int_0^t V^{\varphi,\eps_1,\eps_2}_{t-s} (B_s) \, ds \right)\right).
\end{equation}
We take the expression for $i=1$ and differentiate it with respect to $\eps_2$. We obtain
\begin{align}
&-\frac{\partial^2}{\partial \eps_2 \partial \eps_1} V^{\varphi,\eps_1, \eps_2}_t(x) \nonumber
\\ &\hspace{5 mm }= E^B_x \bigg( \varphi_1(B_t) \exp \left( -\int_0^t V^{\varphi,\eps_1,\eps_2}_{t-s} (B_s) \, ds \right)  \int_0^t U^{2,\eps_1, \eps_2}_{t-s}(B_s) \, ds \bigg)  \nonumber
\\ &\hspace{5 mm }= E^B_x \bigg( \varphi_1(B_t) \exp \left( -\int_0^t V^{\varphi,\eps_1,\eps_2}_{t-s} (B_s) \, ds \right) \nonumber
\\ &\hspace{9 mm }\times \int_0^t E^{B^2}_0 \bigg( \varphi_2(B_s + B^2_{t-s}) \exp \left( -\int_0^{t-s} V_{t-s-r}^{\varphi,\eps_1, \eps_2}(B_s + B^2_r) \,dr\right)  ds \bigg) \bigg), \nonumber
\end{align}
where the final line follows from another application of (\ref{e_Veps_firstderiv_FK}), this time with $i=2$. First we note that all the terms are non-negative, so we can take the internal integral over time outside the expectation. For $s<t$, the integrand then describes one Brownian motion started at $0$ and run to time $t$, and a second which branches from the first at time $s$ and evolves independently. By applying the Markov property at time $s$ we equivalently view it as a Brownian path that branches at time $s$ into two independent Brownian motions $B^1$ and $B^2$ which themselves run for a duration of $t-s$. This formulation combined with the independence of the Brownian motions gives us
\begin{align}
&\frac{\partial^2}{\partial \eps_2 \partial \eps_1} V^{\varphi,\eps_1, \eps_2}_t(x) = - \int_0^t E^B_x \bigg( \exp \left( -\int_0^s V^{\varphi,\eps_1,\eps_2}_{t-u} (B_u) \, ds \right) \nonumber
\\ &\times \prod_{i=1,2} E_0^{B^i} \bigg[ \varphi_i(B_s + B^i_{t-s}) \exp \left( -\int_0^{t-s} V_{t-s-r}^{\varphi,\eps_1, \eps_2}(B_s + B^i_r) \,dr\right) \bigg]  ds \bigg). \nonumber
\end{align}
The derivatives in $\eps_1$ and $\eps_2$ are one-sided at $0$ so we cannot exactly evaluate at $\eps_1 = \eps_2 = 0$. However, $V^{\phi,\eps_1,\eps_2}_t(x)$ is continuous in $\eps_1$ and $\eps_2$ and the integrand is bounded above by $\|\varphi_1\|_\infty \|\varphi_2\|_\infty$ so we can apply bounded convergence. As $\eps_1, \eps_2 \downarrow 0$, $V^{\varphi, \eps_1, \eps_2}_t \to V_t^\phi$ by Lemma 2.1(d) of \cite{M2002}. We also take $\eps_1, \eps_2 \downarrow 0$ in the left hand side of \eqref{e_2ptcanon_pde} and apply Dominated Convergence. Evaluating at $x = 0$ gives the result.
\end{proof}

\emph{Proof of Proposition \ref{prop_pde_rep1}.} We will prove the result for functions of product form, ie. $h(x,y) = \phi_1(x)\,\phi_2(y)$, and then use a monotone class theorem. Let $x_1,x_2 \in \R$ and $\lambda, \lambda' >0$. Consider the expression from Lemma \ref{lemma_pde_canon_func} with  $\varphi =\lambda \delta_{x_1} + \lambda' \delta_{x_2}$. For now we simply let $\varphi_1$ and $\varphi_2$ be functions satisfying the assumptions of Lemma~\ref{lemma_pde_canon_func}, but we will shortly choose them to be approximate identities at $x_1$ and $x_2$. Applying Lemma~\ref{lemma_pde_canon_func}, we have
\begin{align} \label{e_pde_intermed1}
&\N_0 \left( X_t(\varphi_1) X_t(\varphi_2) e^{-\lambda X(t,x_1) - \lambda' X(t,x_2)} \right) = \int_0^t E_0^B \bigg( \exp \left( - \int_0^s V^{\lambda, \lambda'}_{t-u} (B_u - x_1, B_u - x_2) \, du \right) \nonumber
\\ &\hspace{10 mm}\times \prod_{i=1,2} E_0^{B^i} \bigg[ \exp \left( - \int_0^{t-s} V^{\lambda, \lambda'}_{t-s-r}(B_s + B^i_r - x_1, B_s + B^i_r - x_2) \, dr \right) \varphi_i(B_s + B^i_{t-s}) \,\bigg] \bigg)\, ds,
\end{align}
where we have used translation invariance of $V^{\lambda,\lambda'}(x_1,x_2)$. Now let $\varphi_i = p_\delta(\cdot - x_i)$, and let $\phi_1, \phi_2$ be bounded, continuous functions and integrate $\phi_1(x_1) \phi_1(x_2)$  multiplied by the above over $x_1$ and $x_2$. The left hand side is then
\begin{align}  \label{e_pde_intermed2}
&\iint \phi_1(x_1) \, \phi_2(x_2)\, \N_0 \left( X_t(p_\delta(\cdot - x_1)) X_t(p_\delta(\cdot - x_2)) e^{-\lambda X(t,x_1) - \lambda' X(t,x_2)} \right)  dx_1\, dx_2.
\end{align} 
The above is absolutely bounded by 
\begin{align} \label{e_pde_bound1}
\|\phi_1\|_\infty \|\phi_2\|_\infty   \N_0 \left( \int X_t(p_\delta(\cdot - x_1)) dx_1 \int X_t(p_\delta(\cdot - x_2)) \, dx_2 \right) ,
\end{align}
where the change of order of integration follows because all the terms are non-negative once we bound $|\phi_i(x_i)|$ by $\|\phi_i\|_\infty$. Now we note that
\begin{align} \label{e_XapproxID}
\int X_t(p_\delta(\cdot - x_i)) dx_i = \iint X(t,y) p_\delta(x_i - y) \, dy \,dx _i &= \int X(t,y) \int \bigg( p_\delta(x_i - y) \, dx_i \bigg) \,dy = X_t(1). 
\end{align} 
Combined with (\ref{e_pde_bound1}), this implies that (\ref{e_pde_intermed2}) is absolutely integrable and absolutely bounded above by \\ $\|\phi_1\|_\infty \|\phi_2\|_\infty \N_0 ( X_t(1)^2 )$. Thus we can apply Fubini and rewrite (\ref{e_pde_intermed2}) as
\begin{align}  \label{e_pde_intermed3}
&\N_0 \left( \iint \phi_1(x_1) \, \phi_2(x_2)\, X_t(p_\delta(\cdot - x_1)) X_t(p_\delta(\cdot - x_2)) e^{-\lambda X(t,x_1) - \lambda' X(t,x_2)}  dx_1\, dx_2 \right).
\end{align} 
As noted, the expression inside $\N_0$ is absolutely bounded above by $\|\phi_1\|_\infty \|\phi_2\|_\infty X_t(1)^2$, which is integrable under $\N_0$, for all $\delta$. We take $\delta \downarrow 0$ and apply Dominated Convergence to obtain that the limit of \eqref{e_pde_intermed3} as $\delta \downarrow 0$ is equal to
\begin{align} \label{e_pde_intermed4}
&\N_0 \left( \lim_{\delta \to 0^+} \iint \phi_1(x_1) \, \phi_2(x_2)\, \N_0 \left( X_t(p_\delta(\cdot - x_1)) X_t(p_\delta(\cdot - x_2)) e^{-\lambda X(t,x_1) - \lambda' X(t,x_2)} \right)  dx_1\, dx_2 \right) \nonumber
\\ &= \N_0 \left( \lim_{\delta \to 0^+} \left( \int \phi_1(x_1) \, X_t(p_\delta(\cdot - x_1)) e^{-\lambda X(t,x_1)} dx_1 \right)  \left( \int \phi_2(x_2) \,X_t(p_\delta(\cdot - x_2)) e^{-\lambda X(t,x_2)} dx_2 \right)  \right).
\end{align}
We know that
\[X_t(p_\delta(\cdot - x_i)) = \int X(t,y) p_\delta(y - x_i) \, dy = X_t * p_\delta(x_i).\]
Moreover, $X(t,\cdot) \in C_c(\R)$ (ie. $X(t,\cdot)$ is continuous with compact support) $\N_0$-a.s. and $\{p_\delta\}_{\delta >0}$ are an approximate identity family, which together with the above imply that $X_t(p_\delta(\cdot - x_i)) \to X(t,x_i)$ as $\delta \downarrow 0$. Applying (\ref{e_XapproxID}) shows that the integrals in (\ref{e_pde_intermed4}) are absolutely bounded by $\|\phi_i\|_\infty X_t(1)$ for $i=1,2$ uniformly in $\delta>0$, so by another application of Dominated Convergence in (\ref{e_pde_intermed4}), the limit of \eqref{e_pde_intermed2} as $\delta \downarrow 0$ equals
\begin{align}
\N_0 \bigg( \bigg( \int \phi_1(x_1) \, X(t,x)\, e^{-\lambda X(t,x_1)} dx_1 \bigg)  \bigg( \int \phi_2(x_2) \,X(t,x_2) e^{-\lambda X(t,x_2)} dx_2 \bigg)  \bigg).  \nonumber
\end{align}
When rescaled by $(\lambda \lambda')^{2\lambda_0}$ this is equal to $\N_0 ( L^\lambda_t(\phi_1) \, L^{\lambda'}_t(\phi_2) )$. We now turn our attention to the right hand side of (\ref{e_pde_intermed1}). With $\varphi_i = p_\delta(\cdot - x_i)$, integrating against $\phi(x_1) \phi(x_2)  dx_1  dx_2$, we have
\begin{align*}
&\iint \phi_1(x_1) \, \phi_2(x_2) \bigg( \int_0^t E_0^B \bigg( E_{(0,0)}^{B^1, B^2}\bigg[ \exp \left( - \int_0^s V^{\lambda, \lambda'}_{t-u} (B_u - x_1, B_u - x_2) \, du \right) \nonumber
\\ &\times \exp \left( - \int_0^{t-s} V^{\lambda, \lambda'}_{t-s-r}(B_s + B^1_r - x_1, B_s + B^1_r - x_2) \, dr \right) p_\delta(B_s + B^1_{t-s} - x_1) 
\\ &\times \exp \left( - \int_0^{t-s} V^{\lambda, \lambda'}_{t-s-r}(B_s + B^2_r - x_1, B_s + B^2_r - x_2) \, dr \right) p_\delta(B_s + B^2_{t-s} - x_2) \bigg] \bigg)\, ds  \bigg) dx_1 \,dx_2.
\end{align*}
Since the above is equal to (\ref{e_pde_intermed2}), which we have shown is absolutely integrable, we can take the spatial integrals inside the expectations. At this point we note that we are integrating a bounded function of $x_1$ and $x_2$ with respect to the densities $p_\delta(B_s + B^1_{t-s} - x_i)$, which, because $p_\delta$ is the kernel of the Brownian semigroup, is the same as viewing $x_i$ as $B_s + B^i_{t-s + \delta}$. Hence the above is equal to
\begin{align} \label{e_pde_intermed5}
&\int_0^t E_0^B \bigg( E_{(0,0)}^{B^1, B^2}\bigg[ \phi_1( B_s + B^1_{t-s + \delta})\, \phi_2( B_s + B^2_{t-s + \delta})  \nonumber
\\ &\times \exp \left( - \int_0^s V^{\lambda, \lambda'}_{t-u} (B_u - B_s - B^1_{t-s + \delta}, B_u - B_s - B^2_{t-s + \delta}) \, du \right) \nonumber
\\ &\times \exp \left( - \int_0^{t-s} V^{\lambda, \lambda'}_{t-s-r}(B^1_r - B^1_{t-s+\delta}, B^1_r - B^2_{t-s+\delta}) \, dr \right) \nonumber
\\ &\times \exp \left( - \int_0^{t-s} V^{\lambda, \lambda'}_{t-s-r}(B^2_r - B^1_{t-s+\delta}, B^2_r - B^2_{t-s+\delta}) \, dr \right)\bigg] \bigg) ds. 
\end{align}
Taking $\delta \downarrow 0$ and applying Dominated Convergence, we note that because $B^i_{t-s+\delta} \to B^i_{t-s}$ and $\phi_1, \phi_2$ and $V^{\lambda,\lambda'}_s$ are continuous, the limit is equal to the above with $\delta = 0$. To obtain the desired form we make a time reversal of the Brownian motions. Let $\hat{B}^i_u = B^i_{t-s} - B^i_{t-s-u}$. We note that the $\hat{B}^i$ are standard Brownian motions and that $\hat{B}^i_{t-s} = B^i_{t-s}$, $\hat{B}^i_0 = 0$ and $B^i_r - B^i_{t-s} = -\hat{B}^i_{t-s-r}$. Making this substitution shows that \eqref{e_pde_intermed5} with $\delta = 0$ is equal to
\begin{align*}
&\int_0^t E_0^B \bigg( E_{(0,0)}^{\hat{B}^1, \hat{B}^2}\bigg[ \phi_1( B_s + \hat{B}^1_{t-s})\, \phi_2( B_s + \hat{B}^2_{t-s})  \nonumber
\\ &\times \exp \left( - \int_0^s V^{\lambda, \lambda'}_{t-u} (B_u - B_s - \hat{B}^1_{t-s}, B_u - B_s - \hat{B}^2_{t-s}) \, du \right) \nonumber
\\ &\times \exp \left( - \int_0^{t-s} V^{\lambda, \lambda'}_{t-s-r}(-\hat{B}^1_{t-s-r}, -\hat{B}^1_{t-s-r} + \hat{B}^1_{t-s} - B^2_{t-s}) \, dr \right) 
\\ &\times \exp \left( - \int_0^{t-s} V^{\lambda, \lambda'}_{t-s-r}(-\hat{B}^2_{t-s-r} + \hat{B}^2_{t-s} - B^1_{t-s}, -\hat{B}^2_{t-s-r}) \, dr \right) \bigg] \bigg) ds.
\end{align*}
The time index of the Brownian motions now matches the time index of the function $V^{\lambda, \lambda'}$ in the last two lines, allowing us to reverse the time of the integrals for a simpler expression. To obtain the desired expression we now apply the fact that $V^{\lambda,\lambda'}_t(a,b) = V^{\lambda,\lambda'}_t(-a,-b)$ and relabel $\hat{B}^i$ to be simply $B^i$. This proves the result for $h(x_1,x_2) = \phi_1(x_1)\,\phi_2(x_2)$ when $\phi_1, \phi_2$ are bounded and continuous. The result for general bounded measurable $h:\R^2 \to \R$ now follows from a standard monotone class argument such as Corollary 4.4 in the Appendix of Ethier and Kurtz \cite{EK}. \qed \\

\textbf{Definition.} Let $\Gamma^{\lambda,\lambda'}(s)$ denote the integrand in Proposition \ref{prop_pde_rep1}, so that the proposition states that 
\begin{equation} \label{e_Gammaint}
 \N_0 ((L^\lambda_t \times L^{\lambda'}_t)(h) ) = (\lambda \lambda')^{2\lambda_0}\int_0^t \Gamma^{\lambda, \lambda'}(s) ds.
\end{equation}\\

$\Gamma^{\lambda,\lambda'}(s)$ also depends on $h$, but we omit this. The next lemma changes variables to obtain an expression involving Ornstein-Uhlenbeck processes. We first introduce some notation. For bounded and measurable $h:\R^2 \to \R$ and a (continuous) path $(B_u : u \in [0,s])$, define $\Psi_{B,s}^{\lambda, \lambda'}(\cdot,\cdot)$ by
\begin{equation} \label{e_psi}
\Psi_{B,s}^{\lambda, \lambda'}(x,y) = h(x + B_s, y + B_s) \exp \left( - \int_0^s V^{\lambda, \lambda'}_{t-u} (x + B_s - B_u, y + B_s - B_u) \, du \right).
\end{equation} 
We define $H^c_u$ as a scaling of $V^{\lambda, \lambda'}_t$:
\begin{equation} \label{Hdef}
H_u^c(x,y) = u V_u^{1,c}(\sqrt u x, \sqrt u y) = V_1^{\sqrt u, \sqrt u c}(x,y).
\end{equation}
The scaling in the following lemma cannot be done uniformly for all $s\in[0,t]$ because it requires  $\lambda^2> (t-s)^{-1}$ and $\lambda'^2 > (t-s)^{-1}$. We derive an expression for $\Gamma^{\lambda, \lambda'}(s)$ in terms of two independent Ornstein-Uhlenbeck processes which we denote $Y^1$ and $Y^2$, for which we denote the joint (independent) expectation $E_{(x,y)}^{Y^1, Y^2}$.
\begin{lemma} \label{lemma_pde_rep2}
Let $0<s<t$, $T_1 = T_1(s) =  \log(\lambda^2 (t-s))$, $T_2 = T_2(s) = \log(\lambda'^2(t-s))$. Then for all $\lambda> (t-s)^{-1/2}$ and $\lambda' > (t-s)^{-1/2}$, we have
\begin{align*}
 \Gamma^{\lambda, \lambda'}(s)  =\,&E_0^B \bigg( E_{(0,0)}^{B^1, \, B^2}\bigg[ E_{(B_1^1, B^2_1)}^{Y^1, Y^2} \bigg( \Psi^{\lambda, \lambda'}_{B,s}(\sqrt{t-s}\, Y^1_{T_1}, \sqrt{t-s} \,Y^2_{T_2})
\\&  \times \exp \bigg(- \int_0^1 V^{1, \lambda'/ \lambda}_{u}(B^1_{u},B^1_{u} +  e^{T_1/2}( Y^2_{T_2} -   Y^1_{T_1})) + V^{1, \lambda / \lambda'}_{u} (B^2_{u} ,B^2_{u}    + e^{T_2/2}( Y^1_{T_1} -  Y^2_{T_2}) )\, du \bigg)
\\ & \times \exp \bigg( - \int_0^{T_1} H_{e^u}^{\lambda' / \lambda} (Y^1_u, Y^1_u +  e^{(T_1-u)/2}( Y^2_{T_2} - Y^1_{T_1} )) \, du \bigg) 
\\ &   \times \exp \bigg( - \int_0^{T_2} H_{e^u}^{\lambda / \lambda'} (Y^2_u, Y^2_u +  e^{(T_2-u)/2}( Y^1_{T_1} - Y^2_{T_2} )) \, du \bigg) \bigg) \bigg] \bigg).
\end{align*}
\end{lemma}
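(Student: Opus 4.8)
\emph{Proof strategy.} The plan is to obtain the claimed identity from the representation of $\Gamma^{\lambda,\lambda'}(s)$ in Proposition~\ref{prop_pde_rep1} by a chain of deterministic changes of variables, the crucial one being the Lamperti-type transformation that realizes a (killing-free) Ornstein--Uhlenbeck process as a time-changed Brownian motion. Throughout, $0<s<t$ is fixed and $\lambda,\lambda'>(t-s)^{-1/2}$, which is exactly what makes $T_1=\log(\lambda^2(t-s))$ and $T_2=\log(\lambda'^2(t-s))$ nonnegative.

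First I would rescale the two ``branch'' motions appearing in Proposition~\ref{prop_pde_rep1}. Set $\hat W^1_v:=\lambda B^1_{v/\lambda^2}$ and $\hat W^2_v:=\lambda' B^2_{v/\lambda'^2}$; by Brownian scaling these are standard Brownian motions, defined on $[0,\lambda^2(t-s)]$ and $[0,\lambda'^2(t-s)]$ respectively, intervals which contain $[0,1]$ thanks to the hypothesis on $\lambda,\lambda'$. Next define $Y^1_u:=e^{-u/2}\hat W^1_{e^u}$ for $u\in[0,T_1]$ and $Y^2_u:=e^{-u/2}\hat W^2_{e^u}$ for $u\in[0,T_2]$. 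The substantive point is the law identification: a covariance computation (equivalently, the Ornstein--Uhlenbeck transition kernel recalled in Section~\ref{s_OU}) together with the Markov property of $\hat W^i$ at time $1$ shows that, conditionally on $\hat W^i_1$, the process $Y^i$ is an Ornstein--Uhlenbeck process started at $\hat W^i_1$, and is conditionally independent of the restriction $\hat W^i\restrict{[0,1]}$ given $\hat W^i_1$. Since $B,B^1,B^2$ are independent, after relabelling $\hat W^i\restrict{[0,1]}$ as $B^i$ (now a Brownian motion on $[0,1]$ with $B^i_1=Y^i_0$), the joint law of $(B,B^1,B^2,Y^1,Y^2)$ is exactly the one encoded by the nested expectation $E_0^B(E_{(0,0)}^{B^1,B^2}[E_{(B^1_1,B^2_1)}^{Y^1,Y^2}(\cdot)])$ on the right side of the asserted identity. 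I would also record that $e^{T_1/2}=\lambda\sqrt{t-s}$ and $e^{T_2/2}=\lambda'\sqrt{t-s}$, so that $\hat W^1_{\lambda^2(t-s)}=\lambda B^1_{t-s}$ gives $B^1_{t-s}=\sqrt{t-s}\,Y^1_{T_1}$ and similarly $B^2_{t-s}=\sqrt{t-s}\,Y^2_{T_2}$, and that $\hat W^i_{e^w}=e^{w/2}Y^i_w$.

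Then I would substitute these identities into the four factors of the integrand of Proposition~\ref{prop_pde_rep1}. The spine motion $B$ on $[0,s]$ is unchanged, and the term $h(B_s+B^1_{t-s},B_s+B^2_{t-s})$ together with the $\int_0^s$-exponential combine, using $B^i_{t-s}=\sqrt{t-s}\,Y^i_{T_i}$, into $\Psi^{\lambda,\lambda'}_{B,s}(\sqrt{t-s}\,Y^1_{T_1},\sqrt{t-s}\,Y^2_{T_2})$ by definition \eqref{e_psi}. For the factor $\exp(-\int_0^{t-s}V^{\lambda,\lambda'}_r(B^1_r,B^1_r+B^2_{t-s}-B^1_{t-s})\,dr)$, substitute $r=v/\lambda^2$ and apply the scaling relation \eqref{e_Vscale_2pt} in the form $V^{\lambda,\lambda'}_{v/\lambda^2}(a,b)=\lambda^2V^{1,\lambda'/\lambda}_v(\lambda a,\lambda b)$ to rewrite the integral as $\int_0^{\lambda^2(t-s)}V^{1,\lambda'/\lambda}_v(\hat W^1_v,\hat W^1_v+e^{T_1/2}(Y^2_{T_2}-Y^1_{T_1}))\,dv$; split at $v=1$, relabel $\hat W^1\restrict{[0,1]}$ as $B^1$ on the first piece, and on $[1,e^{T_1}]$ substitute $v=e^w$ and use $\hat W^1_{e^w}=e^{w/2}Y^1_w$ together with the definition \eqref{Hdef} of $H^c$ to get $\int_0^{T_1}H^{\lambda'/\lambda}_{e^w}(Y^1_w,Y^1_w+e^{(T_1-w)/2}(Y^2_{T_2}-Y^1_{T_1}))\,dw$. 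The remaining factor is treated identically: first use the symmetry $V^{\lambda,\lambda'}_r(x_1,x_2)=V^{\lambda',\lambda}_r(x_2,x_1)$, immediate from \eqref{V2ptdef}, to write $V^{\lambda,\lambda'}_r(B^2_r+B^1_{t-s}-B^2_{t-s},B^2_r)=V^{\lambda',\lambda}_r(B^2_r,B^2_r+B^1_{t-s}-B^2_{t-s})$, then rescale by $\lambda'^2$ and split at $v=1$. Collecting the two $v\in[0,1]$ contributions into a single exponential yields the first exponential in the statement, and assembling all the pieces gives the claimed formula. Every interchange here is legitimate because $h$ is bounded and the functions $V^{\lambda,\lambda'}$ are nonnegative, so each exponential factor is bounded by $1$.

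I expect the main obstacle to be purely bookkeeping: correctly propagating the arguments of $V^{\lambda,\lambda'}$ and of $H^c$ through the Brownian scalings, the symmetry step, and the logarithmic substitution $v=e^w$, so that all the cross-terms such as $B^1_r+B^2_{t-s}-B^1_{t-s}$ end up in the exact displayed form. The only conceptually delicate point is the law identification in the second paragraph---that the Lamperti-transformed pieces $(\hat W^i\restrict{[0,1]},Y^i)$ are jointly distributed as a Brownian motion on $[0,1]$ followed by a conditionally independent Ornstein--Uhlenbeck process started at its endpoint, with $Y^1$ and $Y^2$ independent---and it is precisely there that the hypothesis $\lambda,\lambda'>(t-s)^{-1/2}$ is used, to ensure $\lambda^2(t-s),\lambda'^2(t-s)\ge 1$ so that the split at $v=1$ is meaningful.
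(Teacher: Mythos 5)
Your proposal is correct and follows essentially the same route as the paper's proof: rescale $B^1,B^2$ by $\lambda,\lambda'$, apply the Lamperti transform $Y^i_u=e^{-u/2}\hat W^i_{e^u}$ to realize the Ornstein--Uhlenbeck processes, split the time integral at $v=1$ using the scaling relation \eqref{e_Vscale_2pt} and the definition \eqref{Hdef} of $H^c$, and invoke the Markov property at time $1$ to obtain the nested expectation structure. The only cosmetic difference is that the paper explicitly introduces $\tilde B^i_u=\hat B^i_{u+1}$ and then sets $Y^i_r=e^{-r/2}\tilde B^i_{e^r-1}$, which is identically your $Y^i_r=e^{-r/2}\hat W^i_{e^r}$; your direct covariance/conditional-independence argument for the law identification is a faithful restatement of what the paper's Markov-property step accomplishes.
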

\begin{proof}
We begin with the expression from Proposition \ref{prop_pde_rep1}. We observe that $\Psi^{\lambda, \lambda'}_{B,s}$ appears and we may write the quantities in the first two lines as $\Psi^{\lambda, \lambda'}_{B,s}(B^1_{t-s}, B^2_{t-s})$. In the third and fourth lines we apply (\ref{e_Vscale_2pt}) to obtain
\begin{align*}
\Gamma^{\lambda, \lambda'}(s)  =\,\,& E_0^B \bigg( E_{(0,0)}^{B^1, \,B^2}\bigg[ \Psi^{\lambda, \lambda'}_{B,s}(B^1_{t-s}, B^2_{t-s})
\\ &\times \exp \left( - \int_0^{t-s}  \lambda^2 V^{1, \lambda'/ \lambda}_{\lambda^2 r}(\lambda B^1_{r}, \lambda(B^1_{r} + B^2_{t-s}- B^1_{t-s})) \, dr \right) 
\\ &\times \exp \left( - \int_0^{t-s} \lambda'^2 V^{\lambda / \lambda', 1}_{\lambda'^2 r}(\lambda' (B^2_{r}   + B^1_{t-s}- B^2_{t-s}) , \lambda' B^2_{r} ) \, dr \right) \bigg] \bigg).
\end{align*}
We define $\hat{B}_u^1 = \lambda B^1_{\lambda^{-2} u}$ and $\hat{B}_u^2 = \lambda' B^2_{\lambda'^{-2} u}$, which are both standard Brownian motions. Making a time change in the integrals (ie. letting $u = \lambda^2 r$ or $\lambda'^2r$) gives
\begin{align*}
\Gamma^{\lambda, \lambda'}(s)  =\,\,& E_0^B \bigg( E_{(0,0)}^{\hat{B}^1, \, \hat{B}^2}\bigg[ \Psi^{\lambda, \lambda'}_{B,s}(\lambda^{-1} \hat{B}^1_{\lambda^{2}(t-s)}, \lambda'^{-1} \hat{B}^2_{\lambda'^{2}(t-s)})
\\ &\times \exp \left( - \int_0^{\lambda^2(t-s)} V^{1, \lambda'/ \lambda}_{u}(\hat{B}^1_{u}, \hat{B}^1_{u} +  \frac{\lambda}{\lambda'} \hat{B}^2_{\lambda'^2(t-s)} -  \hat{B}^1_{\lambda^2(t-s)}) \, du \right) 
\\ &\times \exp \left( - \int_0^{\lambda'^2(t-s)} V^{\lambda / \lambda', 1}_{u} (\hat{B}^2_{u}    + \frac{\lambda'}{\lambda} \hat{B}^1_{\lambda^2(t-s)}-  \hat{B}^2_{\lambda'^{2}(t-s)} , \hat{B}^2_{u} ) \, du \right) \bigg] \bigg).
\end{align*}
Because we have assumed $\lambda, \lambda'> (t-s)^{-1/2}$, the upper bounds of integration in the integrals are greater than $1$. We now apply the Markov property for $\hat{B}^i$ at time $u=1$. We collect the portions of the integrals from the second and third lines on the interval $[0,1]$, leaving the integrals from $1$ to $\lambda^2(t-s)$ and $ \lambda'^2(t-s)$. Conditional on $\hat{B}^i_1$, the Brownian motions in the integrands' arguments are Brownian motions with initial position $\hat{B}^i_1$. If we denote these by $\tilde{B}^i_u$ (in which case, essentially, $\tilde{B}^i_u = \hat{B}^i_{u+1}$), we obtain
\begin{align} \label{scaleaux1}
\Gamma^{\lambda, \lambda'}(s)  =&\,E_0^B \bigg( E_{(0,0)}^{\hat{B}^1, \, \hat{B}^2}\bigg[ E_{(\hat{B}^1_1, \hat{B}^2_1)}^{\tilde{B}^1, \tilde{B}^2} \bigg( \Psi^{\lambda, \lambda'}_{B,s}(\lambda^{-1} \tilde{B}^1_{\lambda^{2}(t-s)-1}, \lambda'^{-1} \tilde{B}^2_{\lambda'^{2}(t-s)-1}) \nonumber
\\& \times \exp \bigg(- \int_0^1 V^{1, \lambda'/ \lambda}_{u}(\hat{B}^1_{u}, \hat{B}^1_{u} +  \frac{\lambda}{\lambda'} \tilde{B}^2_{\lambda'^{2}(t-s)-1} -  \tilde{B}^1_{\lambda^{2}(t-s)-1})  \nonumber
\\ &\hspace{ 19 mm }+ V^{\lambda / \lambda', 1}_{u} (\hat{B}^2_{u}    + \frac{\lambda'}{\lambda} \tilde{B}^1_{\lambda^2(t-s)-1}-  \tilde{B}^2_{\lambda'^{2}(t-s)-1} , \hat{B}^2_{u} )\, du \bigg) \nonumber
\\ &\times \exp \bigg( - \int_0^{\lambda^2(t-s)-1} V^{1, \lambda'/ \lambda}_{u+1}(\tilde{B}^1_{u}, \tilde{B}^1_{u} +  \frac{\lambda}{\lambda'} \tilde{B}^2_{\lambda'^2(t-s)-1} -  \tilde{B}^1_{\lambda^2(t-s)-1}) \, du \bigg)  \nonumber
\\ &\times \exp \bigg( - \int_0^{\lambda'^2(t-s)-1} V^{\lambda / \lambda', 1}_{u+1} (\tilde{B}^2_{u}    + \frac{\lambda'}{\lambda} \tilde{B}^1_{\lambda^2(t-s)-1}-  \tilde{B}^2_{\lambda'^{2}(t-s)-1} , \tilde{B}^2_{u} ) \, du \bigg) \bigg) \bigg] \bigg). 
\end{align}
Recall that if a process $Y$ is defined by
\[ Y_r = e^{-r/2} B_{e^r - 1} \]
where $B$ is a standard Brownian motion, then $Y$ is a standard one-dimensional Ornstein-Uhlenbeck process with $Y_0 = B_0$. For $i=1,2$ we let $Y^i_r = e^{-r/2} \tilde{B}^i_{e^r-1}$. Recall that $T_1 = \log(\lambda^2 (t-s))$ and $T_2= \log(\lambda'^2 (t-s))$. We therefore have that 
\[ \tilde{B}^1_{\lambda^2(t-s) - 1} = e^{T_1/2} Y^1_{T_1}, \,\,\,\,  \tilde{B}^2_{\lambda'^2(t-s) - 1} = e^{T_2/2} Y^2_{T_2}.\] Expressing $\lambda$ and $\lambda'$ in terms of $T_1,T_2$ shows that 
\[ \frac{\lambda}{\lambda'} \tilde{B}^2_{\lambda^2(t-s)-1} = e^{T_1/2} Y^2_{T_2}, \,\,\,\,\, \frac{\lambda'}{\lambda} \tilde{B}^1_{\lambda'^2(t-s)-1} = e^{T_2/2} Y^1_{T_1}.\]
Likewise, we express the argument of $\Psi^{\lambda, \lambda'}_{B,s}$ in terms of $Y^i$ and $T_i$. We substitute $u = e^r - 1$ and apply the above in \eqref{scaleaux1} to obtain
\begin{align*}
\Gamma^{\lambda, \lambda'}(s)  =\,& E_0^B \bigg( E_{(0,0)}^{\hat{B}^1, \, \hat{B}^2}\bigg[ E_{(\hat{B}^1_1, \hat{B}^2_1)}^{Y^1, Y^2} \bigg( \Psi^{\lambda, \lambda'}_{B,s}(\sqrt{t-s}\, Y^1_{T_1}, \sqrt{t-s} \,Y^2_{T_2})
\\& \times \exp \bigg(- \int_0^1 V^{1, \lambda'/ \lambda}_{u}(\hat{B}^1_{u}, \hat{B}^1_{u} +  e^{T_1/2}( Y^2_{T_2} -   Y^1_{T_1})) + V^{\lambda / \lambda', 1}_{u} (\hat{B}^2_{u}    + e^{T_2/2}( Y^1_{T_1} -  Y^2_{T_2}) , \hat{B}^2_{u} )\, du \bigg)
\\ &\times \exp \bigg( - \int_0^{T_1} e^r V^{1, \lambda'/ \lambda}_{e^r}(e^{r/2}Y^1_r, e^{r/2}Y^1_r +  e^{T_1/2}( Y^2_{T_2} - Y^1_{T_1} )) \, dr \bigg) 
\\ &\times \exp \bigg( - \int_0^{T_2} e^r V^{\lambda / \lambda', 1}_{e^r} (e^{r/2}Y^2_r  +  e^{T_2/2}( Y^1_{T_1} - Y^2_{T_2} )) , e^{r/2}Y^2_r ) \, dr \bigg) \bigg) \bigg] \bigg).
\end{align*}
We now apply (\ref{e_Vscale_2pt}) and \eqref{Hdef} in the third and fourth lines. In the third line this gives 
\begin{align*}
&e^r V^{1, \lambda'/ \lambda}_{e^r}(e^{r/2}Y^1_r, e^{r/2}Y^1_r +  e^{T_1/2}( Y^2_{T_2} - Y^1_{T_1} ))
\\ &\hspace{5 mm}= V^{e^{r/2}, e^{r/2} \lambda'/ \lambda}_1(Y^1_r, Y^1_r +  e^{(T_1-r)/2}( Y^2_{T_2} - Y^1_{T_1} ))
\\&\hspace{5 mm}= H_{e^r}^{\lambda' / \lambda} (Y^1_r, Y^1_r +  e^{(T_1-r)/2}( Y^2_{T_2} - Y^1_{T_1} )),
\end{align*} 
and similar in the fourth. Noting that $V^{c,d}_t(a,b) = V^{d,c}_t(b,a)$, we have obtained the desired expression.
\end{proof}

We now obtain an upper bound for $\Gamma^{\lambda,\lambda'}(s)$ and show that the contribution to $\N_0 ((L^\lambda_t \times L^{\lambda'}_t)(h))$ from the integral over $[t-\epsilon,t]$ vanishes as $(\epsilon, \lambda') \to (0,\infty)$.

\begin{lemma} \label{lemma_tepsprelimit} Suppose $\lambda^2 t \geq 1$, and let $h:\R^2\to \R$ be bounded. There is a constant $C_{ \ref{lemma_tepsprelimit}}>0$ such that the following hold.\\
(a) For all $\lambda' > (t-s)^{-1/2}$, 
\begin{equation}
(\lambda \lambda')^{2\lambda_0}\big| \Gamma^{\lambda, \lambda'}(s) \big| \leq C_{ \ref{lemma_tepsprelimit}} \|h\|_\infty t^{-\lambda_0} (t-s)^{-\lambda_0}. \nonumber
\end{equation}
(b) For $0<\epsilon <t$,
\begin{align} 
(\lambda \lambda')^{2\lambda_0}  \int_{t-\epsilon}^t \big| \Gamma^{\lambda, \lambda'}(s) \big| \, ds & \leq C_{ \ref{lemma_tepsprelimit}} \|h\|_\infty t^{-\lambda_0} (\epsilon^{1-\lambda_0} + \lambda'^{-2(1-\lambda_0)}). \nonumber
\end{align}

\end{lemma}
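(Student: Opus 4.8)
\emph{Proof proposal.} The plan is to work directly from the representation in Proposition~\ref{prop_pde_rep1} (rather than from Lemma~\ref{lemma_pde_rep2}, whose scaling would require the stronger constraint $\lambda>(t-s)^{-1/2}$), discard the factor $h$ by $|h|\le\|h\|_\infty$, and reduce the three nested exponential killing factors to Ornstein--Uhlenbeck survival probabilities. The crucial point is that the ``trunk'' killing $\exp(-\int_0^s V^{\lambda,\lambda'}_{t-u}(\cdots)\,du)$, which runs over ages $t-u\in[t-s,t]$, must be retained and combined with the branch-$1$ killing $\exp(-\int_0^{t-s}V^{\lambda,\lambda'}_{r}(B^1_r,\cdots)\,dr)$, which runs over ages $r\in[0,t-s]$: together they cover the full age interval $[0,t]$ of the first lineage and produce the decay $t^{-\lambda_0}$ rather than merely $(t-s)^{-\lambda_0}$. (Bounding all three killing factors crudely would only give $(t-s)^{-2\lambda_0}$, which is not integrable near $s=t$ since $2\lambda_0>1$, so this refinement is unavoidable for part (b).)

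Concretely, by Proposition~\ref{prop_mono_subadd}(a) one has $V^{\lambda,\lambda'}_\tau(x_1,x_2)\ge V^\lambda_\tau(x_1)$ and $\ge V^{\lambda'}_\tau(x_2)$ (drop a point mass from the initial data), so in Proposition~\ref{prop_pde_rep1} we may replace the trunk and branch-$1$ factors by the corresponding $V^\lambda$-killing and the branch-$2$ factor by $V^{\lambda'}$-killing; this splits $\Gamma^{\lambda,\lambda'}(s)$ into the product of a ``lineage-$1$'' factor depending on the trunk path and the branch-$1$ path and a ``lineage-$2$'' factor depending only on the branch-$2$ path, which are built from independent Brownian motions. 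For lineage $1$, the change of variables $\sigma=\log(\text{age})$ together with the scaling $V^\lambda_\tau(x)=\tau^{-1}V^{\lambda\sqrt\tau}_1(\tau^{-1/2}x)$ (from \eqref{e_Vscale}) rewrites the sum of the trunk and branch-$1$ killing integrals as $\int_{-\infty}^{\log t}V^{\lambda e^{\sigma/2}}_1(\mathcal Y_\sigma)\,d\sigma$, where $\mathcal Y$ is a stationary Ornstein--Uhlenbeck process obtained by concatenating the time-changed branch-$1$ motion on $(-\infty,\log(t-s)]$ with the time-changed trunk motion on $[\log(t-s),\log t]$; one checks by a direct Gaussian computation that these pieces glue with the correct OU transition kernel at the junction age $t-s$. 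Dropping the non-negative tail over $\sigma<-2\log\lambda$ and reparametrizing so that the scaling parameter becomes $e^{s/2}$ on $[0,\log(\lambda^2 t)]$, the bound $Z_T(Y)\le Z_\infty(Y)\le C_Z$ from \eqref{e_Zdef}--\eqref{e_ZTbd} lets us replace $V_1^{\lambda e^{\sigma/2}}$ by $F$ up to the constant factor $C_Z$, so the lineage-$1$ expectation is at most $C_Z\,P^Y_m(\rho^F>\log(\lambda^2 t))\le C(\lambda^2 t)^{-\lambda_0}$ by \eqref{OU_survivalprobGaus} (the window is non-empty precisely because $\lambda^2t\ge1$). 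The identical manipulation applied to the branch-$2$ factor alone gives $\le C(\lambda'^2(t-s))^{-\lambda_0}$ whenever $\lambda'^2(t-s)\ge1$, which holds under the hypothesis of (a); independence of the trunk and branch motions makes the two expectations factor. Multiplying by $(\lambda\lambda')^{2\lambda_0}$ gives $(\lambda\lambda')^{2\lambda_0}|\Gamma^{\lambda,\lambda'}(s)|\le C\|h\|_\infty(\lambda\lambda')^{2\lambda_0}(\lambda^2 t)^{-\lambda_0}(\lambda'^2(t-s))^{-\lambda_0}=C\|h\|_\infty t^{-\lambda_0}(t-s)^{-\lambda_0}$, proving (a).

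For (b), split $[t-\epsilon,t]$ at the point $t-\lambda'^{-2}$. On $[t-\epsilon,\,t-\lambda'^{-2}]$ (if non-empty) part (a) applies and, since $\lambda_0<1$, $\int_{t-\epsilon}^{t-\lambda'^{-2}}(t-s)^{-\lambda_0}\,ds\le\int_0^\epsilon r^{-\lambda_0}\,dr=\epsilon^{1-\lambda_0}/(1-\lambda_0)$, giving the first term after multiplying by $C\|h\|_\infty t^{-\lambda_0}$. On the remaining interval $[(t-\lambda'^{-2})\vee(t-\epsilon),\,t]$, of length at most $\lambda'^{-2}$, one has $\lambda'^2(t-s)<1$ so the branch-$2$ factor yields no decay and is bounded trivially by $1$, while the lineage-$1$ estimate still applies (it needs only $\lambda^2t\ge1$); hence $(\lambda\lambda')^{2\lambda_0}|\Gamma^{\lambda,\lambda'}(s)|\le C\|h\|_\infty(\lambda\lambda')^{2\lambda_0}(\lambda^2 t)^{-\lambda_0}=C\|h\|_\infty\lambda'^{2\lambda_0}t^{-\lambda_0}$, and integrating over an interval of length $\le\lambda'^{-2}$ produces the second term $C\|h\|_\infty t^{-\lambda_0}\lambda'^{-2(1-\lambda_0)}$.

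The main obstacle is the identification in the second paragraph: one must verify that gluing the time-changed branch Brownian motion to the time-changed trunk Brownian motion — two paths driven by independent Brownian motions — genuinely yields a single stationary Ornstein--Uhlenbeck process, so that the survival-probability bound \eqref{OU_survivalprobGaus} is legitimately applicable at the junction age $t-s$, and one must control the replacement of $V_1^{\lambda e^{\sigma/2}}$ by $F$ uniformly in $\lambda$; the latter is exactly what the uniform bound $Z_\infty\le C_Z$ provides, and it ultimately rests on $1-2\lambda_0<0$, which makes the error integral coming from the convergence rate \eqref{e_VlambdaConvergence} converge. The remaining steps are elementary bookkeeping with the scaling identities and with the constraints $\lambda^2t\ge1$ and $\lambda'^2(t-s)\ge1$.
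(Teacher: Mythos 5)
Your proposal matches the paper's argument in all essentials: bound $h$ by $\|h\|_\infty$, use monotonicity $V^{\lambda,\lambda'}(\cdot,\cdot)\ge V^\lambda$, $V^{\lambda'}$ to decouple the three killing factors into a lineage-$1$ factor (trunk plus branch $1$) and an independent lineage-$2$ factor (branch $2$), concatenate trunk and branch-$1$ paths into a single path covering ages $[0,t]$, convert to an Ornstein--Uhlenbeck survival probability via the scaling and log time change and the uniform bound $Z_T\le C_Z$, and finally apply $P^Y_m(\rho^F>t)\le Ce^{-\lambda_0 t}$ and, for part (b), split at $t-\lambda'^{-2}$ and bound the branch-$2$ factor by $1$ on the short interval. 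The only cosmetic difference is where you glue: the paper does it at the Brownian level (time-reverse the trunk and concatenate with the branch-$1$ Brownian motion), whereas you do it after the time change at the OU level; both are correct and equivalent, so this is the same proof, not a genuinely different one.
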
 
\begin{proof} To begin we use $|h|\leq \|h\|_\infty$ and apply monotonicity (Proposition~\ref{prop_mono_subadd}(a)), ie. $V^\lambda(x), V^{\lambda'}(y) \leq V^{\lambda, \lambda'}(x,y)$, to obtain
\begin{align}
(\lambda \lambda'&)^{2\lambda_0}  \big| \Gamma^{\lambda, \lambda'}(s) \big| \nonumber
\\ &\leq \|h\|_\infty (\lambda \lambda')^{2\lambda_0}  E_0^B \bigg( E_{(0,0)}^{B^1, \,B^2}\bigg[ \exp \left( - \int_0^s V^{\lambda}_{t-u} (B^1_{t-s} + B_s - B_u) \, du \right) \nonumber
\\ &\hspace{4 mm} \times \exp \bigg( - \int_0^{t-s} V^{\lambda}_{r}(B^1_{r}) \, dr \bigg) \exp \bigg( - \int_0^{t-s} V^{\lambda'}_{r}(B^2_{r} ) \, dr \bigg) \bigg] \bigg)  \nonumber
\\ &= \|h\|_\infty (\lambda \lambda')^{2\lambda_0}  E_0^{B^1} \bigg(  \exp \bigg( - \int_0^t V^{\lambda}_{u} (B^1_u) \, du \bigg) \bigg)E_0^{B^2} \bigg( \exp \bigg( - \int_0^{t-s} V^{\lambda'}_{r}(B^2_{r} ) \, dr \bigg) \bigg) ,  \nonumber
\end{align}
where the final line follows from a time reversal of $B$ and concatenating the time-reversed $B$ with $B^1$. Applying (\ref{e_Vscale}) twice and changing the time variable, the above is equal to
\begin{align} 
\|h\|_\infty (\lambda \lambda')^{2\lambda_0}  E_0^{B^1} \bigg(  \exp \bigg( - \int_0^{\lambda^2t} V^{1}_{u} (\lambda B^1_{\lambda^{-2}u}) \, du \bigg) \bigg) E_0^{B^2} \bigg( \exp \bigg( - \int_0^{\lambda'^2(t-s)} V^{1}_{u}(\lambda ' B^2_{\lambda'^{-2}u} ) \, du \bigg) \bigg). \nonumber
\end{align}
The rescaled Brownian motions in the above are themselves standard Brownian motions which we will denote by $\hat{B}^1,\hat{B}^2$. We next let $e^r = u$ in both integrals and apply (\ref{e_Vscale}) to see that the above equals
\begin{align} \label{e_prelimeps4}
\|h\|_\infty (\lambda \lambda')^{2\lambda_0}  &E_0^{\hat{B}^1} \bigg(  \exp \bigg( - \int_{-\infty}^{\log(\lambda^2t)} V^{e^{r/2}}_{1} (e^{-r/2} \hat{B}^1_{e^r}) \, dr \bigg) \bigg)  E_0^{\hat{B}^2} \bigg( \exp \bigg( - \int_{-\infty}^{\log(\lambda'^2(t-s))} V^{e^{r/2}}_{1}(e^{-r/2}\hat{B}^2_{e^r} ) \, dr \bigg) \bigg) ds \nonumber
\\ &\leq \|h\|_\infty(\lambda \lambda')^{2\lambda_0}  E_m^{Y^1} \bigg(  \exp \bigg( - \int_{0}^{\log(\lambda^2t)} V^{e^{r/2}}_{1} (Y^1_r) \, dr \bigg) \bigg)  E^{Y^2} \bigg( \exp \bigg( - \int_{-\infty}^{\log(\lambda'^2(t-s))} V^{e^{r/2}}_{1}(Y^2_r) \, dr \bigg) \bigg) ,
\end{align}
where $Y^i_r = e^{-r/2} \hat{B}^i_{e^r}$, which makes $Y^i_u$ a stationary Ornstein-Uhlenbeck process for $u \in \R$, and we recall our assumption that $\lambda^2 t > 1$. We condition on the value of $Y^1_0$, which has distribution $m$. \\

We first use the above to prove (a). Assuming that $\lambda' > (t-s)^{-1/2}$, the upper endpoint of the second integral is positive, so by \eqref{e_prelimeps4} we have
\begin{align}\label{e_prelimeps3}
&(\lambda \lambda')^{2\lambda_0}  \big| \Gamma^{\lambda, \lambda'}(s) \big|  \nonumber
\\&\hspace{6 mm}\leq \|h\|_\infty (\lambda \lambda')^{2\lambda_0}  E_m^{Y^1} \bigg(  \exp \bigg( - \int_{0}^{\log(\lambda^2t)} V^{e^{r/2}}_{1} (Y^1_r) \, dr \bigg) \bigg)  E^{Y^2}_m \bigg( \exp \bigg( - \int_0^{\log(\lambda'^2(t-s))} V^{e^{r/2}}_{1}(Y^2_r) \, dr \bigg) \bigg),
\end{align}
where we have also conditioned on $Y^2_0$. In order to approximate the expectations above with survival probabilities for killed Ornstein-Uhlenbeck processes, we add and subtract $F(Y^i_u)$ in the integrals. Recalling the definition of $Z_T(Y)$ from \eqref{e_Zdef}, we define $Z^1_T(Y^1), Z^2_T(Y^2)$ in the same way. Thus \eqref{e_prelimeps3} is equal to
\begin{align} \label{e_prelimeps33}
\|h\|_\infty(\lambda &\lambda')^{2\lambda_0}  E_m^{Y^1} \bigg(Z^1_{\log(\lambda^2t)}(Y^1)  \exp \bigg( - \int_{0}^{\log(\lambda^2t)}F (Y^1_r) \, du \bigg) \bigg) \nonumber
\\ \times  E_m^{Y^2} \bigg( &Z^2_{\log(\lambda'^2(t-s))}(Y^2) \exp \bigg( - \int_{0}^{\log(\lambda'^2(t-s))} F(Y^2_r) \, dr \bigg) \bigg) \nonumber
\\ \leq &\,\|h\|_\infty(\lambda \lambda')^{2\lambda_0}  C_Z E_m^{Y^1} \bigg( \exp \bigg( - \int_{0}^{\log(\lambda^2t)}F (Y^1_r) \, du \bigg) \bigg) \nonumber
\\  &\times C_Z E_m^{Y^2} \bigg( \exp \bigg( - \int_{0}^{\log(\lambda'^2(t-s))} F(Y^2_r) \, dr \bigg) \bigg) ds\nonumber
\\ = &\,C\|h\|_\infty(\lambda \lambda')^{2\lambda_0}  P_m^{Y^1}(\rho^F > \log(\lambda^2t)) \,  P_m^{Y^2}(\rho^F > \log(\lambda'^2(t-s)))
\end{align}
In the first inequality we have used \eqref{e_ZTbd} twice, and the second equality follows by recognizing the expectations as survival probabilities of killed Ornstein-Uhlenbeck processes killed at rate $F(Y^i_r)$. By \eqref{OU_survivalprobGaus}, we have
\begin{equation}
 P_m^{Y^1}(\rho^F > \log(\lambda^2t)) \leq C t^{-\lambda_0} \lambda^{-2\lambda_0}, \,\,\,\,\, P_m^{Y^2}(\rho^F > \log(\lambda'^2(t-s))) \leq C (t-s)^{-\lambda_0} \lambda'^{-2\lambda_0}. \nonumber
\end{equation}
Using the above in \eqref{e_prelimeps33}, which is an upper bound for $(\lambda \lambda')^{2\lambda_0}  \big| \Gamma^{\lambda, \lambda'}(s) \big|$, proves (a).\\

We now show (b). Let $0< \epsilon < t$. Using \eqref{e_prelimeps4} we obtain that
\begin{align} \label{e_prelimeps5}
(\lambda \lambda')^{2\lambda_0} & \int_{t-\epsilon}^t\big| \Gamma^{\lambda, \lambda'}(s) \big| \, ds \, \leq \, \|h\|_\infty (\lambda \lambda')^{2\lambda_0}  E_m^{Y^1} \bigg(  \exp \bigg( - \int_{0}^{\log(\lambda^2t)} V^{e^{r/2}}_{1} (Y^1_r) \, dr \bigg) \bigg) \nonumber
\\&\hspace{ 31 mm }\times \int_{t-\epsilon}^t  E^{Y^2} \bigg( \exp \bigg( - \int_{-\infty}^{\log(\lambda'^2(t-s))} V^{e^{r/2}}_{1}(Y^2_r) \, dr \bigg) \bigg) ds.
\end{align}
We can approximate the first expectation with the survival probability of $Y^1$, just as we did in the proof of (a), and bound it above by $C\lambda^{-2\lambda_0}t^{-\lambda_0}$. Furthermore, by the proof of part (a), we know that when $\lambda' > (t-s)^{-1/2}$ the expectation in the integral above is bounded above by $C (\lambda')^{-2\lambda_0} (t-s)^{-\lambda_0}$. When this is not the case we bound it above by $1$. Thus \eqref{e_prelimeps5} is bounded above by
\begin{align}
&C \|h\|_\infty t^{-\lambda_0} \left[ 1(\lambda' \geq \epsilon^{-1/2}) \int_{t-\epsilon}^{t-\lambda'^{-2}}  (t-s)^{-\lambda_0}\,ds + (\lambda')^{2\lambda_0} \int_{t-\lambda'^{-2}}^t E^{Y^2} \left( \exp \left( - \int_{-\infty}^{\log(\lambda'^2(t-s))} V^{e^{r/2}}_{1}(Y^2_r) \, dr \right) \right)ds \right]\nonumber
\\ &\hspace{4 mm} \leq C \|h\|_\infty t^{-\lambda_0} \left[ \epsilon^{1-\lambda_0} + \lambda'^{-2(1-\lambda_0)}\right]. \nonumber
\end{align}
The result now follows.
\end{proof}

\emph{Proof of Theorem~\ref{thm_l2limit}.} Let $h : \R^2 \to \R$ be bounded and measurable. Clearly we may assume without loss of generality that $h \geq 0$. We recall from \eqref{e_Gammaint} and Proposition~\ref{prop_pde_rep1} that
\[ \N_0((L^\lambda_t \times  L^{\lambda'}_t(h)) = \int_0^{t} (\lambda \lambda')^{2\lambda_0} \Gamma^{\lambda, \lambda'}(s) \,ds,\]
where $h\geq 0$ implies that $\Gamma^{\lambda,\lambda'}(s) \geq 0$. Our strategy is to compute the limit of $(\lambda \lambda')^{2\lambda_0} \Gamma^{\lambda, \lambda'}(s)$ as $\lambda,\lambda' \to \infty$ and pass the limit through the integral. However, the scaling we use cannot be done uniformly in $s$. In order to handle this and the singularity at $s = t$, we fix $\epsilon > 0$ and analyse the integral on $[t-\epsilon,t]$ separately. We have 
\begin{equation} \label{e_Gammaintsplit}
\N_0((L^\lambda_t \times  L^{\lambda'}_t(h)) = \int_0^{t-\epsilon} (\lambda \lambda')^{2\lambda_0} \Gamma^{\lambda, \lambda'}(s) \,ds + (\lambda \lambda')^{2\lambda_0}\int_{t-\epsilon}^t \Gamma^{\lambda, \lambda'}(s)\, ds. 
\end{equation}
By Lemma \ref{lemma_tepsprelimit}(b), the limit superior of the absolute value of the second term as $\lambda' \to \infty$ is bounded above by $C\|h\|_\infty t^{-\lambda_0} \epsilon^{1-\lambda_0}$. Hence, if
\[\lim_{\lambda, \lambda' \to \infty}\int_0^{t-\epsilon} (\lambda \lambda')^{2\lambda_0} \Gamma^{\lambda, \lambda'}(s) \,ds \]
exists for all $\epsilon > 0$, then by the Cauchy condition $\lim_{\lambda, \lambda' \to \infty} \N_0((L^\lambda_t \times  L^{\lambda'}_t(h))$ exists and is the limit of the above as $\epsilon \downarrow 0$. Thus it suffices to fix $\epsilon>0$ and establish the convergence of, and find the limit of, the first term of \eqref{e_Gammaintsplit}, first as $\lambda, \lambda' \to \infty$ and then as $\epsilon \downarrow 0$. By Lemma~\ref{lemma_tepsprelimit}(a), we have
\[(\lambda \lambda')^{2\lambda_0} | \Gamma^{\lambda,\lambda'}(s)| \leq g(s) \,\,\, \text{ for all } \, s\in [0,t-\epsilon]\]
for all $\lambda, \lambda' > \epsilon^{-1/2}$ for a function $g(s) \geq0$ satisfying $\int_0^{t-\epsilon} g(s) ds < \infty$. Thus if $(\lambda \lambda')^{2\lambda_0} \Gamma^{\lambda, \lambda'}(s)$ converges as $\lambda,\lambda' \to \infty$, Dominated Convergence implies
\begin{align} \label{e_Gammareduce}
\lim_{\lambda, \lambda'  \to \infty} \N_0((L^\lambda \times L^{\lambda'}_t)(h)) &= \lim_{\epsilon \to 0^+} \lim_{\lambda, \lambda' \to \infty}\int_0^{t-\epsilon} (\lambda \lambda')^{2\lambda_0} \Gamma^{\lambda, \lambda'}(s) \,ds  \nonumber
\\ &= \lim_{\epsilon \to 0^+}\int_0^{t-\epsilon} \lim_{\lambda, \lambda' \to \infty} (\lambda \lambda')^{2\lambda_0} \Gamma^{\lambda, \lambda'}(s) \,ds,
\end{align}
and so it suffices to find the limit of $(\lambda \lambda')^{2\lambda_0} \Gamma^{\lambda, \lambda'}(s)$ as $\lambda, \lambda' \to \infty$. \\

Let $s \in (0,t)$ and assume $\lambda, \lambda' > (t-s)^{-1/2}$. By Lemma \ref{lemma_pde_rep2},
\begin{align} \label{e_original}
(\lambda \lambda')^{2\lambda_0} \Gamma^{\lambda, \lambda'}(s)\nonumber =\,&(\lambda \lambda')^{2\lambda_0} E^{B}_0 \bigg( E^{B^1, B^2}_{(0,0)} \, \bigg(\, E_{B^1_1, B^2_1}^{Y^1, Y^2} \, \bigg[\,  \Psi^{\lambda, \lambda'}_{B,s} (\sqrt{t-s}\, Y^1_{T_1},\sqrt{t-s} \, Y^2_{T_2} )  \nonumber
\\ &\times \exp \bigg(- \int_0^1 V^{1, \lambda'/ \lambda}_{u}(B^1_{u},B^1_{u} +  e^{T_1/2}( Y^2_{T_2} -   Y^1_{T_1})) + V^{1, \lambda / \lambda'}_{u} (B^2_{u} ,B^2_{u}    + e^{T_2/2}( Y^1_{T_1} -  Y^2_{T_2}) )\, du \bigg) \nonumber
\\ &\times \exp \bigg(-\int_0^{T_1} H_{e^u}^{\lambda' / \lambda} (Y^1_u, Y^1_u + e^{\frac{T_1-u}{2}}(Y^2_{T_2} - Y^1_{T_1} )) \, du \bigg) \nonumber
\\ & \times \exp \bigg(-\int_0^{T_2} H_{e^u}^{\lambda / \lambda'} (Y^2_u, Y^2_u + e^{\frac{T_2-u}{2}}(Y^1_{T_1} - Y^2_{T_2} )) \, du \bigg)\bigg) \bigg) \bigg],
\end{align}
where $T_1 = T_1(s) = \log(\lambda^2(t-s))$, $T_2 = T_2(s) = \log(\lambda'^2(t-s))$. Inside the integral in the third term we add and subtract $F(Y^i_u)$ and decompose as follows
\begin{align*}
&\exp \bigg(-\int_0^{T_1} H_{e^u}^{1} (Y^1_u, Y^1_u + e^{\frac{T-u}{2}}(Y^2_{T_2} - Y^1_{T_1} )) \, du \bigg)
\\ &\hspace{ 5mm} = \exp \bigg( - \int_0^{T_1} F(Y^1_u) \, du \bigg) \exp \bigg(\int_0^{T_1} F(Y^1_u) -  H_{e^u}^{\lambda' / \lambda} (Y^1_u, Y^1_u + e^{\frac{T_1-u}{2}}(Y^2_{T_2} - Y^1_{T_1} )) \, du \bigg). 
\end{align*}
We do the same to the fourth term with the obvious changes of indices. The first term in the above is the probability that the Ornstein-Uhlenbeck process $Y^1$ with killing function $F$ survives until time $T_1$. We extract a similar term from the symmetric term corresponding to $Y^2$ and $T_2$. Weighting the expectation of a functional with this survival probability is equivalent to restricting the expectation to the event that the process survives; in our case, we restrict to the event that $Y^1$ and $Y^2$ survive until $T_1$ and $T_2$, respectively. Thus \eqref{e_original} is equal to
\begin{align} \label{e_ogmod1}
&(\lambda \lambda')^{2\lambda_0}  E^{B}_0 \bigg( E^{B^1, B^2}_{(0,0)} \, \bigg(\, E_{B^1_1, B^2_1}^{Y^1, Y^2} \, \bigg[\,  \Psi^{\lambda, \lambda'}_{B,s} (\sqrt{t-s}\, Y^1_{T_1},\sqrt{t-s} \,Y^2_{T_2} )  \nonumber
\\ &\times \exp \bigg(- \int_0^1 V^{1, \lambda'/ \lambda}_{u}(B^1_{u},B^1_{u} +  e^{T_1/2}( Y^2_{T_2} -   Y^1_{T_1})) + V^{1, \lambda / \lambda'}_{u} (B^2_{u} ,B^2_{u}    + e^{T_2/2}( Y^1_{T_1} -  Y^2_{T_2}) )\, du \bigg)  \nonumber
\\ &\times \exp \bigg(\int_0^{T_1} F(Y^1_u) -  H_{e^u}^{\lambda' / \lambda} (Y^1_u, Y^1_u + e^{\frac{T_1-u}{2}}(Y^2_{T_2} - Y^1_{T_1} )) \, du \bigg) \nonumber
\\ & \times \exp \bigg(\int_0^{T_2} F(Y^2_u) - H_{e^u}^{\lambda / \lambda'} (Y^2_u, Y^2_u + e^{\frac{T_2-u}{2}}(Y^1_{T_1} - Y^2_{T_2} )) \, du \bigg) 1(\rho_1> T_1) 1(\rho_2>T_2)\,\bigg]\, \bigg) \bigg) , 
\end{align}
where $\rho_i = \rho_i^F$ is the lifetime of the killed process $Y^i$. Recall the transition density $q_t(\cdot,\cdot)$ (with respect to $m$) of the killed diffusion . We condition on the endpoints $Y^i_{T_i} = z_i$ (recall from Lemma~\ref{lemma_endpoint_indep}(a) that the regular conditional distributions exist for all $z_i \in \R$) and integrate against $q_{T_i}(\cdot, z_i) \,dm(z_i)$ to obtain that (\ref{e_ogmod1}) is equal to
\begin{align} \label{e_ogmod2}
&(\lambda \lambda')^{2\lambda_0}  E^{B}_0 \bigg( E^{B^1, B^2}_{(0,0)} \, \bigg(\, \iint\,  \Psi^{\lambda, \lambda'}_{B,s} (\sqrt{t-s}z_1 ,\sqrt{t-s} z_2 ) \nonumber
\\ &\times \exp \bigg(- \int_0^1 V^{1, \lambda'/ \lambda}_{u}(B^1_{u},B^1_{u} +  e^{T_1/2}( z_2 -   z_1)) + V^{1, \lambda / \lambda'}_{u} (B^2_{u} ,B^2_{u}    + e^{T_2/2}( z_1 -  z_2) )\, du \bigg) \nonumber
\\ &\times E_{B^1_1}^{Y^1} \bigg( \exp \bigg(\int_0^{T_1} F(Y^1_u) -  H_{e^u}^{\lambda' / \lambda} (Y^1_u, Y^1_u + e^{\frac{T_1-u}{2}}(z_2 - z_1)) \, du \bigg)\bigg|\, \rho_1 > T_1, Y^1_{T_1} = z_1 \bigg) \nonumber
\\ &  \times E_{B^2_1}^{Y^2} \bigg( \exp \bigg(\int_0^{T_2} F(Y^2_u) - H_{e^u}^{\lambda / \lambda'} (Y^2_u, Y^2_u + e^{\frac{T_2-u}{2}}(z_1 - z_2)) \, du \bigg) \bigg| \, \rho_2 > T_2, Y^2_{T_2} = z_2 \bigg) \nonumber  
\\ & \hspace{10 mm} \times q_{T_1}(B^1_1, z_1) \, q_{T_2}(B^2_1, z_2) \, dm(z_1)\, dm(z_2)\, \bigg) \bigg) 
\\ &=:  \iint (\lambda \lambda')^{2\lambda_0} \bigg[ E^{B}_0 \bigg( E^{B^1, B^2}_{(0,0)} \, \bigg(\,  G(\lambda, \lambda', s, B, B^1, B^2, z_1, z_2)  \,q_{T_1}(B^1_1,z_1) \,q_{T_2}(B^2_1, z_2) \bigg) \bigg)\, \bigg] dm(z_1) \, dm(z_2) . \nonumber 
\end{align}
The conditional probabilities that appear are the same that are defined in Section~\ref{s_OU}, in particular Lemma~\ref{lemma_endpoint_indep}. We have used that the terms in the third and fourth lines are independent conditional on the endpoints. Hereafter, $Y^1$ and $Y^2$, and their respective laws, refer to killed Ornstein-Uhlenbeck processes with killing function $F$. Furthermore, after this point we will suppress the conditioning on $\rho_i > T_i$, as it is implicit in the conditioning $Y^i_{T_i} = z_i$ that $\rho_i > T_i$.\\

We introduce notation for the terms appearing in $G(\lambda, \lambda', s, B, B^1, B^2, z_1, z_2)$. We define
\begin{align} 
&Q(\lambda, \lambda', B^1, B^2,z_1,z_2)  \label{e_Qdef} 
\\ &\hspace{ 10 mm }:= \exp \bigg(- \int_0^1 V^{1, \lambda'/ \lambda}_{u}(B^1_{u},B^1_{u} +  e^{T_1/2}( z_2 -   z_1)) + V^{1, \lambda / \lambda'}_{u} (B^2_{u} ,B^2_{u}    + e^{T_2/2}( z_1 -  z_2) )\, du \bigg),\nonumber \\
&\tilde{Z}^1_{T_1} = \tilde{Z}^1_{T_1}(Y^1, z_1,z_2,\lambda'/\lambda):=\exp \bigg(\int_0^{T_1} F(Y^1_u) -  H_{e^u}^{\lambda' / \lambda} (Y^1_u, Y^1_u + e^{\frac{T_1-u}{2}}(z_2 - z_1)) \, du \bigg), \label{e_Ztilde1def} \\
&\tilde{Z}^2_{T_2} = \tilde{Z}^2_{T_2}(Y^2,z_2,z_1,\lambda/\lambda') := \exp \bigg(\int_0^{T_2} F(Y^2_u) - H_{e^u}^{\lambda / \lambda'} (Y^2_u, Y^2_u + e^{\frac{T_2-u}{2}}(z_1 - z_2)) \, du \bigg).\label{e_Ztilde2def}
\end{align}
We recall that $\Psi^{\lambda, \lambda'}_{B,s} (\sqrt{t-s}z_1 ,\sqrt{t-s} z_2 )$ was defined in \eqref{e_psi}. From \eqref{e_ogmod2} we have
\begin{align} \label{e_Gdef}
&G(\lambda, \lambda',s,B, B^1, B^2, z_1, z_2) \nonumber
\\ &\hspace{4 mm}\, = \Psi^{\lambda, \lambda'}_{B,s} (\sqrt{t-s}z_1 ,\sqrt{t-s} z_2 )\, Q(\lambda,\lambda',B^1,B^2,z_1,z_2)\,E_{B^1_1}^{Y^1} \left( \tilde{Z}^1_{T_1} \big| \,Y^1_{T_1} = z_1 \right)  E_{B^2_1}^{Y^2} \left( \tilde{Z}^2_{T_2} \big| \,Y^2_{T_2} = z_2 \right).
\end{align}
We note that $\tilde{Z}^1_{T_1}$ and $\tilde{Z}^2_{T_2}$ are perturbations of the corresponding $Z^i_{T_i}$ terms. In particular, we defined $Z^i_{T_i}$ by 
\begin{equation} \label{e_Zidef}
Z^i_{T_i}(Y^i) = \exp \bigg( \int_0^{T_i} F(Y^i_u) - V_1^{e^{u/2}} (Y^i_u)\, du \bigg).
\end{equation}
By Proposition \ref{prop_mono_subadd}(a) and \eqref{Hdef}, we have that $H_{e^u}^{c}(x,y) \geq V_1^{e^{u/2}}(x)$, and hence
\begin{equation} \label{e_Ztildebd}
\tilde{Z}^i_{T_i} \leq Z^i_{T_i}(Y^i) \leq C_Z,
\end{equation}
where the second inequality is by \eqref{e_ZTbd}. Using $Q(\lambda,\lambda',B^1,B^2,z_1,z_2) \leq 1$ and $|\Psi^{\lambda, \lambda'}_{B,s}| \leq \|h \|_\infty$, both of which are obvious from these terms' definitions, we therefore obtain that for a constant $C_1>0$,  
\begin{equation} \label{e_Gbound}
|G(\lambda, \lambda',s,B, B^1, B^2, z_1, z_2)| \leq C_1
\end{equation}
uniformly in its arguments. We now define $\Theta(\lambda,\lambda',s,B,B^1,B^2, z_1,z_2)$ as the function in the square-bracketed term in \eqref{e_ogmod2} multiplied by the scaling factor $(\lambda \lambda')^{2\lambda_0}$. That is,
\begin{equation}\label{e_thetadef}
\Theta(\lambda,\lambda',s,B,B^1,B^2,z_1,z_2) :=  G(\lambda, \lambda',s, B, B^1_1, B^2_1, z_1, z_2) \,(\lambda \lambda')^{2\lambda_0} \,q_{T_1}(B^1_1,z_1) \,q_{T_2}(B^2_1, z_2).
\end{equation}
Note from \eqref{e_ogmod2} that
\begin{equation} \label{gammatheta}
\Gamma^{\lambda,\lambda'}(s) = \iint E^{B}_0 ( E^{B^1, B^2}_{(0,0)} \, (\,\Theta(\lambda,\lambda',s,B,B^1,B^2,z_1,z_2) ) ) \,dm(z_1) \,dm(z_2).
\end{equation}
Recall that $T_1 = \log(\lambda^2(t-s))$ and $T_2 = \log(\lambda'^2(t-s))$. Taking $s^*(1/8)$ as in Theorem~\ref{thm_killedOU}(c), we note that if $\lambda,\lambda' > e^{s^*/2}(t-s)^{-1/2}$, then $T_1, T_2 \geq s^*(1/8)$. We define $\bar{\lambda}(s)$ as
\begin{equation} \label{lambdabardef}
\bar{\lambda}(s) := \big[e^{s^*(1/8)/2}(t-s)^{-1/2}\big] \vee 1
\end{equation}
and $\tau(s)$ by
\begin{equation} \label{taudef}
\tau(s) = \log( \bar{\lambda}(s)^2 (t-s)).
\end{equation}
Applying \eqref{OU_qbound} with $\delta = 1/8$, we obtain
\[q_{T_1}(b_1,z_1) \,q_{T_2}(b_2, z_2) \leq C (t-s)^{-2\lambda_0} (\lambda \lambda')^{-2\lambda_0} e^{1/8(b_1^2 + b_2^2 + z_1^2 + z_2^2)}\]
for all $T_1, T_2 > \tau(s)$ (equivalently, $\lambda,\lambda' > \bar{\lambda}(s)$). Using the above and \eqref{e_Gbound}, we obtain
\begin{align} \label{e_thetabd}
& | \Theta(\lambda,\lambda',s,B,B^1,B^2, z_1,z_2) |  \nonumber
\\ &\leq  C \, (t-s)^{-2\lambda_0} \exp \left(\left[(B^1_1)^2 + (B_1^2)^2 + z_1^2 + z_2^2 \right]/8 \right)  \,\,\,\text{  for all }\, \lambda,\lambda' > \bar{\lambda}(s).
\end{align}
Since $B_1^i \sim m$, \eqref{e_thetabd} implies that $\Theta$ has a (uniform in $\lambda, \lambda' > \bar{\lambda}(s)$) upper bound which is integrable with respect to $dP^B_0 dP^{B^1}_0 dP^{B^2}_0 dm(z_1) \, dm(z_2)$. From \eqref{gammatheta}, this implies that $(\lambda \lambda')^{2\lambda_0} \Gamma^{\lambda,\lambda'}(s)$ is bounded for $\lambda,\lambda' > \bar{\lambda}(s)$ (for fixed $s<t$). Moreover, if $\lim_{\lambda, \lambda' \to \infty} \Theta(\lambda,\lambda',s,B,B^1,B^2,z_1,z_2)$ exists for $P^B_0 \otimes P^{B^1,B^2}_{(0,0)}$-a.a.~$\omega$ and Lebesgue-a.a. $z_1,z_2 \in \R$, then by \eqref{gammatheta} and Dominated Convergence (using \eqref{e_thetabd}), we have
\begin{align} \label{gammathetalimit}
\lim_{\lambda,\lambda'\to \infty} (\lambda \lambda')^{2\lambda_0} \Gamma^{\lambda,\lambda'}(s) = \iint  E^B_0 ( E^{B^1,B^2}_{(0,0)} [ \,\lim_{\lambda,\lambda'\to \infty} \Theta(\lambda,\lambda',s,B,B^1,B^2,z_1,z_2) ])\, dm(z_1)\,dm(z_2).
\end{align}
In view of \eqref{e_Gammareduce}, the above implies the following:
\begin{align} \label{lastreduction}
\text{If } \lim_{\lambda,\lambda' \to \infty} &\Theta(\lambda,\lambda',s,B,B^1,B^2,z_2,z_2) \, \text{ exists }P^B_0 \otimes P^{B^1,B^2}_{(0,0)}\text{-a.s. for a.e. } z_1, z_2 \in \R, \text{ then} \nonumber
\\ &\lim_{\lambda,\lambda'\to \infty} \N_0((L_t^\lambda \times L_t^{\lambda'})(h)) = \int_0^t \bigg[\iint  E^B_0 ( E^{B^1,B^2}_{(0,0)} [ \,\lim_{\lambda,\lambda'\to \infty} \Theta(\lambda,\lambda',s,B,B^1,B^2,z_1,z_2) ])\, dm(z_1)\,dm(z_2) \bigg]ds.
\end{align}
As $h\geq 0$, and hence $\Gamma^{\lambda,\lambda'}(s) \geq 0$, the right hand side of the above is equal to the last expression of \eqref{e_Gammareduce} (provided $\Theta$ converges) by Monotone Convergence. Thus it suffices to compute the limit of $\Theta(\lambda,\lambda',s,B,B^1,B^2,z_2,z_2)$ as $\lambda,\lambda' \to \infty$. As we only need to find the limit a.e. in $(z_1, z_2)$, we will hereafter assume that $z_1 \neq z_2$. We also take this opportunity to reiterate our assumptions about $\lambda$ and $\lambda'$. Originally we assumed $\lambda,\lambda' > (t-s)^{-1/2}$; in view of the above, we augment the assumption to $\lambda,\lambda' > \bar{\lambda}(s)$, or equivalently, $T_1, T_2 > \tau(s)$. This implies that $\lambda, \lambda' >1$ and $T_1, T_2 > s^*(1/8)$.\\

$\Theta$ is the product of the function $G$ and the rescaled transition densities, ie. $\lambda^{2\lambda_0} q_{T_1}(B^1_1,z_1)$ and $\lambda'^{2\lambda_0}  q_{T_2}(B^2_1,z_2)$. We will show that both of these approach finite limits as $\lambda, \lambda' \to \infty$. First, let us handle the transition densities. By Lemma~\ref{lemma_densitylimit},
\[\lim_{T_i \to \infty} e^{\lambda_0 T_i} q_{T_i}(B_1^i,z_i) = \psi_0(B_1^i)\, \psi_0(z_i)\]
for $i=1,2$. Using the definitions of $T_1$ and $T_2$ (e.g. $T_1 = \log(\lambda^2(t-s))$), we readily obtain from the above that
\begin{align} \label{densitylimit}
&\lambda^{2\lambda_0}  q_{T_1}(B^1_1,z_1) \to (t-s)^{-\lambda_0} \psi_0(B^1_1) \, \psi_0(z_1) \,\, \text{ as } \, \lambda \to \infty, \,\,\,\, \text{ and}  \nonumber
\\ &\lambda'^{2\lambda_0}  q_{T_2}(B^2_1,z_2) \to (t-s)^{-\lambda_0} \psi_0(B^2_1) \, \psi_0(z_2) \,\, \text{ as } \, \lambda' \to \infty 
\end{align}
for all $B^1_1, B^2_1, z_1,
 z_2 \in \R$.\\

We now compute the limit of $G$. We begin by focussing on the components of $G$ for which the analysis is most technical, which are the conditional expectations of $\tilde{Z}^i_{T_i}$. We will focus on $i=1$, but the analysis carries over to the $i=2$ case. For now, we replace $B^1_1$ with a generic point $x \in \R$. We will show that
\begin{align} \label{e_tildeZlim}
\lim_{\lambda,\lambda' \to \infty} E_{x}^{Y^1} \big( \tilde{Z}^1_{T_1}(Y^1,z_1,z_2,\lambda'/\lambda) \big| \,Y^1_{T_1} = z_1 \big) = E_x^{Y,\infty} (Z_\infty(Y)) \, E_{z_1}^{Y,\infty} (W_\infty(Y,z_2)),
\end{align} 
where we recall that $E_x^{Y,\infty}$ is the expectation under the law of the killed process $Y$ with $Y_0 = x$ conditioned to survive for all time, as defined in Theorem~\ref{thm_killedOU}(e). $Z_T(Y)$ is as defined in \eqref{e_Zdef} and we recall from \eqref{e_Zinf} that $Z_\infty(Y) = \lim_{T\to \infty} Z_T(Y)$ exists and is bounded by $C_Z$. $W_S(Y,z)$ is defined as
\begin{equation} \label{e_Wdef}
W_S(Y,z) = \exp \bigg( \int_0^S F(Y_u) - F_2(Y_u, Y_u - e^{u/2}(z - Y_0)) \, du \bigg).
\end{equation}
The integrand in $W_S(Y,z)$ is negative, which implies that $0< W_S(Y,z) \leq 1$ for all $S>0$, and so $W_\infty(Y,z)$ exists and is bounded by $1$. Heuristically, the $Z_\infty$ term comes from the early part of the integral in $\tilde{Z}^i_{T_i}$, and the $W$ term comes from the tail part, and these two contributions are asymptotically independent. Since the time at which we condition is $T$ and goes to infinity, in the limit the expectations are computed under the measure of the process conditioned to survive forever. Because $z_1$ and $z_2$ are fixed, we will hereafter suppress the dependence of $\tilde{Z}_{T_1}^1$ on them and simply write $\tilde{Z}^1_{T_1}(Y^1,\lambda'/\lambda)$. Moreover, we will only be analysing $Y^1, T_1$ and $\tilde{Z}^1_{T_1}(Y^1,\lambda'/\lambda)$ for the time being, so we simply denote these by $Y,T$ and $\tilde{Z}_T(Y,\lambda'/\lambda)$.\\

Let us now proceed more carefully. Let $0<K< T / 2$. We apply the Markov property to $E_{x}^{Y} ( \tilde{Z}_{T}(Y,\lambda'/\lambda) \big| \,Y_{T} = z_1 )$ at times $K$ and $T - K$ and expand in terms of the joint density of $(Y_K,Y_{T-K})$. As in \eqref{e_bridgetransden1}, the joint density of $(Y_K,Y_{T-K})$ at $(w,y)$ with respect to $m\times m$ under $P^Y_x(Y \in \cdot \,| Y_T = z_1)$ is
\[ \frac{q_K(x,w) q_{T - 2K}(w,y) q_K(y,z_1)}{q_{T}(x,z_1)}. \]
Thus we obtain the following:
\begin{align} \label{e_tildeZbreakdown}
&E_{x}^{Y} \left( \tilde{Z}_T(Y,\lambda'/\lambda) \,\big|\, Y_{T} = z_1 \right) \nonumber  \hspace{ 5mm}
\\ &\hspace{ 5mm}=  E_{x}^{Y} \bigg( \exp \bigg(\int_0^{T} F(Y_u) -  H_{e^u}^{\lambda' / \lambda} (Y_u, Y_u + e^{\frac{T-u}{2}}(z_2 - z_1)) \, du \bigg)\bigg| \,Y_{T} = z_1 \bigg) \nonumber 
\\ &\hspace{ 5mm}=\iint E_x^{Y} \bigg( \exp \bigg(\int_0^K F(Y_u) -  H_{e^u}^{\lambda' / \lambda} (Y_u, Y_u + e^{\frac{T-u}{2}}(z_2 - z_1)) \, du \bigg)\bigg|\, Y_K = w \bigg) \nonumber
\\ &\hspace{ 9mm}\times E_w^{Y} \bigg( \exp \bigg(\int_0^{T-2K} F(Y_u) -  H_{e^{K+u}}^{\lambda' / \lambda} (Y_u, Y_u + e^{\frac{T-K-u}{2}}(z_2 - z_1)) \, du \bigg)\bigg| \,Y_{T-2K} = y \bigg) \nonumber
\\ &\hspace{ 9mm}\times E_y^{Y} \bigg( \exp \bigg( \int_0^K F(Y_u) -  H_{e^{T-K+u}}^{\lambda' / \lambda} (Y_u, Y_u + e^{\frac{K-u}{2}}(z_2 - z_1)) \, du \bigg) \bigg|\, Y_{K} = z_1 \bigg) \nonumber
\\ &\hspace{9 mm} \times \frac{q_K(x,w) q_{T - 2K}(w,y) q_K(y,z_1)}{q_{T}(x,z_1)} \, dm(w)\, dm(y). 
\end{align}
Denote the three conditional expectations by $A_1(x,w, \lambda,\lambda',K), A_2(w,y,\lambda,\lambda',K)$ and $A_3(y,z_1,\lambda,\lambda',K)$. That is,
\begin{align}
A_1(x, w, \lambda,\lambda',K) &=  E_x^{Y} \bigg( \exp \bigg(\int_0^K F(Y_u) -  H_{e^u}^{\lambda' / \lambda} (Y_u, Y_u + e^{\frac{T-u}{2}}(z_2 - z_1)) \, du \bigg)\bigg|\, Y_K = w \bigg) \label{A1def}
\\ A_2(w,y,\lambda,\lambda',K) &= E_w^{Y} \bigg( \exp \bigg(\int_0^{T-2K} F(Y_u) -  H_{e^{K+u}}^{\lambda' / \lambda} (Y_u, Y_u + e^{\frac{T-K-u}{2}}(z_2 - z_1)) \, du \bigg)\bigg| \,Y_{T-2K} = y \bigg) \label{A2def}
\\ A_3(y,z_1,\lambda,\lambda',K) &= E_y^{Y} \bigg( \exp \bigg(\int_0^K F(Y_u) -  H_{e^{T-K+u}}^{\lambda' / \lambda} (Y_u, Y_u + e^{\frac{K-u}{2}}(z_2 - z_1)) \, du \bigg)\bigg| \,Y_{K} = z_1 \bigg). \label{A3def}
\end{align}
We observe that $A_1, A_2$ and $A_3$ all depend on $z_1$ and $z_2$ in addition to their listed arguments, as these values appear in their integrands. As for the time being we are viewing $z_1$ and $z_2$ as fixed, we omit this additional dependence. Noting that the integrand is bounded above by $F(Y_u) - V_1^{e^{u/2}}(Y_u)$ in each case, from \eqref{e_ZTbd} we have $A_i \leq C_Z$ for $i=1,2,3$. In terms of the $A_i$, \eqref{e_tildeZbreakdown} can be rewritten as
\begin{align} \label{e_tildeZbreakdownA}
&E_{x}^{Y} \left( \tilde{Z}_T(Y,\lambda'/\lambda) \, \big|\,  Y_{T} = z_1 \right) \nonumber
\\&= \iint A_1(x,w, \lambda,\lambda',K) \, A_2(w,y,\lambda,\lambda',K)\, A_3(y,z_1,\lambda,\lambda',K) \,  \frac{q_K(x,w)\, q_{T - 2K}(w,y)\, q_K(y,z_1)}{q_{T}(x,z_1)} \, dm(w) \, dm(y).
\end{align}
There are two main contributions in the $A_i$. The first comes from $F$ and the first argument of the $H$ function, and is approximately equal to $F(Y_u) - V_1^{e^{u/2}}(Y_u)$; the second comes from the second argument of the $H$ function. We will see that, asymptotically, $A_1$ is only affected by the first contribution and gives the $Z_\infty(Y)$ term in (\ref{e_tildeZlim}); $A_3$ is only affected by the second contribution and gives the $W_\infty(z_2,\infty)$ term in (\ref{e_tildeZlim}). The contribution of $A_2$ is will be seen to be negligible. We first show that $A_2$ is arbitrarily close to $1$ as $K$ is made large, uniformly in $T$ sufficiently large depending on $K$. Define $Z^a_T(Y,\lambda'/\lambda,K)$ as $\tilde{Z}_T(Y, \lambda'/\lambda,K)$ with $A_2$ replaced by $1$; that is,
\begin{align} \label{Zadef}
Z^a_T(Y,\lambda'/\lambda,K) =\, & \exp \bigg( \int_0^K F(Y_u) - H_{e^u}^{\lambda' / \lambda}(Y_u, Y_u + e^{\frac{T-u}{2}}(z_2-z_1)) \,du \bigg)  \nonumber
\\ &\times \exp \bigg( \int_{T-K}^T F(Y_u) -  H_{e^u}^{\lambda' / \lambda} (Y_u, Y_u + e^{\frac{T-u}{2}}(z_2 - z_1)) \, du \bigg).
\end{align}
As in \eqref{e_tildeZbreakdown} and \eqref{e_tildeZbreakdownA}, we therefore have
\begin{align} \label{Zaex}
&E^Y_{x} \big( Z^a_T(Y,\lambda'/\lambda,K) \, \big|  \,Y_T = z_1 \big) \nonumber
\\ &\hspace{5 mm} = \iint A_1(x,w,\lambda,\lambda',K) A_3(y,z_1,\lambda,\lambda',K)  \frac{q_K(x,w)\, q_{T - 2K}(w,y)\, q_K(y,z_1)}{q_{T}(x,z_1)} \, dm(w) \, dm(y). 
\end{align}
By monotonicity (Proposition~\ref{prop_mono_subadd}(a)) and \eqref{e_VlambdaConvergence} we have
\begin{equation} \label{e_FHbound1}
F(Y_u) -  H_{e^{K+u}}^{\lambda' / \lambda} (Y_u, Y_u + e^{\frac{T-K-u}{2}}(z_2 - z_1)) \leq F(Y_u) - V_1^{e^{(K+u)/2}}(Y_u) \leq C e^{-(K+u)(2\lambda_0 -1)/2}
\end{equation}
uniformly in $T > 2K$. Integrating this over $u$ shows that the exponent in $A_2$ is bounded above $C' e^{-(2\lambda_0-1)K/2}$ for a constant $C'$, uniformly in $T>2K$. We choose $K$ large enough so that exponent in $A_2$ is smaller than $2$. Then by \eqref{e_tildeZbreakdownA} and \eqref{Zadef}, applying the mean value theorem, we have
\begin{align} \label{exp_A2error}
&\big| E^Y_{x} \big( \tilde{Z}_T(Y,\lambda'/\lambda) - Z^a_T(Y,\lambda'/\lambda,K) \, \big|  \,Y_T = z_1 \big) \big| \nonumber \hspace{4 mm}
\\ & \hspace{5 mm}\leq \frac{1}{q_{T}(x,z_1)}  \iint A_1(x,w, \lambda,\lambda',K) \, \bigg| A_2(w,y,\lambda,\lambda',K) - 1 \bigg| A_3(y,z_1,\lambda,\lambda',K) \nonumber
\\ &\hspace{26 mm }\times q_K(x,w) q_{T - 2K}(w,y) q_K(y,z_1) \,  dm(w) \, dm(y) \nonumber
\\ & \hspace{5 mm}\leq  \frac{e^2 C_Z^2}{q_{T}(x,z_1)}  \iint  E_w^{Y} \bigg(  \int_0^{T-2K} |F(Y_u) -  H_{e^{K+u}}^{\lambda' / \lambda} (Y_u, Y_u + e^{\frac{T-K-u}{2}}(z_2 - z_1))| \, du\, \bigg| \,Y_{T-2K} = y \,\bigg)  \nonumber
\\ &\hspace{26 mm} \times  q_K(x,w) q_{T - 2K}(w,y) q_K(y,z_1)  \, dm(w) \, dm(y),
\end{align}
uniformly for all $T > 2K$, where we have also used $A_1 A_3 \leq C_Z^2$. The term in the absolute value inside the integral can be positive or negative; (\ref{e_FHbound1}) provides an upper bound for $F- H_{e^{K+u}}^{\lambda' / \lambda}$. To obtain a lower bound, we note that $H_{e^{K+u}}^{\lambda' / \lambda}(a,b) \leq F_2(a,b) \leq F(a) + F(b) $ by Proposition~\ref{prop_mono_subadd} (using part (a) and then part (b)). Using this bound implies that
\begin{equation} \label{e_FHbound2}
F(Y_u) -  H_{e^{K+u}}^{\lambda' / \lambda} (Y_u, Y_u + e^{\frac{T-K-u}{2}}(z_2 - z_1)) \geq -F(Y_u + e^{\frac{T-K-u}{2}}(z_2 - z_1)).
\end{equation}
Together, (\ref{e_FHbound1}) and (\ref{e_FHbound2}) imply that the absolute value appearing in the integral in (\ref{exp_A2error}) is bounded above by 
\[C e^{-(K+u)(2\lambda_0 -1)/2} + F(Y_u + e^{\frac{T-K-u}{2}}(z_2 - z_1)). \]
We have already noted that when integrated over $u$, the first term is bounded by $C'e^{-K(2\lambda_0 - 1)/2}$ (uniformly in $T$). The first term has no dependence on the spatial parameters $w$ and $y$, so in \eqref{exp_A2error} the transition densities and can be integrated and cancelled with the denominator. We get that for all $T>2K$, (\ref{exp_A2error}) is bounded above by
\begin{align} 
&C'e^{-K(2\lambda_0 - 1)/2} + \frac{C}{q_{T}(x,z_1)}  \iint  E_w^Y \bigg(  \int_0^{T-2K} F( Y_u + e^{\frac{T-K-u}{2}}(z_2 - z_1)) \, du \, \bigg|\, Y_{T-2K} = y \bigg) \nonumber
\\ &\hspace{30 mm} \times  q_K(x,w) q_{T - 2K}(w,y) q_K(y,z_1)  \, dm(w) \, dm(y).  \nonumber
\end{align}
We consider the time reversed process in the above and apply \eqref{reversible}, which implies that the above is equal to, and hence for all $T>2K$, \eqref{exp_A2error} is bounded above by
\begin{align}\label{exp_A2error2}
&C'e^{-K(2\lambda_0 - 1)/2} +\frac{C}{q_{T}(x,z_1)} \iint  E_y^Y \bigg(  \int_0^{T-2K} F( Y_u + e^{\frac{K+u}{2}}(z_2 - z_1)) \, du \, \bigg|\, Y_{T-2K} = w \bigg)  \nonumber
\\ &\hspace{30 mm} \times  q_K(x,w) q_{T - 2K}(w,y) q_K(y,z_1)  \, dm(w) \, dm(y).
\end{align}

We recall the asymptotic behaviour of $F$ from \eqref{e_FODE}(iii), ie. that $F(x) \sim c_1 |x| e^{-x^2/2}$ as $|x| \to \infty$. This implies there is a constant $c_2 >0$ such that 
\begin{equation} \label{Ftailbd}
F(x) \leq c_2(1+ |x|) e^{-x^2/2} \,\,\, \text{ for all } \,\, x\in\R.
\end{equation}
In order for this to give a useful upper bound in \eqref{exp_A2error2}, we'll need to show that the argument of $F$ is large in absolute value. It is enough to show that $|Y_u| \ll e^{\frac{K+u}{2}} |z_2 - z_1|$ with high probability when conditioned on its endpoint. Recall that we have assumed $z_1 \neq z_2$. We bound the integrand over the two cases mentioned above and exchange the integral and expectation, which is justifiable since $F$ is positive.  We have
\begin{align} \label{exp_A2error3}
&E_y^{Y} \bigg(  \int_0^{T-2K} F( Y_u + e^{\frac{K+u}{2}}(z_2 - z_1)) \, du \, \bigg|\, Y_{T-2K} = w \bigg) \nonumber
\\ &\hspace{5 mm}\leq E_y^{Y} \bigg( \int_0^{T-2K} F(e^{\frac{K+u}{4}}|z_2 - z_1|) + F(0) 1(|Y_u| \geq e^{\frac{K+u}{4}}|z_2-z_1|) \, du \, \bigg|\, Y_{T-2K} = w \bigg) \nonumber
\\ &\hspace{5 mm}\leq c_2 \int_0^\infty (1+ e^{\frac{K+u}{4}}|z_2-z_1|) e^{-e^{\frac{K+u}{2}}|z_2-z_1|^2/2} du + F(0) \int_0^{T-2K} P_y^{Y}(|Y_u| \geq e^{\frac{K+u}{4}}|z_2-z_1| \, \big| \, Y_{T-2K} = w) \, du,
\end{align}
where we have used \eqref{Ftailbd} and the fact that $F$ is radially decreasing. A simple substitution shows that
\begin{align} \label{exp_Gtail}
c_2 \int_0^\infty (1+ e^{\frac{K+u}{4}}|z_2-z_1|) e^{-e^{\frac{K+u}{2}}|z_2-z_1|^2/2} du &\leq 4c_1 \int_{e^{K/4}|z_2-z_1|}^\infty (1 + a^{-1})e^{-a^2/2} da \nonumber
\\ &\leq C \int_{e^{K/4}|z_2-z_1|}^\infty e^{-a^2/2} da + C 1(e^{K/4}|z_2 - z_1| < 1) \int_{e^{K/4}|z_2-z_1|}^1 a^{-1}e^{-a^2/2} da \nonumber
\\ &\leq C \int_{e^{K/4}|z_2-z_1|}^\infty e^{-a^2/2} da - C  \left[ \log(e^{K/4}|z_2-z_1|)  \wedge 0 \right].
\end{align}
To bound the second term in (\ref{exp_A2error3}) we expand the probability of the large excursion in terms of the transition densities. There are two cases, which we handle in the following lemma. In what follows, $s^* = s^*(1/8)$ from Theorem~\ref{thm_killedOU}(c).
\begin{lemma} \label{lemma_bridgetail}
Let $M>0$ andand $w,y \in \R$. \\
(a) There is a constant $C>0$ such that for $S,u>0$
 satisfying $u,S-u \geq s^*$,
\begin{equation} 
P_y^Y(|Y_u| \geq M \, | \, Y_{S} = w) \leq \frac{C}{q_S(y,w)} e^{-\lambda_0 S} e^{(y^2 + w^2)/8} \bigg[ \frac{e^{-M^2 / 4}}{M} \wedge 1 \bigg]. \nonumber
\end{equation}
(b) For fixed $u_0>0$ the families
\[ \{ P_y^Y(Y_u \in \cdot \, | \, Y_{S} = w) : S\geq u_0, 0\leq u\leq u_0 \} \, \text{ and } \,\{ P_y^Y(Y_{S-u} \in \cdot \, | \, Y_{S} = w) : S \geq u_0, 0 \leq u \leq u_0 \} \]
are tight. 
\end{lemma}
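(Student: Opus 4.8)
\textbf{Proof proposal for Lemma~\ref{lemma_bridgetail}.}

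The plan is to prove both parts by expanding the bridge probabilities in terms of the killed transition density $q_t(\cdot,\cdot)$ and then controlling the resulting integrals using the Gaussian bounds of Theorem~\ref{thm_killedOU}(c) together with the comparison $q_t(x,y)\,p_1(y)\le k_t(x,y)$ from \eqref{unkilledtransbd} and the Gaussian estimate \eqref{unkilledgaussbd}. For part (a), I would first write, using the Markov property of the killed process,
\begin{align}
P_y^Y(|Y_u|\ge M \mid Y_S = w) = \frac{1}{q_S(y,w)} \int_{|a|\ge M} q_u(y,a)\, q_{S-u}(a,w)\, dm(a). \nonumber
\end{align}
Since we are given $u\ge s^*$ and $S-u\ge s^*$, I can apply \eqref{OU_qbound} with $\delta = 1/8$ to \emph{both} factors $q_u(y,a)$ and $q_{S-u}(a,w)$, obtaining $q_u(y,a)q_{S-u}(a,w) \le c^2 e^{-\lambda_0 u}e^{-\lambda_0(S-u)} e^{(y^2+w^2)/8} e^{a^2/4} = C e^{-\lambda_0 S} e^{(y^2+w^2)/8} e^{a^2/4}$. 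Then the integral becomes $C e^{-\lambda_0 S} e^{(y^2+w^2)/8}\int_{|a|\ge M} e^{a^2/4}\,dm(a)$, and since $dm(a) = p_1(a)\,da$ with $p_1(a) = (2\pi)^{-1/2}e^{-a^2/2}$, the integrand is $\propto e^{-a^2/4}$. A standard Gaussian tail estimate gives $\int_{|a|\ge M} e^{-a^2/4}\,da \le C(e^{-M^2/4}/M \wedge 1)$ (the $\wedge 1$ handling small $M$, where we just bound the probability by $1$ directly or use that the total integral is bounded). Dividing by $q_S(y,w)$ gives the claimed bound.

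For part (b), tightness on $C(\mathbb{R})$ — or rather, since $Y_u$ and $Y_{S-u}$ are single random variables, tightness of the families of laws on $\mathbb{R}$ — I would show the laws are uniformly tight by bounding $P_y^Y(|Y_u|\ge M \mid Y_S = w)$ uniformly in the relevant parameter ranges and letting $M\to\infty$. The subtlety compared to part (a) is that here $u$ ranges over $[0,u_0]$, so $u$ can be small and we cannot apply \eqref{OU_qbound} to $q_u(y,a)$. Instead I would use the comparison with the unkilled Ornstein--Uhlenbeck transition density: $q_u(y,a)\,dm(a)\le k_u(y,a)\,da$ by \eqref{unkilledtransbd}(i), and for the other factor, since $S-u\ge u_0 - u$ may still be large, but more robustly $S\ge u_0$ and $S - u\ge S - u_0$... actually the cleanest route is to note $S - u \ge S/2$ when $u\le u_0\le S/2$ isn't guaranteed, so I'd split: for the factor $q_{S-u}(a,w)$, when $S-u\ge s^*$ apply \eqref{OU_qbound} to get $Ce^{-\lambda_0(S-u)}e^{(a^2+w^2)/8}$; when $S-u< s^*$ (only possible if $S < u_0 + s^*$, a bounded range) use $q_{S-u}(a,w)p_1(w)\le k_{S-u}(a,w)\le c\,p_{S-u}(a - we^{-(S-u)/2})$ via \eqref{unkilledgaussbd}. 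Combining with the unkilled bound on $q_u(y,a)$ and the lower bound $q_S(y,w)\ge\delta(K)>0$ on compacts (analogous to \eqref{tightaux2} or \eqref{tightaux5}, valid since $S$ is bounded below by $u_0$ and the claim is for fixed $u_0$ — here one should note the lemma statement as written does not restrict $y,w$ to a compact, so I would either add that restriction, as it is only used that way in \eqref{exp_A2error3} where the outer integration in $dm(w)$ effectively localizes, or carry the $e^{(y^2+w^2)/8}/q_S(y,w)$ factor through as in part (a)), one gets $P_y^Y(|Y_u|\ge M\mid Y_S=w)\le C(y,w,u_0)\,(e^{-M^2/c}/M\wedge 1)\to 0$ as $M\to\infty$, uniformly over the parameter range, which is precisely tightness. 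The time-reversal identity \eqref{reversible} reduces the second family to the first, so only one case needs to be handled in detail.

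The main obstacle I anticipate is the small-time factor $q_u(y,a)$ with $u\downarrow 0$ in part (b): here $q_u$ degenerates toward a point mass at $y$, so one cannot extract exponential decay in $u$ and must instead exploit that the conditional law of $Y_u$ concentrates near $y$ (a fixed point), hence is automatically tight — the concentration argument via the unkilled Gaussian comparison \eqref{unkilledgaussbd} and \eqref{unkilledtransbd} is the right tool, but one must be careful that the normalizing constant $q_S(y,w)$ stays bounded away from zero, which is where the lower bounds of the form \eqref{tightaux2}/\eqref{tightaux5} (requiring $S$ in a compact interval $[u_0, M']$ and $y,w$ in a compact set) enter; for $S$ large this is instead supplied by Lemma~\ref{lemma_densitylimit}, i.e. $e^{\lambda_0 S}q_S(y,w)\to\psi_0(y)\psi_0(w)>0$. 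Assembling these two regimes for $S$ is the one genuinely fiddly point; everything else is routine Gaussian tail estimation.
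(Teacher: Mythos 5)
Your proof of part (a) is essentially identical to the paper's: expand the bridge probability against $q_u(y,a)\,q_{S-u}(a,w)/q_S(y,w)\,dm(a)$, hit both transition densities with \eqref{OU_qbound} at $\delta=1/8$ (valid since $u, S-u \geq s^*$), collect the $e^{-\lambda_0 S}$, and estimate the remaining Gaussian tail $\int_{|a|\geq M} e^{-a^2/4}\,da$ by $C(e^{-M^2/4}/M \wedge 1)$. Correct and matching.

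For part (b) you take a genuinely different route. The paper disposes of it in two sentences by citing Lemma~\ref{lemma_endpoint_indep}(c), which already provides tightness of the bridge laws $\{P^Y_y(Y\restrict{[0,u_0]}\in\cdot\,|\,\rho>S, Y_S=w): S \geq u_0\}$ on path space $C([0,u_0],\R)$ (with $y,w$ fixed, so the compactness restriction $|x|,|z|\leq K$ is automatically met); a tight family on $C([0,u_0],\R)$ has all one-point marginals tight uniformly in $u\in[0,u_0]$, because a compact subset of $C([0,u_0],\R)$ is uniformly bounded. The second family reduces to the first by the reversibility identity \eqref{reversible}. Your approach instead re-derives a direct tail bound from the transition densities, splitting cases according to whether $S-u\geq s^*$, bounding $q_u$ for small $u$ by the unkilled Gaussian kernel, and normalizing by $q_S(y,w)$ using the lower bound $e^{\lambda_0 S}q_S(y,w)\to\psi_0(y)\psi_0(w)>0$ for large $S$ and the compact-range bound \eqref{tightaux2}/\eqref{tightaux5} for $S$ in a bounded window. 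This works and is self-contained, but it duplicates a substantial part of the argument that has already been packaged into Lemma~\ref{lemma_endpoint_indep}(c); the advantage of the paper's route is exactly that it outsources all the small-$u$ and small-$(S-u)$ bookkeeping you're wrestling with. Either way the substance is sound; you should just note that you don't actually need $q_S(y,w)$ itself bounded below (it decays like $e^{-\lambda_0 S}$), only $e^{\lambda_0 S} q_S(y,w)$ bounded below, which is supplied by Lemma~\ref{lemma_densitylimit} and is matched by the $e^{-\lambda_0(S-u)}\leq e^{\lambda_0 u_0}\,e^{-\lambda_0 S}$ factor in the numerator.
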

\begin{proof}
To see (a), we use \eqref{e_bridgetransden1} and \eqref{OU_qbound} with $\delta = 1/8$ to obtain that for $u, S-u \geq s^*$,
\begin{align}  
P_y^Y(Y_u \geq M \, \big| \, Y_{S} = w) &= \int_M^\infty \frac{q_u(y,a) q_{S-u}(a,w)}{q_S(y,w)} dm(a) \nonumber
\\ &\leq \frac{c_{1/8}^2}{q_S(y,w)} e^{-\lambda_0 S}\int_M^\infty e^{(y^2 + 2a^2 + w^2)/8} \, dm(a) \nonumber
\\ &\leq \frac{C}{q_S(y,w)} e^{-\lambda_0 S} e^{(y^2 + w^2)/8} \bigg[\frac{e^{-M^2/4}}{M} \wedge 1 \bigg], \nonumber
\end{align}
where the last line uses a standard upper bound on Gaussian tails and bounds the integral above by a constant when $M$ is small. The bound for $Y_u < -M$ is the same. The first family in part (b) is tight as a consequence of Lemma \ref{lemma_endpoint_indep}(c). To see that the second family is tight we consider the time reversal of $Y$ and use \eqref{reversible}, from which tightness now also follows from Lemma \ref{lemma_endpoint_indep}(c).
\end{proof}

Applying Lemma \ref{lemma_bridgetail}(a), using (\ref{exp_Gtail}) and separating the integrals depending if $u, S-u \geq s^*$ or not, we have that (\ref{exp_A2error3}) is bounded above by
\begin{align}  \label{exp_A2error8}
&C \int_{e^{K/4}|z_2-z_1|}^\infty e^{-a^2/2} da - C  \left[ \log(e^{K/4}|z_2-z_1|)  \wedge 0 \right] \nonumber
\\  &+ \frac{C}{q_{T - 2K}(y,w)} e^{-\lambda_0 (T- 2K)} e^{y^2/8}e^{w^2/8} \int_{s^*}^{T - 2K - s^*} \left[ \frac{e^{-e^{\frac{K+u}{2}}|z_2-z_1|^2 / 4}}{e^{\frac{K+u}{4}}|z_2-z_1|} \wedge 1 \right] du \nonumber
\\ & + C \bigg( \int_0^{s^*} + \int_{T - 2K - s^*}^{T - 2K} \bigg) P_y^{Y}(|Y_u| \geq e^{\frac{K+u}{4}}|z_2-z_1| \, \big| \, Y_{T-2K} = w)\, du. 
\end{align}
As the above is an upper bound for the expectation appearing in the second term of \eqref{exp_A2error2}, and \eqref{exp_A2error2} is an upper bound for \eqref{exp_A2error}, we have
\begin{align} \label{exp_A2error33}
\big| E^Y_{x} \big( \tilde{Z}_T(&Y,\lambda'/\lambda) - Z^a_T(Y,\lambda'/\lambda,K) \, \big|  \,Y_T = z_1 \big) \big| \nonumber
\\ &\leq C e^{-K(2\lambda_0 - 1)/2} + \frac{C}{q_{T}(x,z_1)}  \iint \bigg[  \int_{e^{K/4}|z_2-z_1|}^\infty e^{-a^2/2} da - C  \left[ \log(e^{K/4}|z_2-z_1|)  \wedge 0 \right] \nonumber
\\  &+ \frac{1}{q_{T - 2K}(y,w)} e^{-\lambda_0 (T- 2K)} e^{y^2/8}e^{w^2/8} \int_{s^*}^{T - 2K - s^*} \left[ \frac{e^{-e^{\frac{K+u}{2}}|z_2-z_1|^2 / 4}}{e^{\frac{K+u}{4}}|z_2-z_1|} \wedge 1 \right] du \nonumber
\\ & + \bigg( \int_0^{s^*} + \int_{T - 2K - s^*}^{T - 2K} \bigg) P_y^{Y}(|Y_u| \geq e^{\frac{K+u}{4}}|z_2-z_1| \, \big| \, Y_{T-2K} = w)\, du \bigg] \nonumber 
\\ &\times  q_K(x,w) q_{T - 2K}(w,y) q_K(y,z_1)  \, dm(w) \, dm(y). 
\end{align}
Note that the first two terms in the integral with respect to $y$ and $w$ are independent of these variables. We can therefore integrate them out; using the fact that
\[ \iint q_K(x,w) q_{T - 2K}(w,y) q_K(y,z_1)  \, dm(w) \, dm(y) = q_T(x,z_1) \]
(and an obvious cancellation) we obtain that
\begin{align} \label{exp_A2error333}
\big| E^Y_{x} \big( &\tilde{Z}_T(Y,\lambda'/\lambda) - Z^a_T(Y,\lambda'/\lambda,K) \, \big|  \,Y_T = z_1 \big) \big| \nonumber
\\ &\leq Ce^{-K(2\lambda_0 - 1)/2} + C\int_{e^{K/4}|z_2-z_1|}^\infty e^{-a^2/2} da - C  \left[ \log(e^{K/4}|z_2-z_1|)  \wedge 0 \right] \nonumber
\\ &+  \frac{C}{q_{T}(x,z_1)} e^{-\lambda_0 (T- 2K)} \iint e^{y^2/8}e^{w^2/8}  q_K(x,w) q_K(y,z_1)  \, dm(w) \, dm(y) \bigg( \int_{s^*}^{T - 2K - s^*} \left[ \frac{e^{-e^{\frac{K+u}{2}}|z_2-z_1|^2 / 4}}{e^{\frac{K+u}{4}}|z_2-z_1|} \wedge 1 \right] du \bigg) \nonumber
\\&+ \frac{C}{q_{T}(x,z_1)} \iint \bigg[ \bigg( \int_0^{s^*} + \int_{T - 2K - s^*}^{T - 2K} \bigg) P_y^{Y}(|Y_u| \geq e^{\frac{K+u}{4}}|z_2-z_1| \, \big| \, Y_{T-2K} = w)\, du \bigg] \nonumber
\\&\hspace{10 mm} \times q_K(x,w) q_{T - 2K}(w,y) q_K(y,z_1)  \, dm(w) \, dm(y) \nonumber 
\\ &=: \delta_1 + \delta_2 + \delta_3 + \delta_4 + \delta_5,
\end{align}
where $\delta_i = \delta_i(T,K,z_1,z_2)$. We first note that \begin{equation}
\delta_i(T,K,z_1,z_2) \to 0 \text{  as  } K \to \infty \text{ (uniformly in }T\geq 2K\text{) for  } i=1,2,3. \nonumber
\end{equation}
Turning to $\delta_4$ and $\delta_5$, we observe that by Lemma~\ref{lemma_densitylimit}, $e^{\lambda_0 T} q_T(x,z_1) \to \psi_0(x) \psi_0(z_1)$ as $\lambda \to \infty$. Since $T \to q_T(x,z_1)$ is continuous, $q_T(x,z_1)>0$ for all $T \geq \tau(s)$ and $\psi_0(x) \psi_0(z_1) >0$, this implies that there exists $\beta(x,z_1) = \beta > 0$ such that
\begin{equation} \label{e_qdenombd}
q_T(x,z_1) \geq \beta e^{-\lambda_0 T} \psi_0(x) \psi_0(z_1) \hspace{ 6 mm} \forall \,\,T \geq \tau(s).
\end{equation}
Applying \eqref{OU_qbound} twice with $\delta = 1/8$ and using \eqref{e_qdenombd}, we have
\begin{align}
|\delta_4 (T,&K,z_1,z_2)|  \nonumber
\\ &\leq \frac{C \beta^{-1}}{\psi_0(x)\psi_0(z_1)} e^{2\lambda_0 K} e^{-2\lambda_0 K} e^{(x^2+z_1^2)/8} \iint e^{y^2/4} e^{w^2/4} dm(w) \,dm(y) \bigg( \int_{s^*}^{T - 2K - s^*} \frac{e^{-e^{\frac{K+u}{2}}|z_2-z_1|^2 / 4}}{e^{\frac{K+u}{4}}|z_2-z_1|} \, du \bigg)  \nonumber
\\ &\leq \frac{C e^{(x^2+z_1^2)/8}}{\psi_0(x)\psi_0(z_1)} \bigg( \int_{s^*}^{T - 2K - s^*} \frac{e^{-e^{\frac{K+u}{2}}|z_2-z_1|^2 / 4}}{e^{\frac{K+u}{4}}|z_2-z_1|} \, du \bigg) \nonumber
\\ &= \frac{4C e^{(x^2+z_1^2)/8}}{ \psi_0(x)\psi_0(z_1)} \int_{e^{s^*/4}e^{K/4}|z_2-z_1|}^{\infty} \frac{e^{-a^2 /4}}{a^2} \, da,
\end{align} 
where the last line follows from a simple substitution. Thus we have $\delta_4 (T,K,z_1,z_2) \to 0$ as $K \to \infty$, and again we note that convergence is uniform in $T > 2K$. It remains to handle $\delta_5$. By three applications of \eqref{OU_qbound} with $\delta = 1/8$ and \eqref{e_qdenombd}, we have
\begin{align}
|\delta_5 (T,&K,z_1,z_2)|  \nonumber
\\ \leq \, &\frac{C}{\psi_0(x)\psi_0(z_1)} \iint \bigg[ \bigg( \int_0^{s^*} + \int_{T - 2K - s^*}^{T - 2K} \bigg) P_y^{Y}(|Y_u| \geq e^{\frac{K+u}{4}}|z_2-z_1| \, \big| \, Y_{T-2K} = w)\, du \bigg] \nonumber
\\&\times e^{x^2/8} e^{w^2/4} e^{y^2/4} e^{z_1^2/8} \, dm(w) \, dm(y) \nonumber.
\end{align}
The square bracketed term vanishes as $K \to \infty$ uniformly in $T \geq 2K + s^*(1/8)$ by Lemma~\ref{lemma_bridgetail}(b). The probabilities are bounded so the integrand obviously has a uniformly integrable upper bound. By Dominated Convergence, we have that $\delta_5 (T,K,z_1,z_2) \to 0$ as $K \to \infty$, uniformly in $T \geq 2K + s^*(1/8)$. We have therefore shown that $\sum_{i=1}^5 \delta_i(T,K,z_1,z_2)$ is arbitrarily small as $K \to \infty$, uniformly in $T\geq 2K + s^*(1/8)$ and in $\lambda'>\bar{\lambda}(s)$, where we recall that we have assumed $\lambda,\lambda' > \bar{\lambda}(s)$. From \eqref{taudef}, $\lambda > \bar{\lambda}(s)$ is equivalent to $T > \tau(s)$. As $\tau(s) \geq s^*(1/8)$, $T\geq 2K + \tau(s)$ implies that $T \geq 2K + s^*(1/8)$. Thus by \eqref{exp_A2error333} we have proved the following. Recall that $\tilde{Z}_T(Y,\lambda'/\lambda) = \tilde{Z}_T(Y,z_1,z_2,\lambda'/\lambda)$.
\begin{lemma} \label{lemma_Zalimit} For all $x, z_1,z_2 \in \R$ such that $z_1 \neq z_2$, for all $K>0$,
\[\delta^{\sim}_a(K) = \, \sup_{T\geq 2K + \tau(s), \,\lambda'>\bar{\lambda}(s)} \big|E^Y_{x} \big(\tilde{Z}_T(Y,\lambda'/\lambda) - Z^a_T(Y,\lambda'/\lambda,K) \, \big|  \,Y_T = z_1 \big)\big| \to 0 \,\text{ as } \,K \to \infty.  \]
\end{lemma}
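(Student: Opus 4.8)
\emph{Proof proposal.} The starting point is the bridge decomposition already in hand: conditioning the killed process $Y$ at the two times $K$ and $T-K$ and integrating over the intermediate values $(w,y)$ writes $E_x^Y(\tilde Z_T(Y,\lambda'/\lambda)\mid Y_T=z_1)$ as the $m\times m$-integral of $A_1(x,w,\cdot)A_2(w,y,\cdot)A_3(y,z_1,\cdot)$ against the normalized bridge kernel $q_K(x,w)q_{T-2K}(w,y)q_K(y,z_1)/q_T(x,z_1)$, by \eqref{e_tildeZbreakdownA}, while \eqref{Zaex} is the same integral with $A_2$ replaced by $1$. Thus $E_x^Y(\tilde Z_T-Z^a_T\mid Y_T=z_1)$ is this integral with $A_2$ replaced by $A_2-1$. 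Since by \eqref{Hdef}, Proposition~\ref{prop_mono_subadd}(a) and \eqref{e_VlambdaConvergence} the exponent defining $A_2$ is bounded above by $C'e^{-(2\lambda_0-1)K/2}$ uniformly in $T>2K$, for $K$ large it is $<2$; then $|e^a-1|\le e^2|a|$ there, and with $A_1,A_3\le C_Z$ the whole difference is controlled by the $(w,y)$-integral of $E_w^Y\big(\int_0^{T-2K}|F(Y_u)-H^{\lambda'/\lambda}_{e^{K+u}}(Y_u,Y_u+e^{(T-K-u)/2}(z_2-z_1))|\,du\,\big|\,Y_{T-2K}=y\big)$, weighted by the same bridge kernel.

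Next I would establish a two-sided pointwise bound on that integrand. The upper bound $F(Y_u)-H^{\lambda'/\lambda}_{e^{K+u}}(\cdots)\le F(Y_u)-V_1^{e^{(K+u)/2}}(Y_u)\le Ce^{-(K+u)(2\lambda_0-1)/2}$ follows from \eqref{Hdef}, Proposition~\ref{prop_mono_subadd}(a) and \eqref{e_VlambdaConvergence}; the lower bound uses $H^{c}_{e^{K+u}}(a,b)\le F_2(a,b)\le F(a)+F(b)$ (Proposition~\ref{prop_mono_subadd}(a) then (b)), so $F(Y_u)-H^{\lambda'/\lambda}_{e^{K+u}}(\cdots)\ge -F(Y_u+e^{(K+u)/2}(z_2-z_1))$ once the middle segment is time-reversed using \eqref{reversible}. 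Hence the absolute value of the integrand is $\le Ce^{-(K+u)(2\lambda_0-1)/2}+F(Y_u+e^{(K+u)/2}(z_2-z_1))$. The first summand integrates to $Ce^{-(2\lambda_0-1)K/2}$ uniformly in $T$, a vanishing contribution. For the second, the Gaussian tail of $F$ from \eqref{e_FODE}(iii), $F(x)\le c_2(1+|x|)e^{-x^2/2}$, together with $z_1\neq z_2$ and radial monotonicity of $F$, bounds $F(Y_u+e^{(K+u)/2}(z_2-z_1))$ on the event $\{|Y_u|<e^{(K+u)/4}|z_2-z_1|\}$ by $F(e^{(K+u)/4}|z_2-z_1|)$, which when integrated in $u$ is dominated by $C\int_{e^{K/4}|z_2-z_1|}^\infty e^{-a^2/2}\,da$ plus a $-C[\log(e^{K/4}|z_2-z_1|)\wedge 0]$ term, both $\to 0$ as $K\to\infty$; on the complement one is left with $F(0)\int_0^{T-2K}P_y^Y(|Y_u|\ge e^{(K+u)/4}|z_2-z_1|\mid Y_{T-2K}=w)\,du$.

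For this remaining probability term I would split the $u$-integral into the bulk $[s^*,T-2K-s^*]$ and the two short end intervals $[0,s^*]$ and $[T-2K-s^*,T-2K]$. On the bulk, Lemma~\ref{lemma_bridgetail}(a) gives $P_y^Y(|Y_u|\ge M\mid Y_{T-2K}=w)\le \frac{C}{q_{T-2K}(y,w)}e^{-\lambda_0(T-2K)}e^{(y^2+w^2)/8}[e^{-M^2/4}/M\wedge 1]$; substituting $M=e^{(K+u)/4}|z_2-z_1|$ and integrating collapses this to $\int_{e^{s^*/4}e^{K/4}|z_2-z_1|}^\infty e^{-a^2/4}a^{-2}\,da\to 0$, after the $q_{T-2K}(y,w)$ in the denominator cancels the one from the bridge kernel. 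On the end intervals (length $s^*$, bounded) the tightness assertion Lemma~\ref{lemma_bridgetail}(b) forces $P_y^Y(|Y_u|\ge e^{(K+u)/4}|z_2-z_1|\mid Y_{T-2K}=w)\to 0$ as $K\to\infty$ uniformly in $T\ge 2K+s^*$, and since the probabilities are $\le 1$ dominated convergence passes this through the $(w,y)$-integral. Assembling, $|E_x^Y(\tilde Z_T-Z^a_T\mid Y_T=z_1)|\le\sum_{i=1}^5\delta_i(T,K,z_1,z_2)$ with each $\delta_i\to 0$ as $K\to\infty$ uniformly in $T\ge 2K+\tau(s)$ and $\lambda'>\bar{\lambda}(s)$; for the last two $\delta_i$, once the $e^{-\lambda_0 T}$ factors are pulled out of the numerators via \eqref{OU_qbound}, one needs the lower bound $q_T(x,z_1)\ge\beta(x,z_1)e^{-\lambda_0 T}\psi_0(x)\psi_0(z_1)$ for $T\ge\tau(s)$, which comes from Lemma~\ref{lemma_densitylimit} with continuity and positivity of $q$. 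This gives $\delta^{\sim}_a(K)\to 0$.

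\emph{Main obstacle.} The delicate point is getting the bound on $F(Y_u+e^{(K+u)/2}(z_2-z_1))$ \textbf{uniformly in $T\ge 2K$}: the useful form of the spatial shift only appears after the time reversal \eqref{reversible} turns the original index $(T-K-u)/2$ into $(K+u)/2$, and one must control the bridge's marginal $Y_u$ relative to this exploding shift on the two short windows where Lemma~\ref{lemma_bridgetail}(a) does not apply and the $q_{T-2K}(y,w)$ in its denominator cannot be cancelled cleanly — this is precisely what the separate tightness statement Lemma~\ref{lemma_bridgetail}(b) is designed to handle.
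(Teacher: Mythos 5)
Your proposal matches the paper's argument step for step: the same bridge decomposition and replacement of $A_2$ by $A_2-1$, the same mean-value/$|e^a-1|\le e^2|a|$ bound, the same two-sided estimate on the integrand via \eqref{e_FHbound1} and \eqref{e_FHbound2}, the time reversal \eqref{reversible} to make the exploding spatial shift $T$-independent, the split on $\{|Y_u|<e^{(K+u)/4}|z_2-z_1|\}$ with the Gaussian tail of $F$, Lemma~\ref{lemma_bridgetail}(a) on the bulk and Lemma~\ref{lemma_bridgetail}(b) on the short end windows, and the lower bound $q_T(x,z_1)\ge\beta e^{-\lambda_0 T}\psi_0(x)\psi_0(z_1)$ from Lemma~\ref{lemma_densitylimit} to cancel the denominator. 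This is essentially the paper's own proof, including the identification of the time reversal as the crucial device for achieving uniformity in $T$.
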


Given this Lemma, it suffices to find the limit of (the conditional expectation of) $Z^a_T(Y,\lambda'/\lambda,K)$, and so $A_2$ has been replaced by $1$. \\

Next we consider $A_3(y,z_1,\lambda,\lambda',K)$, which we recall from \eqref{A3def} is defined as
\[  E_y^{Y} \bigg( \exp \bigg( \int_0^{K} F(Y_u) -  H_{e^{T-K+u}}^{\lambda' / \lambda} (Y_u, Y_u + e^{\frac{K-u}{2}}(z_2 - z_1)) \, du \bigg) \bigg| \,Y_{K} = z_1 \bigg). \]
We will show that in the limit as $\lambda, \lambda' \to \infty$, the integrand will be $F-F_2$. Define $A^*_3(y,z_1,K)$ by
\begin{equation} \label{A3stardef}
A^*_3(y,z_1,K) = E_y^{Y} \bigg( \exp \bigg( \int_0^K F(Y_u) -  F_2(Y_u, Y_u + e^{\frac{K-u}{2}}(z_2 - z_1)) \, du \bigg) \bigg| \,Y_{K} = z_1 \bigg).
\end{equation} 
The difference between the integrands of $A_3$ and $A_3^*$ is equal to $(F_2 - H_{e^{T-K+u}}^{\lambda' / \lambda})(Y_u, Y_u + e^{\frac{K-u}{2}}(z_2 - z_1)) $, which we bound below by monotonicity and above via Lemma \ref{lemma_V2pt_ROC}. We have
\begin{align} \label{exp_A3error1}
0 \leq (F_2 - H_{e^{T-K+u}}^{\lambda' / \lambda})(Y_u, Y_u + e^{\frac{K-u}{2}}(z_2 - z_1))
& \leq C \bigg[e^{-(T-K+u)(2\lambda_0 - 1)/2} + \left(\frac{\lambda'}{\lambda}\right)^{-(2\lambda_0 - 1)}e^{-(T-K+u)(2\lambda_0 - 1)/2} \bigg] \nonumber
\\&\leq Ce^{(K-u)(2\lambda_0-1)/2}  (t-s)^{-(2\lambda_0-1)/2} \left[\lambda^{-(2\lambda_0 - 1)} + \lambda'^{-(2\lambda_0-1)}\right].
\end{align}
The first line uses the definition of $H^c$, which we recall from \eqref{Hdef}, and in the second line we have used that $T = \log(\lambda^2 (t-s))$. Since $\lambda,\lambda' > \bar{\lambda}(s) \geq 1$, the last expression in (\ref{exp_A3error1}) is bounded by $Ce^{K/2} (t-s)^{-(2\lambda_0-1)/2}$ for all $u \in [0,K]$. Thus by using $|e^x - e^y| \leq (e^x \vee e^y)|x-y|$ and \eqref{exp_A3error1}, we have
\begin{align}\label{exp_A3error2}
|A^*_3(y,&z_1,K) - A_3(y,z_1,\lambda,\lambda',K)| \nonumber
\\ &\leq  \exp \left( CK e^{K/2} (t-s)^{-(2\lambda_0 - 1)/2}\right) E^Y_{y} \bigg( \int_0^K (F_2- H_{e^{T-K+u}}^{\lambda'/\lambda})(Y_u, Y_u + e^{K/2}(z_2 - z_1)) \, du \, \bigg| \,Y_{K} = z_1 \bigg)  \nonumber
\\ &\leq \exp \left(CK e^{K/2} (t-s)^{-(2\lambda_0 - 1)/2} \right) (t-s)^{-(2\lambda_0 - 1)/2} \left[\lambda^{-(2\lambda_0 - 1)} + \lambda'^{-(2\lambda_0-1)}\right] \int_0^K  Ce^{(K-u)(2\lambda_0-1)/2}  du \nonumber
\\ &\leq C(K,t-s) \left[\lambda^{-(2\lambda_0 - 1)} + \lambda'^{-(2\lambda_0-1)}\right]
\end{align}
for some constant $C(K,t-s)>0$. Define $Z^{b}_T(Y,\lambda'/\lambda,K)$ as we defined $Z^a_T(Y,\lambda'/\lambda,K)$ in \eqref{Zadef} but with $F-F_2$ replacing the integrand in the second term. That is,
\begin{align} \label{Zbdef}
Z^b_T(Y,\lambda'/\lambda,K) =\, & \exp \bigg( \int_0^K F(Y_u) - H_{e^u}^{\lambda' / \lambda}(Y_u, Y_u + e^{\frac{T-u}{2}}(z_2-z_1)) \,du \bigg)  \nonumber
\\ &\times \exp \bigg( \int_{T-K}^T F(Y_u) -  F_2 (Y_u, Y_u + e^{\frac{T-u}{2}}(z_2 - z_1)) \, du \bigg).
\end{align}
In particular, we have
\begin{align} \label{Zbex}
&E_x^Y \big(Z^{b}_T(Y,\lambda'/\lambda,K) \, \big| \, Y_T = z_1 \big) \nonumber  
\\ &\hspace{ 5mm}= \iint A_1(x,w, \lambda,\lambda',K) A^*_3(y,z_1,K) \frac{q_K(x,w)\, q_{T - 2K}(w,y)\, q_K(y,z_1)}{q_{T}(x,z_1)} \, dm(w) \, dm(y). 
\end{align}
Because \eqref{exp_A3error2} is uniform in $y$ and $z_1$ and $|A_1| \leq C_Z$, we can integrate out the transition densities to obtain the following.
\begin{lemma} \label{lemma_Zblimit} For $K>0$ and $s \in [0,t)$, there is a constant $C(K,t-s)$ such that
\begin{equation} 
\delta^a_b(K,\lambda,\lambda') = \big| E^Y_x \big( Z^{a}_T(Y,\lambda'/\lambda,K) - Z^{b}_T(Y,\lambda'/\lambda,K) \, \big| \, Y_T = z_1 \big)   \big| \leq C(K,t-s)\left[\lambda^{-(2\lambda_0 - 1)} + \lambda'^{-(2\lambda_0-1)}\right] \nonumber
\end{equation}
for all $\lambda, \lambda' > \bar{\lambda}(s)$.
\end{lemma}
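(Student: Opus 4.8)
\textbf{Proof plan for Lemma~\ref{lemma_Zblimit}.}

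The plan is to compare $Z^a_T$ and $Z^b_T$ via their expansions through the intermediate time points $K$ and $T-K$, exactly as in \eqref{Zaex} and \eqref{Zbex}. Both conditional expectations are integrals against the joint bridge density $q_K(x,w)\,q_{T-2K}(w,y)\,q_K(y,z_1)/q_T(x,z_1)\,dm(w)\,dm(y)$ of the product of $A_1(x,w,\lambda,\lambda',K)$ with, respectively, $A_3(y,z_1,\lambda,\lambda',K)$ and $A_3^*(y,z_1,K)$. Since the $A_1$ factor and the bridge kernel are identical in both expressions, subtracting gives
\[
\big| E^Y_x\big(Z^a_T - Z^b_T \,\big|\, Y_T = z_1\big)\big|
\le \iint A_1(x,w,\lambda,\lambda',K)\,\big|A_3^*(y,z_1,K) - A_3(y,z_1,\lambda,\lambda',K)\big|\,\frac{q_K(x,w)\,q_{T-2K}(w,y)\,q_K(y,z_1)}{q_T(x,z_1)}\,dm(w)\,dm(y).
\]

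First I would invoke the bound $A_1 \le C_Z$ (noted right after \eqref{A3def}, since the integrand defining $A_1$ is dominated by $F(Y_u) - V_1^{e^{u/2}}(Y_u)$ and \eqref{e_ZTbd} applies). Next, the key input is \eqref{exp_A3error2}, which bounds $|A_3^* - A_3|$ by $C(K,t-s)\big[\lambda^{-(2\lambda_0-1)} + \lambda'^{-(2\lambda_0-1)}\big]$ \emph{uniformly in $y$ and $z_1$}; this uniformity is the crucial feature. Because neither $C_Z$ nor this bound depends on the spatial variables $w,y$, I can pull both constants out of the double integral, leaving only
\[
\iint q_K(x,w)\,q_{T-2K}(w,y)\,q_K(y,z_1)\,dm(w)\,dm(y) = q_T(x,z_1),
\]
by the Chapman--Kolmogorov identity for the killed transition densities $q$ (which follows from the eigenfunction expansion \eqref{OU_eigexp}, or directly from the Markov property of the killed diffusion). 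This cancels exactly with the denominator $q_T(x,z_1)$, yielding the claimed bound with constant $C(K,t-s) := C_Z \cdot C(K,t-s)$ from \eqref{exp_A3error2}.

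The only point requiring care is that all of this is carried out for $\lambda,\lambda' > \bar\lambda(s)$, equivalently $T_1,T_2 > \tau(s)$, which (as recalled before Lemma~\ref{lemma_Zalimit}) guarantees $T > 2K + \tau(s)$ is compatible with $K$ fixed and $\lambda,\lambda'$ large, so that the expansion \eqref{Zbex} is legitimate; and that $q_T(x,z_1) > 0$ there, so the division is valid. I do not expect a genuine obstacle here: the lemma is essentially a bookkeeping consequence of the uniform estimate \eqref{exp_A3error1}--\eqref{exp_A3error2} already established, combined with the identity $\iint q_K q_{T-2K} q_K = q_T$. If anything, the subtle part is simply making sure that the $F_2$ appearing in $Z^b_T$ (which replaces $H^{\lambda'/\lambda}_{e^{T-K+u}}$ only in the \emph{second}, tail integral over $[T-K,T]$) matches the replacement of $A_3$ by $A_3^*$, which is immediate from the definitions \eqref{Zadef}, \eqref{Zbdef}, \eqref{A3def}, \eqref{A3stardef} since the first integral over $[0,K]$ and the factor $A_1$ are untouched.
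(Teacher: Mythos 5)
Your proposal is correct and matches the paper's argument: the paper likewise compares the two conditional expectations via the expansions \eqref{Zaex} and \eqref{Zbex}, applies $|A_1|\leq C_Z$ and the uniform bound \eqref{exp_A3error2} on $|A_3^*-A_3|$, and integrates out the transition densities. Your explicit invocation of the Chapman--Kolmogorov identity $\iint q_K\,q_{T-2K}\,q_K\,dm\,dm=q_T$ is just an unpacking of the paper's phrase ``we can integrate out the transition densities.''
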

We now analyse $A^*_3$ in greater detail. In particular, we perform a time reversal on the process $Y$. By \eqref{reversible}, we have
\begin{equation}
A^*_3(y,z_1,K) = E_{z_1}^{Y} \bigg( \exp \bigg( \int_0^K F(Y_u) -  F_2 (Y_u, Y_u + e^{\frac{u}{2}}(z_2 - z_1)) \, du \bigg) \bigg| \, Y_{K} = y \bigg). \nonumber
\end{equation}
This is the term that in \eqref{e_tildeZlim} we claimed converges to $E_{Z_1}^{Y,\infty}(W_\infty(Y,z_2))$, defined in \eqref{e_Wdef}, in the limit. However, the above expectation is still conditional on the endpoint. We now show that the contribution from the tail of the integral is vanishing, making the quantity asymptotically independent of the endpoint $y$. Let $0<M<K$. Define $A^{*}_3(y,z_1,M,K)$ by truncating the integral in \eqref{A3stardef} at time $M$. That is,
\begin{equation} \label{A3stardef}
A^*_3(y,z_1,M,K) = E_{z_1}^{Y} \bigg( \exp \bigg( \int_0^M F(Y_u) -  F_2 (Y_u, Y_u + e^{\frac{u}{2}}(z_2 - z_1)) \, du \bigg) \bigg| \, Y_{K} = y \bigg). \nonumber
\end{equation}
We now define $Z^c(Y,\lambda'/\lambda,M,K)$ by truncating the corresponding integral in $Z^b(Y,\lambda'/\lambda,K)$ (the integral over $[T-K,T]$ in \eqref{Zbdef} becomes the integral over $[T-M,T]$) so that $A_3^*(y,z_1,M,K)$ replaces $A_3^*(y,z_1,K)$ in the conditional expectation.
\begin{lemma} \label{lemma_Zclimit} For all $x,  z_1, z_2 \in \R$ such that $z_1 \neq z_2$, 
\begin{equation}
\delta^b_c(M) = \,\sup_{K \geq  M + s^*(1/8)} \, \sup_{T \geq 2K + \tau(s), \,\lambda' > \bar{\lambda}(s)} \big| E^Y_{x} \big(Z^{b}_T(Y,\lambda'/\lambda,K) - Z^{c}_T(Y,\lambda'/\lambda,M,K)\, \big|  \,Y_T = z_1 \big) \big| \to 0 \, \text{ as } M \to \infty. \nonumber
\end{equation}
\end{lemma}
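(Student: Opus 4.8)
\emph{Proof proposal for Lemma~\ref{lemma_Zclimit}.} The plan is to rerun, in a simpler setting, the argument used for Lemma~\ref{lemma_Zalimit}: here $Z^b_T$ and $Z^c_T$ share the same $[0,K]$ factor and differ only in the $[T-K,T]$ factor, where the comparison is directly between $F$ and $F_2$ — there is no $H^{\lambda'/\lambda}$ discrepancy. First, exactly as in the derivation of \eqref{Zbex}, applying the Markov property at times $K$ and $T-K$ and the time reversal \eqref{reversible}, one obtains
\[ E^Y_x\big(Z^c_T(Y,\lambda'/\lambda,M,K)\,\big|\,Y_T=z_1\big) = \iint A_1(x,w,\lambda,\lambda',K)\,A^*_3(y,z_1,M,K)\,\frac{q_K(x,w)\,q_{T-2K}(w,y)\,q_K(y,z_1)}{q_T(x,z_1)}\,dm(w)\,dm(y). \]
Subtracting this from \eqref{Zbex}, using $|A_1|\le C_Z$, and applying Chapman--Kolmogorov ($\int q_K(x,w)q_{T-2K}(w,y)\,dm(w)=q_{T-K}(x,y)$), the quantity to be controlled reduces to $\frac{C_Z}{q_T(x,z_1)}\int \big|A^*_3(y,z_1,K)-A^*_3(y,z_1,M,K)\big|\,q_{T-K}(x,y)\,q_K(y,z_1)\,dm(y)$.

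Next I would estimate the $A^*_3$ difference using the time-reversed representations (the one recorded after its definition, with argument shift $e^{u/2}(z_2-z_1)$), of which $A^*_3(y,z_1,M,K)$ is the truncation at time $M$. Since $F_2\ge F$ by monotonicity (Proposition~\ref{prop_mono_subadd}(a)), both exponents are nonpositive, so combining $\exp(\int_0^M\cdots)\le 1$, $1-e^{-a}\le a$, and subadditivity $F_2(a,b)\le F(a)+F(b)$ (Proposition~\ref{prop_mono_subadd}(b)),
\[ \big|A^*_3(y,z_1,K)-A^*_3(y,z_1,M,K)\big|\le E^Y_{z_1}\Big(\int_M^K F\big(Y_u+e^{u/2}(z_2-z_1)\big)\,du\,\Big|\,Y_K=y\Big). \]
The decay comes from the fact that the $F$-argument has magnitude $\gtrsim e^{u/2}|z_2-z_1|$ unless $Y_u$ is atypically large (recall $z_1\ne z_2$). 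Precisely, for $M$ large and $u\ge M$, on $\{|Y_u|<e^{u/4}|z_2-z_1|\}$ one has $|Y_u+e^{u/2}(z_2-z_1)|\ge\tfrac12 e^{u/2}|z_2-z_1|$, so by \eqref{Ftailbd} and radial monotonicity of $F$ the integrand is at most $c_2(1+\tfrac12 e^{u/2}|z_2-z_1|)e^{-e^{u}|z_2-z_1|^2/8}$, whose integral over $u\in[M,\infty)$ is a quantity $\eta(M)$ independent of all other parameters with $\eta(M)\to 0$; on the complement one bounds $F\le F(0)$ to pick up $F(0)\int_M^K P^Y_{z_1}(|Y_u|\ge e^{u/4}|z_2-z_1|\mid Y_K=y)\,du$.

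Plugging these two pieces into the displayed reduction, the $\eta(M)$ part gives $\frac{C_Z\eta(M)}{q_T(x,z_1)}\int q_{T-K}(x,y)q_K(y,z_1)\,dm(y)=C_Z\eta(M)\to 0$ by Chapman--Kolmogorov. For the probability part I would split the $u$-integral at $K-s^*$ (legitimate once $M\ge s^*$). On $[M,K-s^*]$ apply Lemma~\ref{lemma_bridgetail}(a) with $S=K$: the resulting factor $q_K(z_1,y)^{-1}$ cancels $q_K(y,z_1)$, the ratio $q_{T-K}(x,y)/q_T(x,z_1)$ is controlled by \eqref{OU_qbound} with $\delta=\tfrac18$ (valid since $T-K\ge\tau(s)\ge s^*(1/8)$) and by \eqref{e_qdenombd}, the exponential rates cancel exactly ($e^{\lambda_0 T}e^{-\lambda_0 K}e^{-\lambda_0(T-K)}=1$), leaving a Gaussian-integrable weight in $y$ times $\int_M^\infty\big(\tfrac{e^{-e^{u/2}|z_2-z_1|^2/4}}{e^{u/4}|z_2-z_1|}\wedge 1\big)\,du\to 0$. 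On $[K-s^*,K]$, where \eqref{OU_qbound} is unavailable for $q_{K-u}$, I would write $u=K-v$ with $v\in[0,s^*]$, note $e^{u/4}|z_2-z_1|\ge e^{M/4}|z_2-z_1|\to\infty$, and use the tightness in Lemma~\ref{lemma_bridgetail}(b) (uniform over bounded $y,z_1$, via Lemma~\ref{lemma_endpoint_indep}(c)) together with a Gaussian-tail estimate for $\int_{|y|>R_0}q_{T-K}(x,y)q_K(y,z_1)/q_T(x,z_1)\,dm(y)$ — choosing $R_0$ then $M$ large — to force that contribution to $0$. Since none of these bounds involves $\lambda,\lambda'$ or $T,K$ beyond the stated constraints, $\delta^b_c(M)\to 0$.

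The main obstacle is precisely the endpoint block $[K-s^*,K]$ of the $u$-integral together with the requirement that the estimate be uniform over the unbounded parameter $K$; this is the same situation as the $\delta_4,\delta_5$ terms in the proof of Lemma~\ref{lemma_Zalimit}, and it is resolved by the exact cancellation of the $e^{\lambda_0(\cdot)}$ factors from the killed transition densities and by the tightness of Lemma~\ref{lemma_bridgetail}(b). Everything else is a lighter rerun of estimates already carried out above.
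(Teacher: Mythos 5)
Your proposal is correct and follows essentially the same route as the paper; indeed the paper itself reduces to exactly your displayed bound on the $A_3^*$ difference via $|e^{-a}-e^{-b}|\le|a-b|$ and subadditivity, then states that the four resulting error terms are handled ``by the exact same arguments as in the proof of Lemma~\ref{lemma_Zalimit}, with $(M,K)$ playing the roles of $(K,T)$,'' and omits the details. What you have written out — the Markov decomposition plus time reversal, the split on $\{|Y_u|\ge e^{u/4}|z_2-z_1|\}$, the application of Lemma~\ref{lemma_bridgetail}(a) on $u\in[M\vee s^*,K-s^*]$ with the exact cancellation of the $e^{\lambda_0(\cdot)}$ factors, and the tightness/Dominated-Convergence argument on the boundary block $[K-s^*,K]$ — is precisely the omitted rerun, so there is nothing genuinely different in the approach.
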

\begin{proof}
Using the inequality $|e^{-x} - e^{-y}| \leq |x-y|$ for $x,y \geq 0$, we have
\begin{equation} \label{exp_A3errorX}
|A^*_3(y,z_1,K) - A^*_3(y,z_1,M,K)| \leq E_{z_1}^{Y} \bigg( \int_M^K | F(Y_u) -  F_2 (Y_u, Y_u + e^{u/2}(z_2 - z_1))|\,du \, \bigg|\, Y_{K} = y \bigg).
\end{equation}
By Proposition~\ref{prop_mono_subadd}(b), the absolute value of the above integrand is at most $F(Y_u + e^{\frac{u}{2}}(z_2 - z_1))$ (for a similar argument see \eqref{e_FHbound2}). Exchanging expectation and integration, we proceed as in (\ref{exp_A2error3}), (\ref{exp_Gtail}), and \eqref{exp_A2error8}, and apply Lemma \ref{lemma_bridgetail} to bound (\ref{exp_A3errorX}) above by
\begin{align} \label{exp_A3errorY}
&c_1 \int_0^{K-M} (1+e^{\frac{M+u}{4}}|z_2-z_1|) e^{-e^{\frac{M+u}{2}}|z_2-z_1|^2/2} + F(0) \int_M^{K} P_{z_1}^{Y}(|Y_u| \geq e^{\frac{u}{4}}|z_2-z_1| \, \big| \, Y_{K} = y) \, du \nonumber
\\ \leq \,&4c_1 \int_{e^{M/4}|z_2-z_1|}^{\infty}(1+ a^{-1}) e^{-a^2/2}\,da + \frac{C e^{-\lambda_0 K}}{q_{K}(z_1,y)} e^{(z_1^2 + y^2)/8} \int_0^{K-M-s^*} \left[ \frac{e^{-e^{\frac{M+u}{2}}|z_2-z_1|^2}}{e^{\frac{M+u}{4}}|z_2-z_1|} \wedge 1 \right] du \nonumber
\\ &\hspace{10 mm} + C \int_{K-s^*}^K P_{z_1}^{Y}(|Y_u| \geq e^{\frac{M+u}{4}}|z_2-z_1| \, \big| \, Y_{K} = y)\, du.
\end{align}
Expanding in terms of transition densities and using $|A_1| \leq C_Z$, we have
%\label{Zcbd1}
\begin{align} 
\big| E^Y_x \big( Z^{b}_T(&Y,\lambda'/\lambda,K) - Z^{c}_T(Y,\lambda'/\lambda,M,K) \, \big| \, Y_T= z_1 \big) \big| \nonumber
\\  \leq \,&\frac{C}{q_T(x,z_1)} \int |A_3(y,z_1,\lambda,\lambda',K) - A^*_3(y,z_1,M,K)| q_{T-K}(x,y)\, q_K(y,z_1) \,dm(y).  \nonumber
\end{align}
Using \eqref{exp_A3errorY} as an upper bound for the integrand, we obtain an expression which closely resembles \eqref{exp_A2error333}; in particular, four terms appear, directly corresponding to $\delta_2,\delta_3, \delta_4$ and $\delta_5$ of that expression. Moreover, they can be handled using the exact same arguments, as in the proof of Lemma \ref{lemma_Zalimit}, but with $(M,K)$ playing the roles of $(K,T)$. Because the arguments are the same, we omit them.
\end{proof}

We now comment on the limit of $A^*_3(w,M,K)$ as $K \to \infty$. Recalling \eqref{A3stardef} and the definition of $W_M$ in \eqref{e_Wdef}, we have
\begin{equation*}
A^*_3(y,z_1,M,K) = E_{z_1}^{Y} \bigg( \exp \bigg( \int_0^M F(Y_u) -  F_2 (Y_u, Y_u + e^{u/2}(z_2 - z_1)) \, du \bigg) \bigg| Y_{K} = y \bigg) = E_{z_1}^{Y} ( W_M(Y,z_2) \, \big| \, Y_K = y ).
\end{equation*} 
The functional $W_M(Y,z_2)$ is a bounded continuous function of $Y\restrict{[0,M]}$. By Lemma \ref{lemma_endpoint_indep}(b), we have
\begin{equation} \label{e_endpointindep1}
\forall \, M>0, \,\, \lim_{K \to \infty} A^*_3(y,z_1,M,K) = E_{z_1}^{Y,\infty} \left( W_M(Y,z_2)\right). 
\end{equation}
We define $Z^d_T(Y,\lambda'/\lambda, M,K)$ by
\begin{equation} \label{Zddef}
Z^d_T(Y,\lambda'/\lambda,M,K) := \exp \bigg(\int_0^{K} F(Y_u) -  H_{e^{K+u}}^{\lambda' / \lambda} (Y_u, Y_u + e^{\frac{T-u}{2}}(z_2 - z_1)) \, du \bigg) \times E_{z_1}^{Y,\infty} \left( W_M(Y,z_2)\right).
\end{equation}
Note that the second term is now deterministic; it no longer depends on the original Ornstein-Uhlenbeck process $Y$ or the spatial variable $y$. We then have
\begin{equation} 
E^Y_x \big( Z^d_T(Y,\lambda'/\lambda,M,K)\, \big| \, Z_T = z_1 \big) = \int A_1(x,w, \lambda,\lambda',K) \frac{q_K(x,w)\, q_{T-K}(w,z_1)}{q_T(x,z_1)} \, dm(w) \times E_{z_1}^{Y,\infty} \left( W_M(Y,z_2)\right). \nonumber
\end{equation}
Bounding $A_1 \leq C_Z$ and integrating out the transition densities, by \eqref{e_endpointindep1} we obtain the following.
\begin{lemma} \label{lemma_Zdlimit} For all $x,z_1,z_2 \in \R$,
\begin{equation}
\delta^c_d(M,K) = \, \sup_{T \geq 2K + \tau(s),\, \lambda' > \bar{\lambda}(s)} \big| E^Y_{x} \big( Z^{c}_T(Y,\lambda'/\lambda,M,K) - Z^d_T(Y,\lambda'/\lambda,M,K) \, \big|  \,Y_T = z_1 \big) \big| \to 0 \, \text{ as } \,K \to \infty \nonumber
\end{equation}
for each fixed $M>0$.
\end{lemma}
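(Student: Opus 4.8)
\textbf{Proof proposal for Lemma~\ref{lemma_Zdlimit}.}

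The plan is to imitate the structure of the preceding lemmas exactly: rewrite the conditional expectation $E^Y_x(Z^c_T(Y,\lambda'/\lambda,M,K)-Z^d_T(Y,\lambda'/\lambda,M,K)\mid Y_T=z_1)$ in terms of transition densities, isolate the difference between the endpoint-conditional quantity $A^*_3(y,z_1,M,K)$ and its limit $E^{Y,\infty}_{z_1}(W_M(Y,z_2))$, and show that this difference can be pulled out of the spatial integration and integrated against a normalized density, so that it converges to zero by \eqref{e_endpointindep1}. Concretely, first I would apply the Markov property to $Y$ at time $K$ under $P^Y_x(\,\cdot\mid Y_T=z_1)$ and expand in the joint density of $(Y_K, Y_{T-K})$ as in \eqref{e_tildeZbreakdown}; since the second factor in both $Z^c_T$ and $Z^d_T$ only involves the process after time $T-M$ (and is replaced by the deterministic quantity $E^{Y,\infty}_{z_1}(W_M(Y,z_2))$ in $Z^d_T$), the only factor that differs is $A^*_3(y,z_1,M,K)$ versus $E^{Y,\infty}_{z_1}(W_M(Y,z_2))$, with the same $A_1(x,w,\lambda,\lambda',K)$ appearing in both. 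Thus
\[
\big| E^Y_x\big(Z^c_T - Z^d_T \mid Y_T=z_1\big)\big| \le \frac{C_Z}{q_T(x,z_1)}\int \big| A^*_3(y,z_1,M,K) - E^{Y,\infty}_{z_1}(W_M(Y,z_2))\big|\, q_{T-K}(x,y)\, q_K(y,z_1)\, dm(y),
\]
using $|A_1|\le C_Z$ from \eqref{e_ZTbd}.

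The key point is that $W_M(Y,z_2)\le 1$ (noted just after \eqref{e_Wdef}), so both $A^*_3(y,z_1,M,K)$ and its limit lie in $[0,1]$, hence the integrand above is bounded by $2\,q_{T-K}(x,y)\,q_K(y,z_1)$, whose integral over $y$ equals $2\,q_T(x,z_1)$. That is, the expression is bounded by a $y$-averaged version of $|A^*_3(y,z_1,M,K) - E^{Y,\infty}_{z_1}(W_M(Y,z_2))|$ against the probability density $q_{T-K}(x,y)q_K(y,z_1)/q_T(x,z_1)\,dm(y)$. I would then want to conclude via \eqref{e_endpointindep1}, which gives pointwise convergence $A^*_3(y,z_1,M,K)\to E^{Y,\infty}_{z_1}(W_M(Y,z_2))$ as $K\to\infty$, together with the uniform bound; but there is a subtlety because the averaging measure in $y$ itself depends on $T$ and $K$. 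To handle this cleanly I would instead lower-bound $q_T(x,z_1)$ using \eqref{e_qdenombd} (which holds for $T\ge\tau(s)$), upper-bound $q_{T-K}(x,y)q_K(y,z_1)$ via \eqref{OU_qbound} with $\delta=1/8$ to extract Gaussian tails in $y$, and thereby dominate the integrand by $C(x,z_1)\,e^{y^2/4}\,|A^*_3(y,z_1,M,K)-E^{Y,\infty}_{z_1}(W_M(Y,z_2))|\le 2C(x,z_1)e^{y^2/4}$, which is $dm(y)$-integrable and independent of $T,K,\lambda'$; then dominated convergence in $K$ (holding $M$ fixed) together with \eqref{e_endpointindep1} gives the claim, and the supremum over $T\ge 2K+\tau(s)$ and $\lambda'>\bar\lambda(s)$ is harmless since the dominating function and the pointwise limit do not depend on these parameters.

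The main obstacle — really the only one — is justifying that the convergence $A^*_3(y,z_1,M,K)\to E^{Y,\infty}_{z_1}(W_M(Y,z_2))$ as $K\to\infty$ can be made to hold after integrating in $y$ uniformly in $T\ge 2K+\tau(s)$ and in $\lambda'>\bar\lambda(s)$. The uniformity in $\lambda'$ is automatic because $A^*_3(y,z_1,M,K)$ does not depend on $\lambda$ or $\lambda'$ at all (it is defined purely in terms of $F$, $F_2$, and the killed process up to time $M$, conditioned on $Y_K=y$), and likewise $Z^d_T$'s second factor is $\lambda,\lambda'$-free; the uniformity in $T$ for $T\ge 2K+\tau(s)$ is also automatic since $A^*_3(y,z_1,M,K)$ does not depend on $T$ either. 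So in fact the supremum over $T$ and $\lambda'$ plays no role in the $K\to\infty$ limit of the $A^*_3$ term, and the whole lemma reduces to the dominated-convergence argument sketched above. I expect the proof to be short; the only care needed is in writing out the transition-density expansion and the Gaussian domination, which are entirely parallel to the treatment of $\delta_2,\dots,\delta_5$ in the proof of Lemma~\ref{lemma_Zalimit}, so I would state that the estimates are "handled exactly as before" and omit the repetitive computations.
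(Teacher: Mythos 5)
The proposal is correct and reproduces the paper's (very terse) argument: bound $|A_1|\le C_Z$, integrate out the $w$-kernel to obtain a single $dm(y)$-integral of $|A^*_3(y,z_1,M,K)-E^{Y,\infty}_{z_1}(W_M(Y,z_2))|$ against $q_{T-K}(x,y)q_K(y,z_1)/q_T(x,z_1)$, dominate that ratio by a Gaussian via \eqref{OU_qbound} and \eqref{e_qdenombd} uniformly in $T\ge 2K+\tau(s)$ and $\lambda'$, and conclude by \eqref{e_endpointindep1} together with dominated convergence. The student's observations that $A^*_3$ is independent of both $T$ and $\lambda,\lambda'$ (so the supremum is harmless) and that the dominating bound is uniform are exactly the points needed to justify the paper's one-line invocation.
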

From our starting expression for $\tilde{Z}_T(Y,\lambda'/\lambda)$ in \eqref{e_tildeZbreakdownA}, all that remains to be handled in $Z^d_T(Y,\lambda'/\lambda,M,K)$ is the $A_1$ term, whose definition we recall from \eqref{A1def} is
\begin{equation}
A_1(x,w, \lambda,\lambda',K) =  E_x^{Y} \bigg( \exp \bigg(\int_0^K F(Y_u) -  H_{e^u}^{\lambda' / \lambda} (Y_u, Y_u + e^{\frac{T-u}{2}}(z_2 - z_1)) \, du \bigg)\bigg| \,Y_K = w \bigg). \nonumber
\end{equation}
% we bound this integral using the tail behaviour of $F$ and bounds on the probabilities for large excursions of $Y^1_u$. This once again produces terms resembling $\delta_2, \delta_3,\delta_4$ and $\delta_5$ of (\ref{exp_A2error333}), which we have shown all vanish as $K\to \infty$. 
%It remains to show that we can remove the conditioning on $Y_T = z_1$. 

%From our starting expression for $\tilde{Z}_T(Y,\lambda'/\lambda)$ in \eqref{e_tildeZbreakdownA}, it remains to handle $A_1$. Recall that
%\begin{equation}
%A_1(x,w, \lambda,\lambda',K) =  E_x^{Y} \bigg( \exp \bigg(\int_0^K F(Y_u) -  H_{e^u}^{\lambda' / \lambda} (Y_u, Y_u + e^{\frac{T-u}{2}}(z_2 - z_1)) \, du \bigg)\bigg| \,Y_K = w \bigg). \nonumber
%\end{equation}
The dominant contribution to the integral in $A_1$ resembles $F(Y_u) - V_1^{e^{u/2}}(Y_u)$. By Proposition~\ref{prop_mono_subadd} we have the following upper and lower bounds for the difference of the integrand and this term:
\begin{equation} \label{exp_A1error1}
-F( Y_u + e^{\frac{T-u}{2}}(z_2 - z_1)) \leq \left[F(Y_u) -  H_{e^u}^{\lambda' / \lambda} (Y_u, Y_u + e^{\frac{T-u}{2}}(z_2 - z_1))\right] - \left[F(Y_u) - V_1^{e^{u/2}}(Y_u)\right] \leq 0.
\end{equation}
Recall from \eqref{e_Zdef} that $Z_K(Y)$ is defined as
\begin{equation}
Z_K(Y) = \exp \bigg(\int_0^K F(Y_u) - V_1^{e^{u/2}}(Y_u) \,du \bigg). \nonumber
\end{equation} 
Because both the exponential in $A_1$ and $Z_K(Y)$ are bounded above by $C_Z$, by \eqref{exp_A1error1} we have
\begin{align} \label{Zebd}
\big|A_1(x,w, \lambda,\lambda',K) - E^Y_x ( Z_K(Y)\, | \, Y_K = w )\big| \leq C_Z\, E_x^{Y} \bigg(\int_0^K F( Y_u + e^{\frac{T-u}{2}}(z_2 - z_1))\, du \, \bigg| \,Y_K = w \bigg).
\end{align}
We define $Z^e(Y,M,K)$ by
\begin{equation} \label{Zedef}
Z^e(Y,M,K) = Z_K(Y) \times E_{z_1}^{Y,\infty} \left( W_M(Y,z_2)\right).
\end{equation}
Using \eqref{Zebd} and proceeding as in the proofs of Lemmas~\ref{lemma_Zalimit} and \ref{lemma_Zclimit}, we obtain the following.
\begin{lemma} \label{lemma_Zelimit} For all $x, z_1, z_2 \in \R$ such that $z_1 \neq z_2$, we have
\begin{equation}
\delta^d_e(M,K) = \sup_{\lambda'> \bar{\lambda}(s)} \big| E^Y_x \big( Z^d_T(Y,\lambda'/\lambda,M,K) - Z^e(Y,M,K) \, \big| \, Y_T = z_1 \big) \big|  \to 0 \, \text{ as } \, T \to \infty \nonumber
\end{equation}
for all fixed $M$ and $K$ satisfying $0<M<K$.
\end{lemma}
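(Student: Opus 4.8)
The plan is to mimic the proofs of Lemmas~\ref{lemma_Zalimit} and \ref{lemma_Zclimit}, with the estimate \eqref{Zebd} playing the role there played by \eqref{exp_A2error333}. First I would note that $Z^e(Y,M,K)$ and $Z^d_T(Y,\lambda'/\lambda,M,K)$ share the common deterministic factor $E_{z_1}^{Y,\infty}(W_M(Y,z_2))\in(0,1]$, so that $E^Y_x(Z^d_T - Z^e\mid Y_T=z_1)$ equals that factor times $E^Y_x\bigl(\text{(first factor of }Z^d_T) - Z_K(Y)\mid Y_T=z_1\bigr)$, whence $|\delta^d_e(M,K)|$ is at most the absolute value of this latter conditional expectation. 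Since both $Z_K(Y)$ and the first factor of $Z^d_T$ depend on $Y$ only through $Y\restrict{[0,K]}$, and $T>2K$, I would condition on $Y_K=w$ first (the law of $Y_K$ under $P^Y_x(\cdot\mid Y_T=z_1)$ has density $q_K(x,w)q_{T-K}(w,z_1)/q_T(x,z_1)$ against $m$), reducing the quantity to $\int\bigl[A_1(x,w,\lambda,\lambda',K)-E^Y_x(Z_K(Y)\mid Y_K=w)\bigr]\,q_K(x,w)q_{T-K}(w,z_1)/q_T(x,z_1)\,dm(w)$. Applying \eqref{Zebd}, whose proof uses only the monotonicity bound \eqref{exp_A1error1} and is therefore uniform in $\lambda'/\lambda$, then gives
\[
\delta^d_e(M,K)\ \le\ C_Z\int E^Y_x\!\Bigl(\int_0^K F\bigl(Y_u + e^{(T-u)/2}(z_2-z_1)\bigr)\,du\ \Bigm|\ Y_K=w\Bigr)\,\frac{q_K(x,w)\,q_{T-K}(w,z_1)}{q_T(x,z_1)}\,dm(w),
\]
which is visibly independent of $\lambda'$ and of $M$; it then remains only to show this tends to $0$ as $T\to\infty$.

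For that I would argue exactly as in \eqref{exp_A2error3}--\eqref{exp_A2error333}. Since $z_1\ne z_2$ and $u$ ranges over the fixed interval $[0,K]$, for $T$ large (depending on $K$) we have $|Y_u + e^{(T-u)/2}(z_2-z_1)|\ge e^{(T-u)/4}|z_2-z_1|$ whenever $|Y_u|<e^{(T-u)/4}|z_2-z_1|$, so by the radial monotonicity of $F$ and the tail bound \eqref{Ftailbd},
\[
F\bigl(Y_u + e^{(T-u)/2}(z_2-z_1)\bigr)\ \le\ F\bigl(e^{(T-u)/4}|z_2-z_1|\bigr) + F(0)\,1\bigl(|Y_u|\ge e^{(T-u)/4}|z_2-z_1|\bigr).
\]
The first summand depends neither on $w$ nor on the path, so pulling it out and using $\int q_K(x,w)q_{T-K}(w,z_1)\,dm(w)=q_T(x,z_1)$ it contributes at most $C_Z\,K\,F\bigl(e^{(T-K)/4}|z_2-z_1|\bigr)$, which vanishes as $T\to\infty$ by the Gaussian decay in \eqref{Ftailbd}.

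For the remaining term I would split the $u$-integral into $[s^*,K-s^*]$ and $[0,s^*]\cup[K-s^*,K]$ (the first piece being empty when $K\le 2s^*$). On $[s^*,K-s^*]$, Lemma~\ref{lemma_bridgetail}(a) bounds $P^Y_x(|Y_u|\ge M\mid Y_K=w)$ by $Cq_K(x,w)^{-1}e^{-\lambda_0 K}e^{(x^2+w^2)/8}[e^{-M^2/4}/M\wedge 1]$; the factor $q_K(x,w)^{-1}$ cancels the $q_K(x,w)$ in the density, and bounding $q_{T-K}(w,z_1)$ via \eqref{OU_qbound} and $q_T(x,z_1)$ below via \eqref{e_qdenombd} leaves, after the $w$-integration, a constant depending only on $x,z_1,K$ times $[e^{-e^{(T-K)/2}|z_2-z_1|^2/4}/(e^{(T-K)/4}|z_2-z_1|)\wedge 1]$, which $\to 0$. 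On the two short pieces, Lemma~\ref{lemma_bridgetail}(b) gives $\sup_{u\in[0,s^*]\cup[K-s^*,K]}P^Y_x(|Y_u|\ge M\mid Y_K=w)\to 0$ as $M\to\infty$ for each fixed $w$, and since the relevant threshold $e^{(T-u)/4}|z_2-z_1|\to\infty$ uniformly in $u\in[0,K]$, while the integrand against the density is dominated — via \eqref{OU_qbound} or the unkilled Ornstein--Uhlenbeck comparison — by an integrable function of $w$ independent of $T$, dominated convergence finishes this piece. All the estimates being uniform in $\lambda'>\bar{\lambda}(s)$, the lemma follows. The main obstacle is this bridge tail estimate near the endpoints $u\approx 0$ and $u\approx K$, where Lemma~\ref{lemma_bridgetail}(a) is unavailable and one must instead combine the tightness in Lemma~\ref{lemma_bridgetail}(b) with a dominated-convergence argument over $w\in\R$; but this is precisely the computation already carried out for the terms $\delta_4,\delta_5$ in the proof of Lemma~\ref{lemma_Zalimit}, so no genuinely new idea is needed.
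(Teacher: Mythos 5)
Your proposal is correct and follows exactly the route the paper indicates (the paper omits the details, saying only to ``use \eqref{Zebd} and proceed as in the proofs of Lemmas~\ref{lemma_Zalimit} and~\ref{lemma_Zclimit}''). You factor out the deterministic $E^{Y,\infty}_{z_1}(W_M(Y,z_2))$, reduce to $A_1-E^Y_x(Z_K(Y)\mid Y_K=w)$ via \eqref{Zebd}, and run the same $\delta_4/\delta_5$-style splitting with $S=K$ fixed and $T\to\infty$; the only cosmetic remark is that since $K$ is fixed here, the endpoint pieces $u\in[0,s^*]\cup[K-s^*,K]$ need only pointwise convergence and the integrable domination of the bridge density, so Lemma~\ref{lemma_bridgetail}(b) is slightly heavier machinery than strictly necessary.
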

From \eqref{Zedef}, we have
\begin{equation}
E^Y_x (Z^e(Y,M,K) \, | \, Y_T=z_1) = E^Y_x (Z_K(Y)\, | \, Y_T = z_1) \times E_{z_1}^{Y,\infty} \left( W_M(Y,z_2)\right). \nonumber
\end{equation}
Thus by Lemma~\ref{lemma_endpoint_indep}(b) and the fact that $Z_K(Y) \leq C_Z$ (and is a continuous function of $Y$) we have the following.
\begin{lemma}\label{lemma_Zelimit2}
For all $x,z_1,z_2 \in \R$,
\begin{equation} 
\delta^e_f(T,M,K) =  \big|E^Y_x (Z^e(Y,M,K) \, | \, Y_T=z_1) - E^{Y,\infty}_x (Z_K(Y)) E_{z_1}^{Y,\infty} \left( W_M(Y,z_2)\right) \big| \to 0 \, \text{ as } \,T \to \infty \nonumber
\end{equation}
for each fixed $0<M<K$.
\end{lemma}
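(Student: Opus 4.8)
\textbf{Proof proposal for Lemma~\ref{lemma_Zelimit2}.}
The plan is to reduce the claim to a single weak-convergence statement for the killed Ornstein--Uhlenbeck bridge. First I would record, as the key structural observation, that the quantity $E_{z_1}^{Y,\infty}(W_M(Y,z_2))$ appearing in \eqref{Zedef} is a \emph{deterministic} constant: by the remark following \eqref{e_Wdef} the integrand defining $W_M(Y,z_2)$ is negative, so $0<W_M(Y,z_2)\le 1$ and hence $0<E_{z_1}^{Y,\infty}(W_M(Y,z_2))\le 1$. Consequently, from $Z^e(Y,M,K)=Z_K(Y)\cdot E_{z_1}^{Y,\infty}(W_M(Y,z_2))$ and linearity of the conditional expectation,
\[
E^Y_x\big(Z^e(Y,M,K)\,\big|\,Y_T=z_1\big)=E^Y_x\big(Z_K(Y)\,\big|\,Y_T=z_1\big)\cdot E_{z_1}^{Y,\infty}(W_M(Y,z_2)),
\]
which is the identity already noted just before the statement. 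Subtracting $E^{Y,\infty}_x(Z_K(Y))\,E_{z_1}^{Y,\infty}(W_M(Y,z_2))$ and pulling the common factor, which is bounded by $1$, outside the absolute value gives
\[
\delta^e_f(T,M,K)\le \big|E^Y_x(Z_K(Y)\,|\,Y_T=z_1)-E^{Y,\infty}_x(Z_K(Y))\big|,
\]
so it suffices to prove the right-hand side tends to $0$ as $T\to\infty$ (for fixed $0<M<K$).

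Next I would check that $Z_K(Y)$ is a bounded, continuous functional of the path restricted to $[0,K]$. Boundedness by $C_Z$ is \eqref{e_ZTbd}. For continuity, the map $(Y_u:u\in[0,K])\mapsto \int_0^K F(Y_u)-V_1^{e^{u/2}}(Y_u)\,du$ is continuous on $C([0,K],\R)$ because $F$ and each $V_1^{e^{u/2}}$ are continuous and the integration is over the compact interval $[0,K]$; composing with the continuous exponential gives continuity of $Y\mapsto Z_K(Y)$, and this functional plainly depends only on $Y\restrict{[0,K]}$.

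Then I would invoke Lemma~\ref{lemma_endpoint_indep}(b) with $S=K$: as $T\to\infty$, the conditional laws $P^Y_x(Y\restrict{[0,K]}\in\cdot\,|\,\rho>T,\,Y_T=z_1)$ converge weakly on $C([0,K],\R)$ to $P^{Y,\infty}_x(Y\restrict{[0,K]}\in\cdot)$. As noted at the end of Section~\ref{s_OU}, the conditioning $\{Y_T=z_1\}$ for the killed process already incorporates $\{\rho>T\}$, so applying the definition of weak convergence to the bounded continuous test functional $Z_K$ yields $E^Y_x(Z_K(Y)\,|\,Y_T=z_1)\to E^{Y,\infty}_x(Z_K(Y))$, and the reduction above finishes the proof. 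I do not expect a genuine obstacle here; the only points needing minor care are the verification that $Z_K$ is a bona fide continuous functional on path space (so the weak-convergence input applies) and the bookkeeping that the deterministic factor $E_{z_1}^{Y,\infty}(W_M(Y,z_2))$ factors out cleanly — everything else is an immediate application of Lemma~\ref{lemma_endpoint_indep}(b).
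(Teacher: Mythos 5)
Your proposal is correct and takes essentially the same approach as the paper: factor out the deterministic constant $E_{z_1}^{Y,\infty}(W_M(Y,z_2))$ from the conditional expectation, note that $Z_K(Y)$ is a bounded ($\le C_Z$ by \eqref{e_ZTbd}) continuous functional of $Y\restrict{[0,K]}$, and apply Lemma~\ref{lemma_endpoint_indep}(b). The paper gives exactly this argument, merely stating the factorization identity and citing Lemma~\ref{lemma_endpoint_indep}(b) with the boundedness and continuity of $Z_K$; your write-up simply spells out the continuity verification and the elementary bound by the common factor.
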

We are now ready to establish the limiting form of $E^Y_x (\tilde{Z}_T(Y,\lambda'/\lambda)\, | \,Y_T = z_1)$ (provided $z_1 \neq z_2)$. Let $M>0$, $K>M$, $T \geq 2K + \tau(s)$ and $\lambda' > \bar{\lambda}(s)$. Bounding above by the sum of the $\delta$ terms in Lemmas \ref{lemma_Zalimit}-\ref{lemma_Zelimit2}, we have that
\begin{align} \label{bigdiff}
\big| E^Y_x (\tilde{Z}_{T}(Y,\lambda'/\lambda) \, &| \, Y_T=z_1) - E^{Y,\infty}_x (Z_K(Y)) E_{z_1}^{Y,\infty} \left( W_M(Y,z_2)\right) \big|  \nonumber
\\ &\leq \delta^{\sim}_a(K) + \delta^a_b(K,\lambda,\lambda') + \delta^b_c(M) + \delta^c_d(M,K) + \delta^d_e(M,K) + \delta^e_f(T,M,K).
\end{align}
Let $\epsilon > 0$. By Lemma~\ref{lemma_Zclimit}, we can choose $M_0>0$ to be sufficiently large such that $\delta^b_c(M) < \epsilon / 4$ for all $M \geq M_0$, and choose some $M \geq M_0$. By Lemma~\ref{lemma_Zalimit} and Lemma~\ref{lemma_Zdlimit}, we can then choose $K_0$ to be large enough such that $\delta^\sim_a(K) + \delta^c_d(M,K) < \epsilon / 4$ for all $K\geq K_0$. Fix $K > K_0$. Next, by Lemmas~\ref{lemma_Zelimit} and \ref{lemma_Zelimit2} we can choose $T_0 > 2K + \tau(s)$ such that for all $T \geq T_0$, $\delta^d_e(T,M,K) + \delta^e_f(T,M,K) < \epsilon / 4$. Finally, Lemma~\ref{lemma_Zblimit} allows us to choose $\lambda(\epsilon)>\bar{\lambda}(s)$ such that $T = \log(\lambda^2 (t-s)) \geq T_0$ and $\delta^a_b(K,\lambda,\lambda') < \epsilon / 4$ for all $\lambda, \lambda' \geq \lambda(\epsilon)$. We therefore obtain from \eqref{bigdiff} that 
\begin{equation}
\limsup_{\lambda,\lambda' \to \infty} \big|E^Y_x (\tilde{Z}_T(Y,\lambda'/\lambda)\, | \,Y_T = z_1) - E^{Y,\infty}_x (Z_K(Y)) E_{z_1}^{Y,\infty} \left( W_M(Y,z_2)\right)\big| < \epsilon \nonumber
\end{equation} 
for the $M$ and $K$ chosen above. This holds for all $\epsilon > 0$ for sufficiently large $M$ and $K$ (with $M<K$). It therefore holds that if $\lim_{M,K \to \infty, K > M} E^{Y,\infty}_x (Z_K(Y)) E_{z_1}^{Y,\infty} \left( W_M(Y,z_2)\right)$ exists, then $\lim_{\lambda,\lambda' \to \infty} E^Y_x (\tilde{Z}_T(Y,\lambda'/\lambda)\, | \,Y_T = z_1)$ exists and is equal to it. Thus it suffices to find the limit of $E^{Y,\infty}_x (Z_K(Y)) E_{z_1}^{Y,\infty} \left( W_M(Y,z_2)\right)$ as $M,K \to \infty$ with $M<K$. As the first term depends only on $K$ and the second depends only on $M$, we can consider the limits independently. First consider $E^{Y,\infty}_x (Z_K(Y))$. By \eqref{e_ZTbd}, $Z_K \uparrow Z_\infty \leq C_Z$, so the limit of the first term as $K \to \infty$ is $E^{Y,\infty}_x(Z_\infty(Y))$ by Monotone Convergence. We recall the definition of $W_M$ from \eqref{e_Wdef}. The integral in $W_M$ is monotone in $M$ and hence converges to the integral on $[0,\infty]$ as $M\to \infty$. Using the fact that $|W_M(Y,z_2)| \leq 1$ for all $M$ and continuity of the exponential, we can bring the limit inside, and  $E_{z_1}^{Y,\infty}(W_M(Y,z_2)) \to E_{z_1}^{Y,\infty}(W_\infty(Y,z_2))$ as $M \to \infty$. Thus we have shown that 
\begin{equation} \label{Ztildelim1}
\lim_{\lambda ,\lambda' \to \infty} E^Y_x (\tilde{Z}_T(Y,\lambda'/\lambda)\, | \,Y_T = z_1) = E^{Y,\infty}_x (Z_\infty(Y)) E_{z_1}^{Y,\infty} \left( W_\infty(Y,z_2)\right).
\end{equation}
Finally, recall that $\tilde{Z}_T(Y,\lambda'/\lambda)$ was in fact $\tilde{Z}^1_{T_1}(Y^1,\lambda'/\lambda)$. The analysis for $\tilde{Z}^2_{T_2}(Y^2,\lambda/\lambda')$ (under its respective conditional expectation) carries through unchanged. Since \eqref{Ztildelim1} holds for all $x \in \R$, we have therefore shown the following:
\begin{align} \label{Ztildelim2}
\text{For all } B^1_1, B^2_1 &\in \R \text{ and all } z_1,z_2 \in \R \text{ such that } z_1 \neq z_2 \nonumber
\\ & \lim_{\lambda, \lambda' \to \infty} E^{Y^1}_{B^1_1} \big( \tilde{Z}^1_{T_1}(Y^1,\lambda'/\lambda) \, \big| \, Y^1_{T_1} = z_1 \big) = E^{Y,\infty}_{B^1_1}(Z_\infty(Y))\, E^{Y,\infty}_{z_1}(W_\infty(Y,z_2)), \text{ and}
\\& \lim_{\lambda, \lambda' \to \infty} E^{Y^2}_{B^2_1} \big( \tilde{Z}^2_{T_2}(Y^2,\lambda/\lambda') \, \big| \, Y^2_{T_2} = z_2 \big) = E^{Y,\infty}_{B^2_1}(Z_\infty(Y))\, E^{Y,\infty}_{z_2}(W_\infty(Y,z_1)). \nonumber
\end{align}
To find the limit of $G$ it remains to identify the limits of $\Psi^{\lambda, \lambda'}_{B,s} (\sqrt{t-s}z_1 ,\sqrt{t-s} z_2 )$ and $Q(\lambda_1,\lambda_2,B^1,B^2,z_1,z_2)$. From \eqref{e_psi} we recall that the former prelimit is defined as
\begin{align}
&\Psi_{B,s}^{\lambda, \lambda'}(\sqrt{t-s}z_1 ,\sqrt{t-s} z_2 ) \nonumber
\\ &\hspace{10 mm}= h(\sqrt{t-s}z_1 + B_s, \sqrt{t-s}z_2 + B_s) \exp \left( - \int_0^s V^{\lambda, \lambda'}_{t-u} (\sqrt{t-s}z_1 + B_s - B_u, \sqrt{t-s}z_2 + B_s - B_u) \, du \right). \nonumber
\end{align}
For all $z_1, z_2 \in \R$ and all Brownian paths $(B_u, u \in[0,s])$, the obvious limit of the above as $\lambda,\lambda' \to \infty$ is obtained by replacing $V^{\lambda, \lambda'}_{t-u}$ with $V^{\infty,\infty}_{t-u}$. By monotonicity (in $\lambda,\lambda'$) of the integral and continuity of the exponential we can take the limit inside. Denoting the limit by $\Psi^{\infty, \infty}_{B,s} (\sqrt{t-s}z_1 ,\sqrt{t-s} z_2 )$, we have
\begin{equation} \label{psilimit}
\lim_{\lambda, \lambda' \to \infty} \Psi_{B,s}^{\lambda, \lambda'}(\sqrt{t-s}z_1 ,\sqrt{t-s} z_2 ) = \Psi_{B,s}^{\infty, \infty}(\sqrt{t-s}z_1 ,\sqrt{t-s} z_2 ).
\end{equation}
This leaves $Q(\lambda, \lambda',B^1,B^2,z_1,z_2)$, which we recall from (\ref{e_Qdef}) is defined by
\begin{equation} 
Q(\lambda, \lambda',B^1,B^2,z_1,z_2) = \exp \bigg(- \int_0^1 V^{1, \lambda'/ \lambda}_{u}(B^1_{u},B^1_{u} +  e^{T_1/2}( z_2 -   z_1)) + V^{1,\lambda/\lambda'} (B^2_u, B^2_u + e^{T_2/2}(z_1-z_2))\, du \bigg). \nonumber
\end{equation}
The integrand is the sum of two terms that are very similar; for now we restrict our attention to the first. In particular, we will show that
\begin{equation} \label{Qaux}
\lim_{T_1 \to \infty} \,\sup_{\lambda'> (t-s)^{-1/2}} \,\bigg| \exp \bigg(- \int_0^1 V^{1, \lambda'/ \lambda}_{u}(B^1_{u},B^1_{u} +  e^{T_1/2}( z_2 -   z_1)) \, du \bigg) - \exp \bigg( -\int_0^1 V^1_u(B^1_u) \,du \bigg) \bigg| = 0.
\end{equation}
We claim that since the second argument of the integrand goes to infinity, asymptotically the function resembles $V^1_u(B^1_u)$. To see this use both parts of Proposition~\ref{prop_mono_subadd} to conclude that
%\label{e_Qbounds}
\begin{equation} 
0 \leq \left[ V_u^{1, c}(B_u^1, B_u^1 + e^{T_1/2}(z_2 - z_1)) - V_u^1(B_u^1) \right] \leq V^\infty_u(B_u^1 + e^{T_1/2}(z_2 - z_1)) \nonumber
\end{equation}
for all $c > 0$. $P^B_0$-a.s., there is a constant $R(\omega)>0$ such that $|B_u^1(\omega)| \leq R(\omega)$ for all $u \in [0,s]$. Provided $z_1\neq z_2$, for $\lambda$ sufficiently large, $e^{T_1/2}|z_2-z_1| \geq 2R$. Then for $\lambda$ sufficiently large and $\lambda,\lambda' > \bar{\lambda}(s)$,
\begin{align}
&\bigg| \,\exp \bigg(- \int_0^1 V^{1, \lambda'/ \lambda}_{u}(B^1_{u},B^1_{u} +  e^{T_1/2}( z_2 -   z_1)) \, du \bigg) - \exp \bigg(- \int_0^1 V^{1}_{u}(B^1_{u}) \, du \bigg) \bigg| \nonumber
\\ &\hspace{10 mm}\leq \int_0^1 V^\infty_u(e^{T_1/2}(z_2 - z_1)-R)\, du. \nonumber
\end{align}
The integrand is bounded above by $V^\infty_u(R)$, which is integrable on $[0,1]$. We take $\lambda \to \infty$ and apply Dominated Convergence; since $V^\infty_u(y) = u^{-1}F(u^{-1/2} y)$ and by \eqref{e_FODE}(iii), we have $\lim_{|y| \to \infty}V^\infty_u(y) = 0$, and hence limit of the above as $\lambda \to \infty$ (ie. as $T_1 \to \infty$) is zero. Thus \eqref{Qaux} holds. We handle the second term the integral in $Q(\lambda,\lambda',B^1,B^2,z_1,z_2)$ in an identical fashion, now with the roles of $\lambda$ and $\lambda'$ reversed, thereby establishing that 
\begin{align} \label{Qlimit}
&\text{For all } z_1,z_2 \in \R \text{ such that } z_1 \neq z_2, \,dP^{B^1}_0 dP^{B^2}_0\text{-a.s.},
\\ & \hspace{ 10 mm } \lim_{\lambda,\lambda' \to \infty} Q(\lambda,\lambda',B^1,B^2,z_1,z_2) = \exp \bigg( - \int_0^1 V^1_u(B_u^1) \,du \bigg) \exp \bigg( - \int_0^1 V^1_u(B_u^2) \, du \bigg). \nonumber
\end{align}
We have therefore found the limit of $G(\lambda,\lambda',s,B,B^1,B^2,z_1,z_2)$ and hence of $\Theta(\lambda,\lambda',s,B,B^1,B^2,z_1,z_2)$. In particular, recall the definitions \eqref{e_Gdef} and \eqref{e_thetadef}. From \eqref{densitylimit}, \eqref{Ztildelim2}, \eqref{psilimit} and \eqref{Qlimit}, we have shown that $dP^B_0 dP^{B^1}_0 dP^{B^2}_0$-a.s., for all $z_1, z_2\in \R$ such that $z_1 \neq z_2$, 
\begin{align} \label{endgame1}
 &\lim_{\lambda,\lambda' \to \infty} \Theta(\lambda,\lambda',s,B,B^1,B^2,z_1,z_2) = 
(t-s)^{-2\lambda_0} \Psi^{\infty, \infty}_{B,s} (\sqrt{t-s}z_1 ,\sqrt{t-s} z_2 )\exp \left( \int_0^1 -V_u^1(B_u^1) - V^1_u (B^2_u) \, du \right) \nonumber
\\ &\hspace{ 14 mm} \,\times \,  E_{z_1}^{Y,\infty}(W_\infty(Y,z_2)) E_{z_2}^{Y,\infty}(W_\infty(Y,z_1))   \,E_{B^1_1}^{Y,\infty}(Z_\infty(Y)) E_{B^2_1}^{Y,\infty} (Z_\infty(Y)) \psi_0(B^1_1)\, \psi_0(B^2_1)\, \psi_0(z_1)\,\psi_0(z_2). 
\end{align}
Thus by \eqref{lastreduction}, $\lim_{\lambda,\lambda' \to \infty} \N_0( (L^\lambda_t \times L^{\lambda'}_t)(h))$ exists and satisfies
\begin{equation} \label{endgame2}
\lim_{\lambda,\lambda' \to \infty} \N_0( (L^\lambda_t \times L^{\lambda'}_t)(h)) = \int_0^t \bigg[ \iint E^B_0( E^{B^1, B^2}_{(0,0)} [\lim_{\lambda,\lambda' \to \infty} \Theta(\lambda,\lambda',s,B,B^1,B^2,z_1,z_2)]) \, dm(z_1) \, dm(z_2) \bigg] ds.
\end{equation} 
To obtain the desired expression, we note that the terms in \eqref{endgame1} that depend on $B^1$ and $B^2$ can be collected in a constant. In particular, we define a constant $C_{\ref{thm_l2limit}}>0$ by
\begin{align} \label{e_Cdef}
C_{\ref{thm_l2limit}}^2 &= E_{(0,0)}^{B^1, B^2} \bigg( \exp \bigg( -\int_0^1 V_u^1(B^1_u) + V_u^1(B^2_u)\, du \bigg) E_{B^1_1}^{Y,\infty}(Z(\infty))E_{B^2_1}^{Y,\infty}(Z(\infty)) \psi_0(B^1_1) \,\psi_0(B^2_1) \bigg) \nonumber
\\ &=\bigg[E_0^B  \left(\exp \bigg(- \int_0^1 V_u^1(B_u)\, du \bigg) E_{B^1}^{Y,\infty}(Z_\infty(Y)) \, \psi_0(B_1)\right) \bigg]^2.
\end{align}
We also define a function $\rho(\cdot,\cdot)$ by
\begin{equation} \label{e_rhodef}
\rho(z_1, z_2) = E_{z_1}^{Y,\infty}(W_\infty(Y,z_2)) E_{z_2}^{Y,\infty}(W_\infty(Y,z_1)).
\end{equation}
It is clear that $\rho(\cdot,\cdot)$ is jointly continuous and bounded by $1$ from the definition of $W_\infty(Y,z)$. Thus by \eqref{endgame1},
\begin{align}
&E_{(0,0)}^{B^1, B^2} \left[ \lim_{\lambda,\lambda' \to \infty} \Theta(\lambda,\lambda',s,B,B^1,B^2,z_1,z_2) \right] = C_{\ref{thm_l2limit}}^2 (t-s)^{-2\lambda_0} \Psi^{\infty, \infty}_{B,s} (\sqrt{t-s}z_1 ,\sqrt{t-s} z_2 )  \rho(z_1,z_2) \psi_0(z_1)\,\psi_0(z_2). \nonumber
\end{align}
Substituting the above into \eqref{endgame2} completes the proof. \qed

%By \eqref{gammathetalimit}, we have therefore shown that
%\begin{align}
%\Gamma(s) := \lim_{\lambda,\lambda' \to \infty} (\lambda \lambda')^{2\lambda_0} \Gamma^{\lambda,\lambda'}(s) = C_{\ref{thm_l2limit}}^2 (t-s)^{-2\lambda_0} \iint E^B_0 \left[ \Psi^{\infty, \infty}_{B,s} (\sqrt{t-s}z_1 ,\sqrt{t-s} z_2 ) \right] \rho(z_1,z_2) \psi_0(z_1)\,\psi_0(z_2) \,dm(z_1) \, dm(z_2).
%\end{align}
%By \eqref{e_Gammaintsplit} and the discussion that follows it, as well as an application of Fatou's Lemma, we deduce that
%\begin{equation}
%\int_{t-\epsilon}^t |\Gamma(s)| \,ds \leq C\|h\|_\infty t^{-\lambda_0} \epsilon^{1-\lambda_0}. \nonumber 
%\end{equation} 
%Thus the term $(t-s)^{-2\lambda_0}$ in $\Gamma(s)$, which looks like a non-integrable singularity when integrated over $[0,t]$ because $\lambda_0 > 1/2$, does not actually present any integrability issues. Earlier we argued that $\lim_{\lambda,\lambda' \to \infty} \N_0((L^\lambda_t \times L^{\lambda'}_t)(h)) = \int_0^t \Gamma(s) \, ds$. Substituting \eqref{endgame2} completes the proof. \qed

\newpage

\end{document}